\newtheorem{theorem}{Theorem}[section]
\newtheorem{lemma}[theorem]{Lemma}
\newtheorem{proposition}[theorem]{Proposition}
\newtheorem{corollary}[theorem]{Corollary}
\newtheorem{conjecture}[theorem]{Conjecture}
\theoremstyle{definition}
\newtheorem{remark}[theorem]{Remark}
\newtheorem*{acknowledgments}{Acknowledgments}
\newtheorem*{claim1}{Claim 1}
\newtheorem*{claim2}{Claim 2}
\def\ev{\mathrm{ev}}
\def\c{\mathop{\mathrm{cts}}\nolimits}
\def\deg{\mathop{\mathrm{deg}}\nolimits}
\def\inv{\mathop{\mathrm{Inv}}\nolimits}
\def\cl{\mathop{\mathrm{cl}}\nolimits}
\def\ad{\mathop{\mathrm{ad}}\nolimits}
\newcommand{\sad}{\overline{\ad}}
\def\id{\mathop{\mathrm{id}}\nolimits}
\def\ev{\mathop{\mathrm{ev}}\nolimits}
\def\Ob{\mathop{\mathrm{Ob}}\nolimits}
\def\Hom{\mathop{\mathrm{Hom}}\nolimits}
\def\Span{\mathop{\mathrm{Span}}\nolimits}
\def\co{\colon\thinspace}
\newcommand{\uqenh}[1]{ (\bar U_q^{\ev})\,\hat  {}^{\;\hat  \otimes #1}}
\newcommand{\uqen}[1]{ (\bar U_q^{\ev})\,\tilde {}^{\;\tilde \otimes #1}}
\newcommand{\uqe}{\bar U_q^{\ev}}
\newcommand{\uqz}{\bar U_q^0}
\newcommand{\uqze}{\bar U_q^{\ev 0}}
\newcommand{\uq}{\bar U_q}
\newcommand{\uqzq}{U_{\mathbb{Z},q}}
\newcommand{\f}[1]{\tilde F^{({#1})}}
\newcommand{\e}[1]{\tilde E^{({#1})}}
\newcommand{\Z}{\mathbb{Z}[q,q^{-1}]}
\newcommand{\uqzx}{\langle \uqz \rangle }
\newcommand{\uqzex}{\langle \uqze \rangle}
\newcommand{\qintx}[1]{\langle \{ #1\}_q!\rangle }
\newcommand{\mux}{\langle \mu \rangle }
\newcommand{\Dx}{\langle D^{\pm 1} \rangle }
\newcommand{\Thetax}[1]{\langle \Theta _{#1}\rangle }
\newcommand{\Deltax}{\langle \Delta \rangle }
\newcommand{\Kx}{\uqzex}
\newcommand{\ex}[1]{\langle \e{#1}\rangle }
\newcommand{\fx}[1]{\langle \f{#1}\rangle }
\newcommand{\xx}[2]{\langle \tilde X_{#2}^{({#1})}\rangle }
\newcommand{\Yx}{\langle \dot Y\rangle }
\newcommand{\bYx}{\langle \bar Y\rangle }
\newcommand{\adx}{\langle \ad\rangle}
\newcommand{\sadx}{\langle \sad \rangle }
\newcommand{\Rx}[1]{\langle \alpha ^{\pm}_{#1}\otimes \beta ^{\pm}_{#1}\rangle }
\newcommand{\varepsilonx}{\langle \varepsilon  \rangle }
\newcommand{\etax}{\langle \eta   \rangle }
\newcommand{\Ex}[1]{\langle \tilde E^{#1}\rangle}
\newcommand{\Fx}[1]{\langle \tilde F^{#1}\rangle }
\begin{document}
\title{On the universal $sl_2$ invariant of boundary bottom tangles}
\author{Sakie Suzuki\thanks{Research Institute for Mathematical Sciences, Kyoto
University, Kyoto, 606-8502, Japan. E-mail address: \texttt{sakie@kurims.kyoto-u.ac.jp}} }

\maketitle
\begin{center}
\textbf{Abstract}
\end{center}
The universal $sl_2$ invariant of bottom tangles has a universality  property for the colored Jones polynomial of links.
Habiro conjectured  that the universal $sl_2$ invariant of boundary bottom tangles takes values in  certain subalgebras of the completed tensor powers of 
the quantized enveloping algebra  $U_h(sl_2)$ of the Lie algebra $sl_2$.
In the present paper, we prove an improved version of Habiro's conjecture.
As an application, we prove a  divisibility property of  the colored Jones polynomial of boundary links.
\section{Introduction}
In the 80's, Jones \cite{Jo} constructed a  polynomial invariant of links.
After that, Reshetikhin and Turaev \cite{Re} defined  an invariant
of  framed links whose components are colored by finite dimensional representations of  a ribbon Hopf algebra.
The \textit{colored Jones polynomial} is the  Reshetikhin-Turaev invariant  of links  whose components are colored by finite dimensional  representations of 
the quantized enveloping algebra  $U_h(sl_2)$.

The \textit{universal invariant} associated with a ribbon Hopf algebra   is  an invariant of framed links and tangles which are not colored by any representations,
see Hennings \cite{He}, Lawrence \cite{R1,R2},  Reshetikhin \cite{Re}, Ohtsuki \cite{O},  Kauffman \cite{Ka}, and Kauffman and Radford \cite{KR}.
The universal  invariant has the universality property for the Reshetikhin-Turaev invariant.
By the \textit{universal $sl_2$ invariant}, we mean the universal   invariant associated with $U_h(sl_2)$.
In particular, one can  obtain the colored Jones polynomial from the  universal  $sl_2$ invariant.

A \textit{bottom tangle}  is a tangle  consisting of arc components in a cube such that each boundary point is on the bottom line, and the two boundary points of each component  are adjacent to each other,  see Figure \ref{fig:cl} (a) for example.
We can define the closure link  of a bottom tangle, see  Figure \ref{fig:cl} (b).
For each link $L$, there is  a bottom tangle whose closure is  $L$.
In \cite{H1}, Habiro studied the  universal invariant of  bottom tangles  associated with a ribbon Hopf algebra, and  in  \cite{H2}, he  studied the universal  $sl_2$ invariant in detail.
\begin{figure}
\centering
\includegraphics[width=8cm,clip]{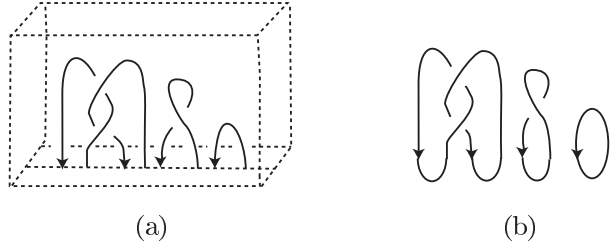}
\caption{(a) A bottom tangle $T$ (b) The closure link of $T$ }\label{fig:cl}
\end{figure}

The universal $sl_2$ invariant of  $n$-component bottom tangles takes values in the completed $n$-fold tensor power $U_h(sl_2)^{\hat \otimes n}$ of $U_h(sl_2)$.
By using  bottom tangles, we can restate the universality of the universal  $sl_2$ invariant:
the colored Jones polynomial  of a link $L$ is obtained from the  universal $sl_2$ invariant of a bottom tangle  whose closure is  $L$,  by taking the  quantum traces associated with the representations attached to the components of links (cf. \cite{H1}).

We are interested in relationships between the algebraic properties of the colored Jones polynomial and  the universal $sl_2$ invariant
and the topological properties of links and  bottom tangles.

Eisermann \cite{Ei} proved that the Jones polynomial of  an $n$-component
ribbon link is  divisible by the Jones polynomial of the  $n$-component unlink.
This result is generalized to links which are ribbon concordant to boundary links by Habiro  \cite{H4}.
Habiro \cite{H2} proved that the universal $sl_2$ invariant  of   $n$-component,
 algebraically-split, $0$-framed bottom tangles takes values in certain small subalgebras of the completed tensor powers of $U_h(sl_2)$,
and  gave a divisibility property of the colored Jones polynomial of algebraically-split, $0$-framed links.

In \cite{sakie}, the present author  proved  an improvement of  Habiro's result for algebraically-split, $0$-framed bottom tangles, in the special case of \textit{ribbon bottom tangles} and  ribbon links.

In the present paper, we study  the universal $sl_2$ invariant of \textit{ boundary bottom tangles}.
A bottom tangle is called \textit{boundary}  if its components admit mutually disjoint Seifert surfaces, 
see Figure \ref{fig:bo} for example. We can obtain each boundary link from a boundary bottom tangle by closing.
Habiro \cite{H2} conjectured  that the universal $sl_2$ invariant of boundary bottom tangles takes values in  certain subalgebras of the completed tensor powers of $U_h(sl_2)$.
We  prove an improved version of Habiro's conjecture (Theorem \ref{1}).
\begin{figure}
\centering
\includegraphics[width=4cm,clip]{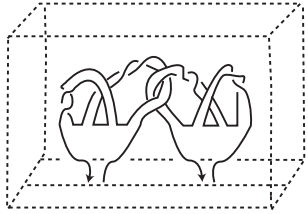}
\caption{A boundary bottom tangle }\label{fig:bo}
\end{figure}

\subsection{Main result}
The quantized enveloping algebra  $U_h=U_h(sl_2)$  is an $h$-adically completed $\mathbb{Q}[[h]]$-algebra (see Section \ref{preuni} for the details).
We set $q=\exp h$. 

Habiro \cite{H2}  proved that the universal $sl_2$ invariant $J_T$ of  an $n$-component,
 algebraically-split, $0$-framed bottom tangle $T$  is contained in the $\mathbb{Z}[q,q^{-1}]$-subalgebra 
$(\tilde {\mathcal{U}}_q^{ev})^{\tilde \otimes n}$ of $U_h^{\hat \otimes n}$.
In \cite{sakie}, we defined another $\Z$-subalgebra  $\uqenh{n}\subset (\tilde {\mathcal{U}}_q^{ev})^{\tilde \otimes n}$, and prove the following theorem.
(See Section \ref{preunis}   for the definition of $\uqe$,
and see Sections \ref{Comp0}--\ref{Comp} for the definition of the  completion $\uqenh{n}$ of $(\uqe)^{\otimes n}$.)
\begin{theorem}[\cite{sakie}]
Let  $T$ be an $n$-component ribbon bottom tangle with $0$-framing.
Then we have $J_T\in  \uqenh{n}$.
\end{theorem}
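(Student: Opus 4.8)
The plan is to prove the statement by induction along a \emph{ribbon presentation} of $T$. First I would recall Habiro's combinatorial description of ribbon bottom tangles: every $n$-component ribbon bottom tangle is obtained from the $n$-component trivial bottom tangle $U_n$ by a finite sequence of \emph{ribbon moves}, each of which band-sums a strand of the current tangle with a $0$-framed unknot that has first been pushed through some strands of the tangle. Since the universal invariant of $U_n$ is $1^{\otimes n}\in(\uqe)^{\otimes n}$, it is enough to show that the operation induced on the universal invariant by a single ribbon move sends $\uqenh{n}$ into itself; the theorem then follows by induction on the number of ribbon moves.

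Next I would make explicit the effect of one ribbon move on the universal invariant. Pushing the auxiliary $0$-framed unknot through a strand cables its (trivial) invariant, i.e.\ applies an iterated coproduct $\Delta^{(k)}$ and the (co)adjoint action $\ad$, and inserts $R^{\pm1}$-type factors at the new crossings; the concluding band-sum then multiplies the resulting element into a single tensor factor. Thus a ribbon move modifies the universal invariant by left and right multiplication by elements obtained from the universal $R$-matrix by iterated comultiplication and $\ad$, followed by an iterated multiplication collapsing the auxiliary tensor factors. Here the $0$-framing hypothesis is essential: it guarantees that the ribbon (balancing) element and the writhe contributions cancel, so that no odd powers of $K$ survive and one remains inside the even part $\uqe$.

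The technical core is then to check that $\uqenh{n}$ is closed under precisely these operations. Using the results recalled in \cite{H2} and \cite{sakie}, the universal $R$-matrix and the variants of it that occur lie in the appropriate completed tensor powers of $\uqe$; what remains is to verify that (i) the iterated coproduct maps $\uqe$ into the completion of $(\uqe)^{\otimes k}$ introduced in Sections \ref{Comp0}--\ref{Comp}, (ii) the adjoint action of $\uqe$ on itself preserves $\uqe$ up to that same completion, and (iii) multiplication together with the maps $(\uqe)^{\otimes(n+k)}\to(\uqe)^{\otimes n}$ that collapse tensor factors are continuous for the topology defining $\uqenh{n}$. Granting (i)--(iii), every ribbon move carries $\uqenh{n}$ into itself, which is all that is needed.

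I expect the main obstacle to be point (i) together with the attendant convergence questions: $\uqe$ is \emph{not} a Hopf subalgebra of $U_h$, since the coproduct of a divided power lies not in $\uqe\otimes\uqe$ but only in a completion of it, so one must keep careful track of which completed tensor product each iterated coproduct lands in, and show that the generally infinite sums produced by repeated cabling (and by the clasper-type expansions hidden in the ribbon moves) converge in the topology defining $\uqenh{n}$. This bookkeeping is exactly what the definition of $\uqenh{n}$ in Sections \ref{Comp0}--\ref{Comp} is designed to support; once it is in place, the geometric input (the ribbon presentation) and the algebraic input ($R$-matrix and divided-power computations) should combine in a routine manner.
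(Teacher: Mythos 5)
This statement is \emph{not} proved in the present paper: it is quoted from the author's earlier work~\cite{sakie}, and the body of the present paper instead proves the parallel result for \emph{boundary} bottom tangles (Theorem~\ref{1}). The strategy used there — and, by the author's own indications, the strategy used in~\cite{sakie} — is to start from Habiro's structural factorization of the tangle class in question, namely an identity of the form $T=\mu_b^{[g_1,\ldots,g_n]}Y_b^{\otimes g}(T')$ for an \emph{arbitrary} auxiliary bottom tangle $T'$ (Proposition~\ref{yyuni}), transfer it to a single closed formula for $J_T$ (Propositions~\ref{y3uni}, \ref{tu}), and then show by a state-sum and graphical-calculus analysis that every summand lies in $(\uqe)^{\otimes n}$ (Proposition~\ref{2}) and that the whole sum converges in the filtration defining $\uqenh{n}$ (Section~\ref{completion}). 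Your proposal takes a genuinely different route: induction over a ``ribbon move'' presentation.

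Unfortunately, as written the proposal has real gaps rather than merely a different emphasis. First, the framing of the induction is not quite well-posed. You write that ``it is enough to show that the operation induced on the universal invariant by a single ribbon move sends $\uqenh{n}$ into itself,'' but the effect of a ribbon move on $J_T$ is not a function of $J_T$ alone: it depends on the geometric data of the move (which strands the auxiliary unknot is pushed through, how the band is embedded), and this data is not recoverable from an element of $\uqenh{n}$. What one actually needs is: for every ribbon move with specified geometric data, the algebraic operation $\Psi$ realizing it satisfies $\Psi(J_T)\in\uqenh{n}$ whenever $J_T\in\uqenh{n}$. You would first have to write down $\Psi$ explicitly; you would also have to justify the ``fusion/ribbon-move'' presentation of ribbon bottom tangles as a precise structural theorem, which you state but do not establish or cite.

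Second, even granting the reduction, the hard content is entirely contained in your unproved points (i)--(iii). The iterated coproduct does \emph{not} carry $\uqe$ into $\uqe^{\otimes k}$ (since $\uqe$ is not a Hopf subalgebra, as you note), so one must prove compatibility with the filtration $\{G_p^{(n)}\}$ defining $\uqenh{n}$; this is precisely what Sections~\ref{proof}--\ref{completion} do, and it is the bulk of the paper, not a bookkeeping step. Likewise, the cancellation of odd powers of $K$ coming from the $D=q^{H\otimes H/4}$ factor of $R^{\pm1}$ is not automatic from $0$-framing; it is the content of Proposition~\ref{Habi} and Corollary~\ref{Habii}, which assert $U_{\mathbb{Z},q}\triangleright\uqe\subset\uqe$ and $U_{\mathbb{Z},q}\triangleright K\uqe\subset K\uqe$. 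In short, the inductive reorganization does not actually reduce the difficulty: verifying closure of $\uqenh{n}$ under one ribbon move requires essentially the same completion and $\ad$-stability lemmas as the direct factorization argument, and some of its prerequisites (a precise move description, a formula $\Psi$) would themselves demand work that the factorization approach sidesteps by invoking Habiro's result from~\cite{H1}.
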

The main result of the present  paper is the following.
\begin{theorem} \label{1}
Let $T$ be an $n$-component boundary bottom tangle with $0$-framing.
Then we have $J_T\in  \uqenh{n}$.
\end{theorem}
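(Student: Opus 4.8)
The plan is to reduce the statement to a standard presentation of boundary bottom tangles and then to propagate membership in $\uqenh{n}$ along that presentation, following the strategy of \cite{sakie} for the ribbon case. The topological starting point is that, since the $n$ components of $T$ bound mutually disjoint Seifert surfaces $\Sigma_1,\dots,\Sigma_n$, their regular neighborhoods $V_i=N(\Sigma_i)\cong\Sigma_i\times[-1,1]$ are mutually disjoint handlebodies of genus $2g_i$ in the cube, on whose boundaries $T$ restricts to the curves $\partial\Sigma_i$ (the surface framing being exactly the $0$-framing). Fixing a handle decomposition of each $\Sigma_i$ --- one $0$-handle and $2g_i$ one-handles, grouped into $g_i$ dual pairs --- I want to exhibit $T$, up to isotopy, as built from the trivial $n$-component $0$-framed bottom tangle by: (i) attaching, for each $i$, the one-handles of $\Sigma_i$ as bands that may be arbitrarily knotted and linked \emph{among themselves, but only inside $V_i$}; here the disjointness of the $\Sigma_i$ is precisely the condition that bands of $\Sigma_i$ cannot interact with bands of $\Sigma_j$ for $j\neq i$; and (ii) routing the $i$-th component of $T$ so that it traverses the product of commutators $[a_1,b_1]\cdots[a_{g_i},b_{g_i}]$ of the cores of those bands. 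Step (ii) is the only place the genus enters, and it is the essential new feature compared with the ribbon case. (Equivalently, one may phrase this via Habiro's description of bottom tangles through the Hopf algebra $H$ in the braided category of bottom tangles, boundary bottom tangles being the morphisms that factor, in the $i$-th slot, through an iterated commutator map $H^{\otimes 2g_i}\to H$.)

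Applying the universal-invariant functor to such a presentation, the contributions of the crossings produced in step (i) are sums of products of the ``elementary'' tensors already handled in \cite{sakie} --- the $R$-matrix pieces $\alpha^{\pm}_i\otimes\beta^{\pm}_j$, the clasp and framing contributions, the (co)evaluations $\varepsilon,\eta$, and the structure maps $\mu$, $\Delta$, $D^{\pm1}$ --- because, as for ribbon bottom tangles, all crossings of $T$ are either self-crossings of a component or crossings between two components of linking number $0$. These contributions therefore lie in the generating set of $\uqenh{n}$. Step (ii) is the new ingredient: it inserts a product of commutator tangles, whose invariant is a product of iterated commutators of the elementary tensors, built using the multiplication $\mu$ and the antipode $S$. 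What is then required are two closure properties of the family $\{\uqenh{m}\}_m$: (a) closure under $S$ and $\mu$, so that commutators of generators stay in $\uqenh{n}$; and (b) closure under the gluing operations $\otimes$, the comultiplication-type maps, the adjoint actions $\ad$ and $\sad$, and the partial quantum traces, together with convergence in the completion of the ensuing infinite sums. Granting (a) and (b), membership in $\uqenh{n}$ propagates from the generators through the whole presentation of $T$, which gives $J_T\in\uqenh{n}$; the $0$-framing hypothesis is used only, as in \cite{sakie}, to kill the ribbon-element correction and keep everything over $\Z$.

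The step I expect to be the main obstacle is the interplay of the genus with integrality and convergence in the \emph{small} completion $\uqenh{n}$: one must prove that the product-of-commutators factor produced by a positive-genus surface actually lands in $\uqe$ and, after the infinitely many resolutions of the crossings of $T$, in $\uqenh{n}$ rather than merely in Habiro's larger subalgebra $(\tilde{\mathcal{U}}_q^{\ev})^{\tilde\otimes n}$. Concretely, one has to show that an iterated commutator of elements drawn from $\uqze$, $\tilde E^{(k)}$, $\tilde F^{(k)}$, $\alpha^{\pm}\otimes\beta^{\pm}$, and so on is not only even but lies deep enough in the relevant filtration that, when a whole surface's worth of such commutators is paired against the $R$-matrix expansions of all the crossings of $T$, the total sum converges in $\uqenh{n}$. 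I would organize this as an induction on $\sum_i g_i$: when $\sum_i g_i=0$ the surfaces are disjoint disks, $T$ is the trivial bottom tangle, and $J_T=1^{\otimes n}\in\uqenh{n}$, while each inductive step adds one dual pair of handles --- one iterated commutator --- whose effect on the invariant must be shown to preserve $\uqenh{n}$. This generalization of the filtration and convergence estimates of \cite{sakie} is where I expect the bulk of the technical work to lie.
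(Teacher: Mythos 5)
Your high-level strategy matches the paper's: use Habiro's commutator presentation of boundary bottom tangles (\cite[Theorem 9.9]{H1}, stated here as Proposition \ref{yyuni}) and then push membership through the formula $J_T = \mu^{[g_1,\ldots,g_n]}\,Y_{\underline{U_h}}^{\otimes g}(J_{T'})$. But your step (a), ``closure under $S$ and $\mu$, so that commutators of generators stay in $\uqenh{n}$,'' does not hold and cannot be the mechanism. The subalgebra $\uqe$ is \emph{not} closed under the antipode (for instance $S(e)=-K^{-1}e\notin\uqe$), nor under $\Delta$ (since $\Delta(e)=e\otimes1+K\otimes e$ and $K\notin\uqe$). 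So an iterated commutator of elements of $\uqe$, expanded literally via $\mu$ and $S$, leaves $\uqe$. What actually saves the argument in the paper is not a general closure property but a delicate cancellation specific to the commutator map: $\bar Y$ is rewritten (Lemma \ref{ydyb}) through the auxiliary map $\dot Y(x\otimes y)=\sum x_{(1)}S^{-1}(y_{(2)})S(x_{(2)})y_{(1)}$ together with $\ad$, $\sad$, and $K^2$, and then one proves directly that $\dot Y$ maps $\uqzq\otimes\uq$ into $\uqe$ (Lemma \ref{qq}) and, crucially, that the pieces of $D^{\pm1}$ distributed over two $\dot Y$-slots recombine to land in $(\uqe)^{\otimes2}$ (Lemma \ref{qd}). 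None of this is a closure of $\uqe$ under elementary operations; it is a structural identity of the commutator.

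The second and larger gap is the integrality estimate. The sum over states produces, for each state, a scalar
\begin{align*}
I=\Big(\prod_{p}\{s_p\}_q!\Big)\cdot\Big(\prod_{\{a,b\}\in\mathcal{P}_{\mathrm{A}}^2}\big(\{\min(i_a,i_b)\}_q!\big)^{-1}\Big),
\end{align*}
and one must show $I\in\mathbb{Z}[q,q^{-1}]$. The paper proves this by a cyclotomic-divisibility inequality (comparing $d_k$-exponents on both sides, using that each $s_p$ splits twice as a sum of the $i_a$'s). This quantitative step is the heart of why the genus does not spoil integrality, and it is entirely absent from your plan; ``each inductive step adds one commutator whose effect must be shown to preserve $\uqenh{n}$'' is not a substitute, because the obstruction is global across all crossings rather than local to one handle pair. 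Finally, convergence in $\uqenh{n}$ is handled in the paper by explicit filtrations $\{G_p\}$, $\{\mathcal{Y}_p\}$, $\{(\mathcal{Y}^D)_p\}$ on $\uqe$ and $(\uqe)^{\otimes2}$ adapted to $\ad$, $\sad$, $\dot Y$, together with a counting lemma showing that only finitely many states fall outside $G_r^{(n)}$; an induction on $\sum_i g_i$ does not by itself supply these estimates. (Also, partial quantum traces play no role in this theorem; they only enter later when passing to the colored Jones polynomial.)
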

\begin{remark}
Habiro \cite[Conjecture 8.9]{H2} conjectured Theorem \ref{1} with  $\uqenh{n}$ replaced with the $\Z$-subalgebra $\uqen{n}$,
which includes $\uqenh{n}$.
We do not know whether the inclusion $\uqenh{n}\subset \uqen{n}$ is proper or not.  The definition of  our algebra $\uqenh{n}$ appears to be  more natural.
\end{remark} 
Since every $1$-component bottom tangle is boundary, Theorem \ref{1}  for $n=1$ gives a possible improvement  of the following theorem.
\begin{theorem}[Habiro]\label{habiroo}
Let $T$ be an $1$-component bottom tangle with $0$-framing.
Then we have $J_T\in (\bar U_q^{\ev})\,\tilde {}$.
\end{theorem}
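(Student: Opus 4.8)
The plan is to compute $J_T$ from a diagram of $T$ and to isolate the only source of non-integrality, the Cartan exponentials coming from the universal $R$-matrix and the ribbon element. Recall that from a tangle diagram of $T$ one produces $J_T$ by placing a copy of the universal $R$-matrix $\mathcal{R}^{\pm1}$ at each crossing, the ribbon element $r^{\pm1}$ at the curls recording the framing, and the (co)evaluation morphisms at the local extrema, and then reading off the element of $U_h$ associated with the unique component of $T$. Write $\mathcal{R}=D\,\Theta$ with $D=q^{H\otimes H/2}$ and $\Theta=\sum_{n\ge 0}\tfrac{(q-q^{-1})^n}{[n]_q!}\,q^{n(n-1)/2}\,E^{n}\otimes F^{n}$. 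Then $\Theta^{\pm1}$ lies in the completed tensor product $\bar U_q\,\hat\otimes\,\bar U_q$, the unipotent part of $r^{\pm1}$ lies in a completion of $\bar U_q$, and all the remaining difficulty is concentrated in the Cartan exponentials $D^{\pm1}$ and the Cartan part $q^{\pm H^2/4}$ of $r^{\pm1}$.

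First I would use the $0$-framing hypothesis. After an isotopy bringing the diagram to a standard form, the curl contributions $r^{\pm1}$ combine, via the ribbon identity, with the Cartan factors attached to the self-crossings, and their net effect is governed by the writhe, which vanishes for a $0$-framed component. Thus $J_T$ becomes a composite — built from the multiplication $\mu$, the antipode $S$, the factors $\Theta^{\pm1}$, and the Cartan exponentials $D^{\pm1}$ — in which the $D^{\pm1}$'s enter in a balanced way. The restriction to a single component is now decisive: since $J_T\in U_h^{\hat\otimes 1}=U_h$, both legs of every $\mathcal{R}$ and every $D$ are eventually fused by $\mu$ into the one tensor factor, so it suffices to check that $\mu$ carries $\bar U_q^{\otimes k}$ into $\bar U_q$ and extends to the relevant completions, and that the balanced Cartan contributions, once pushed through $\mu$, land in the \emph{even} completed form $(\bar U_q^{\ev})\,\tilde{}$. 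The last point is exactly Habiro's computation of expressions of the shape $(\mu\otimes\id)\big((x\otimes 1\otimes 1)\,D^{\pm1}_{12}\,D^{\mp1}_{13}\big)$: when each $q^{H\otimes H/2}$ is paired with its opposite-sign partner, only even $H$-shifts $q^{mH}$ survive — which is precisely the feature defining $\bar U_q^{\ev}$ — while the residual infinite sums converge in $(\;)\,\tilde{}$ thanks to the $[n]_q!$-divisibility carried by the $\Theta$-factors.

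The main obstacle I expect is making this ``balancing'' of the $D^{\pm1}$'s rigorous for an \emph{arbitrary} diagram of $T$ rather than for a conveniently chosen one. The clean way around this is functoriality of the universal invariant on the category of bottom tangles: every $0$-framed $1$-component bottom tangle is a composite of the standard generating morphisms (cups, caps, the positive and negative crossings, and the basic clasp), the $0$-framing and $1$-component conditions are preserved along any composite producing such a $T$, and for each generator the claim ``$J\in(\bar U_q^{\ev})\,\tilde{}$ after fusing the tensor factors'' reduces to a finite verification on $\mathcal{R}^{\pm1}$, $r^{\pm1}$ and the structure maps of $U_h$. Granting this reduction, what remains is the routine check that $\bar U_q$ and $\bar U_q^{\ev}$ are stable under $\mu$, $\Delta$, $S$, $\ad$ and the passage to completions, and that the elementary pieces of a bottom-tangle diagram respect these integral forms — essentially the content of the machinery developed in Sections \ref{preunis}--\ref{Comp}.
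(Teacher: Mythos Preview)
Your proposal does not establish the theorem, and the route you take is quite different from the paper's.

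The paper does not attempt a direct diagrammatic computation at all. It simply observes that Theorem~\ref{habiroo} follows from two already-known facts: (a) Habiro's result \cite[Theorem~4.1]{H2} that for any $n$-component algebraically-split $0$-framed bottom tangle $T$ one has $J_T\in(\tilde{\mathcal U}_q^{\ev})^{\tilde\otimes n}$, specialized to $n=1$; and (b) the identity
\[
\inv(\tilde{\mathcal U}_q^{\ev})=Z(\tilde{\mathcal U}_q^{\ev})=Z\big((\bar U_q^{\ev})\,\tilde{}\;\big),
\]
implicit in \cite[Section~9]{H3}. Since $J_T$ for a $1$-component bottom tangle always lies in the invariant part, this chain of equalities places $J_T$ in $(\bar U_q^{\ev})\,\tilde{}$. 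No analysis of how the $D^{\pm1}$ factors ``balance'' is needed.

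Your direct approach, by contrast, has two real gaps. First, the functoriality reduction is not valid as stated: the generating morphisms of the tangle category (crossings, cups, caps, clasps) are \emph{not} themselves $0$-framed $1$-component bottom tangles, and neither the $1$-component condition nor the $0$-framing is preserved along a decomposition into generators. So ``check on generators'' does not reduce the problem to a finite verification of the target inclusion; at intermediate stages the invariant lives in higher tensor powers and the quantity you want to control is not defined. Second, the claim that the $0$-framing forces the $D^{\pm1}$ contributions to pair off so that ``only even $H$-shifts $q^{mH}$ survive'' is asserted but not proved. The writhe being zero tells you the \emph{signed count} of crossings vanishes, but that does not by itself force the various $q^{\pm H\otimes H/4}$ factors---which are interleaved with noncommuting $E$'s and $F$'s---to cancel in the way you describe. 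Making this precise is essentially the hard content of Habiro's \cite[Theorem~4.1]{H2}, which your argument would need to reprove rather than sidestep.
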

Theorem \ref{habiroo} follows from \cite[Theorem 4.1]{H2} and the equalities
\begin{align*}
\inv (\tilde {\mathcal{U}}_q^{ev})=Z(\tilde {\mathcal{U}}_q^{ev})=Z((\bar U_q^{\ev})\,\tilde {}\,),
\end{align*}
which is implicit in  \cite[Section 9]{H3}. Here, for a subset $X\subset U_h$, we denote by $\inv (X)$  the invariant part of $X$, and  by $Z(X)$ the center of  $X$.

If we use the one-to-one correspondence described in \cite[Section 13]{H1}  between the set of bottom tangles and the set of 
string links, then we can define the Milnor $\bar \mu $ invariants \cite{M1,M2} of a bottom tangle as that of the corresponding string link.
See \cite{Hab} for the Milnor $\bar \mu$ invariants of string links.
In fact, all Milnor $\bar \mu $ invariants vanish both for ribbon bottom tangles  and boundary bottom tangles.
It is natural to expect the following conjecture.
\begin{conjecture}\label{mil}
If $T$ be an $n$-component  bottom tangle with $0$-framing
with vanishing all Milnor $\bar \mu $ invariants, then we have
$J_T\in \uqenh{n}.$
\end{conjecture}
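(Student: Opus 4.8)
\textbf{A strategy toward Conjecture~\ref{mil}.}
A natural approach is to pass from the universal $sl_2$ invariant to the Kontsevich integral and to use the description of the Milnor $\bar\mu$ invariants as its tree part. Identifying $T$ with the corresponding string link, we have $J_T=W(Z(T))$, where $Z(T)$ is the Kontsevich integral of $T$ and $W$ is the universal $sl_2$ weight system, a continuous algebra homomorphism from the degree-completed space of Jacobi diagrams on $n$ strands (with the stacking product) to $U_h^{\hat\otimes n}$. By the theorem of Habegger and Masbaum relating the Kontsevich integral to Milnor invariants, the hypothesis that all $\bar\mu$ invariants of $T$ vanish forces the tree part of $\log Z(T)$ to vanish; equivalently, $\log Z(T)$ is a series of connected Jacobi diagrams each of first Betti number at least one --- call such diagrams \emph{looped} --- so that $Z(T)=\exp(\ell)$ for such a series $\ell$. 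Applying $W$ and using that $\uqenh{n}$ is a subalgebra, the conjecture reduces to showing that $W$ sends each connected looped Jacobi diagram into the completion $\uqenh{n}$ of $(\uqe)^{\otimes n}$ (Sections~\ref{preunis} and \ref{Comp0}--\ref{Comp}).

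I expect this reduction step to be the technical heart of the argument. Contracting a looped diagram against the $sl_2$ bracket and Casimir should produce, for each independent loop, a central wheel-type contribution --- a polynomial in the Casimir, which is exactly the kind of element allowed along a single strand, the one-strand case being Habiro's identity $\inv(\tilde{\mathcal{U}}_q^{ev})=Z((\bar U_q^{\ev})\,\tilde{}\,)$ underlying Theorem~\ref{habiroo} --- while any trees dangling from the loops, once anchored to such a loop, acquire the divisibility by $q$-integers that defines $\uqenh{n}$. This is consistent with the complementary fact that a chord joining two strands (first Betti number zero) produces $R$-matrix-type terms that do \emph{not} lie in $\uqenh{n}$: such diagrams are precisely the ones the $\bar\mu$ invariants detect. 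Carrying this out amounts to an $n$-strand refinement of the estimates already behind Theorem~\ref{1}, now organised around the loop structure of Jacobi diagrams in place of a system of disjoint Seifert surfaces.

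The remaining difficulty, and the one I anticipate to be decisive, is integrality. The argument above only places $J_T$ in the $\mathbb{Q}[[h]]$-span of $\uqenh{n}$, since expanding $\exp(\ell)$ introduces denominators $k!$ that the $\Z$-algebra $\uqenh{n}$ need not absorb. To remove them one should combine this with the a priori integrality $J_T\in(\tilde{\mathcal{U}}_q^{ev})^{\tilde\otimes n}$, which holds because a bottom tangle with vanishing $\bar\mu$ invariants is in particular algebraically split and $0$-framed, so that Habiro's theorem applies, and then show that $\uqenh{n}$ equals the intersection of $(\tilde{\mathcal{U}}_q^{ev})^{\tilde\otimes n}$ with the $\mathbb{Q}[[h]]$-submodule spanned by the weight-system images of looped diagrams. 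Alternatively --- and perhaps more robustly --- one would avoid the exponential altogether by working inside Habiro's clasper calculus \cite{H1}: realise a $\bar\mu$-trivial $T$ by surgery on claspers whose underlying uni-trivalent graphs each carry a loop and induct on clasper degree, replacing the disjoint Seifert surfaces input of Theorem~\ref{1} by this homological condition on the claspers. The essential obstacle is that Milnor-triviality supplies no embedded surfaces, only lower-central-series data, so the geometric mechanism that drives the boundary case has to be reconstructed diagrammatically; it is not clear a priori that either the clasper bookkeeping or the weight-system estimates can be made tight enough to land in the small algebra $\uqenh{n}$ rather than in the larger $\uqen{n}$.
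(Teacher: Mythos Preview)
This statement is labelled \emph{Conjecture} in the paper and is presented there explicitly as open: the paper offers no proof, only the remark that the converse is also open. So there is no paper proof to compare your proposal against; what you have written is, appropriately, a research strategy rather than a proof, and you are candid about its gaps.

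The overall line --- pass to the Kontsevich integral, invoke Habegger--Masbaum to translate the vanishing of all $\bar\mu$ invariants into the vanishing of the tree part of $\log Z(T)$, and then analyse the $sl_2$ weight system on looped Jacobi diagrams --- is a natural and well-motivated approach. Your identification of the two obstacles is accurate. A few sharpening remarks:

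\begin{itemize}
\item The reduction ``it suffices that $W$ sends every connected looped Jacobi diagram into $\uqenh{n}$'' is not obviously a reduction at all. The target $\uqenh{n}$ is a $\Z$-module inside $U_h^{\hat\otimes n}$, whereas $W$ of a single Jacobi diagram is an element of $U_h^{\hat\otimes n}$ with no a priori integrality; a single looped diagram of degree $d$ contributes at order $h^d$ with rational coefficients, and there is no reason it should lie in $\uqenh{n}$ before resummation. So the ``technical heart'' you isolate may be false as stated diagram-by-diagram, even though the conclusion for the full $J_T$ could still hold after cancellations. This is a stronger version of the integrality problem than the $1/k!$ issue from the exponential.
\item Your proposed fix --- prove that $\uqenh{n}$ equals $(\tilde{\mathcal U}_q^{\ev})^{\tilde\otimes n}$ intersected with the $\mathbb{Q}[[h]]$-span of weight-system images of looped diagrams --- is itself a substantial conjecture, and it is not clear it is any easier than the original statement. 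You should flag it as such rather than as a step.
\item The clasper alternative is the more promising of the two routes, precisely because it stays inside the $\Z$-world throughout and mimics the structure of the paper's proof of Theorem~\ref{1}. But the key input in the boundary case is the factorisation $J_T=\mu^{[g_1,\ldots,g_n]}\bar Y^{\otimes g}(J_{\tilde T})$ coming from disjoint Seifert surfaces; for $\bar\mu$-trivial tangles one has only lower-central-series information, and it is genuinely unknown whether this can be upgraded to a comparable algebraic factorisation through the commutator map $\bar Y$ (or some iterate). That is the decisive missing idea, and you correctly name it in your last sentence.
\end{itemize}

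In short: your outline is a reasonable research plan for an open problem, with the obstacles honestly identified; it is not a proof, and the paper does not claim one either.
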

The converse of Conjecture \ref{mil} is also open.
\subsection{Application to the colored Jones polynomial}
We give an application (Theorem \ref{4}) of Theorem \ref{1} to the colored Jones polynomial of boundary links.
This result is parallel to the result for ribbon  links \cite{sakie}.

We use the following $q$-integer  notations:
\begin{align*}
&\{i\}_q = q^i-1,\quad  \{i\}_{q,n} = \{i\}_q\{i-1\}_q\cdots \{i-n+1\}_q,\quad  \{n\}_q! = \{n\}_{q,n},
\\
&[i]_q = \{i\}_q/\{1\}_q,\quad  [n]_q! = [n]_q[n-1]_q\cdots [1]_q, \quad \begin{bmatrix} i \\ n \end{bmatrix} _q  = \{i\}_{q,n}/\{n\}_q!,
\end{align*}
for $i\in \mathbb{Z}, n\geq 0$.

For $m\geq 1$, let  $V_m$ denote the $m$-dimensional irreducible representation of $U_h$.
Let $\mathcal{R}$  denote the representation ring  of $U_h$ over  $\mathbb{Q}(q^{\frac{1}{2}})$, i.e.,
$\mathcal{R}$ is the $\mathbb{Q}(q^{\frac{1}{2}})$-algebra 
\begin{align*}
\mathcal{R}= \Span _{\mathbb{Q}(q^{\frac{1}{2}})}\{V_m \  | \ m\geq 1\}
\end{align*}
with the multiplication induced by the tensor product.
It is well known that $\mathcal{R}=\mathbb{Q}(q^{\frac{1}{2}})[V_2].$ 

For $l\geq 0$, set
\begin{align*}
P_l&=\prod _{i=0}^{l-1}(V_2-q^{i+\frac{1}{2}}-q^{-i-\frac{1}{2}}) \in \mathcal{R},
\\
\tilde P'_l&=\frac{q^{\frac{1}{2}l}}{\{l\}_q!}P_l \in \mathcal{R},
\end{align*}
which are used  in \cite{H2} to construct the unified Witten-Reshetikhin-Turaev invariants for integral homology spheres.
We denote by  $J_{L; \tilde P'_{l_1},\ldots , \tilde P'_{l_n}}$ the colored Jones polynomial of $L$ with $i$th component $L_i$ colored by  $\tilde P'_{l_i}$.
Habiro  proved that Theorem \ref{1} implied the following result.
For $l\geq 0$, let $I_{l}$ denote the ideal in $\mathbb{Z}[q,q^{-1}]$ generated by  $\{l-k\}_q!\{k\}_q!$ for 
$k=0,\ldots, l$.
\begin{theorem}[{\cite[Conjecture 8.10]{H2}}]\label{4}
Let $L$ be an $n$-component boundary link with $0$-framing.
For $l_1,\ldots , l_n\geq 0$, we have
\begin{align*}
J_{L; \tilde P'_{l_1},\ldots , \tilde P'_{l_n}}\in \frac{\{ 2l_j+1\}_{q, l_j+1}}{\{1\} _q} I_{l_1}\cdots \hat I_{l_j}\cdots I_{l_n},
\end{align*} 
where $j$  is an integer such that   $l_j=\max\{l_i\}_{1\leq i\leq n}$, and   $\hat I_{l_j}$ denotes the omission of $I_{l_j}$.  
\end{theorem}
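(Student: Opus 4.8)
The plan is to deduce Theorem~\ref{4} from Theorem~\ref{1}, following the scheme by which Habiro reduced \cite[Conjecture~8.10]{H2} to \cite[Conjecture~8.9]{H2}. First I would fix an $n$-component boundary bottom tangle $T$ with $0$-framing whose closure is $L$; such a $T$ exists because every boundary link is the closure of a boundary bottom tangle, as recalled in the introduction. Theorem~\ref{1} then gives $J_T\in\uqenh{n}$. Since $\tilde P'_l$ is a finite $\mathbb{Q}(q^{1/2})$-combination of the $V_m$'s, the colored Jones polynomial is recovered as
\begin{align*}
J_{L;\tilde P'_{l_1},\ldots,\tilde P'_{l_n}}=\big(\tr_q^{\tilde P'_{l_1}}\otimes\cdots\otimes\tr_q^{\tilde P'_{l_n}}\big)(J_T),
\end{align*}
where $\tr_q^{W}$ denotes the quantum trace in the color $W\in\mathcal{R}$ --- on $V_m$ it is $x\mapsto\tr_{V_m}(\mathcal{K}^{-1}x)$ for the pivotal element $\mathcal{K}$ of $U_h$, extended $\mathbb{Q}(q^{1/2})$-linearly --- and the $0$-framing makes all framing corrections trivial. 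All the sums in sight converge because the quantum traces are $h$-adically continuous on the finite-dimensional colors; the substance is that membership in the \emph{finer} completion $\uqenh{n}$, rather than merely in $U_h^{\hat\otimes n}$, is what will force the trace values into the ideals below.

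The heart of the argument is a single-slot estimate for the map $x\mapsto\tr_q^{\tilde P'_l}(x)$, in two versions. \emph{(i) Coarse bound:} applied to any one slot, the partial trace $\tr_q^{\tilde P'_l}\otimes\id^{\otimes(n-1)}$ carries $\uqenh{n}$ into $I_l$ times $\uqenh{n-1}$ --- in particular the full single-slot map sends $\uqe$, and its completion, into the ideal $I_l\subset\mathbb{Z}[q^{\pm1}]$. \emph{(ii) Sharp bound:} the full single-slot map $\tr_q^{\tilde P'_l}\colon\uqenh{1}\to\mathbb{Z}[q^{\pm1}]$ has image inside the smaller submodule $\frac{\{2l+1\}_{q,l+1}}{\{1\}_q}\mathbb{Z}[q^{\pm1}]$; this is exactly Theorem~\ref{4} for $n=1$, i.e.\ Habiro's divisibility for knots. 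Both are $q$-binomial computations: expanding $\tilde P'_l=\frac{q^{l/2}}{\{l\}_q!}P_l$ in the basis $\{V_m\}$ and evaluating $\tr_q^{V_m}$ on generators of $\uqe$ (divided powers such as $\e{k},\f{k}$, together with the toral part) produces quantities assembled from Gaussian binomials $\begin{bmatrix} m-1 \\ k \end{bmatrix}_q$ and quantum integers $[m]_q$; after multiplying by $q^{l/2}/\{l\}_q!$ the half-integer powers of $q$ cancel in the combination, and one checks divisibility by the generators $\{l-k\}_q!\{k\}_q!$ of $I_l$, respectively by $\frac{\{2l+1\}_{q,l+1}}{\{1\}_q}$, the latter being the normalization built into $\tilde P'_l$ that governs the colored Jones polynomial in Habiro's cyclotomic expansion.

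With these in hand I would trace out the slots in the most efficient order: pick an index $j$ with $l_j=\max_i l_i$; first apply the coarse bound~(i) on each of the $n-1$ slots $i\neq j$, accumulating the coefficient ideal $I_{l_1}\cdots\hat I_{l_j}\cdots I_{l_n}$ and leaving an element of $I_{l_1}\cdots\hat I_{l_j}\cdots I_{l_n}$ times $\uqenh{1}$ in the $j$-th slot; then apply the sharp bound~(ii) to that last slot, picking up the extra factor $\frac{\{2l_j+1\}_{q,l_j+1}}{\{1\}_q}$. Multiplying through yields $J_{L;\tilde P'_{l_1},\ldots,\tilde P'_{l_n}}\in\frac{\{2l_j+1\}_{q,l_j+1}}{\{1\}_q}I_{l_1}\cdots\hat I_{l_j}\cdots I_{l_n}$, as claimed. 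The reason $j$ must be the maximal index is that the sharp bound on the final slot requires the residual element there to still lie in the fine completion $\uqenh{1}$, which holds only once the denominators introduced by the coarse partial traces on the other slots are dominated --- and they are dominated precisely when the remaining index is the largest.

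The step I expect to be the main obstacle is exactly this last compatibility, together with the proof of the coarse partial-trace statement~(i): one must control how $\tr_q^{\tilde P'_{l_i}}\otimes\id^{\otimes(n-1)}$ degrades the completion $\uqenh{\bullet}$ --- a statement about the shared factorial denominators in the diagonal completion --- and verify that the degradation is absorbed when the remaining index is maximal, so that~(ii) can be applied at the end. This is where the precise definition of $\uqenh{n}$ (rather than the coarser $\uqen{n}$ of \cite[Conjecture~8.9]{H2}) is essential, and where essentially all of the $q$-combinatorial bookkeeping lives.
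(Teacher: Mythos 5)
The paper gives no proof of Theorem~\ref{4}: the text simply states ``Habiro proved that Theorem~\ref{1} implied the following result,'' deferring the implication to Habiro's own argument in \cite{H2} (together with the inclusion $\uqenh{n}\subset\uqen{n}$). So there is no argument in this paper to compare against --- your proposal is a blind reconstruction of Habiro's.

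Within your reconstruction there is a concrete gap in the formulation of the sharp bound~(ii). You assert that $\tr_q^{\tilde P'_l}\colon\uqenh{1}\to\mathbb{Z}[q^{\pm 1}]$ lands in $\frac{\{2l+1\}_{q,l+1}}{\{1\}_q}\mathbb{Z}[q^{\pm 1}]$, and separately the coarse bound~(i) at $n=1$ would put its image in $I_l$; on $\uqenh{1}$ these would have to be compatible, but $G_0=\uqe$ shows $\uqe$ sits inside $\uqenh{1}$, and $\frac{\{2l+1\}_{q,l+1}}{\{1\}_q}\mathbb{Z}[q^{\pm 1}]$ is strictly smaller than $I_l$ --- compare cyclotomic valuations: at $l=2$ the former is divisible by $\Phi_3(q)\Phi_4(q)\Phi_5(q)$ while $I_2=(q-1)^2\mathbb{Z}[q^{\pm 1}]$ is not. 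What is actually true is the divisibility statement for the \emph{specific} elements $J_K$, and that uses the $\ad$-invariance/centrality of $J_K$, exactly the point the paper makes above Theorem~\ref{habiroo} via $\inv(\tilde{\mathcal{U}}_q^{ev})=Z(\tilde{\mathcal{U}}_q^{ev})=Z((\bar U_q^{\ev})\,\tilde{}\,)$. For $n>1$ you therefore cannot simply iterate a map-level coarse bound and finish with a map-level sharp bound; you would need to check that the iterated partial traces preserve the relevant invariance so that the sharp estimate applies to whatever remains in the last slot. Relatedly, your explanation of why $j$ must be the maximal index does not look right: if your bounds held at the level of the completion as stated, the argument would produce the conclusion for \emph{every} choice of $j$, and one would single out $j=\arg\max$ only because it yields the smallest of the resulting ideals; there is no ``residual element fails to lie in $\uqenh{1}$'' obstruction of the kind you invoke. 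The overall shape --- trace out slot by slot, one slot absorbing $\{2l_j+1\}_{q,l_j+1}/\{1\}_q$ and the rest contributing $I_{l_i}$ --- is plausible, but the two bounds and the domains on which they hold would have to be reformulated before this could be called a proof.
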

\begin{remark}
For $m\geq 1$, let $\Phi _m(q)=\prod _{d|m}(q^d-1)^{\mu (\frac{m}{d})}\in \mathbb{Z}[q]$ denote the $m$th cyclotomic polynomial, where $\prod _{d|m}$ denotes the
product  over all the positive divisors $d$ of $m$, and  $\mu$ is the M\"obius function.
It is not difficult to prove that for $l\geq 0$, $I_{l}$ is contained in the principal ideal in $\mathbb{Z}[q]$ generated by  $\prod_m\Phi _m(q)^{f(l,m)}$ with $
f(l,m)=\max \{0, \big\lfloor \frac{l+1}{m}\big\rfloor -1 \}$, where, for $r\in \mathbb{Q}$, we denote by $\lfloor r \rfloor$  the largest integer smaller than  or  equal to $r$.
\end{remark}
Theorem \ref{4} is an improvement in the special case of  boundary links of the following result.
\begin{theorem}[Habiro {\cite[Theorem 8.2]{H2}}]
Let $L$ be an $n$-component, algebraically-split link with $0$-framing. For $l_1,\ldots , l_n\geq 0$, we have
\begin{align*}
J_{L; \tilde P'_{l_1},\ldots , \tilde P'_{l_n}}\in \frac{\{ 2l_j+1\}_{q, l_j+1}}{\{1\} _q}\mathbb{Z}[q,q^{-1}].
\end{align*} 
\end{theorem}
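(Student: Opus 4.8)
The strategy is to deduce Theorem \ref{4} from Theorem \ref{1} by a purely algebraic argument, following the template Habiro used to pass from the membership statement for the universal invariant to a divisibility statement for the colored Jones polynomial (compare the parallel deduction in \cite{sakie} for ribbon links, and \cite[Section 8]{H2}). Let $L$ be an $n$-component boundary link with $0$-framing, and choose an $n$-component boundary bottom tangle $T$ whose closure is $L$; such a $T$ exists. By Theorem \ref{1}, $J_T\in\uqenh{n}$. The colored Jones polynomial $J_{L;\tilde P'_{l_1},\ldots,\tilde P'_{l_n}}$ is obtained from $J_T$ by applying, in the $j$th tensor factor, the quantum trace in the representation-ring element $\tilde P'_{l_j}$, i.e.\ by applying $\bigotimes_{i=1}^n \operatorname{tr}_q^{\tilde P'_{l_i}}$. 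So the task reduces to controlling the image of the completed algebra $\uqenh{n}$ under these quantum traces.

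First I would recall the defining generators of $\uqenh{n}$ (from Sections \ref{Comp0}--\ref{Comp}) and reduce to understanding, for a single tensor factor, the value of the quantum trace $\operatorname{tr}_q^{\tilde P'_l}(x)$ for $x$ ranging over the relevant building blocks of $\uqe$, together with the ideals by which the completion is controlled. The key input is an explicit computation of how $\tilde P'_l$ acts: using $\tilde P'_l=\frac{q^{l/2}}{\{l\}_q!}P_l$ and the definition $P_l=\prod_{i=0}^{l-1}(V_2-q^{i+1/2}-q^{-i-1/2})$, one knows (this is in \cite{H2}) that $\operatorname{tr}_q^{\tilde P'_l}$ annihilates $V_m$ for $m\le l$ and, on the generators of the integral form, produces scalars lying in a controlled ideal of $\mathbb{Z}[q,q^{-1}]$. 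Concretely, the normalisation by $\{l\}_q!$ is exactly what forces the quantum trace of an element of $\uqe$ to land in the ideal $I_l$ generated by the $\{l-k\}_q!\{k\}_q!$ for $0\le k\le l$; and the overall prefactor $\{2l_j+1\}_{q,l_j+1}/\{1\}_q$ is the quantum dimension–type factor produced by the distinguished ($j$th) component, i.e.\ the component on which the trace of the cap--cup pairing is taken without the $I_{l_j}$-saving.

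The heart of the argument is therefore to show two things. (1) For each index $i\ne j$, applying $\operatorname{tr}_q^{\tilde P'_{l_i}}$ to the $i$th tensor factor of an element of $\uqenh{n}$ produces an element of $I_{l_i}\cdot(\text{rest})$; this comes from the structure of $\uqe$ together with the precise divisibilities built into the completion $\uqenh{n}$ (this is where Theorem \ref{1}, as opposed to the weaker membership in $(\tilde{\mathcal U}_q^{ev})^{\tilde\otimes n}$, is used — the tighter completion is exactly what upgrades the $\mathbb{Z}[q,q^{-1}]$-coefficient in Habiro's earlier theorem to the ideal $I_{l_i}$). (2) For the distinguished index $j$ (chosen with $l_j=\max_i l_i$), applying $\operatorname{tr}_q^{\tilde P'_{l_j}}$ contributes the single global factor $\{2l_j+1\}_{q,l_j+1}/\{1\}_q$ rather than $I_{l_j}$; the choice of a maximal $l_j$ is needed so that the convergence of the infinite sums defining the completion is not obstructed, i.e.\ so that the $I_{l_i}$-valued contributions from the other factors dominate. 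Assembling (1) and (2) over all $n$ factors and checking that the infinite sum implicit in $\uqenh{n}$ converges $q$-adically to an element of $\frac{\{2l_j+1\}_{q,l_j+1}}{\{1\}_q}I_{l_1}\cdots\hat I_{l_j}\cdots I_{l_n}$ gives the claimed membership.

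\textbf{Main obstacle.} The delicate point is the interplay between the \emph{completion} and the ideals: one must show that the quantum trace, applied termwise to a (generally infinite) element of $\uqenh{n}$, not only sends each term into the product ideal but does so in a way compatible with the $q$-adic convergence, so that the limit genuinely lies in the finitely-generated ideal $\frac{\{2l_j+1\}_{q,l_j+1}}{\{1\}_q}I_{l_1}\cdots\hat I_{l_j}\cdots I_{l_n}$ of $\mathbb{Z}[q,q^{-1}]$ — a priori an infinite sum of elements of an ideal converges only in the completion, not in the ideal itself. Overcoming this requires a uniform estimate: the depth in the defining filtration of $\uqenh{n}$ must grow fast enough that the ``extra'' $I_{l_i}$-divisibility coming from high-order terms is already subsumed, so that only finitely many terms contribute modulo any given power of $q-1$ (equivalently, of each relevant cyclotomic $\Phi_m$). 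Establishing that uniformity, by tracking the explicit generators and relations of $\uqenh{n}$ against the explicit formula for $\operatorname{tr}_q^{\tilde P'_l}$ on those generators, is the technical core; the rest is bookkeeping with $q$-integers.
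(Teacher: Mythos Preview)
You are proving the wrong statement. The theorem in question is Habiro's result \cite[Theorem 8.2]{H2} for \emph{algebraically-split} links, with the conclusion that $J_{L;\tilde P'_{l_1},\ldots,\tilde P'_{l_n}}$ lies merely in $\frac{\{2l_j+1\}_{q,l_j+1}}{\{1\}_q}\mathbb{Z}[q,q^{-1}]$. Your entire proposal instead targets Theorem~\ref{4}, the strengthening for \emph{boundary} links in which $\mathbb{Z}[q,q^{-1}]$ is replaced by the product of ideals $I_{l_1}\cdots\hat I_{l_j}\cdots I_{l_n}$. These are different theorems with different hypotheses and different conclusions; your argument invokes Theorem~\ref{1} and the completion $\uqenh{n}$, neither of which is available or relevant for a general algebraically-split link.

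Moreover, the present paper does not give a proof of the stated theorem at all: it is quoted from \cite{H2} as background, so there is no ``paper's own proof'' to compare against. (The paper likewise does not prove Theorem~\ref{4} directly; it states that Habiro showed Theorem~\ref{1} implies Theorem~\ref{4}, again deferring to \cite{H2}.) If your intent was to sketch the deduction of Theorem~\ref{4} from Theorem~\ref{1}, say so explicitly and retarget your write-up; as it stands, the proposal does not address the statement you were asked about.
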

\subsection{Examples}\label{ex}
Let $T_B$ be the Borromean bottom tangle depicted in Figure  \ref{fig:brunnianex} (a),
whose closure is the Borromean rings.
Since we have $J_{T_B}\notin  \uqenh{3}$ (cf. \cite{sakie}), it follows from  Theorem \ref{1} that  the Borromean rings is neither boundary  nor  ribbon, as is well known.

More generally, for $n\geq 3,$ let   $M_n$  be   Milnor's $n$-component Brunnian link  depicted in Figure \ref{fig:brunnianex} (b).
Note that $M_3$ is the Borromean rings. 
Since there is a non-trivial  Milnor $\bar \mu $ invariant of $M_n$ of length $n$ (cf. \cite{M1}),
$M_n$ is neither  boundary nor  ribbon.
We can prove this fact also from Theorem \ref{4} and 
\begin{align*}
J_{M_n;  \tilde P_1',\ldots, \tilde P_1'}&=(-1)^{n-2}q^{-2n+4}\Phi _1(q)^{n-2} \Phi _2(q)^{n-2}\Phi _3(q)\Phi_4(q)^{n-3}
\\
&\notin \Phi _1(q)^{n}\Phi _2(q)\Phi _3(q)\Z,
\end{align*}
which we will prove in a forthcoming paper \cite{sakie2}.
\begin{figure}
\centering
\includegraphics[width=7.5cm,clip]{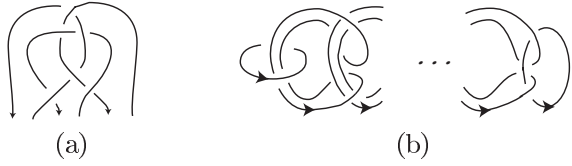}
\caption{(a) Borromean rings (b) Milnor's link $M_n$}\label{fig:brunnianex}
\end{figure}%

\subsection{Organization of paper}
The rest of the paper is organized as follows.
Section \ref{prel} contains preliminary results about bottom tangles, the quantized enveloping algebra $U_h$, and  the universal $sl_2$ invariant of bottom tangles.
In Section \ref{unib}, we recall from \cite{H2} Habiro's formula  for the universal $sl_2$ invariant of boundary bottom tangles, 
and then give a modification of his formula.
In Sections \ref{cat}, \ref{proof}, and \ref{completion}, we  prove Theorem \ref{1}.

\section{Preliminaries}\label{prel}
In this section, we recall basic things about bottom tangles, the universal enveloping algebra $U_h$, and  the universal $sl_2$ invariant of bottom tangles.
\subsection{Bottom tangles and boundary bottom tangles}\label{bottom}
A \textit{tangle} (cf. \cite{Kassel}) is the image of an embedding 
\begin{align*}
\Big(\coprod^m [0,1]\Big) \sqcup\Big(\coprod^n S^1\Big)\hookrightarrow [0,1]^3,
\end{align*}
with $m,n\geq 0,$
whose boundary  is on the two lines $[0,1]\times \{\frac{1}{2}\}\times \{0,1\}$ on the bottom and the top of the cube,
 see Figure \ref{fig:tanglek} (a) for example.
 We equip the image with both an orientation and  a framing.
Here, at each boundary point, the framing is fixed  on the lines $[0,1]\times \{\frac{1}{2}\}\times \{0,1\}$ as in Figure \ref{fig:tanglek} (b),
where the thin arrows represent the strands of the tangle, and the thick arrows represent the framing.
\begin{figure}
\centering
\includegraphics[width=9cm,clip]{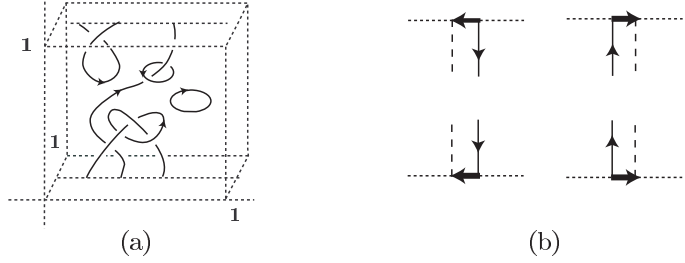}
\caption{(a) A tangle (b) The framing on the boundary}\label{fig:tanglek}
\end{figure}

A \textit{bottom tangle} (cf. \cite{H1,H2}) is a tangle consisting of arc components such that
 each boundary point is on the line $[0,1]\times \{\frac{1}{2}\}\times \{0\}$ on the bottom,
and  the two boundary points of each component  are adjacent to each other.
We give a preferred orientation of the tangle so that each component runs from its right boundary point to its left boundary point.
For example, see  Figure \ref{fig:closure} (a), where the dotted lines represent the framing. We draw a diagram of a bottom tangle in a rectangle assuming the blackboard framing, see Figure \ref{fig:closure} (b).

For each $n\geq 0$, let $BT_n$ denote the set of the ambient  isotopy classes, relative to boundary points,  of $n$-component
bottom tangles.

The \textit{closure link} $\cl(T)$ of a bottom tangle $T$ is defined as the link in $\mathbb{R}^3$ obtained from  $T$ by closing, see Figure \ref{fig:cl} again.
For each $n$-component link $L$, there is an $n$-component bottom tangle whose closure is  $L$.
For a bottom tangle, we can define its linking matrix as that of the closure link.

A \textit{Seifert surface} of knot $K$ is a compact, connected, orientable surface $F$ in $\mathbb{R}^3$ bounded by $K$.
An $n$-component link $L=L_1\cup\cdots \cup L_n$ is called  \textit{boundary }  if it has $n$ mutually  disjoint  Seifert surfaces $F_1,\ldots, F_n$ in $\mathbb{R}^3$ such  that $L_i$ bounds $F_i$ for $i=1,\ldots,n$.
\begin{figure}
\centering
\includegraphics[width=8cm,clip]{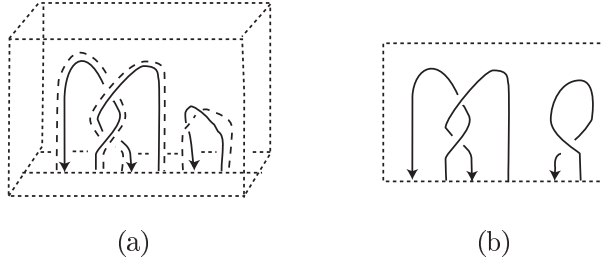}
\caption{ (a) A $3$-component bottom tangle $T$ (b) A diagram   of $T$  }\label{fig:closure}
\end{figure}%

For a $1$-component  bottom tangle $T\in BT_1$, there is a knot $K_T=T\cup \gamma \in [0,1]^3$, where
$\gamma $ is the line segment in the bottom $[0,1]\times  \{\frac{1}{2}\}\times \{0\}$ such that $\partial \gamma =\partial T$.
A \textit{Seifert surface} of a $1$-component bottom tangle $T$ is a Seifert surface of the knot $K_T$ contained  in $[0,1]^3$.
A bottom tangle $T=T_1\cup \cdots\cup T_n$ is called \textit{boundary } if it has  $n$ mutually  disjoint  Seifert surfaces $F_1,\ldots, F_n$ in $[0,1]^3$ such  that $K_{T_i}$ bounds $F_i$ for $i=1,\ldots,n$. For example,
see Figure \ref{fig:bo} again. 
Obviously, for each boundary link $L$, there is a boundary bottom tangle whose closure is  $L$.
\subsection{Quantized enveloping algebra $U_h$}\label{preuni}
We recall  the definition of the universal enveloping algebra $U_h(sl_2)$ of the Lie algebra $sl_2$, and  its ribbon Hopf algebra structure.
We follow the notations in \cite{H2}.

We denote by  $U_h=U_h(sl_2)$ the $h$-adically complete $\mathbb{Q}[[h]]$-algebra,
topologically generated by  $H, E,$ and $F$, defined by the relations
\begin{align*}
HE-EH=2E, \quad HF-FH=-2F, \quad EF-FE=\frac{K-K^{-1}}{q^{1/2}-q^{-1/2}},
\end{align*}
where we set 
\begin{align*}
q=\exp h,\quad K=q^{H/2}=\exp\frac{hH}{2}.
\end{align*}

We equip $U_h$  with the topological $\mathbb{Z}$-graded algebra structure such that  $\deg E=1$, $\deg F=-1$, and   $\deg H=0$.
For a homogeneous element $x$ of $U_h$, the degree of $x$ is denoted by $|x|$.

There is a   complete ribbon Hopf algebra  structure  on  $U_h$ as follows.
The comultiplication $\Delta \co U_h\rightarrow U_h\hat \otimes U_h$,
the counit $\varepsilon \co U_h\rightarrow \mathbb{Q}[[h]]$, and 
the antipode $S\co U_h\rightarrow U_h$ are given by
\begin{align*}
\Delta (H)&=H\otimes 1+1\otimes H, \quad  \varepsilon (H)=0, \quad S(H)=-H,
\\
\Delta (E)&=E\otimes 1+K\otimes E, \quad \varepsilon (E)=0, \quad  S(E)=-K^{-1}E,
\\
\Delta (F)&=F\otimes K^{-1}+1\otimes F, \quad  \varepsilon (F)=0, \quad  S(F)=-FK.
\end{align*}

Set 
\begin{align}
&D=q^{\frac{1}{4}H\otimes H} =\exp \big(\frac{h}{4}H\otimes H\big)\in U_h^{\hat {\otimes }2},\label{defd}
\\
&\tilde {F}^{(n)}=F^nK^n/[n]_q! \in U_h,\label{deff}
\\
&e=(q^{1/2}-q^{-1/2})E \in U_h, \label{defe}
\end{align}
for $n\geq 0$. The universal $R$-matrix and its inverse  $R^{\pm 1}\in U_h\hat \otimes U_h$  are given by
\begin{align*}
R =&D\sum _{n\geq 0}q^{\frac{1}{2}n(n-1)}\tilde {F}^{(n)}K^{-n}\otimes e^n,
\\
R^{-1} =&D^{-1}\sum_{n\geq 0}(-1)^{n}\tilde {F}^{(n)}\otimes K^{-n}e^n.
\end{align*} 
We have $R^{\pm 1}=\sum_{n\geq 0}\alpha ^{\pm}_n \otimes \beta^{\pm} _n $, where for $n\geq 0,$ we set  formally
\begin{align*}
\alpha _n \otimes \beta _n(&=\alpha^+ _n \otimes \beta^+ _n)=D\Big(q^{\frac{1}{2}n(n-1)}\tilde {F}^{(n)}K^{-n}\otimes e^n\Big),
\\
\alpha _n^- \otimes \beta _n^-&=D^{-1}\Big((-1)^{n}\tilde {F}^{(n)}\otimes K^{-n}e^n\Big).
\end{align*}
Note that the right hand sides are infinite sums of tensors of the form as $x\otimes y$ with $x,y\in U_h$.
We denote them by $\alpha ^{\pm}_n \otimes \beta^{\pm} _n$ for simplicity.

The ribbon element  and its inverse $r^{\pm 1}\in U_h$ are given by
\begin{align*}
r=\sum_{n\geq 0} \alpha _n^-K^{-1}\beta _n^-=\sum_{n\geq 0} \beta _n^- K\alpha _n^-, \quad  r^{-1}=\sum _{n\geq 0}\alpha _n K\beta _n =\sum_{n\geq 0} \beta _nK^{-1}\alpha _n.
\end{align*}

We use a notation $D=\sum D'\otimes D ''$.
We use the following formulas.
\begin{align}
&\sum D''\otimes D'=D,\label{d1}
\\
&(\Delta \otimes 1)D=D_{23}D_{13}, \quad (1 \otimes \Delta )D=D_{13}D_{12},\label{d2}
\\
&(\varepsilon \otimes 1)(D)=1=(1\otimes \varepsilon )(D),\label{d3}
\\
&(1\otimes S)D=D^{-1}=(S\otimes 1)D,\label{d4}
\\
&D(1\otimes x)=(K^{|x|}\otimes x)D, \quad D(x\otimes 1)=(x\otimes K^{ |x|})D, \label{exD}
\end{align}
where $D_{13}=\sum D'\otimes 1\otimes D''$, $D_{23}=1\otimes D$, $D_{12}=D\otimes 1$, and  $x\in U_h$  homogeneous. 
\subsection{Subalgebras of $U_h$}\label{preunis}
In this section, we recall from \cite{H2} subalgebras
$\uqzq, \uq$ and  $\uqe$  of $U_h$.
Recall from (\ref{deff}) and  (\ref{defe}) the definitions of $\f{n}\in U_h$ and $e\in U_h$, respectively.
Similarly, set 
\begin{align*}
 &\tilde E^{(n)}=(q^{-1/2}E)^n/[n]_q! \in U_h,
\\
 &f=(q-1)FK \in U_h,
\end{align*}
for $n\geq 0$.

Let $U_{\mathbb{Z}, q}$ denote the $\mathbb{Z}[q,q^{-1}]$-subalgebra of $U_h$ generated by
$K,K^{-1}, \tilde E^{(n)}$, and  $\tilde F^{(n)}$ for $n\geq 1$.

Let $\bar {U}_q$ denote the $\mathbb{Z}[q,q^{-1}]$-subalgebra of $U_{\mathbb{Z},q}$ generated by
 $K,K^{-1},e$ and $f$. Let $\bar {U}_q^{\ev}$ be the $\mathbb{Z}[q,q^{-1}]$-subalgebra of $\uq$ generated by
 $K^2,K^{-2},e$ and $f$.

\begin{remark}
Set $[i]=\frac{q^{i/2}-q^{-i/2}}{q^{1/2}-q^{-1/2}}$ for $i\in \mathbb{Z}$ 
and $[n]!=[n]\cdots[1]$ for $n\geq 0$.
Let $U_{\mathbb{Z}}$ be the $\mathbb{Z}[q^{1/2},q^{-1/2}]$-subalgebra of $U_h$ generated by $K,K^{-1},$ 
 $E^{(n)}=E^n/[n]!$, and  $F^{(n)}=F^n/[n]!$ for $n\geq 1$ (Lusztig's  integral form, cf. \cite{L}).  
 We have
 \begin{align*}
U_{\mathbb{Z}}=U_{\mathbb{Z}, q}\otimes _{\mathbb{Z}[q,q^{-1}]}\mathbb{Z}[q^{1/2},q^{-1/2}].
\end{align*}
Let $\bar U$ denote   the $\mathbb{Z}[q^{1/2},q^{-1/2}]$-subalgebra of $U_h$ 
generated by  $K,K^{-1}$, $(q^{1/2}-q^{-1/2})E$, and $(q^{1/2}-q^{-1/2})F$ (cf. \cite{De}).
We have
\begin{align*}
\bar U=\bar U_{ q}\otimes _{\mathbb{Z}[q,q^{-1}]}\mathbb{Z}[q^{1/2},q^{-1/2}].
\end{align*}
\end{remark}
There is a Hopf $\mathbb{Z}[q,q^{-1}]$-algebra structure  on $U_{\mathbb{Z}, q}$  inherited from $U_h$ (cf. \cite{L, sakie}). We have
\begin{align}
\Delta (\tilde E^{(m)})&=\sum_{j=0}^m
\tilde E^{(m-j)}K^j\otimes \tilde E^{(j)}, \label{De1}
\\
\Delta (\tilde {F}^{(m)})&=\sum_{j=0}^m 
\tilde {F}^{(m-j)}K^j\otimes \tilde {F}^{(j)}, \label{De2}
\\
S^{\pm 1}(\tilde E^{(m)})&=(-1)^mq^{\frac{1}{2}m(m\mp 1)}K^{-m}\tilde E^{(m)},
\\
S^{\pm 1}(\tilde {F}^{(m)})&=(-1)^mq^{-\frac{1}{2}m(m\mp 1)}K^{-m}\tilde {F}^{(m)},
\end{align}
for $i\in \mathbb{Z},  m\geq 0$.
Similarly, there is a Hopf $\mathbb{Z}[q,q^{-1}]$-algebra structure on $\uq$ inherited from $U_h$ (cf. \cite{De, H2}).

Let $U_h^{0}$ denote the Cartan part of $U_h$, i.e.,  the subalgebra of $U_h$ topologically generated by
 $H$.
Let $\bar {U}_q^{0}$ denote the $\mathbb{Z}[q,q^{-1}]$-subalgebra of $\uq$ generated by
 $K$ and $K^{-1}$. Let $\uqze$ be the $\mathbb{Z}[q,q^{-1}]$-subalgebra of $\uq$ generated by
 $K^2$ and $K^{-2}$. We have
\begin{align*}
\bar {U}_q^{0}=\uq \cap U_h^0, \quad \uqze=\uqe \cap U_h^0.
\end{align*}
\subsection{Adjoint action}\label{ad}
In what follows, we use the following notations.
For $m\geq 0$, let  $\Delta ^{[m]}\co U_h\rightarrow U_h^{\hat \otimes m}$
denote the $m$-output comultiplication defined by  $\Delta  ^{[0]}=\varepsilon, \Delta ^{[1]}=\id_{U_h},$ and 
\begin{align*}
\Delta ^{[m]}=(\Delta\otimes\id_{U_h}^{\otimes m-2  })\circ \Delta ^{[m-1]},
\end{align*}
for $m\geq 2$. For $x\in U_h,$ $m\geq 1$, we write  
\begin{align*}
\Delta ^{[m]}(x)=\sum x_{(1)}\otimes \cdots \otimes x_{(m)}.
\end{align*}
For  $m_1,\ldots , m_l\geq 0$, set
\begin{align}\label{Deg}
\Delta  ^{[m_1,\ldots, m_l]}=\Delta  ^{[m_1]}\otimes \cdots \otimes \Delta  ^{[m_l]} \co U_h^{\hat \otimes l}\rightarrow U_h^{\hat \otimes m_1+\cdots+m_l}.
\end{align}

We use  the left adjoint action  $\ad \co U_h\hat \otimes U_h\rightarrow  U_h$  defined  by
\begin{align}\label{Ad}
\ad (x\otimes y)=x\triangleright y:=\sum x_{(1)}yS(x_{(2)}), 
\end{align}
for $x,y\in U_h$. 
We use the following proposition.
\begin{proposition}[{\cite[Proposition 3.2]{sakie}}] \label{Habi}
We have
\begin{align*}
 U_{\mathbb{Z}, q}\triangleright \uqe \subset \uqe,
 \quad
 U_{\mathbb{Z}, q}\triangleright K\uqe \subset K\uqe.
\end{align*}
\end{proposition}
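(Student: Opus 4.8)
\emph{Proof sketch.} The plan is to take the adjoint action apart using its module structure and reduce everything to a short list of explicit computations. First I would record two formal consequences of $\Delta$ being multiplicative and $S$ anti-multiplicative: the adjoint action \eqref{Ad} is a \emph{left action}, $(xy)\triangleright z=x\triangleright(y\triangleright z)$, and it makes $U_h$ a \emph{module algebra} over itself, $x\triangleright(zz')=\sum (x_{(1)}\triangleright z)(x_{(2)}\triangleright z')$; moreover $\ad$ is $\mathbb{Z}[q,q^{-1}]$-linear in the second slot. Since $\uqzq$ is generated as an algebra by $K^{\pm1}$ and the $\tilde E^{(n)},\tilde F^{(n)}$ $(n\geq1)$, the left-action property reduces the first inclusion to the case where $g$ is one of these generators; and since $\uqe$ is generated by $K^{\pm2},e,f$ and is closed under multiplication, the module-algebra property --- applied by induction on the length of a monomial in $K^{\pm2},e,f$, using that $K^{j}$ acts on a homogeneous element by a power of $q$ --- reduces it further to proving $g\triangleright w\in\uqe$ for $w\in\{K^{2},K^{-2},e,f\}$. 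The case $g=K^{\pm1}$ is immediate, as $K^{\pm1}\triangleright z=K^{\pm1}zK^{\mp1}$ scales $e,f$ by units and fixes $K^{\pm2}$.

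For the divided powers, write $e^{n}=\{n\}_q!\,\tilde E^{(n)}$ (immediate from the definitions of $e$ and $\tilde E^{(n)}$), so that \eqref{De1} together with the antipode formula $S(\tilde E^{(m)})=(-1)^{m}q^{\frac12 m(m-1)}K^{-m}\tilde E^{(m)}$ from Section~\ref{preunis} yields a closed expansion of $e^{n}\triangleright w$ as a sum of terms $e^{n-j}we^{j}$ weighted by $q$-binomial coefficients. I would evaluate this through the recursion
\[
e\triangleright(we^{k})=(e\triangleright w)\,e^{k}-q^{|w|}\{k\}_q\,w\,e^{k+1},
\]
a one-line consequence of the module-algebra property. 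When $w$ is $K$ or $K^{\pm2}$, $e\triangleright w$ is already a $\mathbb{Z}[q,q^{-1}]$-multiple of $we$, so iterating contributes one further $q$-integer at each step and produces a $\mathbb{Z}[q,q^{-1}]$-multiple of $\{n\}_q!$; likewise $e^{n}\triangleright e=(-1)^{n}\{n\}_q!\,e^{n+1}$; and $e^{n}\triangleright f$ reduces to the preceding cases via $e\triangleright f=(q-1)(K^{2}-1)$, which holds in $\uqe$ and follows from the defining relations of $U_h$. In every case $e^{n}\triangleright w$ is a $\mathbb{Z}[q,q^{-1}]$-multiple of $\{n\}_q!$ times a monomial in $K^{\pm2},e,f$, so dividing by $\{n\}_q!$ gives $\tilde E^{(n)}\triangleright w\in\uqe$. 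Running the identical argument with $f$ in place of $e$ --- using \eqref{De2}, $\Delta(f)=f\otimes1+K\otimes f$, and $f^{n}=q^{-\binom{n}{2}}\{n\}_q!\,\tilde F^{(n)}$ (which follows from \eqref{deff}) --- handles $\tilde F^{(n)}\triangleright w$.

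For the second inclusion $\uqzq\triangleright K\uqe\subset K\uqe$, note first that $K\uqe=\uqe K$ (since $K$ $q$-commutes with $e,f$ and commutes with $K^{\pm2}$), so $K\uqe$ is spanned over $\mathbb{Z}[q,q^{-1}]$ by $K$ times monomials in $K^{\pm2},e,f$, and $K\uqe\cdot\uqe=K\uqe=\uqe\cdot K\uqe$. Applying the module-algebra property to $g\triangleright(K\cdot m)$ for such a monomial $m$ and invoking the first inclusion for the ``$\uqe$-part'' reduces everything to the statement that $\tilde E^{(n)}\triangleright K$, $\tilde F^{(n)}\triangleright K$, and $K^{\pm1}\triangleright K$ lie in $K\uqe$; and the same recursion gives $\tilde E^{(n)}\triangleright K$ and $\tilde F^{(n)}\triangleright K$ equal to $Ke^{n}$ and $Kf^{n}$ up to units of $\mathbb{Z}[q,q^{-1}]$, while $K^{\pm1}\triangleright K=K$.

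The one genuine obstacle is the middle step. A priori $\tilde E^{(n)}$ and $\tilde F^{(n)}$ do not lie in $\uqe$ --- they carry the denominator $\{n\}_q!$ --- so the adjoint action could in principle introduce denominators; the content of the proposition is precisely that the $q$-binomial coefficients occurring in $e^{n}\triangleright w$ conspire to make the outcome divisible by $\{n\}_q!$. The explicit formulas above, or equivalently the $q$-binomial identity $\sum_{j}(-1)^{j}q^{\frac12 j(j-1)}\begin{bmatrix}n\\j\end{bmatrix}_q x^{j}=\prod_{i=0}^{n-1}(1-q^{i}x)$ specialized at a suitable $x$, make this cancellation transparent, and the remaining bookkeeping is routine.
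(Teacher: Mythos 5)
This proposition is imported in the paper verbatim from \cite[Proposition~3.2]{sakie} and is not reproved here, so there is no in-paper argument to compare your proof against; what follows is a review of your proposal on its own merits.

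Your strategy is sound and I believe it can be closed into a complete proof. The formal reductions are correct: $\ad$ is a left action and makes $U_h$ a module algebra over itself, so the first inclusion reduces (on the $\uqzq$-side by the action property, on the $\uqe$-side by the module-algebra property and induction on monomial length) to checking $\tilde E^{(n)}\triangleright w$ and $\tilde F^{(n)}\triangleright w$ for $w\in\{K^{\pm 2},e,f\}$. Note however that the two reductions cannot be run in sequence the way your phrasing suggests — fixing $g=\tilde E^{(n)}$ and then inducting on the length of the monomial $w$ produces, via $\Delta(\tilde E^{(n)})=\sum_j\tilde E^{(n-j)}K^j\otimes\tilde E^{(j)}$, all the lower divided powers acting on the factors, so the base case must be established for all $n$ simultaneously. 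That is what you do, so this is only a presentational wrinkle. The rewriting $e^n=\{n\}_q!\,\tilde E^{(n)}$ and $f^n=q^{-\binom n2}\{n\}_q!\,\tilde F^{(n)}$ is correct (both follow from \eqref{deff}, \eqref{defe}, the definition of $\tilde E^{(n)}$, and $\{n\}_q!=(q-1)^n[n]_q!$), and the recursion $e\triangleright(we^k)=(e\triangleright w)e^k-q^{|w|}\{k\}_q\,we^{k+1}$ is a correct instance of the module-algebra property since $e\triangleright e^k=-\{k\}_q e^{k+1}$.

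The one place where "the remaining bookkeeping is routine" hides a genuine step is the divisibility. For $w=K^a$ the $q$-binomial identity gives $e^n\triangleright K^a=(-1)^n\{a+n-1\}_{q,n}\,e^nK^a$ up to a unit, and the point is that $\{a+n-1\}_{q,n}/\{n\}_q!=\begin{bmatrix}a+n-1\\ n\end{bmatrix}_q$ lies in $\Z$ (being zero, a Gaussian binomial, or a unit times one, depending on the sign of $a$); this should be said explicitly, since it is precisely the content of the statement. For $w=f$, the reduction via $e\triangleright f=(q-1)(K^2-1)=\{1\}_q(K^2-1)$ gives $e^n\triangleright f=\{1\}_q\,e^{n-1}\triangleright K^2$ for $n\ge 2$, and one needs the extra factor $\{1\}_q$ to combine with the coefficient $\{n\}_q!/\{1\}_q$ (up to units) coming from $e^{n-1}\triangleright K^2$ to recover the full $\{n\}_q!$; the divisibility by $\{n\}_q!$ does \emph{not} follow merely from divisibility of $e^{n-1}\triangleright K^2$ by $\{n-1\}_q!$. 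The numbers do work out — $e^n\triangleright f$ is a unit times $\{n\}_q!\,K^2e^{n-1}$ — but this is exactly the kind of cancellation the proposition asserts, so it deserves to be displayed rather than described as "reduces to the preceding cases." With that one computation made explicit, and the parallel treatment of $\tilde F^{(n)}$ using $f\triangleright e=-q(q-1)(K^2-1)$, the argument closes; the second inclusion then follows as you indicate, since $\tilde E^{(n)}\triangleright K$ and $\tilde F^{(n)}\triangleright K$ are units times $Ke^n$ and $Kf^n$ respectively (the coefficient is $\begin{bmatrix}n\\ n\end{bmatrix}_q=1$), and $K\uqe$ is a left and right $\uqe$-submodule of $U_h$.

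Whether \cite{sakie} proves the proposition by the same generator computation or by a different structural argument I cannot tell from the present paper, which only cites it; but your reconstruction is a natural and correct route.
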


We  also use a right action $ {\sad }\co U_h\hat \otimes U_h\rightarrow  U_h$, which  is the  continuous $\mathbb{Q}[[h]]$-linear map  defined by
\begin{align*}
 {\sad}(y\otimes x)= y\triangleleft  x:&=\sum S^{-1}(x_{(2)})yx_{(1)},
 \\
 &=\sum S^{-1}(x)\triangleright y.
\end{align*}
for $x,y\in U_h$. 
Proposition \ref{Habi}  implies the following.
\begin{corollary}\label{Habii}
We have
\begin{align*}
\bar {U}_q^{\ev} \triangleleft  U_{\mathbb{Z}, q}\subset \bar {U}_q^{\ev},
\quad
K\bar {U}_q^{\ev} \triangleleft  U_{\mathbb{Z}, q} \subset K\uqe.
\end{align*}
\end{corollary}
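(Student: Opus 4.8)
The plan is to deduce Corollary~\ref{Habii} directly from Proposition~\ref{Habi} by translating the right action $\sad$ into the left action $\ad$. The key observation is the identity $y\triangleleft x=\sum S^{-1}(x_{(2)})yx_{(1)}=\sum S^{-1}(x)\triangleright y$, which is already recorded in the definition of $\sad$ preceding the corollary. Thus for $y\in\uqe$ and $x\in\uqzq$ we have $y\triangleleft x=S^{-1}(x)\triangleright y$, and it suffices to know that $S^{-1}$ preserves $\uqzq$. This is immediate: the explicit formulas $S^{\pm1}(\tilde E^{(m)})=(-1)^mq^{\frac{1}{2}m(m\mp1)}K^{-m}\tilde E^{(m)}$ and $S^{\pm1}(\tilde F^{(m)})=(-1)^mq^{-\frac{1}{2}m(m\mp1)}K^{-m}\tilde F^{(m)}$ from Section~\ref{preunis}, together with $S^{-1}(K^{\pm1})=K^{\mp1}$, show that $S^{-1}$ maps each of the algebra generators $K,K^{-1},\tilde E^{(m)},\tilde F^{(m)}$ of $\uqzq$ back into $\uqzq$; since $S^{-1}$ is an algebra anti-automorphism, $S^{-1}(\uqzq)\subseteq\uqzq$ (in fact equality holds).

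Granting this, the first inclusion follows at once: $\uqe\triangleleft\uqzq=S^{-1}(\uqzq)\triangleright\uqe\subseteq\uqzq\triangleright\uqe\subseteq\uqe$ by Proposition~\ref{Habi}. For the second inclusion, $K\uqe\triangleleft\uqzq=S^{-1}(\uqzq)\triangleright(K\uqe)\subseteq\uqzq\triangleright(K\uqe)\subseteq K\uqe$, again by the second statement of Proposition~\ref{Habi}. So both assertions of the corollary are formal consequences once the behavior of $S^{-1}$ on $\uqzq$ is in hand.

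I do not anticipate a genuine obstacle here; the corollary is essentially a bookkeeping consequence of Proposition~\ref{Habi} and the already-listed antipode formulas. The one point that deserves a line of care is that $\sad$ is defined as a \emph{continuous} $\mathbb{Q}[[h]]$-linear map, so one should note that the identity $y\triangleleft x=S^{-1}(x)\triangleright y$ and the inclusions are first checked on the relevant (dense, or generating) subsets and then the containments in the closed subspaces $\uqe$ and $K\uqe$ follow; but since $\uqzq$ is spanned over $\Z$ by monomials in its generators and $\ad$, $\sad$ are linear, this is routine. In short, I would write: (1) recall $y\triangleleft x=S^{-1}(x)\triangleright y$; (2) check $S^{-1}(\uqzq)\subseteq\uqzq$ from the generator formulas; (3) apply Proposition~\ref{Habi} twice.
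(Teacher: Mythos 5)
Your proof is correct and is exactly the argument the paper leaves implicit when it says ``Proposition~\ref{Habi} implies the following'': you rewrite $\triangleleft$ via $y\triangleleft x=S^{-1}(x)\triangleright y$, observe that $S^{-1}(\uqzq)\subseteq\uqzq$ from the listed antipode formulas (and the fact that $S^{-1}$ is an anti-automorphism), and then apply both inclusions of Proposition~\ref{Habi}. Nothing is missing.
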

\subsection{Universal $sl_2$ invariant of bottom tangles}\label{bottom inv}
For  an $n$-component bottom tangle $T=T_1\cup \cdots \cup T_n\in BT_n$, we define the universal $sl_2 $ invariant $J_T\in U_h^{\hat {\otimes }n}$  as follows (\cite{O, H1}).

We choose and fix a diagram of $T$ obtained from the copies of the fundamental tangles  depicted in Figure \ref{fig:fundamental},
by pasting horizontally and vertically. For example, for the bottom tangle $B$ depicted  in Figure \ref{fig:base} (a), we can take a diagram depicted in Figure \ref{fig:base} (b).
 We denote by $C(T)$ the set of the crossings of the diagram.
We call a map 
\begin{align*}
s\co C(T) \ \ \rightarrow \ \ \{0,1,2,\ldots\}
\end{align*}
a \textit{state}. We denote by  $\mathcal{S}(T)$ the set of states of the diagram.
\begin{figure}
\centering
\includegraphics[width=9cm,clip]{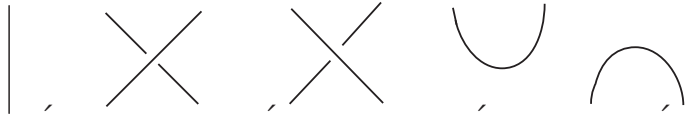}
\caption{Fundamental tangles, where the orientations of the strands are arbitrary
 }\label{fig:fundamental}
\end{figure}%
\begin{figure}
\centering
\includegraphics[width=12cm,clip]{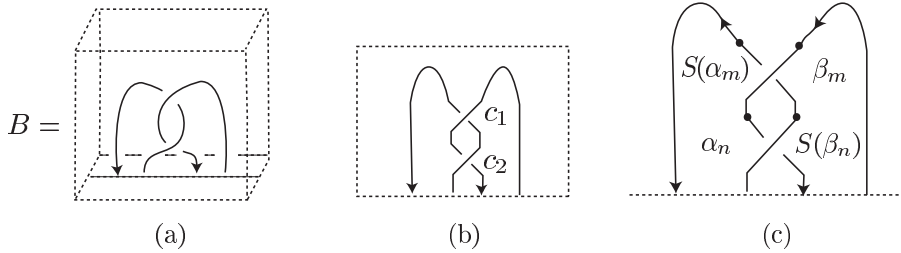}
\caption{(a) A bottom tangle $B\in BT_2$ (b) A  diagram  of $B$ (c) The labels which are put on the diagram of $B$
} \label{fig:base}
\end{figure}

Given a state $s\in \mathcal{S}(T)$, we attach labels  on the copies of the fundamental tangles in the diagram
following the rule described in Figure \ref{fig:cross}, where  ``$S'$'' should be replaced with $\id$ if 
the string is oriented downward, and with $S$ otherwise.
For example, for a state $t\in \mathcal{S}(B)$, we put  labels on the diagram of $B$ as in Figure  \ref{fig:base} (c), where we set $m=t(c_1)$ and $n=t(c_2)$
for the  upper and the lower crossings $c_1$ and $c_2$, respectively.
\begin{figure}
\centering
\includegraphics[width=12cm,clip]{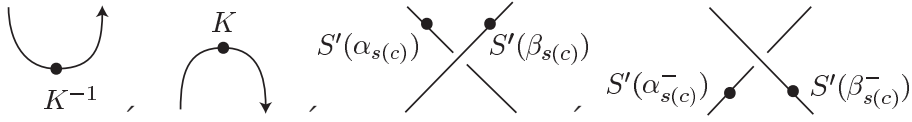}
\caption{How to place labels  on the fundamental tangles}\label{fig:cross}
\end{figure}

We define an element $J_{T,s}\in U_h^{\hat \otimes n}$ as follows. 
The $i$th tensorand of $J_{T,s}$ is defined to be the product 
of the labels put on the component corresponding to $T_i$, where the labels are read off along $T_i$
reversing the orientation, and   written from left to  right. 
We identify the labels $S'(\alpha^{\pm} _{i})$ and  $S'(\beta^{\pm} _{i})$
with the first and the second tensorands, respectively, of the element $S'(\alpha^{\pm} _{i})\otimes S'(\beta^{\pm} _{i})\in U_h^{\hat\otimes  2}$.
Also we identify the label $K^{\pm 1}$ with the element  $K^{\pm 1}\in U_h$.
Then, $J_{T,s}$ is a well-defined element in $U_h^{\hat \otimes n}$. 
For example,  we have
\begin{align*}
J_{B,t}&=S(\alpha  _m)S(\beta  _n)\otimes \alpha  _n\beta _m
\\
&=\sum q^{\frac{1}{2}m(m-1)} q^{\frac{1}{2}n(n-1)}S(D'_1\tilde {F}^{(m)}K^{-m})S(D''_2e^n)\otimes D'_2\tilde {F}^{(n)}K^{-n}D_1''e^m
\\
&=(-1)^{m+n}q^{-n+2mn}D^{-2}(\tilde F^{(m)}K^{-2n}e^n\otimes \tilde F^{(n)}K^{-2m}e^m)\in U_h^{\hat \otimes 2},
\end{align*}
where $D=\sum D'_1\otimes D''_1=\sum D'_2\otimes D''_2.$
Note that $J_{T,s}$  depends on the choice of the diagram.

Set 
\begin{align*}
J_T=\sum_{s\in \mathcal{S}(T)}J_{T, s}.
\end{align*}
For example, we have
\begin{align*}
J_B&=\sum_{t\in \mathcal{S}(B)}J_{B,t}=\sum_{m,n\geq 0} (-1)^{m+n}q^{-n+2mn}D^{-2}(\tilde F^{(m)}K^{-2n}e^n\otimes \tilde F^{(n)}K^{-2m}e^m).
\end{align*}
As is well known \cite{O},  $J_T$ does not depend on the choice of the diagram, and defines an isotopy invariant of bottom tangles.

\section{Universal invariant of boundary bottom tangles}\label{unib}
In this section, we recall  Habiro's formulas for boundary bottom tangles  at the topological level (Proposition \ref{yyuni}), and 
at the algebraic level on the universal $sl_2$ invariant (Proposition \ref{y3uni}).
Then, we modify these formulas into a form more convenient for our purpose.
After that, we study the commutator maps of $U_h$.
In the last section, we give an outline of the proof of Theorem \ref{1}.

In what follows, we use the following notations.
Let $\eta \co \mathbb{Q}[[h]]\rightarrow U_h$ be the unit morphism of $U_h$ and $\mu \co U_h^{\hat \otimes 2}\rightarrow U_h$  the multiplication of $U_h$. 
For $g\geq 0$, let $
\mu  ^{[g]}\co U_h^{\hat \otimes g}\rightarrow U_h$ denote the $g$-input multiplication defined by  $\mu ^{[0]}=\eta$, $\mu^{[1]}=\id_{U_h},$ and 
\begin{align*}
\mu ^{[g]}=\mu ^{[g-1]}\circ (\mu \otimes \id_{U_h}^{\otimes g-2}),
\end{align*} 
for $g\geq  2.$
For $g_1,\ldots , g_n\geq 0$, set
\begin{align}\label{mug}
\mu ^{[g_1,\ldots,g_n]}=\mu ^{[g_1]}\otimes \cdots \otimes \mu ^{[g_n]} \co U_h^{\hat \otimes  g_1+\cdots+g_n}\rightarrow U_h^{\hat \otimes n}.
\end{align}

\subsection{Habiro's formula (topological level)}
Let $T=T_1\cup \cdots\cup T_n\in BT_n$ be a  boundary bottom tangle  and $F_1,\ldots,F_n$ mutually disjoint Seifert surfaces
such that $\partial F_i=K_{T_i}$ for $i=1,\ldots, n.$
We can arrange the surfaces $F_1,\ldots ,F_n$ as depicted in  Figure \ref{fig:seifert},
where $\mathrm{Double}(T')$ is the tangle obtained from a bottom tangle $T'\in BT_{2g}$, where $g=g_1+\cdots+g_n$ with  $g_i=\mathrm{genus}(F_i)$,
 by first duplicating and then reversing the orientation of the inner component of each pair of duplicated components. 
 
The above arrangement of the  Seifert surfaces  implies the  following result, which appears in the proof of \cite[Theorem 9.9]{H1}.
\begin{figure}
\centering
\includegraphics[width=10cm,clip]{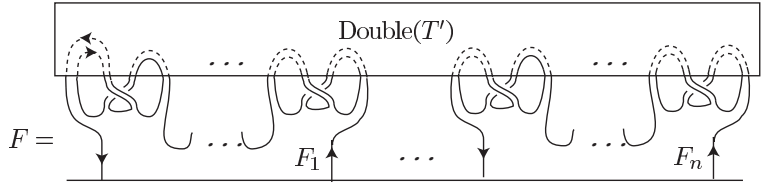}
\caption{How to arrange  Seifert surfaces }\label{fig:seifert}
\end{figure}
\begin{proposition}\label{yyuni}
For a bottom tangle $T\in BT_n$, the following conditions are equivalent.
\begin{itemize}
\item[\rm{(1)}]
$T$ is a boundary bottom tangle.
\item[\rm{(2)}]
There is  a bottom tangle  $T'\in BT_{2g},$ $g\geq 0$, and integers  $ g_1,\ldots , g_n\geq 0$ satisfying $g_1+\cdots+g_n=g$,  such that
\begin{align}
T=\mu _b^{[g_1,\ldots,g_n]}Y_b^{\otimes g}(T'),\label{date}
\end{align}
where 
$Y_b^{\otimes g}\co BT_{2g}\rightarrow BT_{g}$ and $
 \mu _b^{[g_1,\ldots,g_n]}\co BT_{g}\rightarrow BT_n $ are defined as depicted in Figure \ref{fig:boundary} (a) and (b), respectively.
\end{itemize}

\end{proposition}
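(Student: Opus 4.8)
The plan is to verify the equivalence of the two conditions by unwinding the geometric arrangement of Seifert surfaces described in Figure \ref{fig:seifert}. For the implication (1) $\Rightarrow$ (2), I would start from a boundary bottom tangle $T=T_1\cup\cdots\cup T_n$ with mutually disjoint Seifert surfaces $F_1,\ldots,F_n$, $\partial F_i=K_{T_i}$, and set $g_i=\mathrm{genus}(F_i)$, $g=g_1+\cdots+g_n$. The key geometric fact is that each $F_i$, being a compact orientable surface of genus $g_i$ with one boundary component, is isotopic (rel boundary, inside the cube, keeping the surfaces disjoint) to a disk with $2g_i$ bands attached in the standard ``handle'' pattern. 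Cutting along the cocores of these $2g_i$ bands produces, for each $i$, a collection of $2g_i$ framed arcs whose endpoints sit on the bottom line; assembling all of these over $i=1,\ldots,n$ yields a bottom tangle $T'\in BT_{2g}$. Running the construction backwards — doubling-and-reversing each pair of band arcs to recover the boundary of a neighborhood of $F_i$ (this is exactly the operation $Y_b$ of Figure \ref{fig:boundary}(a)), then fusing the $g_i$ doubled pairs belonging to the $i$th surface into the single component $T_i$ (this is exactly $\mu_b^{[g_1,\ldots,g_n]}$ of Figure \ref{fig:boundary}(b)) — recovers $T$, giving \eqref{date}. This is essentially the content extracted from the proof of \cite[Theorem 9.9]{H1}; I would cite that and supply the band-decomposition picture.

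For the converse (2) $\Rightarrow$ (1), I would argue that whenever $T$ is presented in the form $\mu_b^{[g_1,\ldots,g_n]}Y_b^{\otimes g}(T')$, one can explicitly build disjoint Seifert surfaces. Take the $2g$ components of $T'$; after applying $Y_b^{\otimes g}$ each pair of parallel oppositely-oriented components cobounds an embedded annulus, and attaching a band along $T'$ between the two boundary arcs of each such annulus at the bottom (as dictated by the definition of $Y_b$) turns the $j$th doubled pair into the boundary of a genus-one-with-boundary piece of surface. Then $\mu_b^{[g_1,\ldots,g_n]}$ merges the $g_i$ pieces coming from the $i$th block by plumbing/boundary-connect-summing them along the bottom into a single surface $F_i$ of genus $g_i$ bounded by $K_{T_i}$. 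Since the blocks occupy disjoint portions of the cube (the fundamental-tangle description of $\mu_b$ keeps the $n$ groups side by side), the $F_i$ are mutually disjoint, so $T$ is boundary. The framing/$0$-framing bookkeeping is automatic because the surfaces provide the framings.

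The main obstacle is purely presentational rather than mathematical: making the band-decomposition of a genus-$g_i$ Seifert surface precise enough to see that it matches the specific combinatorial operations $Y_b$ and $\mu_b^{[g_1,\ldots,g_n]}$ defined by Figures \ref{fig:boundary}(a)–(b), while respecting the ``$Z$-shape'' constraint that makes the result a bona fide bottom tangle (adjacent endpoints, all on the bottom line, correct orientation conventions). I would handle this by drawing one careful figure for the genus-one case, noting that the general case is an iterated boundary-connected-sum of genus-one pieces, and then remarking that the disjointness of the surfaces corresponds exactly to the horizontal juxtaposition in $\mu_b^{[g_1,\ldots,g_n]}$. Everything else — the equivalence is an ``if and only if'', and both directions are geometric constructions that are inverse to each other — follows formally once this dictionary is in place.
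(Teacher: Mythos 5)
Your proposal is correct and takes essentially the same approach as the paper: both rest on putting each Seifert surface $F_i$ in disk-with-bands normal form as in Figure \ref{fig:seifert} and reading off that cutting along band cocores yields $T'\in BT_{2g}$ while reassembly is $\mu_b^{[g_1,\ldots,g_n]}Y_b^{\otimes g}$. The paper is terser, presenting only the arrangement and citing the proof of \cite[Theorem 9.9]{H1}, but the underlying geometric dictionary you describe is exactly what is meant there.
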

\begin{figure}
\centering
\includegraphics[width=11.5cm,clip]{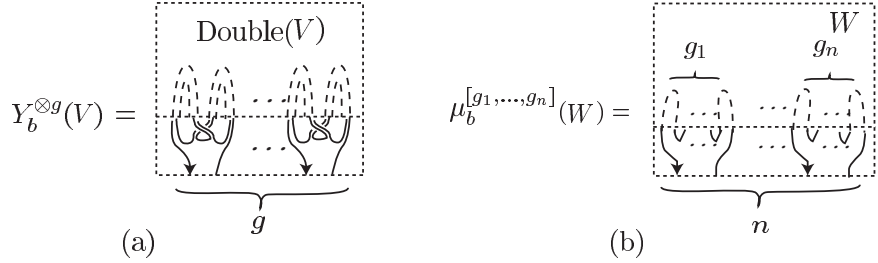}
\caption{(a) $Y_b^{\otimes g}(V)\in BT_{g}$ for $V\in BT_{2g}$  (b) $\mu _b^{[g_1,\ldots,g_n]}(W)\in BT_n$ for $W\in BT_g$ }\label{fig:boundary}
\end{figure}

\subsection{Habiro's formula (algebraic level)}
Recall from \cite[Proposition 9.7]{H1} the commutator morphism $Y_{\underline{H}}\co H\otimes H\rightarrow H$ for a ribbon Hopf algebra $H$.
In the present case  $H=U_h$, the morphism $Y_{\underline{U_h}}\co U_h\hat \otimes U_h\rightarrow U_h$ is the continuous $\mathbb{Q}[[h]]$-linear map  defined by
\begin{align*}
Y_{\underline{U_h}}( x\otimes y)&=\sum_{k\geq 0} x\triangleright \Big(\beta_k S\big((\alpha_k\triangleright y)_{(1)}\big)\Big)(\alpha_k\triangleright y)_{(2)}
\end{align*}
for $x,y\in U_h$. 
\begin{lemma}[Habiro \cite{H1}]\label{yuni}
For each bottom tangle $T\in BT_{2g}$,  $g\geq 0$, we have
\begin{align*}
&J_{Y_b^{\otimes g}(T)}=Y_{\underline{U_h}}^{\otimes g}(J_T).
\end{align*}
For each bottom tangle $T\in BT_{g_1+\cdots+g_n}$,  $g_1,\ldots,g_n\geq 0$, we have
\begin{align*}
&J_{ \mu _b^{[g_1,\ldots, g_n]}(T)}=\mu^{[g_1,\ldots, g_n]}(J_T). 
\end{align*}
\end{lemma}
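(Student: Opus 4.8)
The plan is to derive both identities from the functoriality of the universal $sl_2$ invariant. Recall that $J$ is compatible with horizontal and vertical pasting of tangle diagrams, and that for a bottom tangle it records, in its $i$th tensorand, the ordered product of the labels placed along the $i$th component according to the rules of Figure \ref{fig:cross}. Both $Y_b^{\otimes g}$ and $\mu_b^{[g_1,\ldots,g_n]}$ modify the input bottom tangle only inside a small neighbourhood of the bottom endpoints, leaving the rest of the diagram untouched; hence $J$ of the output is obtained from $J$ of the input by the algebraic operation that this local modification induces on the affected tensorands, and the task reduces to identifying that operation in each case.

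For $\mu_b^{[g_1,\ldots,g_n]}$ I would first decompose it into elementary operations $\mu_b\co BT_{k+1}\to BT_k$, each of which stacks one bottom-tangle component directly onto an adjacent one, merging the two into a single component. Reading the labels along the merged component simply concatenates the two label sequences, so the effect on the universal invariant is to multiply the two corresponding tensorands and leave the others fixed; that is, the elementary $\mu_b$ induces $\mu$ on those tensorands. Iterating over the decomposition of $\mu_b^{[g_1,\ldots,g_n]}$ gives $J_{\mu_b^{[g_1,\ldots,g_n]}(T)}=\mu^{[g_1,\ldots,g_n]}(J_T)$.

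For $Y_b^{\otimes g}$, since the operation acts on the $g$ pairs of components independently and in disjoint subcubes, it suffices to treat the elementary case $g=1$, namely $Y_b\co BT_2\to BT_1$, and then take the $g$th tensor power. I would present $Y_b$, near the bottom, as a fixed word in the fundamental tangles of Figure \ref{fig:fundamental}: the second component is cabled, the inner strand of the cable is orientation-reversed, and the cable is linked with the first component through one positive and one negative crossing. Placing labels by the rules of Figure \ref{fig:cross} and reading them off along the resulting single component, the two crossings contribute the $R$-matrix terms $\sum_k\alpha_k\otimes\beta_k$, the over-strands produce the conjugations that assemble into the adjoint actions $\alpha_k\triangleright y$ and $x\triangleright(-)$, the reversed inner strand contributes an antipode $S$, and the cabling contributes a comultiplication that splits $\alpha_k\triangleright y$ as $\sum (\alpha_k\triangleright y)_{(1)}\otimes(\alpha_k\triangleright y)_{(2)}$. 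Reorganising the resulting factors by means of \eqref{d2}, \eqref{d4}, \eqref{exD} and the Hopf algebra axioms, and comparing with the definition of $Y_{\underline{U_h}}$, yields $J_{Y_b(T)}=Y_{\underline{U_h}}(J_T)$ and hence $J_{Y_b^{\otimes g}(T)}=Y_{\underline{U_h}}^{\otimes g}(J_T)$.

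The main obstacle is the bookkeeping in this last step: one must fix a concrete fundamental-tangle presentation of $Y_b$ and then verify that the $R$-matrix components, the antipode coming from the reversed orientation, the conjugations coming from the over-strands, and the comultiplication coming from the cabling combine into exactly the asymmetric expression $\sum_k x\triangleright\bigl(\beta_k S((\alpha_k\triangleright y)_{(1)})\bigr)(\alpha_k\triangleright y)_{(2)}$, and not some other arrangement of the same ingredients. A secondary point is to confirm that the modification really is supported away from the rest of the diagram, so that the single-pair computation tensors correctly across the $g$ pairs; this is where having $Y_b^{\otimes g}$ literally a product of disjoint local tangles, as in Figure \ref{fig:boundary}(a), is used.
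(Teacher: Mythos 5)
The present paper does not prove this lemma; it is quoted from Habiro's bottom tangles paper \cite{H1}, so there is no internal proof for your argument to be compared against. Judged as a reconstruction of Habiro's argument, the strategy is sound: present $\mu_b$ and $Y_b$ as explicit pastings of the fundamental tangles of Figure~\ref{fig:fundamental}, apply the labeling rules of Figure~\ref{fig:cross}, and match the resulting label products against $\mu$ and $Y_{\underline{U_h}}$. Your treatment of $\mu_b$ is complete and correct (concatenating the label sequences along the merged component is literally tensorand multiplication), and the disjoint-support observation correctly reduces $Y_b^{\otimes g}$ to the $g=1$ case.

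For $Y_b$ itself, however, the outline stops short of a proof, and the gap is concrete rather than cosmetic. First, the diagrammatic description you posit for $Y_b$ --- the cable linked to the first strand via ``one positive and one negative crossing'' --- is a guess rather than a derivation from Figure~\ref{fig:boundary}~(a), and it does not obviously fit the target formula $Y_{\underline{U_h}}(x\otimes y)=\sum_{k\geq 0}x\triangleright\bigl(\beta_kS((\alpha_k\triangleright y)_{(1)})\bigr)(\alpha_k\triangleright y)_{(2)}$, which carries a \emph{single} sum over $R$-matrix components. The present paper's factorization $Y_b=\bar Y_b\circ\nu_b$ isolates exactly one $R$-matrix crossing inside $\nu_b$, while the remaining operation $\bar Y$, given by $\bar Y(x\otimes y)=\sum x_{(1)}KS(y_{(2)})KS(x_{(2)})y_{(1)}$, contributes only comultiplications, antipodes and local extrema, with no $R$-matrix at all; a diagram containing two surviving $R$-matrix crossings would produce an extra sum that does not appear. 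Second, the reorganization of the label product into precisely that asymmetric commutator expression is the whole content of the identity, and you explicitly defer it as ``bookkeeping.'' For a sense of the weight being deferred, the proof of the algebraically analogous identity $Y_{\underline{U_h}}=\bar Y\circ\nu$ in Lemma~\ref{nu} occupies roughly a dozen nontrivial steps of ribbon Hopf algebra manipulation. What you have is a plausible plan; until the diagram is pinned down and the label product is carried through, it is not yet a demonstration.
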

Proposition \ref{yyuni} and Lemma \ref{yuni} imply  the following.
\begin{proposition}[Habiro \cite{H1}]\label{y3uni}
For a boundary bottom tangle $T\in BT_n$ and a bottom tangle $T'\in BT_{2g}$ satisfying (\ref{date}), we have
\begin{align*}
J_T=\mu^{[g_1,\ldots ,g_n]}(Y_{\underline{U_h}})^{\otimes g}(J_{T'}). 
 \end{align*}
\end{proposition}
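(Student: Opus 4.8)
The plan is to obtain the identity as a purely formal consequence of the topological decomposition supplied by Proposition \ref{yyuni} together with the two naturality statements of Lemma \ref{yuni}. Since $T\in BT_n$ is boundary, Proposition \ref{yyuni} provides integers $g_1,\ldots,g_n\geq 0$ with $g=g_1+\cdots+g_n$ and a bottom tangle $T'\in BT_{2g}$ for which $T=\mu_b^{[g_1,\ldots,g_n]}Y_b^{\otimes g}(T')$, as in (\ref{date}). I would first fix, once and for all, the partition $\{1,\ldots,2g\}\to\{1,\ldots,g\}\to\{1,\ldots,n\}$ of the components that is implicit in the definitions of $Y_b^{\otimes g}\co BT_{2g}\to BT_g$ and $\mu_b^{[g_1,\ldots,g_n]}\co BT_g\to BT_n$, and note that the intermediate tangle $Y_b^{\otimes g}(T')$ indeed has $g$ components, so that the composite of the two diagrammatic operations is well defined.

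Next I would apply the universal $sl_2$ invariant $J$ to both sides of (\ref{date}) and feed the decomposition through Lemma \ref{yuni} in two steps. Applying the second identity of Lemma \ref{yuni} to the $g$-component bottom tangle $Y_b^{\otimes g}(T')$ gives
\[
J_T \;=\; J_{\mu_b^{[g_1,\ldots,g_n]}(Y_b^{\otimes g}(T'))} \;=\; \mu^{[g_1,\ldots,g_n]}\big(J_{Y_b^{\otimes g}(T')}\big),
\]
and then applying the first identity of Lemma \ref{yuni} to $T'\in BT_{2g}$ gives $J_{Y_b^{\otimes g}(T')}=(Y_{\underline{U_h}})^{\otimes g}(J_{T'})$. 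Substituting the latter into the former yields $J_T=\mu^{[g_1,\ldots,g_n]}(Y_{\underline{U_h}})^{\otimes g}(J_{T'})$, which is exactly the assertion; the only thing to check along the way is that the tensor-factor bookkeeping of $\mu^{[g_1,\ldots,g_n]}$ and $(Y_{\underline{U_h}})^{\otimes g}$ agrees with the component partition fixed at the start, which it does by construction.

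I do not expect any genuine obstacle in this proposition: its substance is entirely contained in the inputs. The nontrivial topological content — arranging mutually disjoint Seifert surfaces so that $T$ factors as in (\ref{date}) — is Proposition \ref{yyuni}, and the nontrivial algebraic content — the precise form of the commutator morphism $Y_{\underline{U_h}}$ and the fact that $J$ transforms correctly under the pasting operations defining $Y_b$ and $\mu_b$ — is Lemma \ref{yuni}. Granting both, the proof is just the two substitutions displayed above.
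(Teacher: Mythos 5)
Your proof is correct and is exactly the argument the paper intends: the paper simply notes that Proposition \ref{yyuni} and Lemma \ref{yuni} imply the result, and your two substitutions — applying the second identity of Lemma \ref{yuni} to $Y_b^{\otimes g}(T')$ and then the first to $T'$ — spell out precisely that implication.
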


\subsection{Modification of Habiro's formula (topological level) }
In this section, we modify Proposition \ref{yyuni}. 

Let $T\in BT_n$ be a boundary bottom tangle and $T'$ a  $2g$-component bottom tangle satisfying (\ref{date}).
We decompose the operation $Y_b^{\otimes g}$ into the two operations $\nu _b^{\otimes g}$ and $\bar Y_b^{\otimes g}$ as follows.
Let $\tilde T=\nu _b^{\otimes g}(T')$ be the $2g$-component (non-bottom) tangle  as depicted in Figure \ref{fig:boundary2} (a). 
Set $\bar Y_b^{\otimes g}(\tilde T)=Y_b^{\otimes g}(T')\in BT_g$, i.e., $\bar Y_b^{\otimes g}(\tilde T)$ is the bottom tangle  as depicted in Figure \ref{fig:boundary2} (b),
where $\mathrm{Double}(T')$ is defined in the  same way as that for bottom tangles.

We have 
\begin{figure}
\centering
\includegraphics[width=10cm,clip]{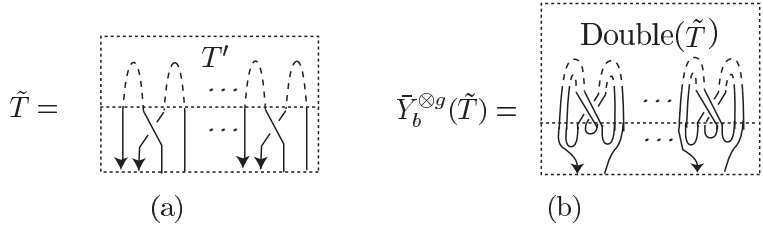}
\caption{(a) The tangle $\tilde T=\nu _b^{\otimes g}(T')$ (b)  The bottom tangle $\bar Y_b^{\otimes g}(\tilde T)\in BT_{g}$ }\label{fig:boundary2}
\end{figure}
\begin{align*}
T&=\mu _b^{[g_1,\ldots,g_n]}Y_b^{\otimes g}(T')
\\
&=\mu _b^{[g_1,\ldots,g_n]}(\bar Y_b^{\otimes g}\circ \nu _b^{\otimes g})(T')
\\
&=\mu _b^{[g_1,\ldots,g_n]}\bar Y_b^{\otimes g}\big(\nu _b^{\otimes g}(T')\big)
\\
&=\mu _b^{[g_1,\ldots,g_n]}\bar Y_b^{\otimes g}(\tilde T).
\end{align*}

Thus, we can modify Proposition \ref{yyuni} by replacing (2) with (2') as follows.
\begin{itemize}
\item[(2')]
There is  a $2g$-component tangle  $\tilde T=\nu _b^{\otimes g}(T')$ with $T'\in BT_{2g},$ $g\geq 0$, and integers  $ g_1,\ldots , g_n\geq 0$ satisfying $g_1+\cdots+g_n=g$,  such that
\begin{align*}
T=\mu _b^{[g_1,\ldots,g_n]}\bar Y_b^{\otimes g}(\tilde T).
\end{align*}
\end{itemize}
For   a boundary bottom tangle $T\in BT_n$, we call  $(\tilde T;g,g_1,\ldots,g_n)$  as in (2') a \textit{boundary data} for $T$. 
\subsection{Modification of Habiro's formula (algebraic level) }
Let $\bar Y\co U_h\hat \otimes U_h\rightarrow U_h$ be the continuous $\mathbb{Q}[[h]]$-linear map  defined by
\begin{align}
\bar Y(x\otimes y)&=\sum x_{(1)}KS(y_{(2)})KS(x_{(2)})y_{(1)}, \label{ytt}
\end{align}
for  $x,y\in U_h$. 

We modify Proposition \ref{y3uni} as follows.
\begin{proposition}\label{tu}
Let $T\in BT_n$ be a boundary bottom tangle  and $(\tilde T;g, g_1,\ldots,g_n)$ a boundary data for $T$.
We have
\begin{align*}
J_T=\mu^{[g_1,\ldots ,g_n]}\bar Y^{\otimes g}(J_{\tilde T}).
\end{align*}
Here, we can define the universal $sl_2$ invariant $J_{\tilde T}\in U_h^{\hat \otimes 2g}$ of the tangle $\tilde T$  in a similar way  to that of bottom tangles (cf. \cite{H1}).
\end{proposition}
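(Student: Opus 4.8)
The plan is to combine Proposition~\ref{y3uni} with the decomposition $Y_b^{\otimes g}=\bar Y_b^{\otimes g}\circ\nu_b^{\otimes g}$ already carried out at the topological level in Section~\ref{unib}. Concretely, by Proposition~\ref{y3uni} we have $J_T=\mu^{[g_1,\ldots,g_n]}(Y_{\underline{U_h}})^{\otimes g}(J_{T'})$, and by the naturality of the universal invariant under pasting (Lemma~\ref{yuni}, applied in the version for tangles rather than bottom tangles) we expect $J_{\tilde T}=\big(\nu_{\underline{U_h}}\big)^{\otimes g}(J_{T'})$ for the appropriate algebraic operation $\nu_{\underline{U_h}}\co U_h\hat\otimes U_h\to U_h\hat\otimes U_h$ realizing $\nu_b$ on invariants. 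So the first step is to identify the algebraic avatar $\nu_{\underline{U_h}}$ of the tangle operation $\nu_b^{\otimes g}$ pictured in Figure~\ref{fig:boundary2}~(a): reading off the labels on the fundamental tangles making up the picture (a duplication-with-orientation-reversal together with whatever clasps appear), one writes $\nu_{\underline{U_h}}$ explicitly in terms of $\Delta$, $S$, $K$ and the $R$-matrix components. The second step is to identify the residual operation $\bar Y_{\underline{U_h}}$ so that $Y_{\underline{U_h}}=\bar Y_{\underline{U_h}}\circ\nu_{\underline{U_h}}$, and to check that the composite indeed reproduces Habiro's $Y_{\underline{U_h}}(x\otimes y)=\sum_{k\ge 0}x\triangleright\big(\beta_kS((\alpha_k\triangleright y)_{(1)})\big)(\alpha_k\triangleright y)_{(2)}$.

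The heart of the argument is then a direct computation showing that $\bar Y_{\underline{U_h}}$ restricted to the image of $\nu_{\underline{U_h}}$ — equivalently, the two-variable map $\bar Y$ of \eqref{ytt}, $\bar Y(x\otimes y)=\sum x_{(1)}KS(y_{(2)})KS(x_{(2)})y_{(1)}$ — is exactly what one gets after the duplication step has been absorbed. The point is that the $\nu_b$-operation turns one of the two input components into a \emph{Double}, which on the algebraic side means that the corresponding tensorand of $J_{T'}$ gets hit by a "double-and-reverse" map; feeding that through the remaining clasp of $\bar Y_b$ collapses all the $R$-matrix summations $\sum_k\alpha_k\otimes\beta_k$ present in Habiro's formula, leaving only the Hopf-algebra expression $x_{(1)}KS(y_{(2)})KS(x_{(2)})y_{(1)}$ with its explicit $K$'s. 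Thus I would prove the identity $\mu^{[g_1,\ldots,g_n]}(Y_{\underline{U_h}})^{\otimes g}=\mu^{[g_1,\ldots,g_n]}\bar Y^{\otimes g}\circ(\nu_{\underline{U_h}})^{\otimes g}$ by verifying it factor by factor: on each $Y_{\underline{U_h}}$-factor it reduces to showing $Y_{\underline{U_h}}(x\otimes y)=\bar Y\big(\nu_{\underline{U_h}}(x\otimes y)\big)$, a finite manipulation using the antipode axiom $\sum S(a_{(1)})a_{(2)}=\varepsilon(a)1$, the commutation rule \eqref{exD} for $D$ past homogeneous elements, and the formulas \eqref{d1}--\eqref{d4} for $D$. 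Combining this with Proposition~\ref{y3uni} and $J_{\tilde T}=(\nu_{\underline{U_h}})^{\otimes g}(J_{T'})$ gives the claimed $J_T=\mu^{[g_1,\ldots,g_n]}\bar Y^{\otimes g}(J_{\tilde T})$.

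The main obstacle I anticipate is bookkeeping rather than conceptual: correctly translating the picture in Figure~\ref{fig:boundary2} into the map $\nu_{\underline{U_h}}$, with all the $K$-powers coming from the framing corrections at the points where the doubled strand passes over or under, and making sure the orientation reversal on the inner strand is accounted for by the $S^{-1}$ (or $S$) in the right place. A secondary technical point is that $\tilde T$ is a genuine tangle, not a bottom tangle, so I must first note that the universal invariant extends to such tangles compatibly with horizontal/vertical pasting — this is standard (cf.\ \cite{H1}) and is exactly the content of the final sentence of the proposition, but it needs to be invoked cleanly so that Lemma~\ref{yuni} can be applied in the form $J_{\nu_b^{\otimes g}(T')}=(\nu_{\underline{U_h}})^{\otimes g}(J_{T'})$ and $J_{\mu_b^{[\ldots]}\bar Y_b^{\otimes g}(\tilde T)}=\mu^{[\ldots]}\bar Y^{\otimes g}(J_{\tilde T})$. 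Once these two functoriality statements and the single two-variable identity $Y_{\underline{U_h}}=\bar Y\circ\nu_{\underline{U_h}}$ are in hand, the proposition is immediate.
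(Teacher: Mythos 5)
Your plan matches the paper's proof: the paper proves exactly the factorization $Y_{\underline{U_h}}=\bar Y\circ\nu$ (Lemma~\ref{nu}, with $\nu(x\otimes y)=\sum_{k\ge 0}x\beta_k\otimes\alpha_k y$), then reads off from the new crossings at the bottom of $\tilde T$ (Figure~\ref{fig:skew}, using $(1\otimes S)(R^{-1})=R$) that $J_{\tilde T}=\nu^{\otimes g}(J_{T'})$, and substitutes into Proposition~\ref{y3uni}. One small inaccuracy worth noting: $\nu$ is purely an $R$-matrix insertion with no $\Delta$, $S$, or $K$ in it, and the actual verification of $Y_{\underline{U_h}}=\bar Y\circ\nu$ leans on $R$-matrix identities such as $(S\otimes S)R=R$, $R\Delta(x)=\Delta^{\mathrm{op}}(x)R$, $KxK^{-1}=S^{-2}(x)$, and centrality of the ribbon element $r^{-1}$, rather than the $D$-commutation formulas and antipode axiom you anticipate; also, in the decomposition it is $\bar Y_b$ (not $\nu_b$) that carries the doubling, with $\nu_b$ just adding the bottom clasps.
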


Let $\nu\co U_h\hat \otimes U_h\rightarrow U_h\hat \otimes U_h$ be the continuous $\mathbb{Q}[[h]]$-linear map  defined by
 
\begin{align*}
\nu (x\otimes y)=\sum _{k\geq 0} x\beta _k\otimes \alpha _k y,
\end{align*}
for  $x,y\in U_h$.
We reduce Proposition \ref{tu} to the following lemma.
\begin{lemma}\label{nu}
We have \begin{align}
Y_{\underline{U_h}}=\bar Y\circ \nu.
\end{align}
\end{lemma}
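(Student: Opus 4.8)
The plan is to verify the identity $Y_{\underline{U_h}} = \bar Y \circ \nu$ by direct computation, expanding both sides on a general pair $x \otimes y$ and reconciling them using the Hopf algebra axioms together with the formula for the ribbon element's $R$-matrix components. First I would unwind the left-hand side: by definition $Y_{\underline{U_h}}(x\otimes y) = \sum_{k\geq 0} x \triangleright \bigl(\beta_k S((\alpha_k\triangleright y)_{(1)})\bigr)(\alpha_k\triangleright y)_{(2)}$, and I would first simplify the inner piece $\alpha_k \triangleright y$ using $\sum_k \alpha_k \otimes \beta_k = R$ together with the standard fact that conjugation by the $R$-matrix componentwise realizes a "quasi-cocommutativity" type move, or more directly, expand $\alpha_k \triangleright y = \sum (\alpha_k)_{(1)} y S((\alpha_k)_{(2)})$ and apply $\Delta$ to the result.

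The key simplification I expect to use is that the expression $\sum_k \beta_k \otimes (\alpha_k \triangleright -)$ can be rewritten: since $\ad$ is an algebra action and $\sum_k (\Delta \otimes 1)(\alpha_k \otimes \beta_k)$ relates to $R_{13}R_{23}$, the combination appearing in $Y_{\underline{U_h}}$ should collapse. A cleaner route: I would instead start from the right-hand side. Compute $\nu(x\otimes y) = \sum_k x\beta_k \otimes \alpha_k y$, then apply $\bar Y$, giving $\sum_k (x\beta_k)_{(1)} K S((\alpha_k y)_{(2)}) K S((x\beta_k)_{(2)})(\alpha_k y)_{(1)}$. Expanding the coproducts via $\Delta(x\beta_k) = \sum x_{(1)}(\beta_k)_{(1)} \otimes x_{(2)}(\beta_k)_{(2)}$ and similarly for $\alpha_k y$, and then using $(\Delta\otimes 1)R = R_{13}R_{23}$ and $(1\otimes\Delta)R = R_{13}R_{12}$ to resum the $\sum_k (\alpha_k)_{(i)} \otimes (\beta_k)_{(j)}$ terms into products of $R$-matrices, I would aim to match the two sides term by term. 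The antipode identities $(S\otimes S)R = R$, $(S\otimes 1)R = R^{-1}$, and $(1\otimes S)R = R^{-1}$ will be essential, as will the fact that $\bar Y$ has those explicit $K$'s inserted — these $K$ factors should be exactly what is needed to absorb the $D$-part of $R$ via the commutation relation (\ref{exD}), $D(1\otimes x) = (K^{|x|}\otimes x)D$.

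Concretely, the strategy is: (i) reduce $\alpha_k \triangleright y$ inside $Y_{\underline{U_h}}$ to a manageable form, observing that $\sum_k \alpha_k \triangleright y$ behaves well because $\ad(\alpha_k \otimes -)$ summed against $\beta_k$ produces a conjugation; (ii) likewise expand the outer adjoint action $x \triangleright (-)$ using $\Delta^{[3]}$ of $x$; (iii) on the $\bar Y\circ\nu$ side, expand all coproducts and resum the $R$-matrix pieces; (iv) check that after cancellation both sides equal, say, $\sum x_{(1)}\beta_k' K S(\alpha_k'' y_{(2)}) K S(x_{(2)})\alpha_k' y_{(1)}$ or whatever canonical form emerges, comparing coefficient-wise. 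I would organize the bookkeeping by tracking separately the "$\beta_k$ near $x$" contribution and the "$\alpha_k$ near $y$" contribution and verifying each matches.

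The main obstacle I anticipate is the bookkeeping of the several nested coproducts and $R$-matrix resummations: both $Y_{\underline{U_h}}$ and $\bar Y \circ \nu$ involve $\Delta^{[3]}$ (or $\Delta^{[2]}$ applied twice) together with sums over $k$, so a naive expansion produces many terms, and getting the $S$'s and $K$'s to cancel in exactly the right places requires careful, patient use of (\ref{d2})--(\ref{d4}) and (\ref{exD}) and the hexagon-type identities for $R$. I do not expect any genuinely new idea to be needed — this is a Hopf-algebraic identity that should follow mechanically — but the risk is a sign error or a misplaced $K^{\pm 1}$, so I would double-check the final identity by evaluating both sides on low-degree elements (e.g. $x = E, y = F$ modulo higher order in $h$) as a sanity check before declaring the computation complete.
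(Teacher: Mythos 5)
Your high-level strategy — start from $\bar Y\circ\nu$, expand all coproducts, resum the $R$-matrix components using $(\Delta\otimes 1)R=R_{13}R_{23}$, $(1\otimes\Delta)R=R_{13}R_{12}$, $(S\otimes S)R=R$, and quasi-cocommutativity, and match term-by-term with $Y_{\underline{U_h}}$ — is exactly the paper's approach, and those are indeed most of the identities it uses. However, the plan as written has a genuine gap: you do not anticipate the role of the ribbon element. In the paper's computation, after applying $(\Delta\otimes\Delta)R$ and $(S\otimes S)R=R$ one finds an inner factor $\sum_{k}\alpha_{k}K\beta_{k}=r^{-1}$; the computation then relies on the centrality of $r^{-1}$ to slide it past the $y$-coproduct factors, and subsequently rewrites it as $r^{-1}=\sum_{k}\beta_{k}K^{-1}\alpha_{k}$ and \emph{re-expands} it into $R$-matrix components to merge with the remaining $R$'s via another application of $(\Delta\otimes 1)R=R_{13}R_{23}$. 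This collapse--move--re-expand manoeuvre is not a purely mechanical coproduct/$R$-matrix resummation and is unlikely to fall out of a blind expansion; the computation also needs $KzK^{-1}=S^{-2}(z)$ and $\sum S(z_{(1)})\otimes S(z_{(2)})=\sum S(z)_{(2)}\otimes S(z)_{(1)}$, which are not on your list.

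A smaller point: your guess that the explicit $K$'s in $\bar Y$ are there to ``absorb the $D$-part of $R$ via (\ref{exD})'' misreads their function. In the actual proof those $K$'s are what combine with $\alpha_k,\beta_k$ to produce $r^{-1}$ (and, earlier, with $S^{-1}$ via $Kz=S^{-2}(z)K$); the relation (\ref{exD}) is not used in this lemma. So while your overall route is the right one, you should budget for the ribbon-element trick — without it the ``mechanical'' expansion will likely stall, and the risk you flag is not merely a misplaced sign or $K^{\pm 1}$ but a missing structural step.
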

\begin{proof}[Proof of Proposition \ref{tu} by assuming Lemma \ref{nu}.]
Let $T'\in BT_g$ the bottom tangle such that $\tilde T=\nu_b ^{\otimes g}(T')$.
We depict in Figure \ref{fig:skew}  the labels put on the new crossings $c_1,\ldots,c_g$ at the bottom of $\tilde T$ associated  a state $s\in \mathcal{S}(\tilde T)$. 
Since $(1\otimes S)(R^{-1})=R$, we have
\begin{align} \label{cd}
J_{\tilde T}=\nu ^{\otimes g}(J_{T'}).
\end{align}
By Proposition \ref{y3uni}, Lemma \ref{nu},  and (\ref{cd}),  we have
\begin {align*}
J_T&=\mu^{[g_1,\ldots ,g_n]}Y_{\underline{U_h}}^{\otimes g}(J_{T'})
\\
&=\mu^{[g_1,\ldots ,g_n]}(\bar Y\circ \nu)^{\otimes g}(J_{T'})
\\
&=\mu^{[g_1,\ldots ,g_n]}\bar Y^{\otimes g}\big(\nu ^{\otimes g}(J_{T'})\big)
\\
&=\mu^{[g_1,\ldots ,g_n]}\bar Y^{\otimes g}(J_{\tilde T}).
\end{align*}
\begin{figure}[!htb]
\centering
\includegraphics[width=10cm,clip]{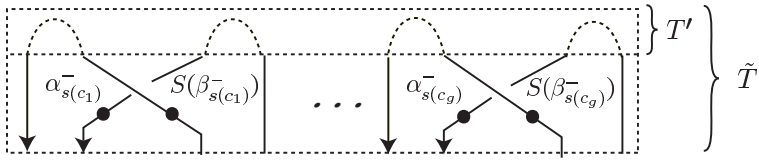}
\caption{The labels which are put on the crossings $c_1,\ldots,c_g$ at the bottom of $\tilde T$}\label{fig:skew}
\end{figure}
\end{proof}

\begin{proof}[Proof of Lemma \ref{nu}.]
For $x,y\in U_h,$ we have
\begin{align*}
(\bar Y&\circ \nu) (x\otimes y)
\\
&=\sum_{k\geq 0} (x\beta _k)_{(1)}KS((\alpha _ky)_{(2)})KS((x\beta _k)_{(2)})(\alpha _ky)_{(1)}
\\&=\sum_{k\geq 0} x_{(1)}\beta _{k(1)}KS(y_{(2)})S(\alpha _{k(2)})KS(\beta _{k(2)})S(x_{(2)})\alpha _{k(1)}y_{(1)}
\\&\stackrel{(\mathrm{i})}{=}\sum _{k_1,k_2,k_3,k_4\geq 0}x_{(1)}\beta _{k_1}\beta _{k_2}KS(y_{(2)})S(\alpha _{k_4}\alpha _{k_2})KS(\beta _{k_3}\beta _{k_4})S(x_{(2)})\alpha _{k_3}\alpha _{k_1}y_{(1)}
\\&=\sum_{k_1,k_2,k_3,k_4\geq 0} x_{(1)}\beta _{k_1}\beta _{k_2}KS(y_{(2)})S(\alpha _{k_2})S(\alpha _{k_4})KS(\beta _{k_4})S(\beta _{k_3})S(x_{(2)})\alpha _{k_3}\alpha _{k_1}y_{(1)}
\\&\stackrel{(\mathrm{ii})}{=}\sum_{k_1,k_2,k_3,k_4\geq 0} x_{(1)}\beta _{k_1}\beta _{k_2}KS(y_{(2)})S(\alpha _{k_2})\alpha _{k_4}K\beta _{k_4}S(\beta _{k_3})S(x_{(2)})\alpha _{k_3}\alpha _{k_1}y_{(1)}
\\&=\sum_{k_1,k_2,k_3\geq 0} x_{(1)}\beta _{k_1}\beta _{k_2}KS(y_{(2)})S(\alpha _{k_2})r^{-1}S(\beta _{k_3})S(x_{(2)})\alpha _{k_3}\alpha _{k_1}y_{(1)}
\\&\stackrel{(\mathrm{iii})}{=}\sum_{k_1,k_2,k_3,k_5,k_6\geq 0}  x_{(1)}\beta _{k_1}\beta _{k_2}KS(\beta _{k_5}^-y_{(1)}\beta _{k_6})S(\alpha _{k_2})r^{-1}S(\beta _{k_3})S(x_{(2)})\alpha _{k_3}\alpha _{k_1}\alpha _{k_5}^-y_{(2)}\alpha _{k_6}
\\&=\sum_{k_1,k_2,k_3,k_5,k_6\geq 0} x_{(1)}\beta _{k_1}\beta _{k_2}KS(\beta _{k_6})S(y_{(1)})S(\beta _{k_5}^-)S(\alpha _{k_2})r^{-1}S(\beta _{k_3})S(x_{(2)})\alpha _{k_3}\alpha _{k_1}\alpha _{k_5}^-y_{(2)}\alpha _{k_6}
\\&\stackrel{(\mathrm{iv})}{=}\sum_{k_1,k_2,k_3,k_5,k_6\geq 0} x_{(1)}\beta _{k_1}\beta _{k_2}S^{-1}(\beta _{k_6})KS(y_{(1)})S(\beta _{k_5}^-)S(\alpha _{k_2})r^{-1}S(\beta _{k_3})S(x_{(2)})\alpha _{k_3}\alpha _{k_1}\alpha _{k_5}^-y_{(2)}\alpha _{k_6}
\\&=\sum_{k_1,k_2,k_3,k_5,k_6\geq 0} x_{(1)}\beta _{k_1}\beta _{k_2}S^{-1}(\beta _{k_6})r^{-1}KS(y_{(1)})S(\beta _{k_5}^-)S(\alpha _{k_2})S(\beta _{k_3})S(x_{(2)})\alpha _{k_3}\alpha _{k_1}\alpha _{k_5}^-y_{(2)}\alpha _{k_6}
\\&\stackrel{(\mathrm{v})}{=}\sum_{k_1,k_2,k_3,k_4,k_5,k_6\geq 0} x_{(1)}\beta _{k_1}\beta _{k_2}\beta _{k_6}\beta _{k_4}K^{-1}\alpha _{k_4}KS(y_{(1)})S(\beta _{k_5}^-)S(\beta _{k_3}\alpha _{k_2})S(x_{(2)})\alpha _{k_3}\alpha _{k_1}\alpha _{k_5}^-y_{(2)}S(\alpha _{k_6})
\\&\stackrel{(\mathrm{vi})}{=}\sum_{k_1,k_2,k_3,k_4,k_5,k_6\geq 0} x_{(1)}\beta _{k_1}\beta _{k_2}\beta _{k_6}\beta _{k_4}S^2(\alpha _{k_4})S(y_{(1)})S(\beta _{k_5}^-)S(\beta _{k_3}\alpha _{k_2})S(x_{(2)})\alpha _{k_3}\alpha _{k_1}\alpha _{k_5}^-y_{(2)}S(\alpha _{k_6})
\\&\stackrel{(\mathrm{vii})}{=}\sum_{k,k_3,k_5\geq 0} x_{(1)}\beta _{k}S^2(\alpha _{k(4)})S(y_{(1)})S(\beta _{k_5}^-)S(\beta _{k_3}\alpha _{k(2)})S(x_{(2)})\alpha _{k_3}\alpha _{k(1)}\alpha _{k_5}^-y_{(2)}S(\alpha _{k(3)})
\\&\stackrel{(\mathrm{viii})}{=}\sum_{k,k_3,k_5\geq 0} x_{(1)}\beta _{k}S^2(\alpha _{k(4)})S(y_{(1)})S(\beta _{k_5}^-)S(\alpha _{k(1)}\beta _{k_3})S(x_{(2)})\alpha _{k(2)}\alpha _{k_3}\alpha _{k_5}^-y_{(2)}S(\alpha _{k(3)})
\\&=\sum_{k,k_3,k_5\geq 0} x_{(1)}\beta _{k}S^2(\alpha _{k(4)})S(y_{(1)})S(\beta _{k_3}\beta _{k_5}^-)S(\alpha _{k(1)})S(x_{(2)})\alpha _{k(2)}\alpha _{k_3}\alpha _{k_5}^-y_{(2)}S(\alpha _{k(3)})
\\&=\sum_{k\geq 0} x_{(1)}\beta _{k}S^2(\alpha _{k(4)})S(y_{(1)})S(\alpha _{k(1)})S(x_{(2)})\alpha _{k(2)}y_{(2)}S(\alpha _{k(3)})
\\&=\sum _{k\geq 0}x_{(1)}\beta _{k}S(\alpha _{k(1)}y_{(1)}S(\alpha _{k(4)}))S(x_{(2)})\alpha _{k(2)}y_{(2)}S(\alpha _{k(3)})
\\&\stackrel{(\mathrm{ix})}{=}\sum _{k\geq 0}x_{(1)}\beta_k S\Big(\big(\alpha_{k(1)}yS(\alpha _{k(2)})\big)_{(1)}\Big)S(x_{(2)})\big(\alpha_{k(1)}yS(\alpha _{k(2)})\big)_{(2)}=Y_{\underline{U_h}}( x\otimes y).
\end{align*}
Here,  recall that $r^{-1}=\sum _{k_4\geq 0} \alpha _{k_4}K\beta _{k_4}=\sum  _{k_4\geq 0}\beta _{k_4}K^{-1}\alpha _{k_4}$ is the inverse of  the ribbon element, which is central.
 
The  identity (i) follows from
\begin{align*}
&\sum _{k\geq 0} \alpha _{k(1)}\otimes \alpha _{k(2)}\otimes  \beta _{k(1)}\otimes  \beta_ {k(2)}=\sum _{k_1,k_2,k_3,k_4\geq 0}\alpha _{k_3}\alpha _{k_1}\otimes \alpha _{k_4}\alpha _{k_2}\otimes \beta _{k_1}\beta _{k_2}\otimes \beta _{k_3}\beta _{k_4}.
\end{align*}

(ii) and (v) follow from 
$(S\otimes S)R=R.$ 

(iii) and  (viii) follow from $R\big(\sum x_{(1)}\otimes x_{(2)}\big)=\big(\sum x_{(2)}\otimes x_{(1)}\big)R$ for $x\in U_h$.

(iv) and  (vi) follow from 
$KxK^{-1}=S^{-2}(x)$
for $x\in U_h$.

(vii) follows from
\begin{align*}
&\sum _{k_1,k_2,k_4,k_6\geq 0}\beta_{k_1} \beta_{k_2}\beta_{k_6} \beta_{k_4}  \otimes \alpha_{k_1} \otimes \alpha_{k_2} \otimes \alpha_{k_6} \otimes \alpha_{k_4}
=\sum  _{k\geq 0}\beta_k \otimes \alpha _{k(1)} \otimes \alpha _{k(2)} \otimes \alpha _{k(3)}\otimes  \alpha _{k(4)}.
\end{align*}

(ix) follows from 
$\sum S(x_{(1)})\otimes S(x_{(2)})=\sum S(x)_{(2)}\otimes S(x)_{(1)}$
for $x\in U_h.$
\end{proof}
\subsection{Commutator maps}
In this section, we study the commutator map $\bar Y$ of $U_h$.

Let $\dot Y\co U_h\hat \otimes U_h\rightarrow U_h$ be the  continuous $\mathbb{Q}[[h]]$-linear map defined  by
\begin{align*}
\dot Y( x\otimes y)&=\sum x_{(1)}S^{-1}(y_{(2)})S(x_{(2)})y_{(1)},
\end{align*}
for $x,y\in U_h$.
Note that 
\begin{align}
\dot Y( x\otimes y)&=\sum \Big( x\triangleright S^{-1}(y_{(2)})\Big) y_{(1)}\label{ad1}
\\
&=\sum x_{(1)}\Big(S(x_{(2)}) \triangleleft y\Big)
\\&=\sum x_{(1)}\Big(S^{-1}(y)\triangleright S(x_{(2)}) \Big).\label{ad2}
\end{align}
By the following lemma, we can study $\bar Y$ by using  $\dot Y$, $\triangleright $ and $\triangleleft$.
\begin{lemma}\label{ydyb}
For  $x, y\in U_h$, we have 
\begin{align*}
\bar Y(x\otimes y)&=\sum\dot Y( x_{(1)}\otimes y_{(2)})\big((x_{(2)}\triangleright K^2)\triangleleft y_{(1)}\big).
\end{align*}
\end{lemma}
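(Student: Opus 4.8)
\textbf{Proof proposal for Lemma \ref{ydyb}.}

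The plan is to prove the identity by direct manipulation of the Hopf algebra structure, starting from the defining formula \eqref{ytt} for $\bar Y$ and massaging it into the claimed expression. First I would expand the right-hand side using the definition of $\dot Y$, the adjoint actions $\triangleright$ and $\triangleleft$, the comultiplication formulas $\Delta(E)$, $\Delta(F)$, $\Delta(H)$, and the commutation rule \eqref{exD} for $D$; but a cleaner route is to work abstractly, exploiting only the Hopf algebra axioms together with the fact that conjugation by $K$ realizes $S^{-2}$ (i.e.\ $KxK^{-1}=S^{-2}(x)$, as already used in the proof of Lemma \ref{nu}). Writing out the claimed right-hand side,
\begin{align*}
\sum \dot Y(x_{(1)}\otimes y_{(2)})\big((x_{(2)}\triangleright K^2)\triangleleft y_{(1)}\big)
=\sum x_{(1)}S^{-1}(y_{(4)})S(x_{(2)})y_{(3)}\cdot S^{-1}(y_{(1)})\,x_{(3)}K^2S(x_{(4)})\,y_{(2)},
\end{align*}
where I have expanded $x\triangleright z=\sum x_{(1)}zS(x_{(2)})$ and $z\triangleleft y=\sum S^{-1}(y_{(2)})zy_{(1)}$ and used coassociativity to relabel; here the intermediate factor $y_{(3)}S^{-1}(y_{(1)})$ with $y_{(2)}$ still to the right is the signal that an antipode--counit collapse is coming.

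The key steps, in order, would be: (1) apply coassociativity to write everything in terms of a single iterated coproduct of $x$ and of $y$; (2) collapse the middle using the antipode axiom $\sum S^{-1}(y_{(1)})y_{(2)}=\varepsilon(y)1$ applied to an appropriately bracketed consecutive pair of $y$-factors, after commuting the central or $K$-type factors past them if necessary — note $K^2$ commutes past nothing automatically, so one must instead use \eqref{exD}-style relations or push the $S(x_{(4)})$ and the $y$'s around so that $S^{-1}(y_{(i)})$ and $y_{(i+1)}$ become adjacent; (3) once two consecutive $y$-legs cancel, one is left with $\sum x_{(1)}S^{-1}(y_{(2)})S(x_{(2)})\cdot x_{(3)}K^2 S(x_{(4)})y_{(1)}$ or similar, and then the pair $S(x_{(2)})x_{(3)}=\varepsilon(x_{(2)})1$ collapses; (4) what survives should be $\sum x_{(1)}S^{-1}(y_{(2)})x_{(2)}K^2S(x_{(3)})y_{(1)}$, and then a final use of $KxK^{-1}=S^{-2}(x)$ together with the antipode axioms converts $x_{(2)}K^2S(x_{(3)})$ and rearranges the $K$'s to match $\bar Y(x\otimes y)=\sum x_{(1)}KS(y_{(2)})KS(x_{(2)})y_{(1)}$. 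The factor $(x\triangleright K^2)\triangleleft y$ is engineered precisely so that these two $K$'s in $\bar Y$ get produced from $K^2$ after the Hopf-algebra cancellations and a conjugation identity.

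I expect the main obstacle to be bookkeeping: keeping track of which leg of $\Delta^{[m]}(x)$ and $\Delta^{[m]}(y)$ sits where after repeated coassociativity relabelings, and in particular making the right pair of $y$-legs (and then $x$-legs) adjacent to trigger the antipode collapses without illegitimately moving $K^{\pm 1}$ factors past group-like-incompatible terms. A safer but more laborious alternative, which I would fall back on if the abstract computation gets tangled, is to verify the identity on the topological generators $E$, $F$, $H$ (equivalently $e$, $f$, $K^{\pm1}$) and invoke continuity and the fact that both sides are $\mathbb{Q}[[h]]$-linear and multiplicative-ish in a controlled way — though since neither $\bar Y$ nor $\dot Y$ is an algebra map this reduction needs care, so the Hopf-algebraic route above is the preferred one. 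Either way the proof is a finite, if intricate, string of applications of coassociativity, the antipode axiom, and $KxK^{-1}=S^{-2}(x)$, exactly in the style of the proof of Lemma \ref{nu}.
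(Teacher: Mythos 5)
Your plan is essentially the paper's proof run in the opposite direction: the paper starts from $\bar Y(x\otimes y)=\sum x_{(1)}KS(y_{(2)})KS(x_{(2)})y_{(1)}$, uses $Kz=S^{-2}(z)K$ to merge the two $K$'s into $K^2$, then inserts $\sum S(x_{(2)})x_{(3)}=\varepsilon$ and $\sum y_{(i+1)}S^{-1}(y_{(i)})=\varepsilon$ to expand, and reads off the factorization $\dot Y(x_{(1)}\otimes y_{(2)})\big((x_{(2)}\triangleright K^2)\triangleleft y_{(1)}\big)$; you start from the right-hand side and propose to collapse. Both use exactly the tools you name (coassociativity, the antipode--counit axiom, $KxK^{-1}=S^{-2}(x)$), so the route is sound.

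However, your expansion of the right-hand side has an index slip that manufactures a phantom difficulty. With the outer $y$ split as $(y_{(1)},y_{(2)})$, the leg $y_{(1)}$ feeding $\triangleleft$ and the leg $y_{(2)}$ feeding $\dot Y$, the 4-fold relabeling gives $y_{(1)(1)}=y_{(1)}$, $y_{(1)(2)}=y_{(2)}$, $y_{(2)(1)}=y_{(3)}$, $y_{(2)(2)}=y_{(4)}$, so the correct expansion is
\begin{align*}
\sum x_{(1)}S^{-1}(y_{(4)})S(x_{(2)})\,y_{(3)}\,S^{-1}(y_{(2)})\,x_{(3)}K^2S(x_{(4)})\,y_{(1)},
\end{align*}
not with $S^{-1}(y_{(1)})$ and $y_{(2)}$ at the end as you wrote. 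With the correct indices the pair $y_{(3)}S^{-1}(y_{(2)})$ is already adjacent and collapses by $\sum z_{(2)}S^{-1}(z_{(1)})=\varepsilon(z)$; then $S(x_{(2)})x_{(3)}$ is adjacent and collapses; then a single application of $K^{-1}zK=S^{2}(z)$ turns $S^{-1}(y_{(2)})K^2$ into $KS(y_{(2)})K$ and you land exactly on $\bar Y(x\otimes y)$. No commuting of $K^2$ past $y$-legs or $x$-legs is ever needed, and your worry about \eqref{exD}-style moves is unnecessary. Fix the indices and the argument goes through cleanly; as written, the mislabeling blocks the first collapse and is the source of the ``bookkeeping'' concern you raise.
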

\begin{proof}
We have 
\begin{align*}
\bar Y(x\otimes y)&=\sum x_{(1)}KS(y_{(2)})KS(x_{(2)})y_{(1)}
\\
&=
\sum x_{(1)}S^{-1}(y_{(2)})K^2S(x_{(2)})y_{(1)}
\\
&=\sum x_{(1)}S^{-1}(y_{(2)})S(x_{(2)})x_{(3)}K^2S(x_{(4)})y_{(1)}
\\
&=\sum x_{(1)}S^{-1}(y_{(4)})S(x_{(2)})y_{(3)}S^{-1}(y_{(2)})x_{(3)}K^2S(x_{(4)})y_{(1)}
\\
&=\sum\dot Y( x_{(1)}\otimes y_{(2)})\big((x_{(2)}\triangleright K^2)\triangleleft y_{(1)}\big),
\end{align*}
where the second identity follows from $Kz=S^{-2}(z)K$ for $z\in U_h$.
\end{proof}
The rest of this section is devoted to studying the map $\dot Y$.
\begin{lemma}\label{y2}
For $x,y,z\in U_h,$ we have
\begin{align}
\dot Y(xy\otimes z)&=\sum \big(x_{(1)}\triangleright \dot Y(y\otimes z_{(2)})\big)\dot Y(x_{(2)}\otimes z_{(1)}), \label{y21}
\\
\dot Y(x\otimes yz)&=\sum\dot Y( x_{(1)}\otimes z_{(2)})\big(\dot Y(x_{(2)}\otimes y)\triangleleft z_{(1)}\big).\label{y22}
\end{align}
\end{lemma}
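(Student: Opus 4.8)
The statement to prove is Lemma \ref{y2}, the two "Leibniz-type" identities for the map $\dot Y$ with respect to multiplication in each variable.

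\medskip

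The plan is to prove both identities by direct expansion using the definition $\dot Y(x\otimes y)=\sum x_{(1)}S^{-1}(y_{(2)})S(x_{(2)})y_{(1)}$ together with the standard Hopf-algebra axioms: coassociativity, the antihomomorphism property $S(ab)=S(b)S(a)$ and $S^{-1}(ab)=S^{-1}(b)S^{-1}(a)$, and the antipode axiom $\sum x_{(1)}S(x_{(2)})=\varepsilon(x)=\sum S(x_{(1)})x_{(2)}$. For \eqref{y21}, I would start from $\dot Y(xy\otimes z)=\sum (xy)_{(1)}S^{-1}(z_{(2)})S((xy)_{(2)})z_{(1)}=\sum x_{(1)}y_{(1)}S^{-1}(z_{(2)})S(y_{(2)})S(x_{(2)})z_{(1)}$, then insert a factor $S(x_{(2)})x_{(3)}=\varepsilon(x_{(2)})$ (after relabelling, split $\Delta^{[3]}(x)$) at an appropriate spot and split $\Delta^{[2]}(z)$ into $\Delta^{[3]}(z)$ so that the middle block $y_{(1)}S^{-1}(z_{(3)})S(y_{(2)})z_{(2)}$ becomes $\dot Y(y\otimes z_{(2')})$ and the outer block $x_{(1)}(\cdots)S(x_{(2)})\cdot x_{(3)}S^{-1}(z_{(1)})S(x_{(4)})z'_{(1)}$ reorganizes into $\big(x_{(1)}\triangleright \dot Y(y\otimes z_{(2)})\big)\dot Y(x_{(2)}\otimes z_{(1)})$, recalling from \eqref{ad1} that $\dot Y(a\otimes b)=\sum(a\triangleright S^{-1}(b_{(2)}))b_{(1)}$ and $a\triangleright b=\sum a_{(1)}bS(a_{(2)})$. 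The key is to choose the coproduct bookkeeping so the telescoping of antipode pairs lands exactly on the claimed right-hand side.

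\medskip

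For \eqref{y22}, I would proceed symmetrically, using the alternative formula $\dot Y(a\otimes b)=\sum a_{(1)}(S(a_{(2)})\triangleleft b)$ from the display before the lemma, together with the right action $c\triangleleft b=\sum S^{-1}(b_{(2)})cb_{(1)}$. Expanding $\dot Y(x\otimes yz)=\sum x_{(1)}S^{-1}((yz)_{(2)})S(x_{(2)})(yz)_{(1)}=\sum x_{(1)}S^{-1}(z_{(2)})S^{-1}(y_{(2)})S(x_{(2)})y_{(1)}z_{(1)}$ and inserting $S(x)\,x=\varepsilon$-type identities in the middle (splitting $\Delta^{[3]}(x)$ again), the left portion $x_{(1)}S^{-1}(z_{(2)})S(x_{(2)})x_{(3)}$ with the remaining $z_{(1)}$ rearranges to $\dot Y(x_{(1')}\otimes z_{(2')})$ acted on the right by $z_{(1')}$, while the inner $x\cdots S^{-1}(y_{(2)})\cdots y_{(1)}$ collapses to $\dot Y(x_{(2')}\otimes y)$; matching the coproduct labels to the claimed $\sum\dot Y(x_{(1)}\otimes z_{(2)})\big(\dot Y(x_{(2)}\otimes y)\triangleleft z_{(1)}\big)$ completes it. In fact, \eqref{y22} can alternatively be deduced from \eqref{y21} by applying $S^{-1}$-symmetry, but the direct computation is cleaner.

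\medskip

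The main obstacle is purely combinatorial: managing the Sweedler indices so that the antipode-pair cancellations telescope to exactly the stated form, and in particular correctly identifying which of the several inserted $\sum a_{(1)}S(a_{(2)})$ factors produces the adjoint action $\triangleright$ versus the right action $\triangleleft$ on the right-hand side. I do not expect any conceptual difficulty — no properties of $U_h$ beyond being a Hopf algebra are needed — so once the bookkeeping is set up carefully (writing out $\Delta^{[4]}(x)$, $\Delta^{[3]}(y)$, $\Delta^{[3]}(z)$ as needed and tracking each tensorand), both identities fall out. I would present the computation as a chain of equalities with a one-line justification after each step citing the relevant axiom, exactly in the style of the proof of Lemma \ref{nu} above.
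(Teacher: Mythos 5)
Your plan matches the paper's proof exactly: both identities are established by expanding $\dot Y$ from its definition, inserting cancelling antipode pairs via extra Sweedler legs, and regrouping to expose $\triangleright$, $\triangleleft$, and the inner copies of $\dot Y$. One small bookkeeping correction to your sketch: for \eqref{y21} (and symmetrically for \eqref{y22}) you actually need $\Delta^{[4]}(x)$, $\Delta^{[2]}(y)$, and $\Delta^{[4]}(z)$ rather than $\Delta^{[3]}(z)$, since each of the two outer factors $x_{(1)}\triangleright\dot Y(y\otimes z_{(2)})$ and $\dot Y(x_{(2)}\otimes z_{(1)})$ in the final expression consumes two Sweedler legs of $z$ (and of $x$).
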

\begin{proof}We have
\begin{align*}
\dot Y(xy\otimes z)&=\sum (xy)_{(1)}S^{-1}(z_{(2)})S\big((xy)_{(2)}\big)z_{(1)}
\\
&=\sum x_{(1)}y_{(1)}S^{-1}(z_{(2)})S(y_{(2)})S(x_{(2)})z_{(1)}
\\
&=\sum x_{(1)}y_{(1)}S^{-1}(z_{(4)})S(y_{(2)})z_{(3)}S^{-1}(z_{(2)})S(x_{(2)})z_{(1)}
\\
&=\sum x_{(1)}y_{(1)}S^{-1}(z_{(4)})S(y_{(2)})z_{(3)}S(x_{(2)})x_{(3)}S^{-1}(z_{(2)})S(x_{(4)})z_{(1)}
\\
&=\sum \big(x_{(1)}\triangleright \dot Y(y\otimes z_{(2)})\big)\dot Y(x_{(2)}\otimes z_{(1)}).
\end{align*}
Similarly, we have
\begin{align*}
\dot Y(x\otimes yz)&=\sum x_{(1)}S^{-1}\big((yz)_{(2)}\big)S(x_{(2)})(yz)_{(1)}
\\
&=\sum x_{(1)}S^{-1}(z_{(4)})S(x_{(2)})z_{(3)}S^{-1}(z_{(2)})x_{(3)}S^{-1}(y_{(2)})S(x_{(4)})y_{(1)}z_{(1)}
\\
&=\sum\dot Y( x_{(1)}\otimes z_{(2)})\big(\dot Y(x_{(2)}\otimes y)\triangleleft z_{(1)}\big).
\end{align*}
\end{proof}

\begin{lemma}\label{y3}
For $x,y\in U_h^0$, we have
\begin{align*}
\dot Y(x\otimes y)=\varepsilon (x)\varepsilon (y).
\end{align*}

\end{lemma}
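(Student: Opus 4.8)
\textbf{Proof proposal for Lemma \ref{y3}.}

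The plan is to compute $\dot Y(x\otimes y)$ directly from the definition, exploiting that $U_h^0$ is commutative and cocommutative, that $S$ and $S^{-1}$ restrict to $H\mapsto -H$ on $U_h^0$, and that the only grading-preserving monomials available are in $H$ alone. First I would reduce to the case where $x$ and $y$ are (topological) monomials in $H$; by continuity and $\mathbb{Q}[[h]]$-linearity of $\dot Y$ it suffices to treat $x=H^a$, $y=H^b$, or even better to treat the group-like-type generators $K^{\pm 2}$ (which generate $\uqze$) together with $H$, whichever makes the coproduct bookkeeping cleanest. Since $H$ is primitive, $\Delta(H)=H\otimes 1+1\otimes H$, so for $x=H$ we get $\dot Y(H\otimes y)=HS^{-1}(y)y_{(1)}\cdot(\text{terms}) + S^{-1}(y_{(2)})S(H)y_{(1)}\cdots$ — but the cleanest route is to use that $K=q^{H/2}$ is group-like in $U_h^0$, so it is enough to verify the identity on products of $K^{\pm1}$ and then extend.

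The key computation: for group-like $x$ (so $\Delta(x)=x\otimes x$, $S(x)=S^{-1}(x)=x^{-1}$) and group-like $y$, we have
\begin{align*}
\dot Y(x\otimes y)=\sum x_{(1)}S^{-1}(y_{(2)})S(x_{(2)})y_{(1)}=x\,y^{-1}\,x^{-1}\,y=1,
\end{align*}
using commutativity of $U_h^0$ in the last step, while $\varepsilon(x)\varepsilon(y)=1$ as well. For a general element of $U_h^0$, which is a topological linear combination of monomials in $H$ (equivalently, a limit of linear combinations of the $K^n$, $n\in\mathbb{Z}$, or one works with $H$ primitive directly), one then extends by bilinearity and continuity: writing $x=\sum_i \lambda_i x_i$, $y=\sum_j \mu_j y_j$ with $x_i,y_j$ group-like, $\dot Y(x\otimes y)=\sum_{i,j}\lambda_i\mu_j\cdot 1=\big(\sum_i\lambda_i\big)\big(\sum_j\mu_j\big)=\varepsilon(x)\varepsilon(y)$, since $\varepsilon(x_i)=1$ for group-like $x_i$. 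Alternatively, argue monomial-by-monomial in $H$: using $\Delta^{[m]}$ on $H^n$ and the fact that $S^{-1}(y_{(2)})S(x_{(2)})$ commutes past $x_{(1)}$ and $y_{(1)}$ inside the commutative algebra $U_h^0$, the expression collapses to $\sum x_{(1)}S(x_{(2)})\cdot S^{-1}(y_{(2)})y_{(1)}=\big((\id*S)(x)\big)\big((S^{-1}*\id)(y)\big)=\varepsilon(x)\,1\cdot\varepsilon(y)\,1$, where I use the antipode axioms $\mu\circ(\id\otimes S)\circ\Delta=\eta\varepsilon$ and its $S^{-1}$-variant.

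I do not expect a serious obstacle here; the one point requiring a little care is the reordering step, i.e.\ justifying that inside $U_h^0$ one may freely permute the four factors $x_{(1)},S^{-1}(y_{(2)}),S(x_{(2)}),y_{(1)}$ past one another. This is exactly where commutativity of $U_h^0$ (all four factors lie in $U_h^0$ because $S,S^{-1}$ preserve $U_h^0$ and $\Delta(U_h^0)\subset U_h^0\hat\otimes U_h^0$) is used, and once that is granted the antipode axioms finish the proof immediately. A secondary bookkeeping point is the passage from monomials/group-likes to all of $U_h^0$ via $h$-adic continuity of $\dot Y$, which is built into its definition as a continuous map.
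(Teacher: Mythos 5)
Your ``alternative'' argument -- reordering the four factors $x_{(1)}$, $S^{-1}(y_{(2)})$, $S(x_{(2)})$, $y_{(1)}$ inside the commutative algebra $U_h^0$ and then applying the antipode axioms $\sum x_{(1)}S(x_{(2)})=\varepsilon(x)$ and $\sum S^{-1}(y_{(2)})y_{(1)}=\varepsilon(y)$ -- is correct, and it differs from the paper's proof, which instead expands $\dot Y(H^m\otimes H^n)$ directly via $\Delta(H^m)=\sum_i\binom{m}{i}H^i\otimes H^{m-i}$ and observes that the resulting alternating sums vanish unless $m=n=0$. Your route is slicker: it bypasses the binomial bookkeeping, needs no reduction to monomials at all, and works for arbitrary $x,y\in U_h^0$ in one stroke once one notes that $\Delta$, $S$, $S^{-1}$ preserve $U_h^0$ and that $U_h^0$ is commutative. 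The paper's proof is more elementary and makes the $\delta_{m,0}\delta_{n,0}$ cancellation explicit.

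One genuine flaw in the first half of your write-up: the reduction to group-like elements does not work. The group-like elements of $U_h^0$ are the $\exp(\lambda H)$ with $\lambda\in h\mathbb{Q}[[h]]$ (for convergence), and a finite $\mathbb{Q}[[h]]$-linear combination $\sum_i c_i\exp(\lambda_i H)$ has the coefficient of $H$ equal to $\sum_i c_i\lambda_i\in h\mathbb{Q}[[h]]$; hence $H$ itself is not in the $h$-adic closure of the span of group-likes. So ``verify on $K^{\pm1}$ and extend by continuity'' is not a valid reduction. This is harmless here only because your alternative argument does not rely on it, but the group-like computation should be viewed as motivation rather than proof.
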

\begin{proof}
It is enough to prove
\begin{align*}
\dot Y(H^m\otimes H^n)&=\delta _{m,0}\delta _{n,0},
\end{align*}
for $m,n\geq 0$.
By using the formula
\begin{align*}
\Delta (H^m)=\sum_{i=0}^{m} \begin{pmatrix}m\\i \end{pmatrix}H^i\otimes H^{m-i},
\end{align*}
for $m\geq 0,$
we have
\begin{align*}
\dot Y(H^m\otimes H^n)&=
\sum_{i=0}^{m}\sum_{j=0}^{n} \begin{pmatrix}m\\i \end{pmatrix} \begin{pmatrix}n\\j \end{pmatrix}H^{i}(-H)^j(-H)^{m-i}H^{n-j}
\\&=\Bigg(\sum_{i=0}^{m} (-1) ^{i}\begin{pmatrix}m\\i \end{pmatrix}\Bigg) \Bigg( \sum_{j=0}^{n} (-1)^j \begin{pmatrix}n\\j \end{pmatrix}\Bigg)(-1)^mH^{n+m}
\\
&=\delta _{n,0}\delta _{m,0}.
\end{align*}
\end{proof}

\begin{lemma}\label{qq}
We have
\begin{align}
&\dot Y(\uqzq \otimes \uq)\subset \uqe, \label{qq1}
\\
&\dot Y(\uq \otimes \uqzq)\subset \uqe.\label{qq2}
\end{align}
\end{lemma}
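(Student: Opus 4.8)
\textbf{Proof proposal for Lemma \ref{qq}.}

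The plan is to reduce both inclusions to the behaviour of $\dot Y$ on algebra generators, using the bilinear expansion formulas of Lemma \ref{y2} together with the adjoint-action stability results Proposition \ref{Habi} and Corollary \ref{Habii}. Since $\dot Y(x\otimes\cdot)$ and $\dot Y(\cdot\otimes y)$ behave well under products in each variable, it suffices to handle $x$ ranging over the generators $K^{\pm 1},\tilde E^{(m)},\tilde F^{(m)}$ of $\uqzq$ and $y$ over the generators $K^{\pm 1},e,f$ of $\uq$ (for (\ref{qq1})), and symmetrically for (\ref{qq2}); the general case then follows by induction on word length. Concretely, for (\ref{qq1}) I would argue: given $x=x_1\cdots x_r$ a product of generators of $\uqzq$, formula (\ref{y21}) expresses $\dot Y(x\otimes z)$ as a sum of terms of the shape $\big(a_{(1)}\triangleright \dot Y(x_r\otimes z_{(2)})\big)\cdots$, i.e. iterated adjoint actions of elements of $\uqzq$ (note $\Delta$ preserves $\uqzq$ by (\ref{De1})--(\ref{De2}) and the $S$-formulas) applied to single-generator values $\dot Y(x_i\otimes z')$; and similarly (\ref{y22}) lets one reduce the second slot to generators of $\uq$, turning the remaining operations into right actions $\triangleleft\,\uqzq$. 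So the whole computation collapses to: (i) check $\dot Y(\text{generator of }\uqzq\otimes\text{generator of }\uq)\in\uqe$ — in fact I expect the sharper statement that these land in $\uqe$, with the $K$-generator cases landing in $\uqze$; and (ii) invoke $\uqzq\triangleright\uqe\subset\uqe$ (Proposition \ref{Habi}) and $\uqe\triangleleft\uqzq\subset\uqe$ (Corollary \ref{Habii}) to propagate.

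For the base cases in step (i) I would compute directly from the definition $\dot Y(x\otimes y)=\sum x_{(1)}S^{-1}(y_{(2)})S(x_{(2)})y_{(1)}$ using the explicit coproducts and antipodes of Section \ref{preuni}. The purely Cartan case $\dot Y(K^{a}\otimes K^{b})$ is immediate from Lemma \ref{y3} (it equals $\varepsilon(K^a)\varepsilon(K^b)=1$, which lies in $\uqze\subset\uqe$). The remaining cases are a short finite list: $\dot Y(K^{\pm1}\otimes e)$, $\dot Y(K^{\pm1}\otimes f)$, $\dot Y(\tilde E^{(m)}\otimes K^{\pm1})$, $\dot Y(\tilde F^{(m)}\otimes K^{\pm1})$, $\dot Y(\tilde E^{(m)}\otimes e)$, $\dot Y(\tilde E^{(m)}\otimes f)$, $\dot Y(\tilde F^{(m)}\otimes e)$, $\dot Y(\tilde F^{(m)}\otimes f)$. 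Each is a monomial-times-$q$-power computation; for instance $\dot Y(\tilde E^{(m)}\otimes f)$ expands via (\ref{De1}) into a sum over $j$ of terms built from $\tilde E^{(m-j)}K^j$, $S(\tilde E^{(j)})=\pm q^{\bullet}K^{-j}\tilde E^{(j)}$, and $S^{\pm1}(f)$, and one checks the $K$-powers that appear are even and the $E$/$F$-content is a product of $e$'s and $\tilde E^{(i)}$'s — but since we only need membership in $\uqe$ (the subalgebra generated by $K^{\pm2},e,f$) I would rather re-express everything in terms of $e,f,K^{\pm2}$ from the start, using $\tilde E^{(m)}\in\uqzq$ acting adjointly and the relation between $\tilde E^{(m)}$ and powers of $e$ modulo $\uqzq$-multiples. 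The cleanest route is actually to not fight with $\tilde E^{(m)},\tilde F^{(m)}$ at all for the base cases involving them: use (\ref{ad1})--(\ref{ad2}), which rewrite $\dot Y(x\otimes y)=\sum (x\triangleright S^{-1}(y_{(2)}))y_{(1)}$, so $\dot Y(\tilde E^{(m)}\otimes e)=\sum(\tilde E^{(m)}\triangleright S^{-1}(e))e_{(1)}+\cdots$, and $S^{-1}(e)\in K^{-1}\cdot(\text{const})\cdot e$, $e\in\uq$, whence Proposition \ref{Habi}'s second clause $\uqzq\triangleright K\uqe\subset K\uqe$ (and the first clause) does the work once one is careful that the leftover $K$-parity is even.

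The main obstacle I anticipate is precisely the $K$-parity bookkeeping: $\dot Y$ naively produces factors of $K$ to the first power (from $S$, $S^{-1}$, and from the coproducts of $E,F$), and $\uqe$ only contains even powers of $K$, so one must verify that across the whole reduction these odd contributions always cancel in pairs. The structural reason they do is that $\tilde E^{(j)},\tilde F^{(j)}$ carry a $K^{j}$ when one uses the comultiplication, $e$ and $f$ each carry one $K$-type factor, and $S$ flips $K\mapsto K^{-1}$, so the total $K$-exponent in any surviving monomial of $\dot Y(x\otimes y)$ for $x\in\uqzq,\ y\in\uq$ is congruent mod $2$ to (degree of $x$) $+$ (degree of $y$) $-$ (degree of $x$) $-$ (degree of $y$) $\equiv 0$; I would make this precise by tracking, for homogeneous $x,y$, the fact that $\dot Y(x\otimes y)$ is homogeneous of degree $|x|+|y|-|x|-|y|$... more carefully, that $\dot Y(x\otimes y)\in U_h$ sits in the piece where the $K$-exponent is determined mod $2$ by $|x|+|y|$, and combine with the observation that $e,f$ are built so that each raises the $K$-exponent-mod-2 consistently. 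Once that parity lemma is isolated, (\ref{qq2}) is entirely symmetric to (\ref{qq1}) — swap the roles of the two tensor slots, use (\ref{y22}) in place of (\ref{y21}), and use the $\triangleleft$-version of the adjoint stability (Corollary \ref{Habii}) — so I would prove (\ref{qq1}) in full and then indicate that (\ref{qq2}) follows mutatis mutandis, or alternatively deduce it from (\ref{qq1}) via the identity $\dot Y(x\otimes y)$'s behaviour under the relevant anti-automorphism of $U_h$ if one is available cheaply.
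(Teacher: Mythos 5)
Your overall toolkit is the right one --- the rewriting (\ref{ad1}) $\dot Y(x\otimes y)=\sum\big(x\triangleright S^{-1}(y_{(2)})\big)y_{(1)}$, Proposition \ref{Habi} (both clauses), and the symmetric use of (\ref{ad2}) and Corollary \ref{Habii} for (\ref{qq2}) --- and your instinct that the $K$-parity bookkeeping is the crux is correct. But two things are off. First, the reduction to generators of $\uqzq$ via Lemma \ref{y2} is unnecessary overhead: formula (\ref{ad1}) already isolates the $\uqzq$-slot as a single adjoint action $x\triangleright(\cdot)$, and Proposition \ref{Habi} digests an arbitrary $x\in\uqzq$ all at once, so only the $\uq$-slot needs analysis.

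Second, and more seriously, your proposed resolution of the parity problem does not close the gap. You suggest deducing the $K$-parity from homogeneity in the $E,F$-degree $|\cdot|$, arguing that $\dot Y(x\otimes y)$ ``sits in the piece where the $K$-exponent is determined mod $2$ by $|x|+|y|$.'' But the $E,F$-grading and the $K$-parity decomposition $\uq=\uqe\oplus K\uqe$ are independent: $K$ has $E,F$-degree $0$ and odd $K$-parity, while $e$ has $E,F$-degree $1$ and even $K$-parity, so $|x|+|y|$ carries no information about the $K$-exponent of the output. What actually makes the parity work --- and what the paper isolates as the key observation --- is
\[
(1\otimes S^{\pm 1})\Delta(\uq)\ \subset\ \bigoplus_{i=0,1}\Big(K^{i}\uqe\otimes K^{i}\uqe\Big),
\]
i.e., the two tensorands of $(1\otimes S^{\pm 1})\Delta(y)$ always carry the \emph{same} $K$-parity. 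This is checked on the generators $K^{\pm 1},e,f$ of $\uq$ (for instance $(1\otimes S)\Delta(e)=e\otimes 1-K\otimes K^{-1}e$, both summands balanced) and propagates to products because $\Delta$ is multiplicative and $S$ is anti-multiplicative. Granting this, (\ref{ad1}) and Proposition \ref{Habi} give
\[
\dot Y(\uqzq\otimes\uq)\ \subset\ \sum_{i=0,1}\big(\uqzq\triangleright K^{i}\uqe\big)K^{i}\uqe\ \subset\ \sum_{i=0,1}\big(K^{i}\uqe\big)\big(K^{i}\uqe\big)\ \subset\ \uqe,
\]
and (\ref{qq2}) is symmetric via (\ref{ad2}) and Corollary \ref{Habii}, as you indicate. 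Without the matched-parity statement, the $i=1$ contributions leave stray single $K$'s and the argument does not close.
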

\begin{proof}
We prove (\ref{qq1}). Then (\ref{qq2}) is similar. Note that
\begin{align}
(1\otimes S^{\pm 1})\Delta  (\uq)\subset \bigoplus_{i=0,1} \Big(K^i\uqe\otimes K^i\uqe\Big),\label{bibi}
\end{align}
since we have
\begin{align}
&(1\otimes S^{\pm 1})\Delta  (K^{\pm 1})=K^{\pm 1}\otimes K^{\mp 1},\label{bibi1}
\\
&(1\otimes S^{\pm 1})\Delta  (\f{n})=\sum_{j=0}^n(-1)^nq^{-\frac{1}{2}n(n\mp 1)}\f{n-j}K^j\otimes K^{-j}\f{j},
\\
&(1\otimes S^{\pm 1})\Delta  (e)=e\otimes 1-K\otimes K^{-1}e.\label{bibi3}
\end{align}
Then, (\ref{ad1}) and  (\ref{bibi}) imply
\begin{align*}
\dot Y(\uqzq  \otimes \uq) &\subset  \sum_{i=0,1} \Big( \uqzq \triangleright K^i\uqe \Big) K^i\uqe.
\end{align*} 
By Proposition \ref{Habi}, we have
\begin{align*} 
\sum_{i=0,1} \Big( \uqzq \triangleright K^i\uqe \Big) K^i\uqe &\subset \sum_{i=0,1} \big(K^i\uqe\big)\cdot \big(K^i\uqe\big)\subset \uqe.
\end{align*}
This completes the proof.
\end{proof}
In what follows, we use the notations $D^{\pm 1}=\sum D'_{\pm }\otimes D''_{\pm} $.
\begin{lemma}\label{qd}
We have
\begin{align}
&\sum \dot Y( \uqzq \otimes \uqz D_{\pm }')\otimes \dot Y( \uq \otimes \uqz D_{\pm}'') \subset (\uqe)^{\otimes 2},\label{qd1}
\\
&\sum \dot Y( \uqzq \otimes \uqz D_{\pm}')\otimes \dot Y(\uqz D_{\pm}''\otimes \uq )\subset  (\uqe)^{\otimes 2},\label{qd4}
\\
&\sum \dot Y(\uqz D_{\pm}'\otimes \uqzq )\otimes \dot Y(\uq \otimes \uqz D_{\pm}'')\subset  (\uqe)^{\otimes 2},\label{qd3}
\\
&\sum \dot Y(\uqz D_{\pm}'\otimes \uqzq )\otimes \dot Y(\uqz D_{\pm}''\otimes \uq )\subset   (\uqe)^{\otimes 2}.\label{qd2}
\end{align}
\end{lemma}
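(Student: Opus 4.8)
The plan is to reduce all four inclusions to a single computation by exploiting the symmetries already available. First I would observe that $D^{\pm 1} = \sum D'_\pm \otimes D''_\pm$ is symmetric in its two tensorands by (\ref{d1}), so the roles of the two factors of $D^{\pm 1}$ are interchangeable; moreover each of $D'_\pm, D''_\pm$ is a power of $K$ (up to a scalar in the appropriate completion), hence lies in $\uqz$ and satisfies $\uqz D'_\pm \subset \uqz$, so the factors $\uqz D'_\pm$ and $\uqz D''_\pm$ are themselves just $\uqz$-cosets of the form $\uqz K^j$. Thus each of (\ref{qd1})--(\ref{qd2}) asks, after unwinding, that a sum of the form $\sum_j c_j\, \dot Y(A \otimes \uqz K^{j}) \otimes \dot Y(B \otimes \uqz K^{-j})$ (or with the second $\dot Y$ having its arguments swapped) lands in $(\uqe)^{\otimes 2}$, where $\{A,B\}=\{\uqzq,\uq\}$ in some order.

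Next I would handle the individual factors using the two lemmas just proved: Lemma \ref{qq} gives $\dot Y(\uqzq \otimes \uq) \subset \uqe$ and $\dot Y(\uq \otimes \uqzq) \subset \uqe$, and the point is that these continue to hold when one of the slots is enlarged by a factor $\uqz$, i.e. $\dot Y(\uqzq \otimes \uqz\, \uq) \subset \uqe$, because $\uqz \subset \uqzq$ and $\dot Y$ is built from $\Delta$, $S^{\pm 1}$ and multiplication in a way that respects these subalgebras — concretely, one can absorb the $\uqz$ factor using the module-algebra identities (\ref{y21})--(\ref{y22}) of Lemma \ref{y2} together with Lemma \ref{y3}, which says $\dot Y$ is trivial on the Cartan part, and Proposition \ref{Habi}/Corollary \ref{Habii}, which control $\triangleright$ and $\triangleleft$ on $K\uqe$. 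So each tensorand $\dot Y(\cdots)$ already lies in $\uqe$ on its own. The remaining issue is purely the bookkeeping of the $K$-powers $D'_\pm \otimes D''_\pm$: one must check that the powers of $K$ introduced are even, so that the product stays in the even subalgebra $\uqe$ rather than merely in $\uq$.

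To control the parity I would use the degree/grading: $D = q^{\frac14 H\otimes H}$, so $D(x\otimes 1) = (x\otimes K^{|x|})D$ and $D(1\otimes y)=(K^{|y|}\otimes y)D$ by (\ref{exD}); tracking how $D^{\pm 1}$ commutes past the elements fed into $\dot Y$, together with the fact that $\dot Y(x\otimes y)$ and $\bar Y(x\otimes y)$ shift degrees by $|x|+|y|$ in a controlled way, one sees that the total $K$-exponent appearing in each tensorand of the left-hand side is always even. The cleanest way to organize this is probably to expand $D^{\pm 1}$ as $\sum_n (\text{scalar}) \cdot K^{\pm n}\otimes K^{\pm n}$ (its actual form), so that $\uqz D'_\pm = \uqz K^{\pm n}$ and $\uqz D''_\pm = \uqz K^{\pm n}$ with the \emph{same} $n$ in both factors, and then invoke the refined versions of Lemma \ref{qq} (with a $K$-coset in one slot) to conclude that $\dot Y(\cdot \otimes \uqz K^{\pm n})$ lies in $K^{\pm n}\uqe$ in each case, whence the tensor product of the two factors lies in $(K^{\pm n}\uqe)\otimes (K^{\pm n}\uqe) \subset (\uqe)^{\otimes 2}$ since $K^{\pm 2n}\uqe \subset \uqe$. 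The four inclusions (\ref{qd1})--(\ref{qd2}) then differ only in which of $\uqzq$, $\uq$ sits in which slot and whether the arguments of the second $\dot Y$ are swapped, all of which are covered by the two symmetric statements of Lemma \ref{qq}.

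The main obstacle I anticipate is not any single deep step but making the $\uqz$-absorption precise: one needs that feeding an extra $K$ or $K^{-1}$ (equivalently an element of $\uqze$ times $K^{0}$ or $K^{1}$) into either argument of $\dot Y$ keeps the output inside $\uqe$ or $K\uqe$ as appropriate. This is exactly the content one gets by combining the comultiplication formula (\ref{bibi})--(\ref{bibi3}) used in the proof of Lemma \ref{qq} with Proposition \ref{Habi}; so the real work is to state and prove a slightly strengthened form of Lemma \ref{qq} allowing a $K^{i}\uqe$ in each slot, and then the present lemma follows by bilinearity and the parity count above.
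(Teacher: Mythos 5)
Your reduction plan — peel off the $\uqz$-factors via Lemma \ref{y2} and (\ref{exD}), reduce to a ``weaker'' inclusion with bare $D'_{\pm}$, $D''_{\pm}$, and then invoke Lemma \ref{qq}, Proposition \ref{Habi}/Corollary \ref{Habii}, and the parity of $K$-powers — is the same high-level strategy the paper uses. The paper first derives precisely such a weaker inclusion (their (\ref{eno})) and then absorbs the extra $\uqz$'s by (\ref{exD}), (\ref{y21})--(\ref{y22}), and Lemma \ref{qq}, as you describe. Where your argument has a genuine gap is in the treatment of $D^{\pm 1}$ itself.

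First, you want to expand $D$ as $\sum_n (\text{scalar})\,K^{\pm n}\otimes K^{\pm n}$ and treat each tensorand separately. No such expansion exists: $D = q^{\frac{1}{4}H\otimes H}$ is not a (even topological) linear combination of elements of the form $K^{a}\otimes K^{b}$, since on a weight vector $v\otimes w$ with $Hv=mv$, $Hw=m'w$ it acts by $q^{mm'/4}$, whereas $K^{a}\otimes K^{b}$ acts by $q^{(am+bm')/2}$. So ``$\uqz D'_{\pm}=\uqz K^{\pm n}$'' is not a meaningful statement, and the parity count built on top of it is unfounded. Second, even granting such an expansion formally, your conclusion $(K^{\pm n}\uqe)\otimes(K^{\pm n}\uqe)\subset(\uqe)^{\otimes 2}$ fails whenever $n$ is odd, since $K^{n}\uqe\not\subset\uqe$ for odd $n$; the tensorands do not interact, so you cannot ``pair'' the two $K^{\pm n}$'s across the $\otimes$.

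The paper gets around both issues with a cancellation inside a single tensorand. Using (\ref{d2}) and (\ref{d4}) one expands $(1\otimes S^{\pm 1}\otimes 1\otimes S^{\pm 1})(\Delta\otimes\Delta)(D)$ into a product of two copies of $D$ and two copies of $D^{-1}$ arranged so that, after commuting $K$'s through via (\ref{exD}), all the $D$-factors cancel and one is left with
\begin{align*}
\sum \dot Y(a\otimes D')\otimes \dot Y(D''\otimes b)=\sum \big(a\triangleright K^{-|b_{(2)}|}\big)K^{|b_{(2)}|}\otimes S^{-1}(b_{(2)})b_{(1)},
\end{align*}
with $a\in\uqzq$ and $b\in\uq$ homogeneous. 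Now the key point is that the compensating $K^{|b_{(2)}|}$ sits in the \emph{same} tensorand as $a\triangleright K^{-|b_{(2)}|}\in K^{|b_{(2)}|}\uqe$ (by Corollary \ref{Habii}), so the first factor lies in $K^{2|b_{(2)}|}\uqe\subset\uqe$ regardless of the parity of $|b_{(2)}|$; and (\ref{bibi1})--(\ref{bibi3}) show $S^{-1}(b_{(2)})b_{(1)}\in\uqe$. That within-a-tensorand doubling of the $K$-power is the mechanism you were reaching for with your ``parity'' heuristic, but it cannot be obtained by expanding $D$ termwise — one must run the $\Delta$/antipode calculus to make the $D$'s cancel first. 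The rest of your outline (reducing (\ref{qd1})--(\ref{qd2}) to one another by symmetry, and absorbing the $\uqz$-cosets via Lemma \ref{y2} and Lemma \ref{qq}) is correct and matches the paper.
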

\begin{proof}
First, we  prove (\ref{qd4}) with $D$. 
Let us  assume a weaker inclusion
\begin{align}
\sum \dot Y( \uqzq \otimes D')\otimes \dot Y(  D''\otimes  \uq) \subset (\uqe)^{\otimes 2},\label{eno}
\end{align}
which we prove later.
We have
\begin{align}
\begin{split}\label{Dd}
&\sum \dot Y( \uqzq \otimes \uqz D')\otimes \dot Y( \uqz D'' \otimes \uq)
\\
 =&\sum \dot Y( \uqzq \otimes D'\uqz)\otimes \dot Y( \uqz D'' \otimes \uq)
\\
\subset &\sum\dot Y( \uqzq \otimes \uqz)\big(\dot Y(\uqzq \otimes D')\triangleleft \uqz \big)\otimes\big(\uqz\triangleright \dot Y( D''\otimes \uq)\big)
\dot Y( \uqz\otimes \uq)
\\
\subset &\sum\dot Y( \uqzq \otimes \uqz)\big(\uqe \triangleleft \uqz \big)\otimes\big( \uqz\triangleright \uqe \big)\dot Y( \uqz \otimes \uq)
\\
\subset &\sum\dot Y( \uqzq \otimes \uqz)\cdot \uqe \otimes\uqe \cdot \dot Y( \uqz \otimes \uq)
\\
\subset& (\uqe)^{\otimes 2},
\end{split}
\end{align}
where the identity follows from  (\ref{exD}), 
the first inclusion follows from Lemma \ref{y2}, $\Delta  (X)\subset X^{\otimes 2}$, for $X=\uq,\uqz,\uqzq,$ and the last inclusion follows from   Lemma \ref{qq}.

Now, we prove (\ref{eno}). By (\ref{d2}) and (\ref{d4}), we have
\begin{align*}
( 1\otimes S^{\pm 1}\otimes 1\otimes S^{\pm 1})(\Delta \otimes \Delta  )(D)=\sum D'_{1,-}D'_{1}\otimes D'_{2}D'_{2,-}\otimes D''_{2,-}D''_{1}\otimes D''_{2}D''_{1,-},
\end{align*}
where  $D=\sum D_{1}'\otimes D_{1}''=\sum D_{2}'\otimes D_{2}''$ and $D^{-1}=\sum  D_{1,-}'\otimes D_{1,-}''=\sum D_{2,-}'\otimes D_{2,-}''$.

For  $a\in \uqzq$  and $b\in \uq$ homogeneous, we have
\begin{align}
\begin{split}\label{ad3}
&\sum \dot Y(a\otimes D')\otimes \dot Y(D''\otimes b)
\\
&=\sum a_{(1)}D'_2D'_{2,-}S(a_{(2)}) D'_{1,-}D'_{1}\otimes  D''_{2,-}D''_{1}S^{-1}(b_{(2)}) D''_{2}D''_{1,-}b_{(1)}
\\
&=\sum a_{(1)}D'_2D'_{2,-}K^{-|b_{(2)}|}S(a_{(2)}) K^{|b_{(2)}|}D'_{1,-}D'_{1}\otimes  S^{-1}(b_{(2)})D''_{2,-}D''_{1} D''_{2}D''_{1,-}b_{(1)}
\\
&=\sum a_{(1)}K^{-|b_{(2)}|}S(a_{(2)}) K^{|b_{(2)}|}\otimes  S^{-1}(b_{(2)})b_{(1)}
\\
&=\sum (a\triangleright K^{-|b_{(2)}|})K^{|b_{(2)}|}\otimes   S^{-1}(b_{(2)})b_{(1)},
\end{split}
\end{align}
where by (\ref{bibi1})--(\ref{bibi3}), we can assume that  $ S^{-1}(b_{(2)})b_{(1)}\in \uqe$, with $b_{(1)},b_{(2)}\in \uq$ homogeneous. By Corollary \ref{Habii}, we have $a\triangleright K^{-|b_{(2)}|} \in K^{|b_{(2)}|}\uqe$.
Hence we have
\begin{align*}
\sum (a\triangleright K^{-|b_{(2)}|})K^{|b_{(2)}|}\otimes   S^{-1}(b_{(2)})b_{(1)}\subset (K^{|b_{(2)}|}\uqe)K^{|b_{(2)}|}\otimes \uqe\subset (\uqe)^{\otimes 2},
\end{align*}
which completes the proof of  (\ref{eno}).

We can prove  (\ref{qd1}), (\ref{qd4}) with $D^{-1}$, (\ref{qd3}), and  (\ref{qd2})  almost in the same way   by using 
\begin{align*}
&\sum \dot Y(a\otimes D_{\pm}')\otimes \dot Y(b\otimes D_{\pm}'')
=\sum (a\triangleright K^{\pm |b_{(2)}|})K^{\mp |b_{(2)}|}\otimes b_{(1)}S(b_{(2)}),
\\
&\sum \dot Y(a\otimes D_{-}')\otimes \dot Y(D_{-}''\otimes b)
=\sum (a\triangleright K^{|b_{(2)}|})K^{-|b_{(2)}|}\otimes S^{-1}(b_{(2)})b_{(1)},
\\
&\sum \dot Y(D_{\pm}'\otimes a)\otimes \dot Y(b\otimes D_{\pm}'')
=\sum  K^{\mp |b_{(2)}|}(K^{\pm |b_{(2)}|}\triangleleft a)\otimes    b_{(1)}S(b_{(2)}),
\\
&\sum \dot  Y(D_{\pm}'\otimes a)\otimes \dot Y(D_{\pm}''\otimes b)
=\sum K^{\pm |b_{(2)}|}(K^{\mp |b_{(2)}|}\triangleleft a)\otimes S^{-1}(b_{(2)})b_{(1)},
\end{align*}
for $a\in \uqzq$  and $b\in \uq$ homogeneous.
\end{proof}

\subsection{Outline of the proof of Theorem \ref{1}}
We give an outline of the proof of Theorem \ref{1}.
There are two steps.
The first step  is in Section \ref{proof}. We prove the following proposition.
\begin{proposition}\label{2}
Let $T\in BT_n$ be a boundary bottom tangle and $(\tilde T;g,g_1,\ldots, g_n)$ a boundary data for $T$.
For each state  $s\in \mathcal{S}(\tilde T)$ we have
\begin{align*}
\mu ^{[g_1,\ldots ,g_n]}\bar  Y^{\otimes g}(J_{\tilde T,s}) \in (\bar U_q^{\ev})^{\otimes n}.
\end{align*}
\end{proposition}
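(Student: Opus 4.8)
The plan is to analyze the single-state invariant $J_{\tilde T,s}$ explicitly, track the factors of $D^{\pm 1}$ coming from crossings through the maps $\bar Y$, and reduce everything to the commutator-map estimates of Lemmas \ref{ydyb}--\ref{qd}. First I would recall that $\tilde T=\nu_b^{\otimes g}(T')$ for a genuine bottom tangle $T'\in BT_g$, and that by \eqref{cd} we have $J_{\tilde T}=\nu^{\otimes g}(J_{T'})$; correspondingly each single-state contribution $J_{\tilde T,s}$ is obtained from a single-state contribution $J_{T',s'}$ of $T'$ by inserting, at the $i$th pair of strands, the $R$-matrix factors $\beta_{k_i}\otimes\alpha_{k_i}$ from $\nu$. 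Because $J_{T',s'}$ is a product of labels read along the components, and each such label lies in $\uqzq$ up to finitely many factors of $D^{\pm1}$ (the $D$'s come only from the universal $R$-matrix at each crossing), I would first establish the normal form: the $j$th tensorand of $J_{\tilde T,s}$ is a product of elements of $\uqzq$, interspersed with the two tensorands $D_\pm',D_\pm''$ of finitely many $D^{\pm1}$'s whose two legs sit in possibly different tensorands. This is exactly the kind of input the Lemma \ref{qd} estimates are designed to digest.

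Next I would feed this through $\bar Y^{\otimes g}$. By Lemma \ref{ydyb}, $\bar Y(x\otimes y)=\sum \dot Y(x_{(1)}\otimes y_{(2)})\big((x_{(2)}\triangleright K^2)\triangleleft y_{(1)}\big)$, so $\bar Y$ is built from $\dot Y$, $\triangleright$, $\triangleleft$ and multiplication, together with comultiplications. Using that $\Delta$ preserves $\uqzq$ and $\uqz$ (the Hopf-algebra structures recalled in Section \ref{preunis}), that $\dot Y(H^m\otimes H^n)=\delta_{m0}\delta_{n0}$ (Lemma \ref{y3}) so Cartan parts are killed, and the bilinearity-type expansions of $\dot Y$ in Lemma \ref{y2}, I would decompose $\bar Y^{\otimes g}(J_{\tilde T,s})$ into a finite sum of terms each of which is a product of factors of the following shapes: $\dot Y(\uqzq\otimes\uqz D_\pm')$, $\dot Y(\uqz D_\pm''\otimes\uq)$, $\dot Y(\uqzq\otimes\uqz)$, $\dot Y(\uqz\otimes\uq)$ (controlled by Lemma \ref{qq}), together with adjoint actions $\uqzq\triangleright K^i\uqe$ and $K^i\uqe\triangleleft\uqzq$ (controlled by Proposition \ref{Habi} and Corollary \ref{Habii}). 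The two legs of each $D^{\pm1}$ land in two (possibly different) tensorands, so the relevant estimate is always one of the four displayed inclusions \eqref{qd1}--\eqref{qd2} in Lemma \ref{qd}, which say precisely that such a pair of $\dot Y$-factors sharing the legs of a $D^{\pm1}$ lands in $(\uqe)^{\otimes 2}$. Since $\uqe$ is closed under multiplication, collecting factors in each tensorand gives membership in $(\uqe)^{\otimes n}$. Finally I would apply $\mu^{[g_1,\ldots,g_n]}$: this just multiplies together several of the $2g$ output tensorands into $n$ tensorands, and since $\uqe$ is a subalgebra, the conclusion $\mu^{[g_1,\ldots,g_n]}\bar Y^{\otimes g}(J_{\tilde T,s})\in(\uqe)^{\otimes n}$ follows.

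The bookkeeping is the real content, and the main obstacle is exactly the interaction of the $D^{\pm1}$ legs across tensorands: a single $D$ contributes one factor to one tensorand and one to another, so one cannot estimate tensorands independently, and one must pair up the legs correctly and invoke the right case of Lemma \ref{qd} rather than the weaker single-tensorand Lemma \ref{qq}. I expect the proof to proceed by: (i) writing $J_{\tilde T,s}$ in the $D$-normal form described above, with an explicit index set recording which tensorands each $D^{\pm1}$ and each $R$-matrix from $\nu$ connect; (ii) expanding $\bar Y^{\otimes g}$ via Lemmas \ref{ydyb}, \ref{y2}, \ref{y3} into a finite sum of products of $\dot Y$-factors and $\triangleright/\triangleleft$-factors; (iii) matching each $D^{\pm1}$-pair to one of \eqref{qd1}--\eqref{qd2} and each remaining adjoint action to Proposition \ref{Habi}/Corollary \ref{Habii}; (iv) using closure of $\uqe$ under multiplication to conclude, first for $\bar Y^{\otimes g}(J_{\tilde T,s})\in(\uqe)^{\otimes 2g}$ and then, after $\mu^{[g_1,\ldots,g_n]}$, for $(\uqe)^{\otimes n}$. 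A secondary subtlety is keeping the $K$-powers balanced: the $\triangleright K^2$ and the $K^{\pm|b_{(2)}|}$ shifts appearing in Lemmas \ref{ydyb} and \ref{qd} must cancel so that no stray odd power of $K$ survives, which is what makes the target the \emph{even} subalgebra $\uqe$ rather than $\uq$; this is already accounted for in the statements of Lemma \ref{qd} and Corollary \ref{Habii}, so it should go through without new ideas.
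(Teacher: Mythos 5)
Your overall route — express $\bar Y$ via $\dot Y$, $\triangleright$, $\triangleleft$ and comultiplications (Lemma~\ref{ydyb}, Lemma~\ref{y2}), kill Cartan parts (Lemma~\ref{y3}), pair up the two legs of each $D^{\pm1}$ across tensorands and feed them to Lemma~\ref{qd}, feed the rest to Lemma~\ref{qq} and Proposition~\ref{Habi}/Corollary~\ref{Habii}, and close up under multiplication — is indeed the backbone of the paper's argument (the paper packages the bookkeeping in a categorical framework with the sets $\Gamma_g\supset\Gamma'_g\supset\Gamma''_g$ and the functor $\mathcal{F}'$, but the key inclusions invoked are the same ones you name). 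However, there is a genuine gap that the paper spends a full subsection (Section~\ref{prBB}) fixing: your step (iv), ``since $\uqe$ is closed under multiplication, collecting factors in each tensorand gives membership in $(\uqe)^{\otimes n}$,'' does not follow, because the individual $\dot Y$-factors do \emph{not} land in $\uqe$.

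The source of the trouble is that both legs of each crossing in $J_{\tilde T,s}$ contribute divided powers $\tilde F^{(m)}$, $\tilde E^{(m)}$ to the strands, and these get further split by the comultiplications into smaller divided powers $\tilde F^{(i_a)}$, $\tilde E^{(i_a)}$. Lemmas~\ref{qq} and~\ref{qd} each require one of the two arguments of $\dot Y$ to lie in $\uq$ (i.e., to be an \emph{un}-divided power). When both arguments are divided powers, converting one of them back to $\uq$ costs a factorial: one only gets
\begin{align*}
\dot Y\big(\uqz \tilde X_1^{(i_a)} \otimes \uqz \tilde X_2^{(i_b)}\big)\subset \big(\{\min(i_a,i_b)\}_q!\big)^{-1}\cdot \uqe,
\end{align*}
and similarly for the paired-$D$ cases. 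So what your argument produces, after collecting all factors, is membership in $I\cdot(\uqe)^{\otimes n}$ for an element $I\in\mathbb{Q}(q)$ which is a product of $\{s_p\}_q!$'s (from the $e^{s_p}$ legs of the $R$-matrix, equivalently the $\Theta$-scalars) divided by $\{\min(i_a,i_b)\}_q!$'s (from the pairings). Showing $I\in\Z$ is the real content: it requires the combinatorial constraint that each $s_p$ equals \emph{both} $\sum_t i_{(p,0,t)}$ and $\sum_u i_{(p,1,u)}$, together with Condition~A (every $\Theta$-output descends to a $\Yx$-input, so nothing escapes pairing), and the cyclotomic-valuation estimate~(\ref{uu}). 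Your proposal treats the target $\uqe$ as though each local factor were already integral; it is not, and no amount of ``closure under multiplication'' closes this gap without the global factorial bookkeeping.

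A secondary point you underplay: after applying the comultiplications and the formulas $(\Delta\otimes 1)D=D_{23}D_{13}$, $(1\otimes\Delta)D=D_{13}D_{12}$, a single original $D^{\pm1}$ spreads its legs over \emph{many} tensorands, not two, and one must re-collect them into two-at-a-time pairings to invoke Lemma~\ref{qd}; the paper handles this via the moves $\stackrel{7}{\Rightarrow},\stackrel{8}{\Rightarrow}$ and the modified functor $\mathcal{F}'$ with the degree projections $\Delta_j$ (needed so the infinite sum $\Delta=\sum_j\Delta_j$ can be handled term-by-term). Your sketch should acknowledge that reducing to the two-tensorand shape of Lemma~\ref{qd} is itself a non-trivial step.
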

The second step is  in Section \ref{completion}. We  define a completion $\uqenh{n}$ of $(\bar U_q^{\ev})^{\otimes n}$ and prove Theorem \ref{1}, i.e., 
\begin{align*}
J_T=\sum_{s\in \mathcal{S}(\tilde T)}\mu ^{[g_1,\ldots ,g_n]}\bar  Y^{\otimes g}(J_{\tilde T,s}) \in \uqenh{n}.
\end{align*}

In the above two steps, we use ``graphical calculus'' because the proof is too complicated to be written down by using expressions.
In order to do so, in Section  \ref{cat}, we define two symmetric monoidal categories   $\mathcal{A}$, $\mathcal{M}$, and a functor $\mathcal{F}\co \mathcal{A} \rightarrow \mathcal{M}$.
\section{The  categories $\mathcal{M}, \mathcal{A}$ and  the  functor $\mathcal{F} \colon  \mathcal{A}\rightarrow  \mathcal{M}$}\label{cat}
In what follows, we use strict symmetric monoidal categories and strict symmetric monoidal functors.
Since we use only strict ones, we omit the word ``strict''.
For the definition of symmetric monoidal categories, see \cite{Ka}, \cite{Mc}.
\subsection{The  category $\mathcal{M}$}
We define the symmetric monoidal category  $\mathcal{M}$.
The  objects in $\mathcal{M}$ are non-negative integers.
For $k,l\geq 0,$ the morphisms from $k$ to $l$ in $\mathcal{M}$ are  $\mathbb{Z}[q,q^{-1}]$-submodules of  the $\mathbb{Q}[[h]]$-module $\Hom^{\c}_{\mathbb{Q}[[h]]}(U_h^{\hat \otimes k}, U_h^{\hat \otimes l})$ of continuous $\mathbb{Q}[[h]]$-linear maps from $U_h^{\hat \otimes k}$ to $U_h^{\hat \otimes l}$.

We  equip $\mathcal{M}$ with a symmetric monoidal category structure as follows.
\begin{itemize}
\item The identity of an object $k$ in $\mathcal{M}$ is defined by $\id_k=\mathbb{Z}[q,q^{-1}]\id_{U_h^{\hat \otimes k}}$.

 The composition of morphisms $k\stackrel{X}{\longrightarrow }l\stackrel{Y}{\longrightarrow}m$ in  $\mathcal{M}$ is defined by
\begin{align*}
Y\circ X=\Span_{\mathbb{Z}[q,q^{-1}]}\{ y\circ x\ | \ x\in X, \ y\in Y\}.
\end{align*}
\item The unit object is $0$, and the tensor product of objects $k$ and $l$ in $\mathcal{M}$ is define by $k+l$.

The tensor product of morphisms $F\co k\rightarrow l$ and $F'\co k'\rightarrow l'$ in $\mathcal{M}$ is defined by
\begin{align*}
Z\otimes Z'=\Span_{\mathbb{Z}[q,q^{-1}]}\{ \varphi (z\otimes  z')\ | \ z\in Z, \ z'\in Z'\},
\end{align*}
where  $\varphi $ is the natural  $ \mathbb{Q}[[h]]$-linear map 
\begin{align*}
\varphi \co \Hom^{\c}_{\mathbb{Q}[[h]]}(U_h^{\hat \otimes k}, U_h^{\hat \otimes l})\otimes 
\Hom^{\c}_{\mathbb{Q}[[h]]}(U_h^{\hat \otimes k'}, U_h^{\hat \otimes l'})\rightarrow 
\Hom^{\c}_{\mathbb{Q}[[h]]}(U_h^{\hat \otimes k+k'}, U_h^{\hat \otimes l+l'}).
\end{align*}
\item The symmetry $c_{k,l} \co k\otimes l\rightarrow l\otimes k$ of  objects $k$ and $l$ in $\mathcal{M}$ is  defined by
\begin{align*}
c_{k,l}=\mathbb{Z}[q,q^{-1}]\tau_{U_h^{\hat \otimes k},U_h^{\hat \otimes l}},
\end{align*}
where $\tau_{U_h^{\hat \otimes k},U_h^{\hat \otimes l}}\co U_h^{\hat \otimes k+l}\rightarrow U_h^{\hat \otimes l+k}$ is the continuous $\mathbb{Q}[[h]]$-linear map defined by
 \begin{align*}
\tau_{U_h^{\hat \otimes k},U_h^{\hat \otimes l}}(x\otimes y)= y\otimes x,
\end{align*}
for $x\in U_h^{\hat \otimes k}$ and $ y\in U_h^{\hat \otimes l}$.
\end{itemize}
It is straightforward to check the axioms of a symmetric monoidal category.
\subsection{The category $\mathcal{A}$ and the functor $\mathcal{F}\co \mathcal{A}\rightarrow \mathcal{M}$}\label{gene}
Let  $\mathcal{A}$ be the symmetric monoidal category with the unit object $I$,  freely
generated by an object $A$ and morphisms 
\begin{align*}
& \qintx{i} \in \Hom_{\mathcal{A}}(I,I),
\\
&\etax, \ex{i}, \fx{i},  \uqzx,  \uqzex \in \Hom_{\mathcal{A}}(I,A),
\\
&\Dx \in \Hom_{\mathcal{A}}(I,A^{\otimes 2}),
\\
& \varepsilonx \in \Hom_{\mathcal{A}}(A,I),
\\
&\Deltax \in \Hom_{\mathcal{A}}(A,A^{\otimes 2}),
\\
&\mux,  \Yx, \adx, \sadx \in \Hom_{\mathcal{A}}(A^{\otimes 2},A),
\end{align*}
 for $i\geq 0$. (Here $\Dx$ is one morphism, not two morphisms $\langle D^{+1}\rangle$ and $\langle D^{-1}\rangle$. )
We denote by $c_{X,Y}\co X\otimes Y\rightarrow Y\otimes X$ the symmetry of  objects $X,Y$ in $\mathcal{A}$.

We define the symmetric monoidal subcategories $\mathcal{A}_{ \mathfrak{S}}, \mathcal{A}_{\mu }, \mathcal{A}_{\Delta },$ and $ \mathcal{A}_{\mu, \Delta }$ of $ \mathcal{A}$
as follows.
On objects, we define $ \Ob(\mathcal{A}_{ \mathfrak{S}})=\Ob(\mathcal{A}_{\mu})=\Ob( \mathcal{A}_{\Delta})=\Ob(\mathcal{A}_{\mu,\Delta})=\Ob(\mathcal{A})$.
On morphisms, $\mathcal{A}_{\mathfrak{S}}$ is generated by no morphism as a symmetric monoidal category, i.e., for $k,l\geq 0,$ $k\neq l$, 
we have $\Hom_{\mathcal{A}_{ \mathfrak{S}}}(A^{\otimes k},A^{\otimes l})=\emptyset $, and  for $l\geq 0,$
 the monoid $\Hom_{\mathcal{A}_{ \mathfrak{S}}}(A^{\otimes l},A^{\otimes l})$ is isomorphic to the symmetric group $\mathfrak{S}(l)$ in a natural way.
On morphisms, $\mathcal{A}_{\mu}$ is generated by $\mux$, $\mathcal{A}_{\Delta}$ is generated by $\Deltax$,
and $\mathcal{A}_{\mu,\Delta}$ is generated by $\mux$ and $\Deltax$, as symmetric monoidal categories.

Let $\mathcal{F}\co \mathcal{A}\rightarrow \mathcal{M}$ be the  symmetric monoidal functor defined by $\mathcal{F}(A)=1$ on the objects and  
\begin{align*}
\mathcal{F}(\qintx{i} )&=\Z \{i\}_q!,
\\
\mathcal{F}(\etax)&=\Z \eta,
\\
\mathcal{F}(\varepsilonx)&=\Z \varepsilon,
\\
\mathcal{F}(\mux)&=\Z \mu,
\\
\mathcal{F}(\Deltax)&=\Z \Delta,
\\
\mathcal{F}(\Yx )&=\Z \dot Y,
\\
\mathcal{F}(\adx)&=\Z \ad,
\\
\mathcal{F}(\sadx)&=\Z \sad,
\\
\mathcal{F}(\ex{i} )&=\uqz \e{i},
\\
\mathcal{F}(\fx{i} )&=\uqz \f{i},
\\
\mathcal{F}(\uqzx )&=\uqz,
\\
\mathcal{F}(\uqzex )&=\uqze,
\\
\mathcal{F}(\Dx )&= (\uqz)^{\otimes 2} D+(\uqz)^{\otimes 2} D^{-1},
\end{align*}
for $i\geq 0$, on the morphisms.
Here, for a $\Z$- submodule $X\subset U_h^{\hat \otimes n}$, we identify $X$ with a $\Z$-submodule of $\Hom^{\c}_{\mathbb{Q}[[h]]}(\mathbb{Q}[[h]], U_h^{\hat \otimes n})$ by  identifying each $x\in X$ 
with  the map $f_x\co \mathbb{Q}[[h]]\rightarrow U_h^{\hat \otimes n}$ such that $f_x(a)= ax$ for $a\in \mathbb{Q}[[h]]$.

In what follows, we  use  diagrams of  morphisms in $\mathcal{A}$ as follows.
The generating morphisms in $\mathcal{A}$ are depicted as in Figure \ref{fig:obx}.
The composition $y\circ x$ of morphisms $x$ and $y$ in $\mathcal{A}$ is represented  as the diagram obtained by placing  the diagram of $x$ on the top  of  the diagram of $y$, see Figure \ref{fig:comp} (a).
The tensor product $z\otimes z'$ of morphisms $z$ and $z'$ in $\mathcal{A}$ is represented  as the diagram  obtained 
by placing the diagram of $z'$ to the right of  the diagram of $z$, see Figure \ref{fig:comp} (b).
\begin{figure}
\centering
\includegraphics[width=12cm,clip]{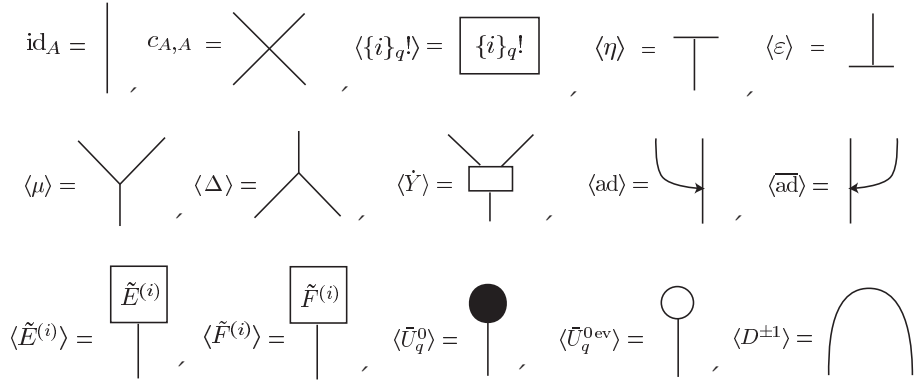}
\caption{The diagrams of  the generator morphisms in $\mathcal{A}$ }\label{fig:obx}
\end{figure}
\begin{figure}
\centering
\includegraphics[width=10cm,clip]{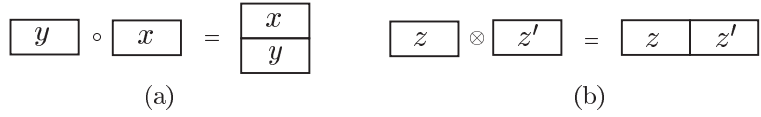}
\caption{(a) Composition  (b)  Tensor product }\label{fig:comp}.
\end{figure}

For a  diagram of a morphism $b\co A^{\otimes k}\rightarrow A^{\otimes l}$ in $\mathcal{A}$, we call the $k$ edges at the top  of  the diagram  the \textit{input edges} of $b$,
and the $l$ edges at the bottom of the diagram the \textit{output edges} of $b$.

For simplicity, a copy of a  generating morphism $f$ of $\mathcal{A}$ appearing in a diagram will be called  `an $f$' in the diagram.

 \subsection{Some morphisms in $\mathcal{A}$}
In this section, we define  morphisms $\mux^{[g_1,\ldots, g_n]}
,\Deltax^{[m_1,\ldots, m_l]}, \Rx{i},$ and $\bYx$ in $\mathcal{A}$.

For $ g_1,\ldots,g_n\geq 0$, we  define 
\begin{align*}
&\mux^{[g_1,\ldots, g_n]}\in \Hom_{\mathcal{A}}(A^{\otimes g_1+\cdots+g_n},A^{\otimes n})
\end{align*}
in a  similar way to (\ref{mug}), and 
for $m_1,\ldots , m_l\geq 0$, we define
\begin{align*}
&\Deltax^{[m_1,\ldots, m_l]}\in \Hom_{\mathcal{A}}(A^{\otimes l},A^{\otimes m_1+\cdots+m_l})
\end{align*}
in a  similar way to (\ref{Deg}), see Figure  \ref{fig:mD}. 
Clearly we have
\begin{figure}
\centering
\includegraphics[width=11cm,clip]{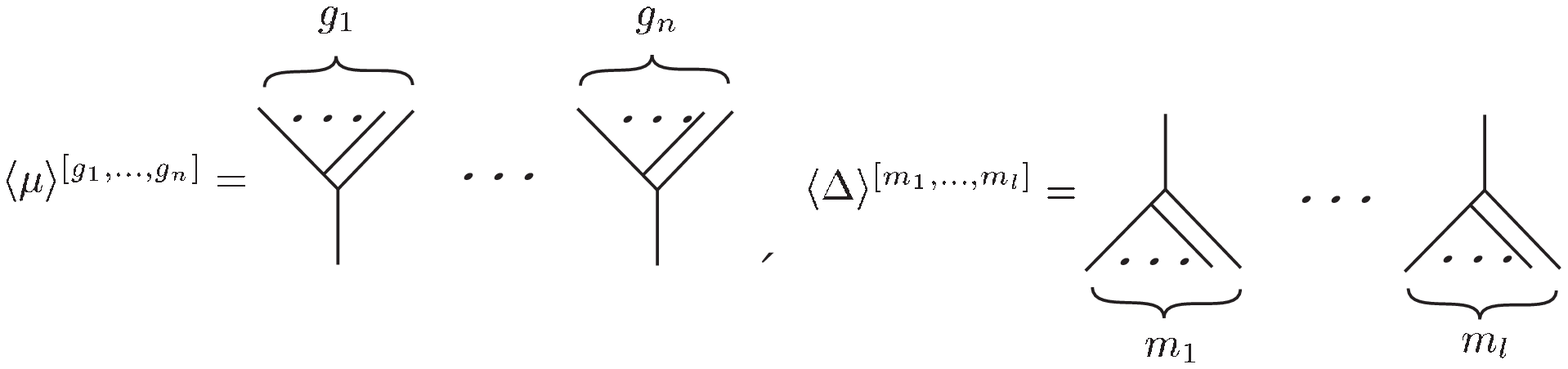}
\caption{$\mux^{[g_1,\ldots, g_n]}$ and $\Deltax^{[m_1,\ldots, m_l]}$}\label{fig:mD}
\end{figure}
\begin{align}
&\mathcal{F}\big(\mux^{[g_1,\ldots, g_n ]}\big)=\Z \mu ^{[g_1,\ldots, g_n ]},\label{F1}
\\
&\mathcal{F}\big(\Deltax^{[m_1,\ldots, m_l ]}\big)=\Z \Delta  ^{[m_1,\ldots, m_l ]}.
\end{align} 

For $i\geq 0,$ set 
\begin{align*}
\Thetax{i}=\qintx{i}\otimes \fx{i}\otimes \ex{i}\in \Hom_{\mathcal{A}}(I ,A^{\otimes 2}).
\end{align*}
We represent $\Thetax{i}$ as in Figure \ref{fig:theta} (a).
We define 
 \begin{figure}
\centering
\includegraphics[width=11cm,clip]{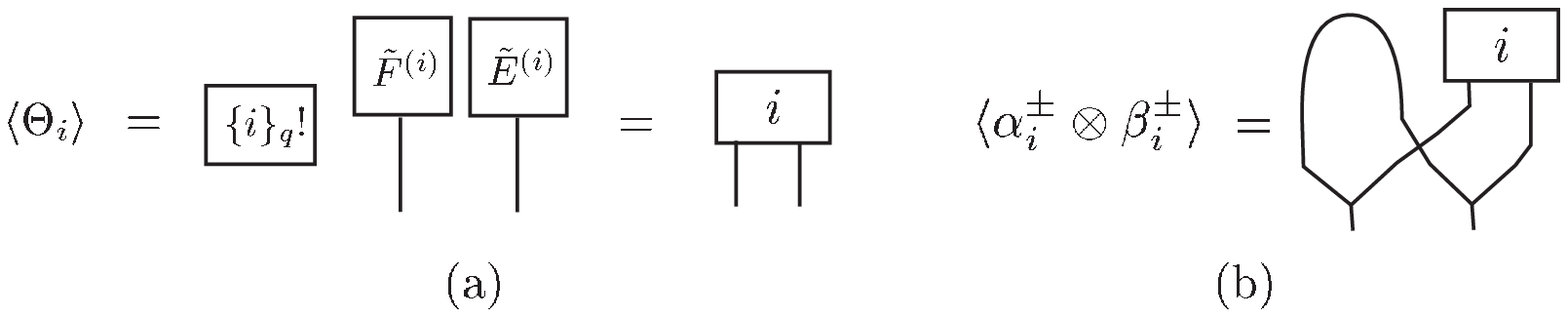}
\caption{(a) A diagram  of  $\Thetax{i}$ (b) A diagram of  $\Rx{i}$}\label{fig:theta}
\end{figure} 
\begin{align*}
\Rx{i}\in \Hom_{\mathcal{A}}(I ,A^{\otimes 2})
\end{align*}
as  in Figure \ref{fig:theta} (b),
i.e.,
\begin{align*}
\Rx{i}=&\big(\mux\otimes \mux\big)\circ  \big(\id_A\otimes c_{A,A} \otimes \id_A\big) \circ \big(\Dx \otimes \Thetax {i}\big).
\end{align*}
In $U_h^{\hat \otimes 2}$, we have
\begin{align*}
&\mathcal{F}\big(\Rx{i}\big)(1)=\big(\uqz\otimes \uqz\big)D\big( \f{i}\otimes e^i\big)+\big(\uqz\otimes \uqz\big)D^{-1}\big(\f{i}\otimes e^i\big),
\end{align*}
which implies 
\begin{align*}
\alpha _i\otimes \beta _i,\ \alpha ^-_i\otimes \beta ^-_i\in \mathcal{F}\big(\Rx{i}\big)(1).
\end{align*}  
Since  we have
\begin{align*}
(S^j\otimes S^k)\Big(\mathcal{F}\big(\Rx{i})\big(1)\Big)=\mathcal{F}\big(\Rx{i}\big)(1),
\end{align*} 
for $j,k\in \mathbb{Z}$, it follows that
\begin{align}\label{F4}
S^j(\alpha _i)\otimes S^k(\beta _i), \ S^j(\alpha ^-_i)\otimes S^k(\beta ^-_i)\in \mathcal{F}\big(\Rx{i}\big)(1).
\end{align}

We define 
\begin{align*}
\bYx \in \Hom_{\mathcal{A}}(A^{\otimes 2} ,A^{\otimes 1})
\end{align*}
as  in Figure \ref{fig:RbY}, i.e.,
\begin{figure}
\centering
\includegraphics[width=3cm,clip]{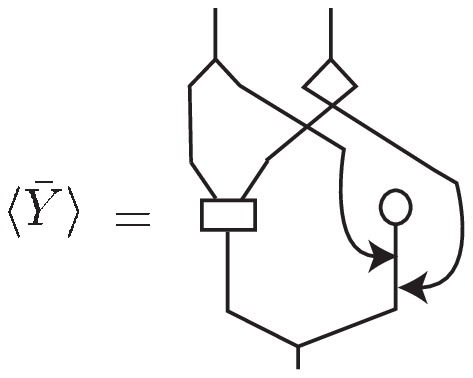}
\caption{A diagram of  $\bYx$ }\label{fig:RbY}
\end{figure}
\begin{align*}
\bYx=&\mux\circ \big(\id _A\otimes \sadx\big)\circ \big(\id_A\otimes \adx\otimes \id_A\big)
\\
&\circ \big(\Yx\otimes \id_A\otimes \Kx\otimes \id_A\big)\circ \big(\id\otimes c_{A^{\otimes 2},A}\big)\circ  \big(\Deltax\otimes \Deltax\big).
\end{align*}
By Lemma \ref{ydyb}, we have
\begin{align*}
\bar Y=&\mu\circ (\id_{U_h}\otimes \sad)\circ (\id_{U_h}\otimes \ad\otimes \id_{U_h})\circ (\dot Y\otimes \id_{U_h}\otimes K^2\otimes \id_{U_h})\circ (\id_{U_h}\otimes \tau_{U_h^{\hat \otimes 2},U_h})\circ  (\Delta\otimes \Delta).
\end{align*}
Hence we have 
\begin{align}
&\bar Y\in \mathcal{F}\big(\bYx\big).\label{F3}
\end{align}
\section{Proof of Proposition \ref{2}}\label{proof}
The goal of this section is to prove Proposition \ref{2}.
In Section \ref{gr}, we define a subset $\Gamma _g\subset \Hom_{\mathcal{A}}(I ,A^{\otimes g})$, and prove 
$\bar  Y^{\otimes g}(J_{\tilde T,s})\in \mathcal{F}(\Gamma _g)(1)$.
Here, for a subset $X\subset \Hom_{\mathcal{A}}(I ,A^{\otimes g})$, we set 
\begin{align*}
\mathcal{F}(X)(1)=\bigcup_{B\in X} \mathcal{F}(B)(1) \subset U_h^{\hat \otimes g}.
\end{align*}
In Section \ref{gr2}--\ref{prBB}, we prove $\mathcal{F}(\Gamma _g)(1)\subset (\uqe)^{\otimes g}$.
Thus we have
\begin{align*}
\bar  Y^{\otimes g}(J_{\tilde T,s}) \in (\bar U_q^{\ev})^{\otimes g},
\end{align*}
which implies Proposition \ref{2}. 
\subsection{The set $\Gamma _g\subset \Hom_{\mathcal{A}}(I ,A^{\otimes g})$}\label{gr}
Let  $\mathcal{W}$ be the symmetric monoidal category  freely generated by two objects  $W_+$ and $W_-$ and three morphisms 
\begin{align*}
\mu _{\mathcal{W}}\co (W_+)^{\otimes 2}\rightarrow W_+,
\quad
\ad_{\mathcal{W}}\co W_- \otimes W_+\rightarrow W_+,
\quad 
\sad_{\mathcal{ W}}\co W_+ \otimes W_- \rightarrow W_+,
\end{align*}
see Figure \ref{fig:b4} for example.  
We define the symmetric monoidal functor $\mathcal{F}_{\mathcal{W}}\co \mathcal{W}\rightarrow \mathcal{A}$ by $\mathcal{F}_{\mathcal{W}} (W_+)=\mathcal{F}_{\mathcal{ W} }(W_-)=A$ on objects, and 
\begin{align*}
\mathcal{F}_{\mathcal{W}}(\mu_{\mathcal{W}})=\mux,
\quad
\mathcal{F}_{\mathcal{W}}(\ad_{\mathcal{W}})=\adx,
\quad
\mathcal{F}_{\mathcal{W}}(\sad_{\mathcal{W}})=\sadx,
\end{align*}
on  morphisms.
\begin{figure}
\centering
\includegraphics[width=13cm,clip]{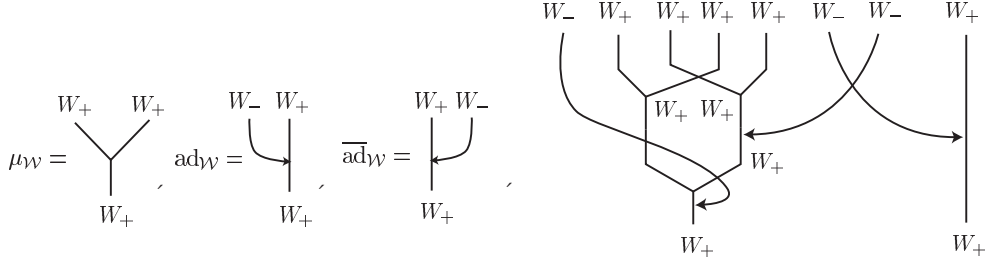}
\caption{A morphism in $\mathcal{W}$}\label{fig:b4}
\end{figure}

For $g\geq 0$, let $\tilde \Gamma_g$ be the set of  quadruples $b=(b_1,b_2,b_3,b_4)\in \Hom({\mathcal{A}})^{\times 4}$ of 
composable morphisms
\begin{align*}
I\stackrel{b_1}{\longrightarrow }A^{\otimes 2l_1+2l_2+l_3}\stackrel{b_2}{\longrightarrow }A^{\otimes l_4+2l_5}\stackrel{b_3}{\longrightarrow }A^{\otimes l_4+l_5+l_6}\stackrel{b_4}{\longrightarrow }A^{\otimes g}
\end{align*}
in $\mathcal{A}$ such that
\begin{align*}
b_1&=\Dx ^{\otimes l_1}\otimes\Thetax{s_1}\otimes \cdots \otimes \Thetax{s_{l_2}}\otimes \uqzx^{\otimes l_3},
\\
b_2&\in   \Hom_{\mathcal {A}_{\mu,\Delta}}(A^{\otimes 2l_1+2l_2+l_3},A^{\otimes  l_4+2l_5}), 
\\
b_3&=\id_A^{\otimes l_4}\otimes \Yx^{\otimes l_5}\otimes \Kx^{\otimes l_6},
\\
b_4&\in \mathcal{F}_{\mathcal{W}}\big(\Hom_{\mathcal{W}}(W_-^{\otimes l_4}\otimes W_+^{\otimes l_5+l_6}, W_+^{\otimes g})\big),
\end{align*}
for  $l_1,\ldots,l_6, s_1,\ldots, s_{l_2}\geq 0,$
satisfying  Condition A  below.
\begin{center}
\textbf{Condition A}:  On a diagram of $b_4\circ b_3\circ b_2\circ b_1$, from  each output edge of  $\Thetax{s_p}$ for  $p=1,\ldots, l_2$, we can find a descending path to an input edge  of  a $\Yx$.
\end{center}
For example, see Figure \ref{fig:b}, where the dotted arrow  denotes a path as in Condition A from the right output edge of  $\Thetax{s_1}$.
\begin{figure}
\centering
\includegraphics[width=7.5cm,clip]{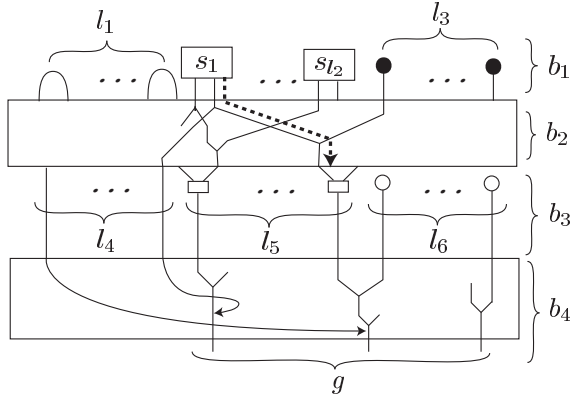}
\caption{An example of $b_4\circ b_3\circ b_2\circ b_1$ with  $(b_1,b_2,b_3,b_4)\in \tilde  \Gamma_g$
}\label{fig:b}
\end{figure}

Let $\lambda \co \tilde \Gamma_g\rightarrow \Hom_{ \mathcal{A}}(I,A^{\otimes g})$ be the composition map defined by 
\begin{align*}
\lambda (b_1,b_2,b_3,b_4)= b_4\circ b_3\circ b_2\circ b_1.
\end{align*}  Set
\begin{align*}
\Gamma_g=\lambda (\tilde \Gamma _g)\subset \Hom_{ \mathcal{A}}(I,A^{\otimes g}).
\end{align*}
 
We consider the following sequence of   maps
\begin{align*}
\tilde \Gamma_g\stackrel{\lambda }{\longrightarrow}  \Gamma_g\subset \Hom_{ \mathcal{A}}(I,A^{\otimes g})\stackrel{\mathcal{F}}{\longrightarrow} \Hom_{\mathcal{M}}(0,g).
\end{align*}

\begin{lemma}\label{A}
Let $T\in BT_n$ be a boundary bottom tangle and $(\tilde T;g,g_1,\ldots, g_n)$ a boundary data for $T$.
For each state $s\in \mathcal{S}(\tilde T)$, we have $\bar Y^{\otimes g}(J_{\tilde T,s})\in \mathcal{F}(\Gamma _g)(1).$
\end{lemma}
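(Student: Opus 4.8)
The goal is to exhibit, for each state $s\in\mathcal S(\tilde T)$, an element $b=(b_1,b_2,b_3,b_4)\in\tilde\Gamma_g$ with $\mathcal F(\lambda(b))(1)\ni \bar Y^{\otimes g}(J_{\tilde T,s})$. First I would recall the structure of $\tilde T=\nu_b^{\otimes g}(T')$ and of the state sum: by the definition of the universal invariant in Section~\ref{bottom inv}, together with the extra crossings $c_1,\dots,c_g$ at the bottom coming from $\nu_b$ (Figure~\ref{fig:skew}) and the identity $(1\otimes S)(R^{-1})=R$ used in (\ref{cd}), the element $J_{\tilde T,s}$ is a product, in the $2g$ tensorands, of labels of the form $S'(\alpha^{\pm}_i)$, $S'(\beta^{\pm}_i)$, $K^{\pm1}$, and of $D^{\pm1}$-factors coming from the $R$-matrix expansions (via $\alpha^{\pm}_i\otimes\beta^{\pm}_i$). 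By (\ref{F4}) each such label (in any of the four sign/antipode variants) lies in $\mathcal F(\Rx{i})(1)$ for the appropriate $i$, and $K^{\pm1}\in\mathcal F(\Kx)(1)$ after noting $\uqze\subset$ the relevant module; hence $J_{\tilde T,s}$ itself is of the form $\mathcal F(c)(1)$ for a morphism $c=b_2'\circ b_1\colon I\to A^{\otimes 2g}$ in $\mathcal A$, where $b_1$ is a tensor product of $\Dx$'s, $\Thetax{s_p}$'s and $\uqzx$'s (these generate, through $\Rx{i}$, all the $R$-matrix and $K$ data), and $b_2'$ lies in $\mathcal A_{\mu,\Delta,\mathfrak S}$ — it performs the multiplications within each tensorand and the comultiplications/permutations forced by the crossings and by the orientation-reversal in $\mathrm{Double}(T')$.

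**Folding in $\bar Y^{\otimes g}$.** Next I would apply $\bar Y^{\otimes g}$. By (\ref{F3}), $\bar Y\in\mathcal F(\bYx)$, and unwinding the definition of $\bYx$ from Figure~\ref{fig:RbY} it is built from $\Deltax$, $c_{A^{\otimes2},A}$, $\Yx$, $\Kx$, $\adx$, $\sadx$, $\mux$. Composing $\bYx^{\otimes g}$ after $c$ therefore produces a morphism $I\to A^{\otimes g}$ in $\mathcal A$ assembled from exactly the generators allowed in $\tilde\Gamma_g$: the $\Dx$/$\Thetax{s_p}$/$\uqzx$ block $b_1$ at the top; then a block in $\mathcal A_{\mu,\Delta}$ (the old $b_2'$, the $\Delta$'s at the start of each $\bYx$, together with the symmetries, reorganized so that all $\Yx$'s and $\Kx$'s are pushed to a single horizontal level); then $b_3=\id_A^{\otimes l_4}\otimes\Yx^{\otimes l_5}\otimes\Kx^{\otimes l_6}$ collecting the $g$ copies of $\Yx$ coming from the $g$ copies of $\bYx$ plus the $g$ copies of $\Kx$; and finally $b_4$, built from $\mux,\adx,\sadx$ only, i.e. lying in $\mathcal F_{\mathcal W}(\Hom_{\mathcal W}(\cdots))$, which collects the remaining $\mu$, $\ad$, $\sad$ at the bottom of each $\bYx$. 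The only subtlety is bookkeeping: one must check that all the $\Yx$'s and $\Kx$'s really can be brought to a common level with only $\mathcal A_{\mu,\Delta}$-morphisms and symmetries above them and only $\mathcal W$-morphisms below — this is a naturality/planarity argument about sliding $\Delta$'s up past nothing and $\mu,\ad,\sad$ down, which works because in $\bYx$ the single $\Yx$ feeds only into $\mu$ and $\ad$-type maps downstream.

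**Condition A.** The heart of the matter is verifying Condition A: from each output edge of each $\Thetax{s_p}$ there must be a descending path to an input edge of some $\Yx$. This is where the hypothesis that $\tilde T$ is a \emph{boundary data} — concretely, that it arose as $\nu_b^{\otimes g}$ applied to a bottom tangle and is being hit by $\bar Y^{\otimes g}$ — is used in an essential way. Each $\Thetax{s_p}$ records an $R$-matrix crossing of $\tilde T$; because $\tilde T$ is (the doubling pattern of) a bottom tangle living inside the "handles" and every strand of $\tilde T$ ultimately enters one of the $g$ clasps that become the $g$ copies of $\bYx$, the two strands emanating from each crossing must both run down into some $\bYx$, hence into the $\Yx$ inside it — that $\Yx$ is exactly where the commutator is taken. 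I would make this precise by tracing, on the diagram of $b_4\circ b_3\circ b_2\circ b_1$, the image of each output edge of $\Thetax{s_p}$: it passes through $b_2\in\mathcal A_{\mu,\Delta}$ (which only branches/merges along, never terminates, edges) and must reach $b_3$; there it either hits a $\Yx$ (done) or a $\Kx$, but a $\Kx$ has no input edge, so an edge cannot terminate at a $\Kx$ — one checks that the $\Kx$'s in $b_3$ are fed only by the $\Dx$-derived $\Kx$-generators of $\bYx$, not by $\Thetax{s_p}$ outputs, again because of the clasp structure.

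**Main obstacle.** The genuinely hard step is this last one — proving Condition A, i.e. that the combinatorics of $J_{\tilde T,s}$ after applying $\bar Y^{\otimes g}$ forces every $\Theta$-output to land on a $\Yx$ rather than getting "absorbed" elsewhere. This is precisely the algebraic shadow of the topological fact (Proposition~\ref{yyuni}, in the modified form (2')) that boundary bottom tangles factor through $\bar Y_b^{\otimes g}\circ\nu_b^{\otimes g}$; the proof will amount to carefully overlaying the diagram of $\tilde T$ with the diagram of the morphism in $\mathcal A$ and reading off the path. Everything else — that the $R$-matrix labels lie in the stated $\mathcal F$-images, that $\bar Y$ is $\mathcal F(\bYx)$, that the blocks sort into the four required types — is routine once the diagrammatic dictionary of Section~\ref{cat} is in place.
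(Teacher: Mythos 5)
Your overall approach matches the paper's: express $J_{\tilde T,s}\in\mathcal F(u)(1)$ via (\ref{F1}) and (\ref{F4}), where $u = \mux^{[N_1,\ldots,N_{2g}]}\circ\sigma\circ\big(\Rx{s(c_1)}\otimes\cdots\otimes\Rx{s(c_l)}\otimes\uqzx^{\otimes k}\big)$, apply (\ref{F3}) to get $\bar Y^{\otimes g}(J_{\tilde T,s})\in\mathcal F(\bYx^{\otimes g}\circ u)(1)$, and then check that $B=\bYx^{\otimes g}\circ u$ decomposes as an element of $\Gamma_g$.

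Where you go astray is in your reading of Condition~A, which you cast as the ``genuinely hard step'' requiring an overlay of the tangle diagram on the $\mathcal A$-diagram and as the place where boundaryness is ``used in an essential way.'' Neither is true. The paper verifies Condition~A by a purely structural observation about $B$, valid for any $2g$-component tangle: $u$ is built only from $\Dx$'s, $\Thetax{s(c_i)}$'s, $\uqzx$'s, $\mux$'s and symmetries --- in particular it contains no $\varepsilonx$ --- so each output edge of a $\Thetax{s(c_i)}$ descends to one of the $2g$ outputs of $u$; each such output is an input edge of a $\bYx$; and in $\bYx$ (Figure~\ref{fig:RbY}) each input feeds a $\Deltax$ one of whose two branches runs directly into the $\Yx$. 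Your own path-tracing also slips: at $b_3$ the alternatives are hitting a $\Yx$ or an $\id_A$ strand, not a $\Kx$ (you correctly note a $\Kx$ has no input, which makes your subsequent sentence about ``the $\Kx$'s being fed'' vacuous), and the $\id_A$ case is harmless not because of any clasp constraint but simply because Condition~A asks only for the \emph{existence} of one descending path, so at each $\Deltax$ one chooses the branch headed for $\Yx$. A smaller point: the block you call $b_2'$ contains no $\Deltax$'s (the state sum is a pure product of labels, with no comultiplications); all $\Deltax$'s in $B$ come from the $g$ copies of $\bYx$, which makes the bookkeeping step easier than you suggest.
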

\begin{proof}
It is enough to construct an element $B\in \Gamma _g$ such that $\bar Y^{\otimes g}(J_{\tilde T,s})\in \mathcal{F}(B)(1).$

Recall  from Section \ref{bottom inv} the definition of $J_{\tilde T,s}$ associated with a fixed diagram of $\tilde T$ with the crossings  $c_1,\ldots,c_l.$
 We put the labels $S'(\alpha_{s(c_i)} ^{\pm })$ and $S'(\beta_{s(c_i)}^{\pm})$ on the 
crossing $c_i$ for $i=1,\ldots,l$, and 
put the labels  $K$ and $K^{-1}$ on the maximal and minimal  points, respectively, on strands  running from left to right.
After that, we multiply the labels on each strand, and take the tensor product.
Thus, there is $k\geq 0$ and a permutation $\sigma \in \mathfrak{S}(2l+k)$ such that
\begin{align*}
J_{\tilde T,s}\in \big(\mu ^{[N_1,\ldots,N_{2g}]}\circ \sigma \big)\Big(S'(\alpha_{s(c_1)} ^{\pm })\otimes S'(\beta_{s(c_1)}^{\pm})\otimes \cdots \otimes S'(\alpha_{s(c_l)} ^{\pm })\otimes S'(\beta_{s(c_l)}^{\pm})\otimes (\uqz)^{\otimes k}\Big),
\end{align*}
where, for each $i=1,\ldots,2g$, $N_i\geq 0$ is the number of labels put on $i$th strand of  $\tilde T$.

By (\ref{F1}) and (\ref{F4}), we have  $J_{\tilde T,s}\in \mathcal{F}(u)(1)$, where
\begin{align*}
&u=\mux ^{[N_1,\ldots,N_{2g}]}\circ  \sigma \circ \Big(\Rx{s(c_1)}\otimes \cdots \otimes \Rx{s(c_l)}\otimes \uqzx^{\otimes k}\Big).
\end{align*}
Here we identify $\sigma \in \mathfrak{S}(2l+k)$ with the  corresponding morphism in $\Hom_{\mathcal{A}_{\mathfrak{S}}}(A^{\otimes 2l+k}, A^{\otimes 2l+k})$.

By (\ref{F3}), we have
\begin{align*}
\bar Y^{\otimes g}(J_{\tilde T,s})\in \mathcal{F}\big(\bYx^{\otimes g}\circ u\big)(1).
\end{align*}

Set $B=\bYx^{\otimes g}\circ  u\in \Hom_{\mathcal{A}}(I,A^{\otimes g})$. It is not difficult to check that $B\in \Gamma _g$ as in Figure \ref{fig:jt2}.
In particular,  $B$ satisfies Condition A, since for each $i=1,\ldots,l,$ the  output edges of  $\Thetax{s(c_l)}$ go down  to the output edges of $u$, and there is  a descending path from each output edge of $u$  to an  input edge of a $\Yx$, see the dotted lines in Figure \ref{fig:jt2} for example.
\end{proof}
\begin{figure}
\centering
\includegraphics[width=13cm,clip]{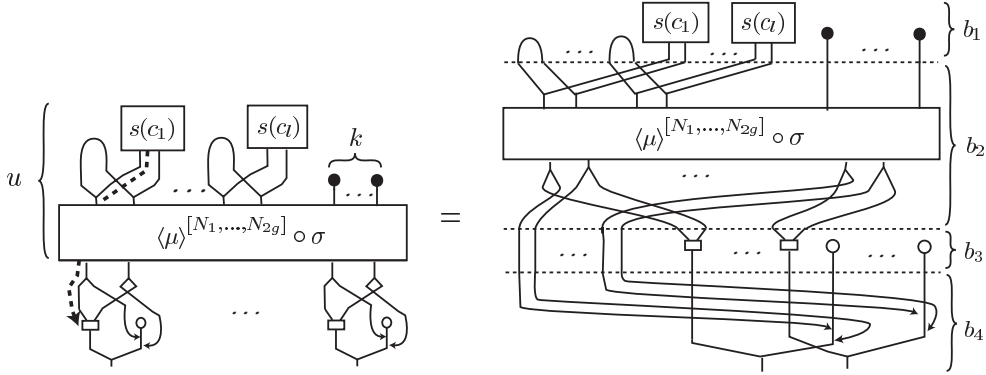}
\caption{ $\bYx^{\otimes g}\circ  u=b_4\circ b_3\circ b_2\circ b_1$ with $(b_1,b_2,b_3,b_4)\in \tilde \Gamma _g$}\label{fig:jt2}
\end{figure}

Proposition \ref{2}  follows  from Lemma \ref{A} and the following lemma.
\begin{lemma}\label{B''}
For $g\geq 0,$ we have $\mathcal{F}(\Gamma _g)(1)\subset (\uqe)^{\otimes g}$.
\end{lemma}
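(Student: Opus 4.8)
The plan is to analyze an arbitrary element $B = \lambda(b_1,b_2,b_3,b_4) \in \Gamma_g$ by ``pushing'' the algebraic content through the four layers $b_1, b_2, b_3, b_4$ in order, tracking at each stage which $\Z$-submodule of $U_h^{\hat\otimes\bullet}$ the relevant tensorands live in. The target ring $\uqe$ is closed under multiplication, comultiplication (via the Hopf $\Z$-structure on $\uq$), $\ad$, and $\sad$ (Corollary \ref{Habii}, Proposition \ref{Habi}), so the only delicate points are the interactions of the $\Yx$'s (i.e.\ $\dot Y$) and the $\Dx$'s (i.e.\ $D^{\pm1}$) coming from $b_1$. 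The heart of the matter is Lemma \ref{qd}: it says precisely that when a $\dot Y$ consumes a tensorand of $D^{\pm1}$ on one side, the output — together with whatever the other $D^{\pm1}$-tensorand feeds into another $\dot Y$ — still lands in $(\uqe)^{\otimes 2}$, even though the factors $D'_{\pm}, D''_{\pm}$ individually are not in $\uqze$. Condition A is the combinatorial guarantee that this applies: every output edge of a $\Thetax{s_p}$ (hence of the $\ex{s_p}$ and $\fx{s_p}$, which land in $\uqz\e{s_p}, \uqz\f{s_p} \subset \uqzq$) reaches an input of some $\Yx$ after passing only through $\mux,\Deltax$ and symmetries, so those tensorands enter a $\dot Y$ from the $\uqzq$-side, where Lemma \ref{qq} applies.

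First I would reduce to a normal form: by naturality of the symmetry and the interchange law one may assume $b_2 \in \Hom_{\mathcal{A}_{\mu,\Delta}}$ is a tensor product of iterated comultiplications followed by iterated multiplications, so that, tracing edges from top to bottom, each of the $2l_5$ input edges of the block $\Yx^{\otimes l_5}$ in $b_3$ receives a product of $\Delta$-components of the generators in $b_1$ — each such component being a factor of $\Delta^{[\,\cdot\,]}$ applied to an element of $\mathcal{F}(\Dx)(1) = (\uqz)^{\otimes2}D^{\pm1}$, of $\mathcal{F}(\Thetax{s_p})(1) \subset \uqzq$-type data, or of $\uqz$. Using $\Delta(\uqz) \subset (\uqz)^{\otimes2}$, $\Delta(\uqzq)$-type compatibility (equations \eqref{De1}, \eqref{De2}), and formula \eqref{d2} for $\Delta$ of $D$, each input to a $\dot Y$ is a product of factors each of which is either (a) in $\uqzq$ (coming ultimately from $\ex{s_p},\fx{s_p}$ and closed under the needed operations), (b) in $\uqz$, or (c) a $D'_{\pm}$ or $D''_{\pm}$ factor. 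Condition A forces the $\ex{s_p},\fx{s_p}$-derived factors to actually reach a $\dot Y$-input, so each $\dot Y$ has at least a $\uqzq$-worth of input on the appropriate side; then \eqref{y21}, \eqref{y22}, Lemma \ref{qq}, and Lemma \ref{qd} show $\dot Y$ of these inputs lies in $\uqe$ (the $D$-factors being paired up between two $\dot Y$'s exactly as in Lemma \ref{qd}, and the $\uqz$-factors absorbed by the module structure). Formally this is an induction on the number of $\mux$ and $\Deltax$ copies in $b_2$, using the multiplicativity formulas \eqref{y21}–\eqref{y22} of Lemma \ref{y2} to peel off one generator at a time, together with Lemma \ref{y3} to discard any $\dot Y(\uqz\otimes\uqz)$ contributions as $\pm1 \in \uqze$.

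After the $b_3$-layer we are left with a tensor in $(\uqe)^{\otimes l_4+l_5+l_6}$ (the $l_4$ strands carry $\ad$-targets which by Proposition \ref{Habi} need the refined statement $\uqzq \triangleright K\uqe \subset K\uqe$; here I would carry along the slightly finer bookkeeping of ``$\uqe$ or $K\uqe$'' on each strand, exactly as in the proof of Lemma \ref{qq} and \eqref{ad3}, and note the $K$-ambiguities cancel in even products). Finally $b_4$ lies in $\mathcal{F}_{\mathcal{W}}$ of a word in $\mu_{\mathcal{W}}, \ad_{\mathcal{W}}, \sad_{\mathcal{W}}$, so it is a composite of $\mu$'s, $\ad$'s and $\sad$'s; since $\uqe$ is a $\Z$-subalgebra and $\uqe \triangleright \uqe \subset \uqe$, $\uqe \triangleleft \uqzq \subset \uqe$ (Corollary \ref{Habii}, applied with the $W_-$-strands in $\uqzq$ — which holds because those strands never met a $\Yx$ and their generators $\f{i},\e{i},K^{\pm}$ all lie in $\uqzq$), the image stays in $(\uqe)^{\otimes g}$. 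This gives $\mathcal{F}(B)(1) \subset (\uqe)^{\otimes g}$, hence $\mathcal{F}(\Gamma_g)(1) \subset (\uqe)^{\otimes g}$.

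The main obstacle is the bookkeeping in the second step: one must set up the induction so that at the moment a $\dot Y$ is evaluated, the $D^{\pm1}$-factors entering it (possibly several, from different $\Dx$'s, and possibly split by $\Delta$ via \eqref{d2} into pieces distributed across the two inputs of one $\dot Y$ or across two different $\dot Y$'s) are exactly in the configuration covered by one of \eqref{qd1}–\eqref{qd2}, and that the ``other half'' of each such $D$ is accounted for. Making this precise is where a careful graphical argument — tracking, for each copy of $\Dx$, which $\Yx$'s its two output edges descend to — is needed; this is presumably the content of Sections \ref{gr2}–\ref{prBB}. Everything else is a routine propagation of the closure properties already assembled in Section \ref{ad} and the formulas of Lemmas \ref{y2}, \ref{y3}, \ref{qq}, \ref{qd}.
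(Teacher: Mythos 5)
Your proposal captures the overall architecture (normalize $b_2$ via Lemma \ref{y2}, then propagate closure properties through $b_3$ and $b_4$ using Proposition \ref{Habi}, Corollary \ref{Habii}, and Lemmas \ref{qq}--\ref{qd}), and the observation that Condition A is what makes Lemma \ref{qd} applicable to the $D$-tensorands is correct. But there are two genuine gaps.

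The serious one is that you assert the $\dot Y$'s all evaluate inside $\uqe$ because ``each $\dot Y$ has at least a $\uqzq$-worth of input on the appropriate side.'' Lemma \ref{qq} needs one side in $\uqzq$ \emph{and the other side in $\uq$}. After the comultiplications distribute $\e{s_p},\f{s_p}$ over $\Delta^{[m_p]},\Delta^{[n_p]}$, the tensorands are of the form $\uqz\tilde X^{(i)}$ with $\tilde X^{(i)} = X^i/\{i\}_q!$; these lie in $\uqzq$ but \emph{not} in $\uq$ when $i>0$. So when two output edges of $\Thetax{s_p}$'s feed the two inputs of the same $\Yx$ (case (c1) of Figure \ref{fig:couple}, which Condition A does not forbid), Lemma \ref{qq} gives only
$\dot Y\big(\uqz\tilde X_1^{(i_a)}\otimes \uqz\tilde X_2^{(i_b)}\big)\subset (\{\min(i_a,i_b)\}_q!)^{-1}\cdot\uqe$,
i.e.\ a denominator appears. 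The paper's proof spends a substantial amount of effort (Sections \ref{ep}--\ref{prBB}, culminating in inequality (\ref{uu}) counting cyclotomic-polynomial multiplicities $d_k$) showing that the scalars $\{s_p\}_q!$ coming from $\qintx{s_p}=\Z\{s_p\}_q!$ cover \emph{all} these denominators, so the product $I$ of factorials and inverse factorials lands back in $\Z$. Your outline never mentions that denominators arise, nor how the $\qintx{s_p}$ factors are used; without this the conclusion $\mathcal{F}(B)(1)\subset(\uqe)^{\otimes g}$ does not follow.

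The second gap is the omission of the modified functor $\mathcal{F}'$ (Section \ref{f7}). The paper cannot work directly with $\mathcal{F}(\Deltax)=\Z\Delta$: to split $\Delta^{[n]}(\e{m})$ into the finitely many homogeneous pieces $\tilde E^{\mathbf{i}}$ needed for the denominator bookkeeping above, one must pass to the degree-graded comultiplications $\Delta_j$, i.e.\ to $\mathcal{F}'(\Deltax)=\sum_j\Z\Delta_j$. The inclusion $\mathcal{F}(\Gamma'_g)(1)\subset\mathcal{F}'(\Gamma'_g)(1)$ is a separate (if routine) step that your proposal skips; your invocation of ``\eqref{De1}, \eqref{De2}'' implicitly assumes this homogeneous refinement is available. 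These two omissions are precisely what Sections \ref{gr2}--\ref{prBB} are for, and the proof is not routine without them.
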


The outline of  the proof of Lemma \ref{B''} is as follows.
We define two subsets $\Gamma _g',\Gamma _g''\subset \Gamma _g$ such that
$\Gamma _g''\subset \Gamma _g'\subset \Gamma _g$, and 
prove the following inclusions
\begin{align}
\mathcal{F}(\Gamma _g)(1)\subset \mathcal{F}(\Gamma' _g)(1)\subset  \mathcal{F}'\big(\Gamma _g'\big)(1)\subset  \mathcal{F}'\big(\Gamma _g''\big)(1)\subset (\uqe)^{\otimes g}\label{naga},
\end{align}
where $\mathcal{F}'$ is a modification of the functor $\mathcal{F}$, which is  defined in Section \ref{f7}.
\subsection{The subset $\Gamma _g'\subset \Gamma _g$}\label{gr2}
In this section, we define  the subset $\Gamma _g'\subset \Gamma _g$.

For $g\geq 0$, let $\dot \Gamma'_g$ be the set of   $7$-tuples $(b_1,d,w,k, \sigma  ,b_3,b_4)$ of 
morphisms in $\mathcal{A}$, such that $(b_1, \sigma \circ (d\otimes w\otimes k) ,b_3,b_4)\in \tilde \Gamma _g$
is well-defined,  $\sigma \in \Hom(\mathcal{A}_{\mathfrak{S}})$ and 
\begin{align*}
b_1&= \Dx^{\otimes l_1}\otimes \Thetax{s_1}\otimes \cdots \otimes \Thetax{s_{l_2}}\otimes \uqzx ^{\otimes l_3},
\\
d \in \Hom _{\mathcal{A}_{\mu}}(&A^{\otimes 2l_1},A^{\otimes l_4}),
\quad
w =\bigotimes_{p=1}^{l_2} \big(\Deltax^{[m_p,n_p]}\big),
\quad
k =\Deltax^{[r_1,\ldots,r_{l_3}]},
\end{align*}
for $l_1,\ldots,l_4,s_1,\ldots, s_{l_2}\geq 0$, $m_1,\ldots,m_{l_2},n_1,\ldots,n_{l_2}, r_1,\ldots,r_{l_3}\geq 1$.
See Figure \ref{fig:dwk} for an example of  $\sigma \circ(d\otimes w\otimes k)\circ b_1$.

Let $\kappa \co \dot \Gamma _g\rightarrow \tilde \Gamma _g$ be the map defined by
\begin{align*}
\kappa (b_1,d,w,k, \sigma  ,b_3,b_4)=(b_1, \sigma \circ (d\otimes w\otimes k) ,b_3,b_4).
\end{align*}
Set
\begin{align*}
\tilde \Gamma'_g &=\kappa(\dot  \Gamma'_g)\subset \tilde \Gamma _g,
\\
\Gamma'_g &=\lambda (\tilde \Gamma'_g)\subset \Gamma _g.
\end{align*}
\begin{figure}
\centering
\includegraphics[width=7cm,clip]{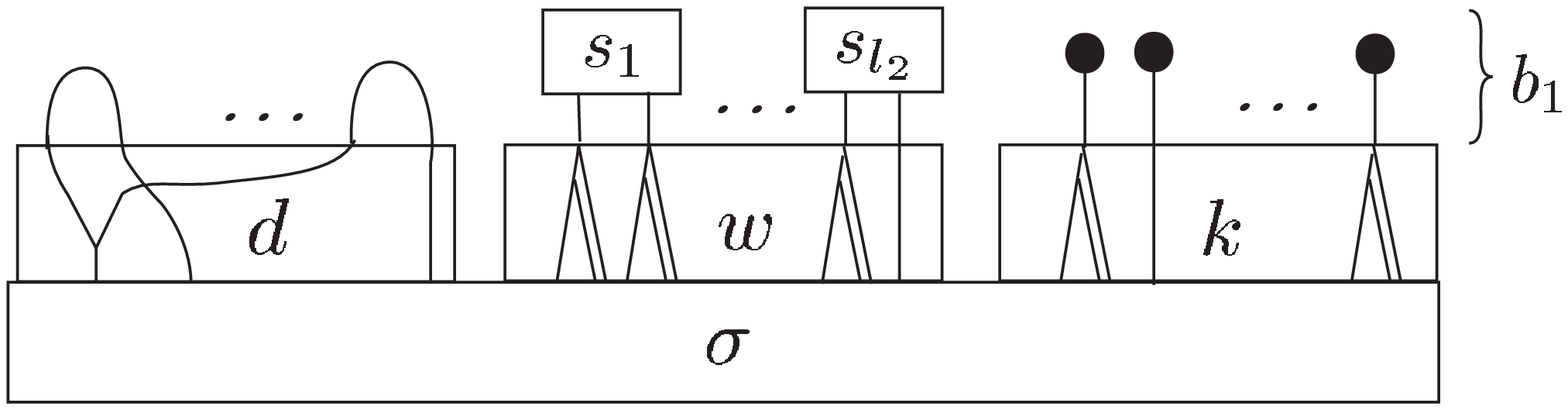}
\caption{An example  of $\sigma \circ(d\otimes w\otimes k)\circ b_1$ for $(b_1,d,w,k, \sigma  ,b_3,b_4)\in \dot  \Gamma _g'$ }\label{fig:dwk}
\end{figure} 

\subsection{Proof of $\mathcal{F}(\Gamma _g)(1)\subset \mathcal{F}(\Gamma' _g)(1)$}\label{SB}
In this section, we  define a preorder $\preceq $ on $\Gamma _g$,
and prove the following two lemmas, which imply $\mathcal{F}(\Gamma _g)(1)\subset \mathcal{F}(\Gamma' _g)(1)$.

\begin{lemma}\label{co''}
For  $B\preceq B'$ in $ \Gamma _g$, we have $\mathcal{F}(B)(1)\subset  \mathcal{F}(B')(1)$.
\end{lemma}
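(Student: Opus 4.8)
The plan is to establish Lemma \ref{co''} by working diagrammatically with the generating morphisms of $\mathcal{A}$ and tracking how applying a single elementary move to a diagram changes its image under $\mathcal{F}$. First I would make the preorder $\preceq$ precise: declare $B \preceq B'$ when $B$ is obtained from $B'$ by a finite sequence of local moves, each of which either slides a generator past another, pushes a $\Deltax$ or $\mux$ through a region of the diagram, replaces a subdiagram by an equal one using the Hopf-algebra axioms, or—crucially—replaces a label by one from a \emph{larger} $\Z$-submodule (e.g.\ replacing $\uqzx$ by $\uqzex$ is forbidden, but replacing $\ex{i}\otimes\fx{i}$ by the full $\Rx{i}$ pattern, or enlarging along the line $S^j(\alpha_i)\otimes S^k(\beta_i)\in\mathcal{F}(\Rx{i})(1)$ from \eqref{F4}, is allowed). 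The content of the lemma is then that each such move is \emph{monotone} for $\mathcal{F}(-)(1)$, and monotonicity is preserved under composition of moves.

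Next I would reduce to checking the generating moves one at a time. The moves coming from the symmetric-monoidal-category axioms (naturality of the symmetry $c_{X,Y}$, associativity/unitality of $\mux$, coassociativity/counitality of $\Deltax$) are sent by the functor $\mathcal{F}$ to genuine identities in $\mathcal{M}$, since $\mathcal{F}(\mux)=\Z\mu$, $\mathcal{F}(\Deltax)=\Z\Delta$, etc., and $\mu,\Delta$ satisfy these axioms in $U_h$; so for these moves one gets $\mathcal{F}(B)(1)=\mathcal{F}(B')(1)$. The Hopf-algebra relations (compatibility of $\Delta$ with $\mu$, the antipode axioms, and the identities \eqref{d1}--\eqref{exD}, \eqref{De1}--\eqref{De2}) likewise hold at the level of the concrete maps in $U_h$, so the corresponding diagrammatic rewrites again give equality of images. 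The one-sided moves—those that genuinely enlarge a label or insert a generator—need the inclusion $\mathcal{F}(\text{small label})(1)\subset\mathcal{F}(\text{large label})(1)$, which holds by construction of $\mathcal{F}$ on the generators and by \eqref{F4} and \eqref{F3}. Finally, if $B\preceq B'$ via moves $B=B_0,B_1,\dots,B_r=B'$ (read in the direction making each $B_{i}$ obtained from $B_{i+1}$), then chaining the inclusions $\mathcal{F}(B_i)(1)\subset\mathcal{F}(B_{i+1})(1)$ gives the result.

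One subtlety I would be careful about is that $\mathcal{F}$ on a composite is not literally ``apply $\mathcal{F}$ to each factor and compose'': in $\mathcal{M}$ the composite $Y\circ X$ is the $\Z$-span of all $y\circ x$ with $x\in X$, $y\in Y$, so I must check that replacing a subdiagram $X$ of $B$ by a subdiagram $X'$ with $\mathcal{F}(X)\subset\mathcal{F}(X')$ (as morphisms in $\mathcal{M}$, i.e.\ as $\Z$-submodules of the Hom-space) indeed yields $\mathcal{F}(B)\subset\mathcal{F}(B')$. This follows because $\mathcal{F}$ is a monoidal functor, so it is compatible with $\circ$ and $\otimes$ up to the spanning operations, and the spanning operations are themselves monotone in each argument; I would state this as a short preliminary observation (``substitution monotonicity'') and then apply it throughout.

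The main obstacle I expect is bookkeeping rather than conceptual: verifying that the list of local moves generating $\preceq$ is exactly the list one needs later (in the proof of $\mathcal{F}(\Gamma_g)(1)\subset\mathcal{F}(\Gamma'_g)(1)$ and beyond), and that every move on that list really is one of the monotone types above—in particular that no move secretly requires passing from $\uqzex$ to $\uqzx$ or otherwise \emph{shrinking} a label, which would break monotonicity. So the care goes into the definition of $\preceq$: it must be generous enough to connect $\Gamma_g$ to $\Gamma_g'$ yet restrictive enough that Lemma \ref{co''} holds. Once $\preceq$ is pinned down correctly, the proof of this lemma itself is a routine move-by-move check using the Hopf-algebra identities already recorded in Section \ref{preuni} and the definition of $\mathcal{F}$ on generators.
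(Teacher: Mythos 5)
Your overall scaffolding—reduce to the generating local moves $\stackrel{i}{\Rightarrow}$, check monotonicity of $\mathcal{F}(-)(1)$ under each, and chain the inclusions, with a preliminary ``substitution monotonicity'' observation justifying the reduction—is the same as the paper's, and that observation is indeed the right thing to formalize the move-by-move argument. But the content of the lemma lies entirely in the verifications, and your catalogue of moves misses the one nontrivial case.

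The paper's eight moves (Figure~\ref{fig:move}) split as follows: moves $1$--$4$ are ``clear'' (symmetry, and compatibility of $\mux$ with $\adx$/$\sadx$, which is the module-algebra identity and hence covered by your Hopf-axiom bucket); moves $7,8$ follow from the $D$-compatibility \eqref{d2} together with the genuine $\Z$-module \emph{inclusions} $\Delta(\uqz)\subset(\uqz)^{\otimes 2}$ and $\uqz\subset\mu\big((\uqz)^{\otimes 2}\big)$, which are your ``one-sided enlarging'' moves (though they have nothing to do with \eqref{F4} or \eqref{F3}, which are not used in this lemma but in Lemma~\ref{A}). The moves you have no category for are $5,6$: pushing a $\mux$ through an input of a $\Yx$. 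These are validated by Lemma~\ref{y2}, which rewrites $\dot Y(xy\otimes z)$ and $\dot Y(x\otimes yz)$ in terms of $\dot Y$, $\triangleright$, $\triangleleft$, and $\Delta$. That identity is not a Hopf-algebra axiom, not a naturality/associativity axiom, and not a label enlargement—it is a derived property of the commutator map $\dot Y$ proved specifically for this purpose, and it is the real engine behind the later reduction to $\Gamma_g'$. Your proposal's taxonomy (monoidal axioms, Hopf relations, label enlargements) simply cannot account for these moves, so while the strategy is correct, the actual verification you describe would stall at moves $5,6$ without first proving Lemma~\ref{y2}.
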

\begin{lemma}\label{c'}
For each $B\in \Gamma_g$, there exists $B'\in \Gamma'_g$ such that 
$B\preceq B'$.
\end{lemma}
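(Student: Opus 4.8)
The plan is to treat Lemma \ref{c'} as a normal-form statement: given $B\in\Gamma_g$, the middle layer $b_2$ can be rewritten, up to the preorder $\preceq$, into the ``parallel'' shape $\sigma\circ(d\otimes w\otimes k)$ required by $\Gamma'_g$. The whole argument takes place in $\mathcal{A}$ by graphical calculus, and the only role of $\mathcal{F}$ is, via Lemma \ref{co''}, to certify that each elementary rewrite does not shrink $\mathcal{F}(\,\cdot\,)(1)$. Accordingly I would list a handful of generating moves for $\preceq$ and then show that a finite sequence of them carries any $B\in\Gamma_g$ into $\Gamma'_g$.

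First I would fix a representative $(b_1,b_2,b_3,b_4)\in\tilde\Gamma_g$ of $B$ and normalize $b_2\in\Hom_{\mathcal{A}_{\mu,\Delta}}$. Using associativity of $\mux$, coassociativity of $\Deltax$, and the bialgebra compatibility $\Deltax\circ\mux\ \rightsquigarrow\ (\mux\otimes\mux)\circ(\id_A\otimes c_{A,A}\otimes\id_A)\circ(\Deltax\otimes\Deltax)$ --- each an identity that holds in $U_h$, so the two sides have the same image under $\mathcal{F}$ --- one rewrites $b_2$ so that every copy of $\Deltax$ is applied before every copy of $\mux$, with a single permutation in between; this is the familiar two-layer normal form for $(\mu,\Delta)$-diagrams, and the rewriting clearly terminates. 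Now the comultiplications sit directly on the outputs of $b_1$.

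Next I would collapse that layer of comultiplications onto the generators. Over a $\uqzx$-output the comultiplications combine, by coassociativity, into a single $\Deltax^{[r]}$, which assembled over the $l_3$ factors gives $k=\Deltax^{[r_1,\dots,r_{l_3}]}$; over each $\Thetax{s_p}$ the comultiplications on its two legs $\fx{s_p},\ex{s_p}$ combine into $\Deltax^{[m_p]}$ and $\Deltax^{[n_p]}$, which assembled gives $w=\bigotimes_p\Deltax^{[m_p,n_p]}$. Over each $\Dx$ I would instead remove the comultiplications altogether by repeatedly applying the $D$-duplication moves $(\Deltax\otimes\id_A)\circ\Dx\ \rightsquigarrow\ (\id_A^{\otimes2}\otimes\mux)\circ(\id_A\otimes c_{A,A}\otimes\id_A)\circ(\Dx\otimes\Dx)$ and its left--right mirror, which realize the identities $(\Delta\otimes1)(D^{\pm1})=D_{23}^{\pm1}D_{13}^{\pm1}$ and $(1\otimes\Delta)(D^{\pm1})=D_{13}^{\pm1}D_{12}^{\pm1}$ of (\ref{d2}); here $\mathcal{F}(\Dx^{\otimes2})(1)$ already contains the required products $D_{\bullet}^{\pm1}D_{\bullet}^{\pm1}$ together with extra $\uqz$-factors, so the $\mathcal{F}$-image only grows. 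The outcome is an enlarged $b_1$ still of the prescribed form (larger $l_1$), together with new copies of $\mux$ acting on $D$-descendants.

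The last and by far hardest step is to re-sort the surviving multiplications so that they all land in a single block acting only on $D$-descendants --- the map $d\in\Hom_{\mathcal{A}_\mu}(A^{\otimes 2l_1},A^{\otimes l_4})$ of $\dot\Gamma'_g$ --- leaving the $\Theta$- and $\uqz$-descendants carrying only comultiplications. Multiplications mixing a $D$-descendant with a $\Theta$- or $\uqz$-descendant do occur (already inside each $\Rx{i}$ and inside the $\mux^{[N_j]}$'s produced in the proof of Lemma \ref{A}), and the point is that $\Deltax^{[m]}$ applied to a $\Dx$ or to a $\uqzx$ stays in the Cartan part, so such a $D$-piece is a Cartan element which, using the commutation relations (\ref{exD}), the identity $KxK^{-1}=S^{-2}(x)$, Lemmas \ref{y2} and \ref{y3}, and Corollary \ref{Habii}, can be slid along the diagram past the $\Deltax^{[m]}$'s, past the $\Yx$'s of $b_3$, and past the $\adx,\sadx$'s of $b_4$, and then absorbed into the nearest $\fx{i}$, $\ex{i}$, or $\uqzx$ edge --- whose image under $\mathcal{F}$ already carries an arbitrary $\uqz$-factor, so the move is $\preceq$-admissible. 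At the same time one must check that Condition A survives: the descending path from each $\Thetax{s_p}$-output to a $\Yx$-input guaranteed in $\tilde\Gamma_g$ is only rerouted by these moves, never cut. Once every remaining $\mux$ involves $D$-descendants only, the diagram has the shape $b_4\circ b_3\circ\sigma\circ(d\otimes w\otimes k)\circ b_1'$ with $(b_1',d,w,k,\sigma,b_3,b_4)\in\dot\Gamma'_g$, so $B\preceq\lambda\kappa(b_1',d,w,k,\sigma,b_3,b_4)\in\Gamma'_g$, as required. I expect essentially all of the difficulty to lie in this separation step and in the bookkeeping needed to present everything as a terminating chain of designated moves, so that Lemma \ref{co''} applies at each stage; the earlier steps are routine PROP normalization.
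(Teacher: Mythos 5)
Your first two steps (normal-forming $b_2$ into a $\mu$-over-$\Delta$ shape via Lemma \ref{ca}, then collapsing comultiplications onto generators via coassociativity and the $D$-duplication identities) line up with parts of the paper's argument, but you have the \emph{order} of the two main operations reversed, and your third step contains a genuine gap. The paper's Claim 1 first eliminates \emph{all} multiplications from $b_2$ by pushing each $\mux$ \emph{downward} into the $\adx$'s, $\sadx$'s and $\Yx$'s of $b_3,b_4$, using the moves $\stackrel{3}{\Rightarrow},\ldots,\stackrel{6}{\Rightarrow}$ whose algebraic content is $\ad(xy\otimes z)=\ad(x\otimes\ad(y\otimes z))$ (and its $\sad$-mirror) together with Lemma \ref{y2}; a double induction on the complexity $(|b_2|,m(b_2))$ certifies termination. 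Only afterwards (Claim 2) does the paper touch the $\Delta$'s, and the $\mu$'s appearing in the block $d$ of $\Gamma'_g$ are \emph{created at that stage} by the $D$-duplication moves $\stackrel{7}{\Rightarrow},\stackrel{8}{\Rightarrow}$ — they are not survivors of the original $\mu$'s. You never invoke the mechanism $\dot Y(xy\otimes z)=\sum(x_{(1)}\triangleright\dot Y(y\otimes z_{(2)}))\dot Y(x_{(2)}\otimes z_{(1)})$ that powers the whole reduction; this is the central idea you missed.

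The ``Cartan slide and absorb'' step is where the argument breaks. You claim that a $D$-descendant, being Cartan, can be pushed past $\Deltax^{[m]}$'s, $\Yx$'s and $\adx,\sadx$'s and then absorbed into the free $\uqz$-factor carried by $\mathcal{F}(\fx{i})$, $\mathcal{F}(\ex{i})$ or $\mathcal{F}(\uqzx)$. This fails because the outputs of $\Deltax^{[m]}\circ\Dx$ have $\mathcal{F}$-image in the topological Cartan algebra $\mathbb{Q}[[h]][[H]]$, not in $\uqz=\mathbb{Z}[q,q^{-1}][K,K^{-1}]$; the factors $D'_{\pm},D''_{\pm}$ are $h$-adic formal series in $H$ and simply are not elements of $\uqz$, so the proposed absorption does not yield an inclusion of $\mathcal{F}$-images. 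This is precisely why $\Gamma'_g$ is designed to \emph{retain} a separate $\mu$-block $d$ acting on $D$-descendants: the $D$-pieces must be carried intact to the later stages (Lemma \ref{qd} and Section \ref{prBB}) where they are paired with $\dot Y$'s and only then shown to land in $(\uqe)^{\otimes 2}$. In addition, the moves you propose are not among the generating relations $\stackrel{1}{\Rightarrow},\ldots,\stackrel{8}{\Rightarrow}$ defining $\preceq$, so Lemma \ref{co''} does not apply to them as stated; a fix would require enlarging the preorder and reproving the inclusion, which is a nontrivial further task your sketch does not address.
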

The preorder  $\preceq $ on $\Gamma _g$ is generated by the binary relations $\stackrel{i}{\Rightarrow}$ for $i=1,\ldots,8$ on $\Hom(\mathcal{A})$
 defined by the local moves on diagrams as depicted in Figure \ref{fig:move}, where in each relation, the outsides of the two rectangles are the same.
 Note that  $\Gamma _g$ is \textit{closed} under $\stackrel{i}{\Rightarrow}$ for $i=1,\ldots,8$, i.e., for $B\stackrel{i}{\Rightarrow} B'$ in $\Hom(\mathcal{A})$, 
if $B\in \Gamma _g$, then $B'\in \Gamma _g$.
 In particular, we can check that each $\stackrel{i}{\Rightarrow}$ preserves Condition A.
\begin{figure}
\centering
\includegraphics[width=9cm,clip]{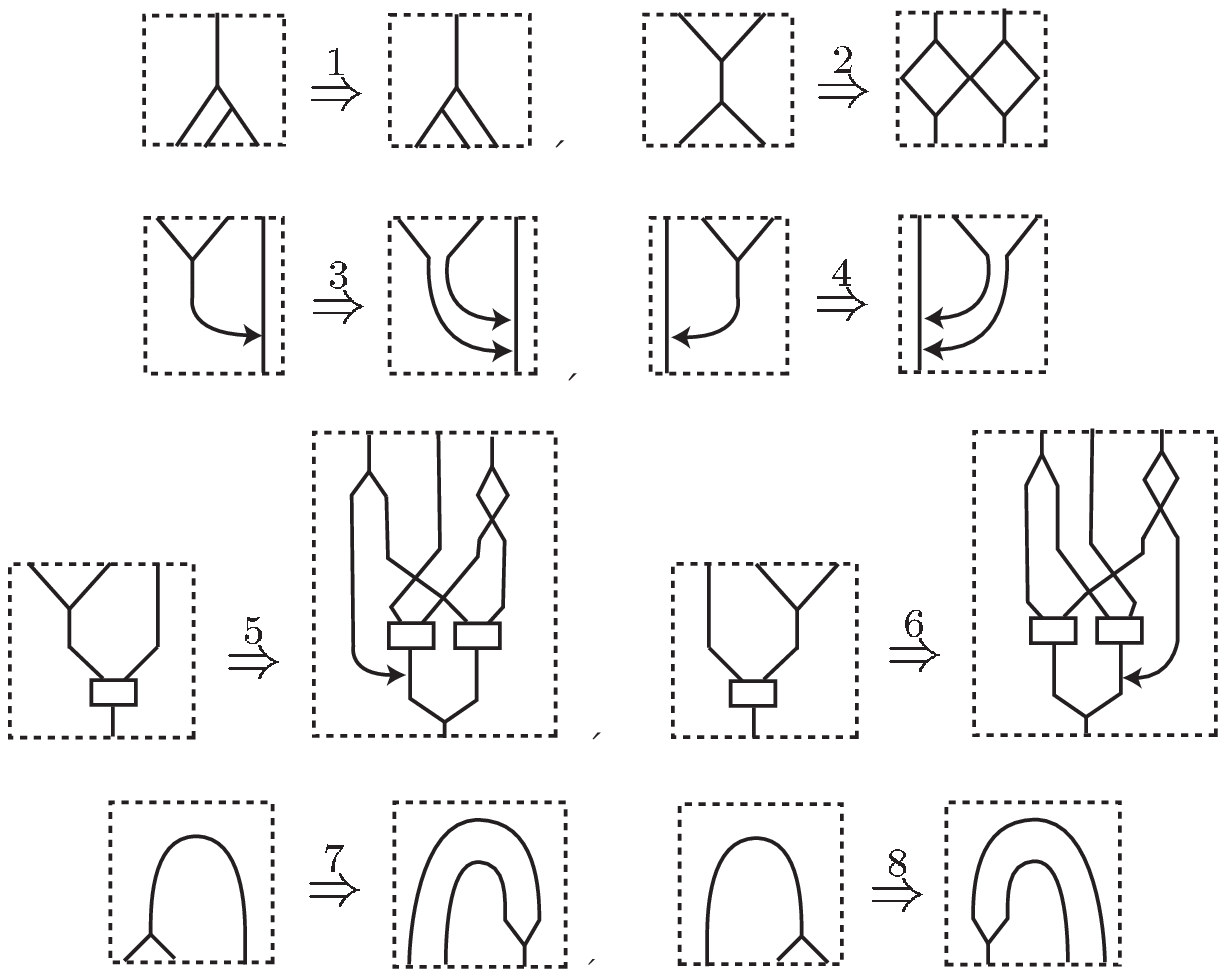}
\caption{Local moves $\stackrel{i}{\Rightarrow}$ for $i=1,\ldots,8$
 }\label{fig:move}
\end{figure}

\begin{proof}[Proof of Lemma \ref{co''}]
It is enough to prove that, for $B\stackrel{i}{\Rightarrow} B'$ in $\Gamma _g$ with $i\in \{1,\ldots,8\}$, 
 we have $\mathcal{F}(B)(1)\subset \mathcal{F}(B')(1)$.
 
The cases $i=1,2,3,4$ are clear.
The cases $i=5,6$ follow from Lemma \ref{y2}.
The cases $i=7,8$ follow from (\ref{d2}), $\Delta  (\uqz)\subset (\uqz)^{\otimes 2}$ and $\uqz\subset \mu((\uqz)^{\otimes 2})$.
\end{proof}

The rest of this section is devoted to the proof of Lemma \ref{c'}.
We divide  Lemma \ref{c'}  to the following two claims.
\begin{claim1} For $b=(b_1,b_2,b_3,b_4)\in \tilde \Gamma _g$,
there exists $b'=(b'_1,b'_2,b'_3,b'_4)\in \tilde \Gamma_g$  with 
$b'_2\in  \Hom(\mathcal{A}_{\Delta})$
 such that $\lambda (b)\preceq  \lambda (b')$.
 \end{claim1}
 \begin{claim2} For $b'=(b'_1,b'_2,b'_3,b'_4)\in \tilde \Gamma_g$  with 
$b'_2\in  \Hom(\mathcal{A}_{\Delta})$, there exists $b''=(b''_1,b''_2,b''_3,b''_4)\in \tilde \Gamma_g'$
such that $\lambda (b')\preceq   \lambda (b'')$.
\end{claim2}

Roughly speaking, we prove Claim 1 by reducing the number of the $\mux$'s of $b_2$ by using $\stackrel{i}{\Rightarrow}$ for $i=3,\ldots,6$.
For that purpose, we define ``$\mux$-complexity'' functions
\begin{align*}
|\ |,m&\co \Hom(\mathcal{A}_{\mu,\Delta})\rightarrow  \mathbb{Z}_{\geq 0}
\end{align*}
as follows. Given an element $b\in \Hom(\mathcal{A}_{\mu,\Delta})$, we color each edge of a diagram of $b$ with an non-negative integer.
First, we color each edge on the top with $0$.
Then, we color the  edges below inductively as in Figure \ref{fig:coloremu} (a).
We define $|b|$ as the maximal integer on the edges on the bottom.  We define $m(b)$ as the number of  the edges on the bottom colored with $|b|$.
For example, for the colored morphism $f\in \Hom(\mathcal{A}_{\mu,\Delta})$ in Figure \ref{fig:coloremu} (b), we have  $|f|=3$, and 
$m(f)=2$.
\begin{figure}
\centering
\includegraphics[width=12cm,clip]{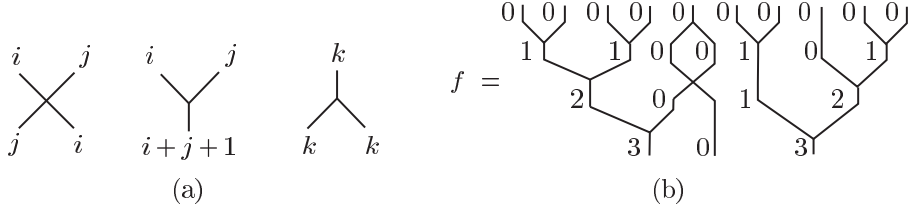}
\caption{(a) How to color the edges (b) An example of the coloring}\label{fig:coloremu}
\end{figure}

We use the following lemma.
\begin{lemma}\label{ca}
For every $B\in \Hom(\mathcal{A}_{\mu,\Delta})$, there exists $B_{\mu}\in\Hom (\mathcal{X}_{\mu})$ and $B_{\Delta}\in \Hom (\mathcal{X}_{\Delta})$ such that $B\preceq  B_{\mu}\circ B_{\Delta}$
and $|B|=|B_{\mu}\circ B_{\Delta}|$.
\end{lemma}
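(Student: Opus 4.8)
\textbf{Proof proposal for Lemma \ref{ca}.}
The plan is to rewrite a diagram of $B$, by a controlled sequence of the local moves $\stackrel{i}{\Rightarrow}$, into the shape ``all $\Deltax$'s above all $\mux$'s'', i.e.\ into a morphism $B_\mu\circ B_\Delta$ with $B_\mu\in\Hom(\mathcal{A}_\mu)$ and $B_\Delta\in\Hom(\mathcal{A}_\Delta)$, without changing the value of $|\cdot|$. The moves $\stackrel{i}{\Rightarrow}$ that act on a diagram built only from $\mux$, $\Deltax$, $\id_A$ and symmetries are the associativity of $\mux$, the coassociativity of $\Deltax$, the bialgebra exchange $\Deltax\circ\mux\ \Leftrightarrow\ (\mux\otimes\mux)\circ(\id_A\otimes c_{A,A}\otimes\id_A)\circ(\Deltax\otimes\Deltax)$, and the naturality of the symmetry; these sit among $\stackrel{1}{\Rightarrow},\dots,\stackrel{4}{\Rightarrow}$, and each of them is an equality of the underlying $\mathbb{Q}[[h]]$-linear maps, so both of its orientations may be used inside $\preceq$. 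Moreover two diagrams related by interchanging gates on parallel strands represent the same morphism of $\mathcal{A}$, so such re-layerings are free of charge.

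First I would check that each of the above moves preserves $|\cdot|$. Since the colouring rule copies colours through $\Deltax$ and assigns to the output of a $\mux$ the sum of its two input colours plus $1$, a direct local computation shows that the colours on the boundary of the rewritten patch agree before and after each move; for the bialgebra move, inputs coloured $a,b$ produce outputs coloured $a+b+1,\,a+b+1$ in \emph{both} diagrams, and for the (co)associativity and symmetry moves this is immediate. Consequently every bottom edge of the whole diagram keeps its colour, and $|B|$ is left unchanged by any sequence of these moves; so it will suffice to produce \emph{some} $B'=B_\mu\circ B_\Delta$ with $B\preceq B'$.

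The core step is an induction on the number of copies of $\mux$ occurring in the diagram of $B$. If there are none, then pushing all symmetries below all $\Deltax$'s by naturality already puts $B$ in the form $B_\mu\circ B_\Delta$ with $B_\mu$ a permutation. Otherwise, choose a $\mux$-node $v$ minimal in the causal order; a topological sort (realized by interchanges, hence costing no move) writes $B=C\circ(\id_A^{\otimes a}\otimes\mux\otimes\id_A^{\otimes b})\circ D$ where $D$ is a composite of $\Deltax$'s, symmetries and identities (it contains no $\mux$, as $v$ is causally minimal) and $C$ has one fewer $\mux$ than $B$. By the induction hypothesis $C=C_\mu\circ C_\Delta$, so $B\preceq C_\mu\circ\big(C_\Delta\circ(\id_A^{\otimes a}\otimes\mux\otimes\id_A^{\otimes b})\big)\circ D$. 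Now I push the single $\mux$ downward through the $\Deltax$-block $C_\Delta$: whenever its output wire reaches a symmetry I slide the symmetry past it by naturality, and whenever it reaches a $\Deltax$ I apply the bialgebra exchange move, moving that $\Deltax$ above the $\mux$ at the cost of duplicating both. When the $\mux$ (now a collection of $\mux$-nodes) has emerged below all of $C_\Delta$, the block has become $E_\mu\circ E_\Delta$ with $E_\mu$ a composite of $\mux$'s and symmetries and $E_\Delta$ of $\Deltax$'s and symmetries, whence $B\preceq(C_\mu\circ E_\mu)\circ(E_\Delta\circ D)=:B_\mu\circ B_\Delta$; by the previous paragraph $|B|=|B_\mu\circ B_\Delta|$.

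The main obstacle is the termination (and clean bookkeeping) of the push-down step, since each bialgebra move turns one $\mux$ into two and one $\Deltax$ into two, so the diagram grows. The point to argue carefully is that this particular, restricted process still halts: $C_\Delta$ has a fixed finite depth $d$, each bialgebra application advances a copy of the $\mux$ one layer closer to the bottom of $C_\Delta$, and a copy that has left $C_\Delta$ is never touched again; hence at most $2^{d}d$ applications occur. I would also need to be precise about the layered-diagram/causal-order formalism used to isolate $v$ and to track which gate the descending wire meets next, but these are routine once set up. Everything else — the identification of the relevant moves among $\stackrel{1}{\Rightarrow},\dots,\stackrel{4}{\Rightarrow}$ and the $|\cdot|$-invariance checks — is local and straightforward.
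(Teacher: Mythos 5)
Your proposal takes essentially the same approach as the paper: push the $\mux$'s below the $\Deltax$'s by repeated applications of the bialgebra move $\stackrel{2}{\Rightarrow}$, checking that each application preserves the edge colouring and hence $|\cdot|$; the paper states this in one line, while you spell out the induction and the termination bookkeeping. One small caveat worth fixing: your opening claim that the (co)associativity and bialgebra moves may be used ``in both orientations because the underlying $\mathbb{Q}[[h]]$-linear maps agree'' is not a valid appeal (the preorder $\preceq$ is generated by the one-directional local moves $\stackrel{i}{\Rightarrow}$, not by equality of underlying maps, and from the proof of Claim~1 the moves $\stackrel{3}{\Rightarrow},\stackrel{4}{\Rightarrow}$ actually involve $\adx,\sadx$), though your inductive step as written uses only the forward direction of $\stackrel{2}{\Rightarrow}$ together with genuine equalities of morphisms in a strict symmetric monoidal category (interchange and naturality of the symmetry), so the conclusion is unaffected.
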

\begin{proof}
We can realize a path from $B$ to  $B_{\mu}\circ B_{\Delta}$ with some $B_{\mu}\in\Hom (\mathcal{X}_{\mu})$ and $ B_{\Delta}\in \Hom (\mathcal{X}_{\Delta})$ by using $\stackrel{2}{\Rightarrow}$,
which preserves $|\ |$ as in Figure \ref{fig:mud}.
\begin{figure}
\centering
\includegraphics[width=7cm,clip]{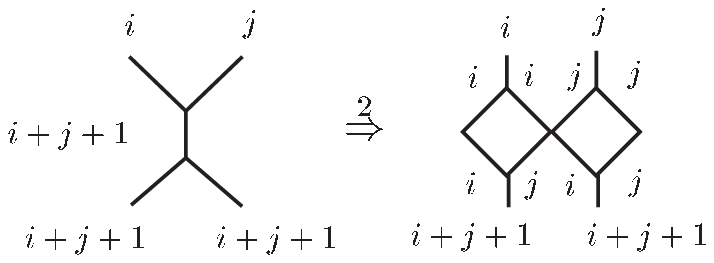}
\caption{The coloring before and after we apply  $\stackrel{2}{\Rightarrow}$}\label{fig:mud}
\end{figure}
\end{proof}
\begin{proof}[Proof of Claim 1]
We use  double induction  on $|b_2|$ and $m(b_2)$.
If $|b_2|=0$, then we have $b_2\in \Hom(\mathcal{A}_{\Delta})$. 
We assume $|b_2|>0$. It is enough to prove that there exists  $a=(a_1,a_2,a_3,a_4) \in \tilde \Gamma_g$ such that $\lambda (b)\preceq  \lambda (a)$ satisfying 
either $|b_2|>|a_2|$, or $|b_2|=|a_2|$ and $m(b_2)>m(a_2)$.

By Lemma \ref{ca}, we can assume  $b_2=b_{2,\mu}\circ b_{2,\Delta}$ with $b_{2,\mu}\in \Hom(\mathcal{A}_{\mu})$ and $ b_{2,\Delta}\in  \Hom(\mathcal{A}_{\Delta})$.
Since  $|b_{2,\mu}|=|b_2|>0$,  there is  a $\mux$ at the bottom of $b_2$ whose  output edge is  colored by $|b_2|$.
We define $a=(a_1,a_2,a_3,a_4) \in \tilde \Gamma_g$ as follows. 
\begin{itemize}
\item[(1)]If  the  output edge of the $\mux$  is connected to the left input edge of an  $\adx$ (resp.  the right input edge of an $\sadx$), then let $a$ be the element obtained from $b$ by applying $\stackrel{3}{\Rightarrow}$ to the  $\adx$ (resp.  $\stackrel{4}{\Rightarrow}$ to the $\sadx$) in $\lambda (b)$ as in Figure \ref{fig:ba2} (a) (resp. (b)).
\begin{figure}
\centering
\includegraphics[width=11cm,clip]{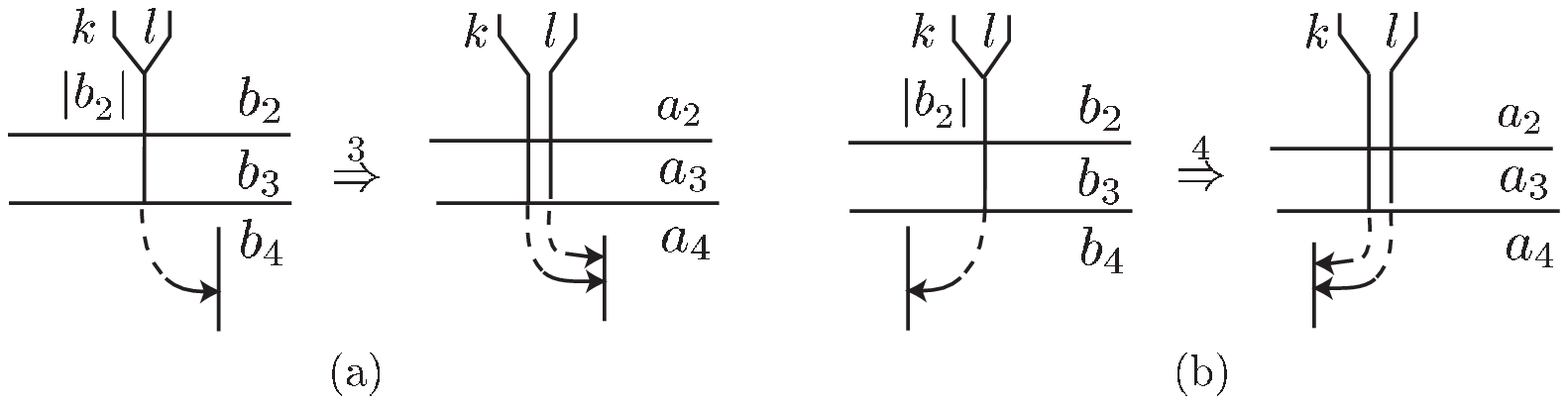}
\caption{How to obtain $a=(a_1,a_2,a_3,a_4) \in \tilde \Gamma_g$ from $b$, where   $k,l\geq 0$, $k+l+1=|b_2|$ }\label{fig:ba2}
\end{figure}
\item[(2)]If  the  output edge of the $\mux$    is connected to  the   left (resp.  right) input edge of a $\Yx$, then let $a$ be
the element obtained from $b$ by applying $\stackrel{5}{\Rightarrow}$ (resp. $\stackrel{6}{\Rightarrow}$) on the  $\Yx$  in $\lambda (b)$ as in  Figure \ref{fig:ba2b} (a) (resp. (b)).
\begin{figure}
\centering
\includegraphics[width=13cm,clip]{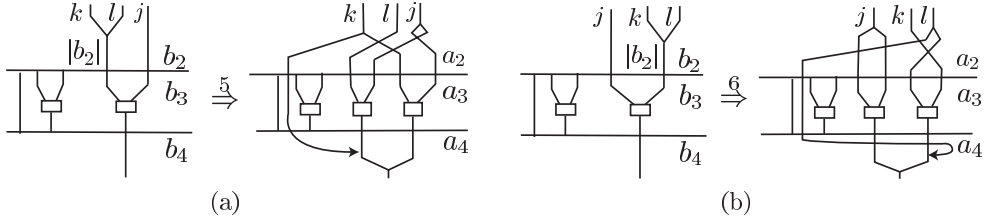}
\caption{How to obtain $a=(a_1,a_2,a_3,a_4) \in \tilde \Gamma_g$ from $b$, where  $j,k,l\geq 0$, $k+l+1=|b_2|$}\label{fig:ba2b}
\end{figure}
\end{itemize}
If $m(b_2)=1,$ then we have $|b_2|>|a_2|$. 
If $m(b_2)>1,$ then we have $|b_2|=|a_2|$ and $m(b_2)>m(a_2)$. 
This completes the proof.
\end{proof}

\begin{proof}[Proof of Claim 2]
We transform $b_2'\circ b_1'$  into   $b_2''\circ b_1''$ such that  $b''=(b''_1,b''_2,b_3',b_4')\in \tilde \Gamma_g'$ by  the two steps as in Figure \ref{fig:jt3}.
That is, 
\begin{itemize}
\item[(i)]we can  transform $b'_2$ into $\sigma \circ \Deltax^{[m_1,\ldots,m_l]}$ for some $ \sigma \in \Hom(\mathcal{A}_{\mathfrak{S}})$, $l\geq 0$, 
$m_1,\ldots,m_l\geq 1$ by using  $\stackrel{1}{\Rightarrow}$, and 
\item[(ii)] we can transform  $(\Deltax^{[m]}\otimes \Deltax^{[n]})\circ \Dx$,  $m,n\geq 1$,  into $ a\circ \Dx^{\otimes mn},$
for some $a\in \Hom_{\mathcal{A}_{\mu}}(2mn,m+n)$, by using $\stackrel{2}{\Rightarrow}$, $\stackrel{7}{\Rightarrow}$,  and $\stackrel{8}{\Rightarrow}$ as depicted in Figure \ref{fig:deld6}.
\begin{figure}
\centering
\includegraphics[width=13cm,clip]{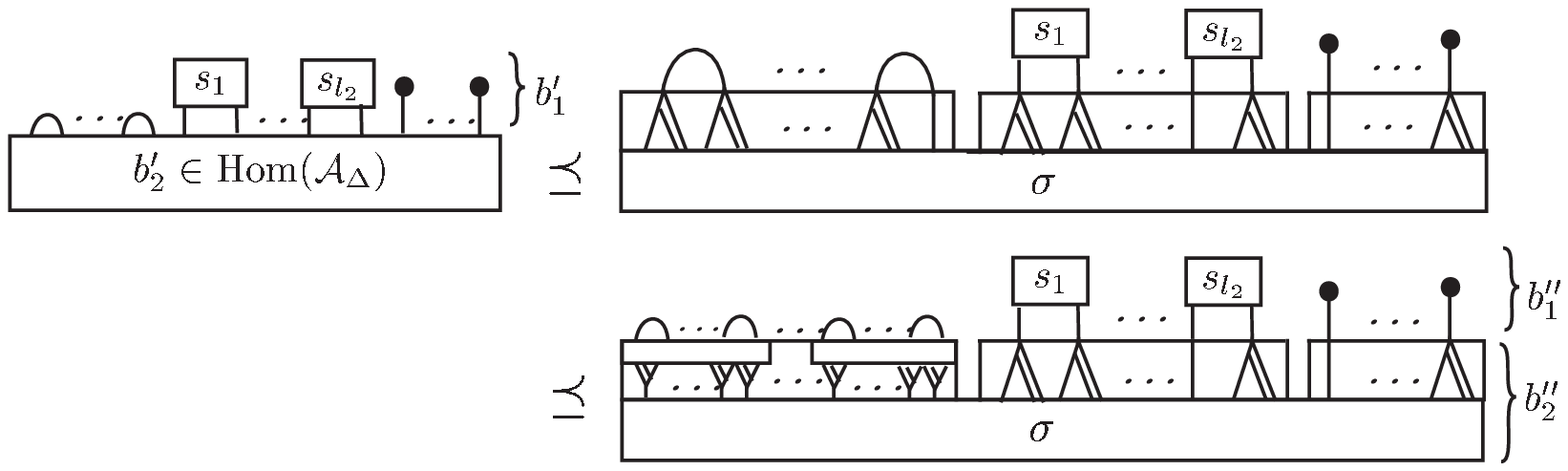}
\caption{How to transform $b_2'\circ b_1'$ to $b_2''\circ b_1''$}\label{fig:jt3}
\end{figure}
\end{itemize}
Hence we have the assertion.
\end{proof}
\begin{figure}
\centering
\includegraphics[width=12cm,clip]{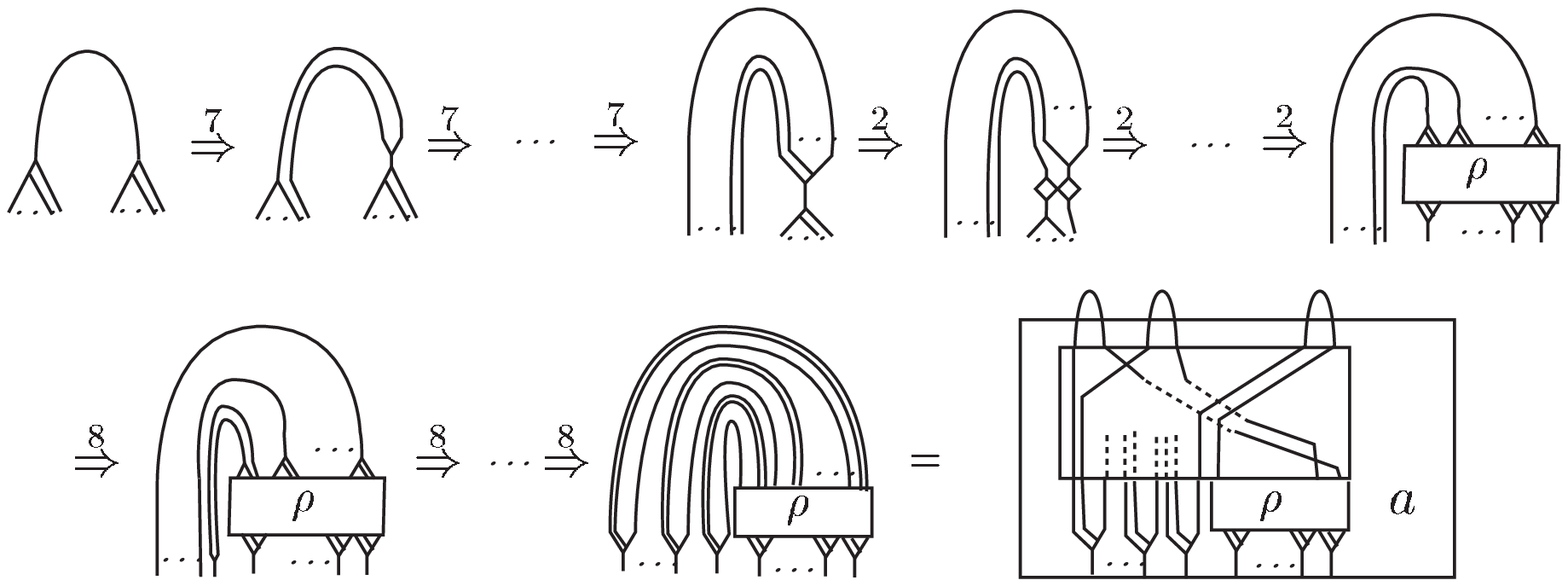}
\caption{How to transform $(\Deltax^{[m]}\otimes \Deltax^{[n]})\circ \Dx$ into $a\circ \Dx^{\otimes mn}$, where $ \rho\in \Hom(\mathcal{A}_{\mathfrak{S}})$ }\label{fig:deld6}
\end{figure}
\subsection{The functor $\mathcal{F}'$ and proof of $\mathcal{F}\big(\Gamma _g'\big)(1)\subset \mathcal{F}'\big(\Gamma _g'\big)(1)$}\label{f7}
In this section, we define the symmetric monoidal functor $\mathcal{F}'\co \mathcal{A}\rightarrow \mathcal{M}$ and prove $\mathcal{F}\big(\Gamma _g'\big)(1)\subset \mathcal{F}'\big(\Gamma _g'\big)(1)$.

For $n\geq 0$, we equip $U_h^{\hat \otimes n}$  with the topological $\mathbb{Z}^n$-graded algebra structure such that  
\begin{align*}
\deg(x_1\otimes \cdots\otimes x_n)=\big(|x_1|,\ldots,|x_n|\big),
\end{align*}
for  homogeneous elements  $x_1,\ldots,x_n\in U_h$ with respect to the topological $\mathbb{Z}$-grading of $U_h$ defined in Section \ref{preuni}.

For $k,l\geq 0$, we call a map $f\co U_h^{\hat \otimes k}\rightarrow U_h^{\hat \otimes l}$ \textit{homogeneous} if it sends each homogeneous element to a homogeneous element.
We call an object in $\mathcal{M}$ \textit{homogeneous} if it is generated by  homogeneous maps as a $\mathbb{Z}[q,q^{-1}]$-submodule of  $\Hom^{\c}_{\mathbb{Q}[[h]]}(U_h^{\hat \otimes k}, U_h^{\hat \otimes l})$.
Note that the image by the functor $\mathcal{F}$ of each generator morphism  in $\mathcal{A}$ in Section \ref{gene}, except for $\Deltax$,  is homogeneous.  

We define  $\mathcal{F}'$ in the same way as $\mathcal{F}$ except that we set
$\mathcal{F}'(\Deltax)=\sum_{j\in \mathbb{Z}}\Z\Delta_j$  instead of  $\mathcal{F}(\Deltax)=\Z\Delta$.
Here, for $j\in \mathbb{Z}$, $\Delta_j\co U_h\rightarrow U_h\hat \otimes U_h$  is  the continuous $\mathbb{Q}[[h]]$-linear map defined by
\begin{align*}
\Delta_j(x)= \sum x_{(1)}\otimes p _j (x_{(2)}),
\end{align*}
for $x\in U_h$,
where $p_j\co U_h\rightarrow U_h$ is the projection map defined by
\begin{align*}
p _j(y)=
\begin{cases}
y \quad  \text{if }|y|=j,
\\
0 \quad  \text{otherwise},
\end{cases}
\end{align*}
for $y\in U_h$ homogeneous.
Since  $\mathcal{F}'(\Deltax)$ is homogeneous, $\mathcal{F}'$ sends each generator morphism in $\mathcal{A}$ to a homogeneous module.
Moreover, since the compositions and  the tensor products of homogeneous objects in $\mathcal{M}$ are also homogeneous,
 the image by $\mathcal{F}'$ of  each morphism in $\mathcal{A}$ is homogeneous.

We prove $\mathcal{F}\big(\Gamma _g'\big)(1)\subset \mathcal{F}'\big(\Gamma _g'\big)(1).$
For  $x\in U_h$  a finite linear combination of homogeneous elements,
it is easy to check that  
\begin{align}
\Delta(x)&=\sum_{j\in \mathbb{Z}}\Delta_j(x),\label{homo1}
\\
\mathcal{F}(\Deltax)(x) &=\big(\Z\Delta\big)(x)\subset \Big(\sum_{j\in \mathbb{Z}} \Z \Delta_j\Big)(x)=\mathcal{F}'(\Deltax)(x). \label{homo2}
\end{align}
(In fact, (\ref{homo1}) is true for all $x\in U_h$.
However, (\ref{homo2})  is not,
since  $\sum_{j\in \mathbb{Z}} \Z \Delta_j$  consists of  finite linear combinations of $\Delta_j$ for $j\in \mathbb{Z}$.)

Note that each $\Deltax$ in a diagram of $B\in \Gamma' _g$ is contained in a $\Deltax^{[n]}\ex{m}$, in a $\Deltax^{[n]}\fx{m}$,
or in a $\Deltax^{[n]}\uqzx$  for $m,n\geq 0$. 
By (\ref{homo2}), we can prove that
\begin{align*}
\mathcal{F}(\Deltax^{[n]}\ex{m})(1)&\subset \mathcal{F}'(\Deltax^{[n]}\ex{m})(1),
\\
\mathcal{F}(\Deltax^{[n]}\fx{m})(1)&\subset \mathcal{F}'(\Deltax^{[n]}\fx{m})(1),
\end{align*}
for $m,n\geq 0$, by using induction on $n$. 
For $y\in U_h^0$, we have 
\begin{align*}
\mathcal{F}(\Deltax)(y)=\big(\Z\Delta_0\big)(y)=\mathcal{F}'(\Deltax)(y).
\end{align*}
Thus,  we have $\mathcal{F}\big(B\big)(1)\subset \mathcal{F}'\big(B\big)(1)$,
which completes the proof.

\subsection{The Subset $\Gamma''_g\subset \Gamma _g'$}\label{gr3}
In this section, we define  the subset $\Gamma _g''\subset \Gamma _g'$.

In what follows, we color each edge of a diagram of $B\in \Gamma _g'$ with $d,w,k$ or $\emptyset $ as follows.
First, we color the output edges of $\Dx$'s, $\Thetax{i}$'s, and $\uqzx$'s with $d,w$, and $k$, respectively.
Then, we color the  edges below as in Figure \ref{fig:colore} (a).
See Figure \ref{fig:colore}  (b) for an example of $G\in \Gamma _2'$ with the coloring.

\begin{figure}
\centering
\includegraphics[width=12cm,clip]{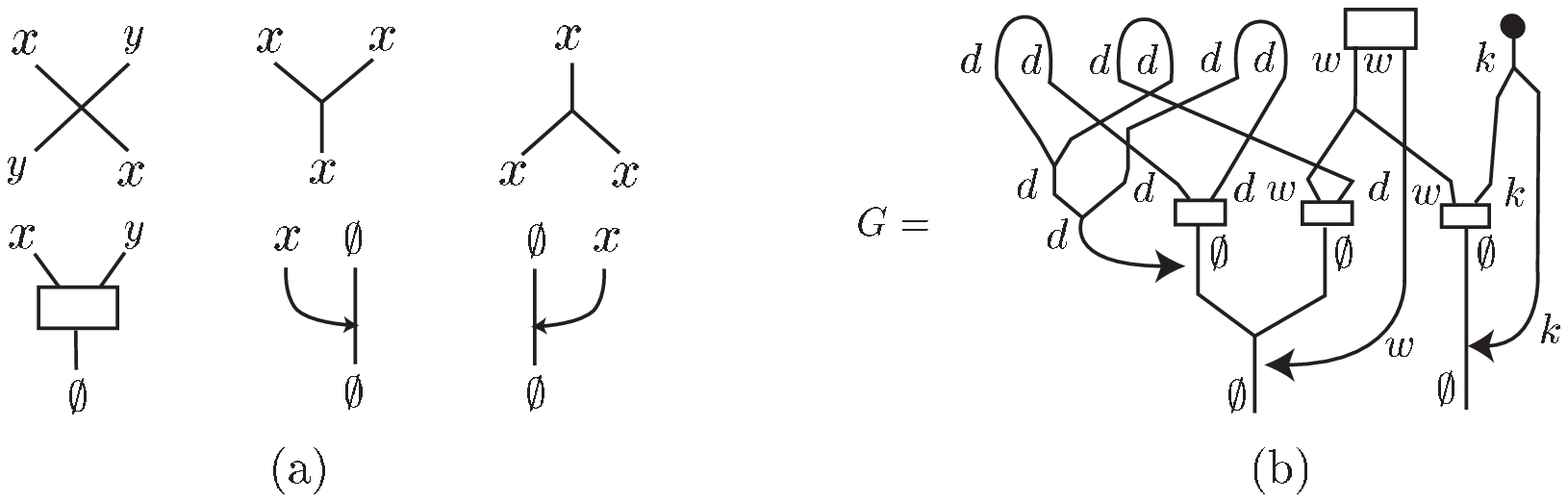}
\caption{(a) How to color the edges (b) An example of the coloring}\label{fig:colore}
\end{figure}

For $g\geq 0$, let  $\dot \Gamma''_g\subset \dot  \Gamma _g'$ be the subset consisting of  $b=(b_1,d,w,k, \sigma  ,b_3,b_4)$  such that
\begin{itemize}
\item[($C_{dk}$)] $d$ and $k$ are the identity morphisms in $\mathcal{A}$, 
\item[($C_{\ad}$)]  in $(\lambda \circ \kappa )(b)$, there is no  $\adx$ (resp. $\sadx$) with the $d$-colored left (resp. right) input edge, 
i.e.,  the first $l$ input edges of  $b_3=\id_A^{\otimes l}\otimes \Yx^{\otimes m}\otimes \uqzex^{\otimes n}$  are not  colored by $d$,
\item[($C_{Y}$)] there is no $\Yx$ with the left and right input edges both colored by $d$.
\end{itemize}
See Figure \ref{fig:dwk2} for an example of  $b_3\circ \sigma \circ(d\otimes w\otimes k)\circ b_1$.

Set
\begin{align*}
\tilde \Gamma''_g &=\kappa(\dot  \Gamma''_g)\subset \tilde \Gamma '_g,
\\
\Gamma''_g &=\lambda (\tilde \Gamma''_g)\subset \Gamma' _g.
\end{align*}
\begin{figure}
\centering
\includegraphics[width=7cm,clip]{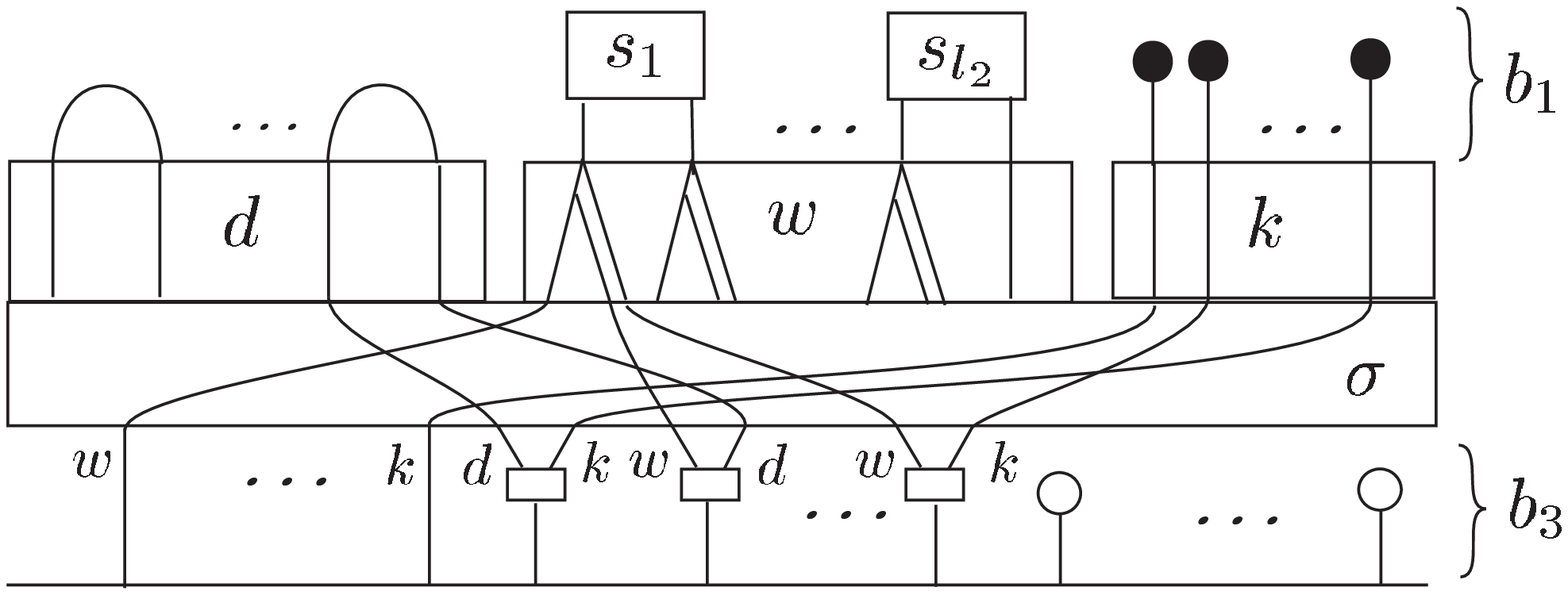}
\caption{An example  of $b_3\circ \sigma \circ(d\otimes w\otimes k)\circ b_1$ for $(b_1,d,w,k, \sigma  ,b_3,b_4)\in \dot  \Gamma _g''$}\label{fig:dwk2}
\end{figure}

\subsection{Proof of $\mathcal{F}'\big(\Gamma' _g\big)(1)\subset \mathcal{F}'\big(\Gamma'' _g\big)(1)$}\label{SB2}
Similarly to  Section \ref{SB}, we define a preorder $\preceq '$ on $\Gamma _g'$,
 and prove the following two lemmas, which imply  $\mathcal{F}'\big(\Gamma' _g\big)(1)\subset \mathcal{F}'\big(\Gamma'' _g\big)(1)$.
\begin{lemma}\label{co'}
For $B\preceq 'B'$ in $ \Gamma '_g$,  we have $\mathcal{F}'(B)(1)\subset \mathcal{F}'(B')(1)$.
\end{lemma}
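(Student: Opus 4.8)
The plan is to follow the proof of Lemma~\ref{co''} almost verbatim, in the graded setting. The preorder $\preceq'$ on $\Gamma'_g$ will be generated by a finite list of binary relations $\stackrel{i}{\Rightarrow}$ on $\Hom(\mathcal{A})$, each given by a local move on diagrams in which the outsides of the two rectangles coincide, and $\Gamma'_g$ will be closed under each of them (in particular each move will preserve Condition~A and be compatible with the $d/w/k$-coloring of edges introduced in Section~\ref{gr3}). It therefore suffices to check, for every generating move $B\stackrel{i}{\Rightarrow}B'$ with $B,B'\in\Gamma'_g$, that $\mathcal{F}'(B)(1)\subset\mathcal{F}'(B')(1)$; the general statement then follows by transitivity of $\preceq'$. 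Since $\mathcal{F}'$ agrees with $\mathcal{F}$ on every generating morphism of $\mathcal{A}$ except $\Deltax$, and since $\mathcal{F}'(\Deltax)=\sum_{j\in\mathbb{Z}}\Z\,\Delta_j$ still satisfies the coproduct relations $(\ref{d2})$ together with $\Delta(\uqz)\subset(\uqz)^{\otimes2}$ and $\uqz\subset\mu((\uqz)^{\otimes2})$ when read degree by degree, the moves inherited from the list $\stackrel{i}{\Rightarrow}$, $i=1,\dots,8$, of Section~\ref{SB} are handled exactly as there, using $(\ref{d2})$, Lemma~\ref{y2}, and the inclusions just recalled.

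The genuinely new moves are those realizing the passage from $\dot\Gamma'_g$ to $\dot\Gamma''_g$: the moves that push a $d$-colored edge (an output edge of a $\Dx$) into a $\Yx$, an $\adx$, or an $\sadx$ and absorb it, so as to enforce conditions $(C_{dk})$, $(C_{\ad})$, $(C_{Y})$. For these I would invoke Lemma~\ref{qd} directly, together with $(\ref{exD})$ to commute $D^{\pm1}$ past homogeneous elements, Proposition~\ref{Habi} and Corollary~\ref{Habii} to control the actions $\triangleright$ and $\triangleleft$, and Lemma~\ref{qq}. This is precisely where the refinement from $\mathcal{F}$ to $\mathcal{F}'$ of Section~\ref{f7} is needed: the right-hand sides of the displayed formulas in the proof of Lemma~\ref{qd} carry factors $K^{\pm|b_{(2)}|}$, so one must know that the relevant tensorands run over homogeneous elements, which holds because $\mathcal{F}'$ sends every morphism of $\mathcal{A}$ to a homogeneous object of $\mathcal{M}$. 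Each such move then maps $\mathcal{F}'(B)(1)$ into $\mathcal{F}'(B')(1)$, the target diagram having the offending $d$- (and $k$-) colored edges removed, hence lying in $\Gamma''_g$ once all such moves have been applied.

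The main obstacle will be bookkeeping rather than any new algebra. One must (i) choose the move list so that $\Gamma'_g$ is genuinely closed under $\preceq'$ and so that the moves enforcing $(C_{dk})$, $(C_{\ad})$, $(C_{Y})$ actually drive every diagram of $\Gamma'_g$ to one of $\Gamma''_g$ — this is the content of the companion lemma of Section~\ref{SB2}, the analogue of Lemma~\ref{c'} for $\preceq'$ — and (ii) verify that after each move the $\Deltax$'s still occur only inside pieces of the form $\Deltax^{[n]}\ex{m}$, $\Deltax^{[n]}\fx{m}$, or $\Deltax^{[n]}\uqzx$, so that the homogeneity argument of Section~\ref{f7} keeps applying and so that $\mathcal{F}'$, not merely $\mathcal{F}$, may be used throughout. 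Granting these compatibility checks, the per-move inclusion $\mathcal{F}'(B)(1)\subset\mathcal{F}'(B')(1)$ follows from the identities above, which proves Lemma~\ref{co'} and, combined with the companion lemma, supplies the link $\mathcal{F}'(\Gamma'_g)(1)\subset\mathcal{F}'(\Gamma''_g)(1)$ in the chain $(\ref{naga})$.
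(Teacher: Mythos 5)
Your high-level plan matches the paper's: $\preceq'$ is generated by a finite list of local moves, one proves the inclusion $\mathcal{F}'(B)(1)\subset\mathcal{F}'(B')(1)$ move by move, and homogeneity of $\mathcal{F}'$ (that every morphism is sent to a module spanned by homogeneous maps) is what makes the degree-sensitive identities available. The paper indeed splits this lemma into three sub-lemmas (Lemmas~\ref{om1}, \ref{om2}, \ref{om3}) corresponding to the generating relations $\stackrel{9}{\Rightarrow},\ldots,\stackrel{13}{\Rightarrow}$, and the relations $\stackrel{9}{\Rightarrow}$ and $\stackrel{10}{\Rightarrow}$ are themselves defined as composites of elementary local moves $\stackrel{j}{\rightsquigarrow}$.

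Where your proposal goes wrong is in the toolbox for the ``new'' moves that enforce $(C_{dk})$, $(C_{\ad})$, $(C_{Y})$. You invoke Lemma~\ref{qd} (together with~(\ref{exD}), Proposition~\ref{Habi}, Corollary~\ref{Habii}, Lemma~\ref{qq}), but Lemma~\ref{qd} only asserts containment in $(\uqe)^{\otimes 2}$; it says nothing about replacing a local diagram by a simpler one while keeping the $\mathcal{F}'$-image inside the new one. That is why it appears in Section~\ref{prBB}, not here. The moves that delete a $d$-colored edge entering an $\adx$ (resp.\ $\sadx$) are justified by the identities $\sum \ad(D'_{\pm}\otimes x)\otimes D''_{\pm}=x\otimes K^{\pm|x|}$ and $\sum \ad(D'_{\pm}D''_{\pm}\otimes x)=q^{|x|^2}x$ from \cite[Lemma~5.2]{sakie}; the move that deletes a $\Yx$ whose two input edges are both $d$-colored uses Lemma~\ref{y3} ($\dot Y(x\otimes y)=\varepsilon(x)\varepsilon(y)$ for Cartan $x,y$); the $\varepsilonx$-propagation sub-moves rest on explicit formulas such as $(\varepsilon\otimes\id)\circ\big((\uqz)^{\otimes 2}D^{\pm 1}\big)=\uqz$ and on the degree-by-degree description of $\Delta_k$ on $\f{l}\uqz\e{m}$; and the coloring-normalization relations $\stackrel{11}{\Rightarrow},\stackrel{12}{\Rightarrow}$ use~(\ref{y21}) and~(\ref{homo2}), while $\stackrel{13}{\Rightarrow}$ uses $\Delta(\uqz)\subset(\uqz)^{\otimes 2}$. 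These identities, not Lemma~\ref{qd}, are exactly where the homogeneity of $\mathcal{F}'$ is required (the $K^{\pm|x|}$ and $q^{|x|^2}$ factors only make sense termwise on homogeneous $x$). Also note that $\preceq'$ is \emph{not} built from the earlier relations $\stackrel{1}{\Rightarrow},\ldots,\stackrel{8}{\Rightarrow}$ of Section~\ref{SB}; a new move list is introduced. So while the structure of your argument is sound, carrying it out as written would stall for lack of the right per-move identities.
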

\begin{lemma}\label{C}
For each element $B\in \Gamma'_g$, there exists $B'\in \Gamma''_g$, such that 
$B\preceq' B'$.
\end{lemma}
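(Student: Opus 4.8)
The plan is to define the preorder $\preceq'$ on $\Gamma'_g$ by a finite list of generating local moves on diagrams, in the spirit of the moves $\stackrel{i}{\Rightarrow}$ of Section~\ref{SB}, each chosen so that $\Gamma'_g$ is closed under it, Condition~A is preserved, and the move is $\mathcal{F}'$-increasing; this last property is exactly the content of Lemma~\ref{co'}. Besides the evident $\Gamma'_g$-versions of $\stackrel{1}{\Rightarrow},\ldots,\stackrel{8}{\Rightarrow}$, the list should include: (a) a move absorbing a $\Deltax^{[r]}$ sitting on a $\uqzx$ into $\uqzx^{\otimes r}$, legitimate because $\Delta^{[r]}(\uqz)\subset(\uqz)^{\otimes r}$; (b) moves on the $\Dx$'s coming from the Hopf-algebraic identities for $D$, namely splitting or merging $\Dx$'s via~(\ref{d2}), transposing the two legs of a $\Dx$ via~(\ref{d1}), contracting a leg of a $\Dx$ against a neighbouring $\varepsilonx$ and replacing the $\Dx$ by a single $\uqzx$ via~(\ref{d3}), and commuting a leg of a $\Dx$ past a homogeneous Cartan factor via~(\ref{exD}); and (c) a move collapsing any $\Yx$ both of whose input edges are coloured $d$ into $\uqzx\circ(\varepsilonx\otimes\varepsilonx)$, legitimate because a $d$-coloured edge always carries an element of $U_h^0$, so that on those two inputs $\dot Y$ equals $\eta\circ(\varepsilon\otimes\varepsilon)$ by Lemma~\ref{y3} and is absorbed into $\uqz\ni 1$. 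Proving Lemma~\ref{co'} then reduces to checking each move; the substantive cases are the analogues of $\stackrel{5}{\Rightarrow},\stackrel{6}{\Rightarrow}$ (Lemma~\ref{y2}), move (c) (Lemma~\ref{y3}), and the $D$-moves~(\ref{d1})--(\ref{exD}).

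To prove Lemma~\ref{C}, I would normalise a given $B=(\lambda\circ\kappa)(b_1,d,w,k,\sigma,b_3,b_4)\in\Gamma'_g$ towards $\Gamma''_g$ in three phases (which, as noted below, must be interleaved rather than run strictly in order). First, absorb every $\Deltax$ occurring in $k=\Deltax^{[r_1,\ldots,r_{l_3}]}$ into the corresponding $\uqzx$ by move~(a), which makes $k$ the identity. Second, reduce $d$ to the identity by removing its $\mux$'s one at a time, exactly as in the proof of Claim~1, by double induction on the $\mu$-complexity $(|d|,m(d))$: a bottom-most $\mux$ of $d$ is carried below $\sigma$, where it meets either a $\mux$, a pass-through edge, the left input of an $\adx$, or the right input of an $\sadx$ (and is then absorbed and disappears, by associativity and the analogues of $\stackrel{3}{\Rightarrow},\stackrel{4}{\Rightarrow}$), or else a $\Yx$ (where the analogue of $\stackrel{5}{\Rightarrow}$ or $\stackrel{6}{\Rightarrow}$, that is Lemma~\ref{y2}, removes it at the cost of new $\Yx$'s, $\adx$'s, $\sadx$'s and $\Deltax$'s). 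The $\Deltax$'s so produced sit on $D$-legs, on $\uqzx$-legs, or on $\Thetax{i}$-legs and migrate back up --- into $b_1$ via~(\ref{d2}), into $k$ via move~(a), or into $w$ --- so the result again parses as an element of $\tilde\Gamma'_g$ with strictly smaller $d$; and whenever a $\Yx$ acquires two $d$-coloured inputs, it is at once collapsed by move~(c). Third, with $d$ and $k$ trivial, repair $(C_{\ad})$ and $(C_{Y})$: if a leg of a $\Dx$ enters the left input of an $\adx$ or the right input of an $\sadx$, use~(\ref{d1}), (\ref{d2}) and~(\ref{exD}) to transpose, split and commute that leg into an allowed slot, or~(\ref{d3}) to contract it away; if both legs of a $\Dx$ feed a single $\Yx$, collapse that $\Yx$ by move~(c) and delete the $\Dx$ via~(\ref{d3}).

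The hard part will be the bookkeeping that binds the three phases together. The reductions interfere: reducing $d$ through Lemma~\ref{y2} creates $\adx$'s with $d$-coloured left inputs, hence violations of $(C_{\ad})$, while repairing $(C_{\ad})$ by splitting a $\Dx$ via~(\ref{d2}) creates a fresh $\mux$ on the other $D$-leg, hence re-enlarges $d$; and one must keep verifying that every intermediate diagram lies in $\Gamma'_g$ and satisfies Condition~A. So the argument really depends on exhibiting a single well-founded lexicographic complexity function on $\Gamma'_g$ --- presumably with coordinates the number of $\Deltax$'s in $k$, then the number of $\Dx$'s in forbidden positions, then $(|d|,m(d))$, ordered so that each of the moves above strictly decreases it --- and checking termination. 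I expect essentially all of the difficulty to be in designing and verifying this complexity; the algebraic inputs (Lemmas~\ref{y2} and~\ref{y3}, the identities~(\ref{d1})--(\ref{exD}), and Proposition~\ref{Habi}/Corollary~\ref{Habii}) are already available.
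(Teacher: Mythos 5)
Your proposal has the right overall shape, and two of your explicit moves do match the paper's: move (a) is exactly the paper's $\stackrel{13}{\Rightarrow}$, and move (c) is the paper's $\stackrel{9}{\rightsquigarrow}$ (justified by Lemma~\ref{y3}, as you say). But there is a genuine gap, and it is precisely where you expect it to be. The paper does \emph{not} normalise in the order you propose (make $k$ and $d$ trivial, then repair $(C_{\ad})$ and $(C_Y)$); it does the opposite. It defines counters $N_{\ad}$, $N_Y$, $N_{dk}$ for violations of the three conditions and reduces them sequentially (Lemmas~\ref{ata1}--\ref{ata3}): first drive $N_{\ad}$ to zero, then $N_Y$ to zero \emph{while preserving} $N_{\ad}=0$, then $N_{dk}$ to zero while preserving $N_{\ad}=N_Y=0$. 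The point of this order is that once the $\ad$- and $Y$-conditions hold, every remaining $\mux$ in $d$ feeds a $\Yx$ whose other input is \emph{not} $d$-coloured; the $N_{dk}$-reduction then uses \emph{composite} moves $\stackrel{11}{\Rightarrow},\stackrel{12}{\Rightarrow}$ which apply the $\stackrel{5}{\Rightarrow}/\stackrel{6}{\Rightarrow}$-type rewriting from Lemma~\ref{y2} \emph{and in the same step} erase the freshly created $\adx$ and the $\Deltax$ on the $D$-leg, using the identities $\sum\ad(D'_{\pm}\otimes x)\otimes D''_{\pm}=x\otimes K^{\pm|x|}$ and $\sum\ad(D'_{\pm}D''_{\pm}\otimes x)=q^{|x|^2}x$ from \cite[Lemma 5.2]{sakie} (this is the substance of Lemma~\ref{om3}). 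Because the $\adx$ and the $\Deltax$ on the $D$-leg vanish together, no $\Deltax$ is ever ``migrated up'' a $D$-leg, so no new $\mux$ is created in $d$, and $N_{dk}$ strictly drops. Your version of phase~2 --- push $\mux$ through $\Yx$ via Lemma~\ref{y2}, then migrate the $D$-leg $\Deltax$'s up via~(\ref{d2}) --- is exactly the version that fails to terminate, since $(\Delta\otimes 1)(D)=D_{23}D_{13}$ manufactures a new $\mux$ on the other $D$-leg, so $(|d|,m(d))$ does not drop.

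The second gap is in your treatment of $(C_{\ad})$. The paper's move for a forbidden $\adx$ (or $\sadx$) is a specific local replacement: $\stackrel{1}{\rightsquigarrow}$ and $\stackrel{2}{\rightsquigarrow}$ rewrite $\adx$ with $d$-coloured left input as $\varepsilonx\otimes\id$ (again justified by those $D$-action identities, Lemma~\ref{om1} case $j=1$), followed by a deterministic cleanup of the resulting $\varepsilonx$ by $\stackrel{3}{\rightsquigarrow},\ldots,\stackrel{8}{\rightsquigarrow}$: contract the $\varepsilonx$ against a $D$-leg via~(\ref{d3}) to produce a new $\uqzx$, which is then either absorbed into an adjacent $\mux$ or added to the $k$-part. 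No $D$-splitting via~(\ref{d2}) occurs, so this step never re-enlarges $d$. By contrast, your suggestion to ``transpose, split and commute the $D$-leg into an allowed slot'' is not a local rewrite at all --- the slot a $D$-leg feeds is fixed by $\sigma$ and $b_4$, and the $D$-identities~(\ref{d1}),~(\ref{d2}),~(\ref{exD}) change the shape of the $D$-box, not the wiring of its legs. In short, the missing ingredients are (i) the $\adx$-erasure move itself, (ii) the precise $D$-action identities that make it $\mathcal{F}'$-increasing, and (iii) the correct order of reductions with the composite moves that make the three counters form a strictly decreasing chain. Without these, the lexicographic complexity you gesture at is not well defined, and the termination you flag as ``the hard part'' is in fact where the proof lives.
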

The preorder  $\preceq' $ on $\Gamma _g'$ is  generated by binary relations $\stackrel{i}{\Rightarrow}$ for $i=9,\ldots, 13$ on $\Gamma _g'$.
In the present case, we divide the definitions of  the binary relations into three.
Correspondingly, the proof of Lemma \ref{co'} is divided into that of Lemmas \ref{om1}, \ref{om2}, and \ref{om3}.

For $B\in \Gamma _g'$, let $N_{dk}(B)\geq 0$ be the number of the $\mux$'s colored by $d$ and the $\Deltax$'s colored by $k$,
$N_{\ad}(B)\geq 0$ the number of the $\adx$'s with $d$-colored left input edges and the $\sadx$'s with $d$-colored right input edges,
and $N_{Y}(B)\geq 0$ the number of the $\Yx$'s with  the left and right input edges both $d$-colored.
For example, for $G\in \Gamma _2'$ as in Figure \ref{fig:colore} (b), we have $N_{dk}(G)=3$, $N_{\ad}(G)=1$, and $N_{Y}(G)=1$.

Note that for $B\in \Gamma _g'$, we have $B\in \Gamma _g''$ if and only if $N_{dk}(B)=N_{\ad}(B)=N_{Y}(B)=0$.
By using inductions on $N_{\ad}(B),N_{Y}(B)$ and $N_{dk}(B)$,  Lemma \ref{C} follows from Lemmas \ref{ata1}, \ref{ata2}, and \ref{ata3}. 
\subsubsection{Binary relation $\stackrel{9}{\Rightarrow}$} 
Let $\stackrel{i}{\rightsquigarrow}$ for $i=1,\ldots,8$ be the local moves on diagrams of 
morphisms in $\mathcal{A}$ as depicted  in Figure \ref{fig:move21}, where in each relation,  the outsides of the two rectangles are the same.
For $B,B''\in \Gamma _g',$ we write $B\stackrel{9}{\Rightarrow} B'$ if there exists $B''\in \Hom(\mathcal{A})$ such that 
either $B\stackrel{1}{\rightsquigarrow}B''$ or $B\stackrel{2}{\rightsquigarrow}B''$, and there exists a sequence from $B''$ to $B'$ in $\Hom(\mathcal{A})$ of moves $\stackrel{i}{\rightsquigarrow}$ for $i=3,\ldots,8$.
\begin{figure}
\centering
\includegraphics[width=12cm,clip]{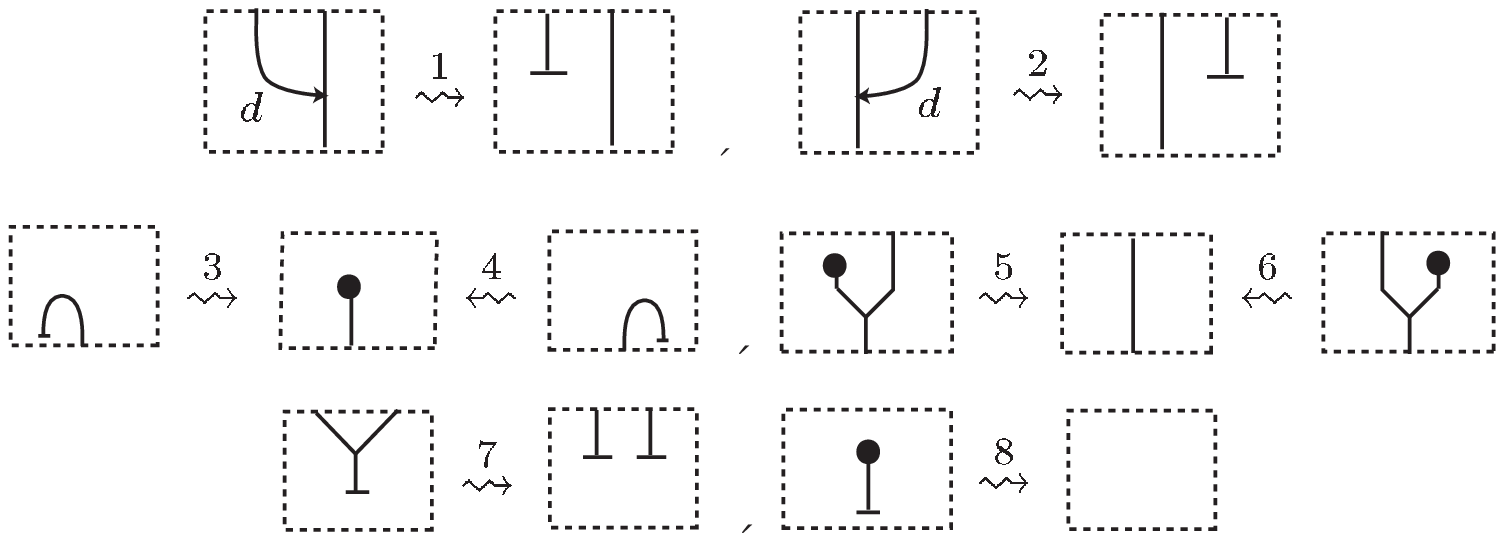}
\caption{Local moves $\stackrel{i}{\rightsquigarrow}$ for $i=1,\ldots,8$ }\label{fig:move21}
\end{figure}

\begin{lemma}\label{ata1}
For $B\in \Gamma _g'$ with $N_{\ad}(B)>0$, there exists $B'\in \Gamma _g'$ such that $N_{\ad}(B)>N_{\ad}(B')$ and  $B\stackrel{9}{\Rightarrow} B'$.
\end{lemma}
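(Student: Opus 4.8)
The plan is to locate one $\adx$ (or $\sadx$) that contributes to $N_{\ad}(B)$, resolve it with a single move $\stackrel{1}{\rightsquigarrow}$ or $\stackrel{2}{\rightsquigarrow}$, and then apply the cleanup moves $\stackrel{3}{\rightsquigarrow},\ldots,\stackrel{8}{\rightsquigarrow}$ to restore a diagram lying in $\Gamma'_g$; this is exactly a relation $B\stackrel{9}{\Rightarrow}B'$. Fix a presentation $B=(\lambda\circ\kappa)(b_1,d,w,k,\sigma,b_3,b_4)$ with $(b_1,d,w,k,\sigma,b_3,b_4)\in\dot\Gamma'_g$ and colour its diagram as in Figure \ref{fig:colore}. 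Since all $\Deltax$'s of $B$ occur inside the $w$- and $k$-blocks, every $d$-coloured edge is obtained from output edges of $\Dx$'s by applying only $\mux$'s; because the image of $\Dx$ lies in $(\uqz)^{\otimes2}D+(\uqz)^{\otimes2}D^{-1}$ and all tensorands of $D^{\pm1}$ and all powers of $K$ lie in the Cartan part $U_h^0$, every $d$-coloured edge carries a product of elements of $U_h^0$, and it can only meet a $\mux$ or be an input of an $\adx$, an $\sadx$, or a $\Yx$.

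Since $N_{\ad}(B)>0$, fix an $\adx$ whose left input edge is $d$-coloured (an $\sadx$ with $d$-coloured right input is treated identically via $\sad(y\otimes x)=\sum S^{-1}(x)\triangleright y$). Using $\stackrel{3}{\rightsquigarrow},\ldots,\stackrel{8}{\rightsquigarrow}$ I slide the $\mux$'s lying between the relevant $\Dx$'s and this $\adx$ out of the way, so that the $\adx$ acts directly on an output edge of a $\Dx$; these moves keep us in $\Gamma'_g$ and preserve Condition A because they do not touch the output edges of the $\Thetax{s_p}$'s. I then apply $\stackrel{1}{\rightsquigarrow}$ or $\stackrel{2}{\rightsquigarrow}$, which realizes the identities
\begin{align*}
\sum (D'\triangleright y)\otimes D''=y\otimes K^{|y|},\qquad \sum D'\otimes (D''\triangleright y)=K^{|y|}\otimes y
\end{align*}
(equivalent to (\ref{exD}) together with (\ref{d1}): $D',D''$ are scalar multiples of powers of $H$, and $\ad(H^n\otimes y)=(2|y|)^n y$ for homogeneous $y$). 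Its effect is to delete the $\adx$ together with one tensorand of the $\Dx$, to leave the right input $y$ unchanged up to a unit in $\Z$, and to turn the companion output of the $\Dx$ into a factor $K^{\pm|y|}$; this factor, together with the $\uqz$-factors already carried by $\Dx$, is recorded as a fresh $\uqzx$ (a $k$-coloured edge inserted into $b_1$ and split trivially in the $k$-block), or, when $|y|$ is even, absorbed into a $\Kx$. A further sequence of $\stackrel{3}{\rightsquigarrow},\ldots,\stackrel{8}{\rightsquigarrow}$ then reorganizes the result into the normal form required by $\tilde\Gamma'_g$, exactly as in the proofs of Claims 1 and 2, giving $B'\in\Gamma'_g$ with $B\stackrel{9}{\Rightarrow}B'$. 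It remains to note that $N_{\ad}(B)>N_{\ad}(B')$: the chosen $\adx$ is gone, and no other $\adx$ or $\sadx$ acquires a $d$-coloured acting input — deleting a tensorand of a $\Dx$ only shortens the Cartan products carried by $d$-coloured edges (or deletes such an edge), the companion edge ceases to be $d$-coloured, and the moves $\stackrel{3}{\rightsquigarrow},\ldots,\stackrel{8}{\rightsquigarrow}$ merely slide existing morphisms around.

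The main obstacle is bookkeeping, not algebra: one must check that after the single resolving move the diagram can indeed be brought by $\stackrel{3}{\rightsquigarrow},\ldots,\stackrel{8}{\rightsquigarrow}$ all the way back into the rigid normal form defining $\tilde\Gamma'_g$ (the block decomposition $b_1,b_2,b_3,b_4$ with $b_2$ built only from $\mux$'s and $\Deltax$'s and $b_4$ in the image of $\mathcal{F}_{\mathcal{W}}$) without destroying Condition A, and that the new $K^{\pm|y|}$-edge is legitimately recorded as a $\uqzx$ or $\uqzex$ — this is the point where the distinction between $\uqz$ and $\uqze$, and hence the precise propagation of the $d$-, $w$-, $k$-colours through the eight local moves, has to be verified move-by-move, just as indicated by the figures.
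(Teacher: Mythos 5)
Your opening sentence correctly states the shape of the argument, but the execution goes off the rails in a way that would not compile into a valid application of $\stackrel{9}{\Rightarrow}$.

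First, the order is wrong. The definition of $\stackrel{9}{\Rightarrow}$ is rigid: one application of $\stackrel{1}{\rightsquigarrow}$ or $\stackrel{2}{\rightsquigarrow}$ \emph{first}, then an arbitrary sequence of $\stackrel{3}{\rightsquigarrow},\ldots,\stackrel{8}{\rightsquigarrow}$. You propose to begin by ``sliding the $\mux$'s out of the way'' with $\stackrel{3}{\rightsquigarrow},\ldots,\stackrel{8}{\rightsquigarrow}$ so that the $\adx$ ``acts directly on an output edge of a $\Dx$,'' and only afterwards apply the resolving move. Not only does this reverse the prescribed order, it is also impossible with the moves at hand: $\stackrel{3}{\rightsquigarrow},\ldots,\stackrel{8}{\rightsquigarrow}$ all involve an $\varepsilonx$ (they collapse $(\varepsilonx\otimes\id)\circ\Dx$ or $(\id\otimes\varepsilonx)\circ\Dx$ to a $\uqzx$, push $\varepsilonx$ through a $\mux$ as $\varepsilonx\otimes\varepsilonx$, or kill $\varepsilonx\circ\uqzx$); before $\stackrel{1}{\rightsquigarrow}$ or $\stackrel{2}{\rightsquigarrow}$ there is no $\varepsilonx$ anywhere in the $d$-block, so these moves have nothing to act on. Moreover no preprocessing is needed: $\stackrel{1}{\rightsquigarrow}$ is a \emph{local} replacement of the chosen $\adx$ by $\varepsilonx\otimes\id$ (and dually for $\sadx$), applicable regardless of the $\mux$'s in between; its validity in the sense of Lemma \ref{om1} is precisely what the displayed inclusion with the products $\uqz D'_{1,\pm}\cdots D'_{n,\pm}(D'_\pm D''_\pm)^m$ is for.

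Second, you conflate the single move $\stackrel{1}{\rightsquigarrow}$ with its net effect after cleanup. The move itself does not ``delete one tensorand of a $\Dx$ and produce a $K^{\pm|y|}$ on the companion edge''; it only inserts an $\varepsilonx$. The $K^{\pm|y|}$ and the fresh $\uqzx$ are what you see only after propagating that $\varepsilonx$ up the $d$-block, and the propagation is the genuinely nontrivial part: when the $\varepsilonx$ hits a $\mux$ you must apply $\stackrel{7}{\rightsquigarrow}$, spawning two new $\varepsilonx$'s, each of which is then pushed further by (1)--(2) in the paper's proof until they all land on $\Dx$-outputs (giving $\uqzx$'s via $\stackrel{3}{\rightsquigarrow},\stackrel{4}{\rightsquigarrow}$) or on existing $\uqzx$'s (killed by $\stackrel{8}{\rightsquigarrow}$), with the new $\uqzx$'s absorbed into adjacent $\mux$'s via $\stackrel{5}{\rightsquigarrow},\stackrel{6}{\rightsquigarrow}$ or moved to the $k$-part. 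Your proposal skips this branching, recursive bookkeeping, which is exactly what makes the termination of the procedure and the resulting membership $B'\in\Gamma'_g$ nonobvious. Once those two issues are fixed the argument coincides with the paper's: one resolving move, a propagating cascade of $\varepsilonx$'s, and an inspection that $N_{\ad}$ drops by exactly one.
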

\begin{proof}
We can transform $B$ into  $B'\in \Gamma _g'$  as in Lemma \ref{ata1}  as follows.
Since $N_{\ad}(B)>0$, there exists $B''$ obtained from $B$ by applying  $\stackrel{1}{\rightsquigarrow}$ or $\stackrel{2}{\rightsquigarrow}$.
There is an   $\varepsilonx$ in $B''$, and we continue the transformation as follows.
\begin{itemize}
\item[(1)] If the $\varepsilonx $  is connected to the left (resp. right) output edge of a $\Dx$, then we apply $\stackrel{3}{\rightsquigarrow}$
 (resp. $\stackrel{4}{\rightsquigarrow}$).
If the new $\uqzx$ is connected to  the  left  (resp. right) input edge of a $\mux$, then we apply $\stackrel{5}{\rightsquigarrow}$ (resp. $\stackrel{6}{\rightsquigarrow}$),
otherwise  we put its  edge into the $k$-part.
\item[(2)] If the $\varepsilonx $  is connected to  an output edge of a $\mux$, then we apply $\stackrel{7}{\rightsquigarrow}$.
Then, for each new  $\varepsilonx$, we  continue the transformation starting from (1).
If there appears $(\varepsilonx\otimes \varepsilonx ) \circ \Dx$, then we apply $\stackrel{3}{\rightsquigarrow}$ or $\stackrel{4}{\rightsquigarrow}$,
and then we apply $\stackrel{8}{\rightsquigarrow}$.
\end{itemize}
For example, see Figure \ref{fig:lmove}, where a dotted circle with a number $i$ attached is a place to where we apply $\stackrel{i}{\rightsquigarrow}$.
It is easy to check that the procedure terminates, and the result $B'$ is contained in $\Gamma _g'$.
One can check that $N_{\ad}(B')=N_{\ad}(B)-1$.
\end{proof}

\begin{figure}
\centering
\includegraphics[width=11cm,clip]{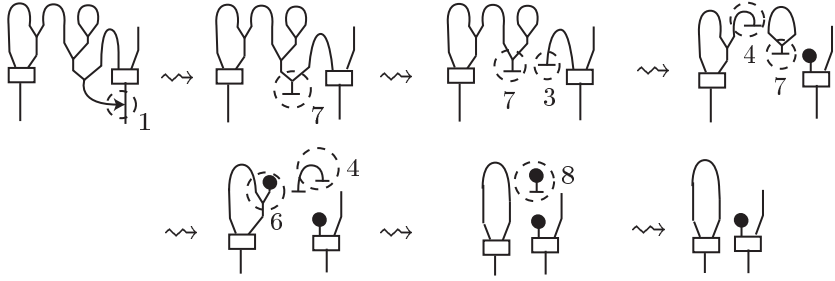}
\caption{Binary relation $\stackrel{10}{\Rightarrow}$}\label{fig:lmove}
\end{figure}

\begin{lemma}\label{om1}
For $B\stackrel{9}{\Rightarrow} B'$ in $\Gamma _g'$ , we have  $\mathcal{F}'\big(B\big)(1)\subset \mathcal{F}'\big(B'\big)(1)$.
\end{lemma}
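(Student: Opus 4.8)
The plan is to show that each of the binary relations $\stackrel{i}{\rightsquigarrow}$, $i=1,\ldots,8$, from Figure~\ref{fig:move21} is ``functorially monotone'' under $\mathcal{F}'$, in the sense that if $B\stackrel{i}{\rightsquigarrow}B''$ (with both morphisms lying in the relevant subset of $\Hom(\mathcal{A})$), then $\mathcal{F}'(B)(1)\subset \mathcal{F}'(B'')(1)$; since $\stackrel{9}{\Rightarrow}$ is by definition a finite composite of such moves (one move of type $1$ or $2$ followed by moves of types $3$--$8$), the claim $\mathcal{F}'(B)(1)\subset\mathcal{F}'(B')(1)$ follows by transitivity. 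Because $\mathcal{F}'$ is a symmetric monoidal functor and the moves are local (the outsides of the two rectangles agree), it suffices to compare the images of the two local pictures: if the local replacement sends a morphism $u$ in $\mathcal{A}$ to a morphism $u'$ with $\mathcal{F}'(u)(x)\subset\mathcal{F}'(u')(x)$ for the relevant inputs $x$, then tensoring and composing with the unchanged context preserves the inclusion.

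So the work reduces to checking the eight local moves one at a time. For the moves $\stackrel{1}{\rightsquigarrow}$ and $\stackrel{2}{\rightsquigarrow}$ (which create an $\varepsilonx$, cf.\ Lemma~\ref{ata1}), I expect the relevant identity to come from counit-type relations: e.g.\ $\ad(x\otimes y)$ composed appropriately, or the fact that applying $\varepsilon$ to an output recovers a sub-expression, so that $\mathcal{F}'$ of the ``before'' picture is literally contained in (indeed equal to, after using $(\varepsilon\otimes 1)\Delta=\id$) $\mathcal{F}'$ of the ``after'' picture. For the moves among $\stackrel{3}{\rightsquigarrow}$--$\stackrel{8}{\rightsquigarrow}$ I would invoke the standard structural facts already recorded in the paper: the comultiplication formulas (\ref{d2}) for $D$, namely $(\Delta\otimes 1)D=D_{23}D_{13}$ and $(1\otimes\Delta)D=D_{13}D_{12}$, the counit property (\ref{d3}) $(\varepsilon\otimes 1)D=1=(1\otimes\varepsilon)D$, the antipode property (\ref{d4}) $(1\otimes S)D=D^{-1}=(S\otimes 1)D$, together with $\Delta(\uqz)\subset(\uqz)^{\otimes2}$, $\uqz\subset\mu((\uqz)^{\otimes 2})$, and the compatibility of $\ad$ and $\sad$ with $\mu$ and $\Delta$ (the same identities that powered Lemmas~\ref{co''} and \ref{co'}). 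In each case the local move either rewrites a sub-diagram using one of these identities as an equality of $\mathbb{Q}[[h]]$-linear maps, or enlarges the $\mathbb{Z}[q,q^{-1}]$-span (replacing a factor by a bigger submodule that still contains it), and in both situations $\mathcal{F}'(\text{before})(1)\subset\mathcal{F}'(\text{after})(1)$.

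One point that needs a little care, and which I expect to be the main obstacle, is the passage from $\mathcal{F}$ to $\mathcal{F}'$: recall from Section~\ref{f7} that $\mathcal{F}'(\Deltax)=\sum_{j\in\mathbb{Z}}\Z\,\Delta_j$ consists only of \emph{finite} linear combinations of the graded components $\Delta_j$, so the naive inclusion $\mathcal{F}'(\Deltax)(x)\supset\mathcal{F}(\Deltax)(x)$ is not automatic for arbitrary $x$. However, inside any $B\in\Gamma_g'$ every $\Deltax$ occurs inside a $\Deltax^{[n]}\ex{m}$, a $\Deltax^{[n]}\fx{m}$, a $\Deltax^{[n]}\uqzx$, or is applied to an element of $U_h^0$ — exactly the situations in which (\ref{homo2}) does hold, as already verified when proving $\mathcal{F}(\Gamma_g')(1)\subset\mathcal{F}'(\Gamma_g')(1)$. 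Thus I would first record that for the inputs that actually arise the graded refinement loses nothing, and then run the local-move argument above purely formally. The remaining routine verifications — writing out each of the eight pictures and matching it against the cited identity — I would dispatch in a sentence or two apiece, emphasizing that no move ever shrinks the image.
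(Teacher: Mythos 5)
Your overall strategy---reduce to the eight local moves $\stackrel{j}{\rightsquigarrow}$ and verify each one by monoidal functoriality and locality---is the same as the paper's, and your treatment of the easy cases $j=3,\ldots,8$ via counit and $\Delta/\mu$ facts is fine. However, there is a genuine gap in your treatment of the moves $\stackrel{1}{\rightsquigarrow}$ and $\stackrel{2}{\rightsquigarrow}$, which are the nontrivial ones. These moves remove an $\adx$ (resp.\ $\sadx$) whose $d$-colored input edge is fed entirely by outputs of $\Dx$'s (possibly joined by $\mux$'s), and replace it by an $\varepsilonx$. The validity of that replacement is \emph{not} a counit relation; it is a nontrivial statement about the adjoint action of $D$-type elements. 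Concretely, the paper invokes the identities, from the earlier paper,
\begin{align*}
\sum \ad(D'_{\pm}\otimes x)\otimes D''_{\pm}=x\otimes K^{\pm |x|},\qquad
\sum \ad(D'_{\pm}D''_{\pm}\otimes x)=q^{|x|^2}x,
\end{align*}
for $x\in U_h$ homogeneous, and then shows that the image of the ``before'' picture (an $\ad$ of a $d$-colored product of $D$-factors against $x$) lands inside $x\otimes (\uqz)^{\otimes n}$, which in turn sits inside the image of the ``after'' picture (where the $\ad$ is replaced by $\varepsilon\otimes\id$). Your sketch gestures at ``$(\varepsilon\otimes 1)\Delta=\id$'' and ``applying $\varepsilon$ to an output recovers a sub-expression,'' but nothing of that form is what is actually needed; without the $\ad$-with-$D$ identities the inclusion for $j=1,2$ does not follow.

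A secondary, related issue: you locate the role of homogeneity in the wrong place. The homogeneity of $\mathcal{F}'$ is used here not to compare $\mathcal{F}(\Deltax)(x)$ with $\mathcal{F}'(\Deltax)(x)$ (that is the concern of Section~\ref{f7}, already disposed of before this lemma), but to guarantee that $\mathcal{F}'(B)(1)$ is generated by \emph{homogeneous} elements of $U_h^{\hat\otimes l}$, so that the displayed $\ad$-with-$D$ identities (which require $x$ homogeneous) are applicable to the actual elements that occur. As written, your proposal would not let you invoke those identities, because you have not observed that the relevant inputs are homogeneous.
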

\begin{proof}
It is enough to prove that, for $C\stackrel{j}{\rightsquigarrow}C'$  with  $j\in\{1,\ldots,8\} $ in the sequence of the local moves  from $B$ to $B'$, we have $\mathcal{F}'(C)(1)\subset \mathcal{F}'(C')(1)$.

Consider the case  $j=1$. The case  $j=2$ is similar.
Recall from Section \ref{f7} that the image by $\mathcal{F}'$ of each morphism in $\mathcal{A}$ is homogeneous.
This implies that, for each $b\in \Hom_{\mathcal{A}}(I,A^{\otimes l})$, $l\geq 0$, the $\Z$-submodule $\mathcal{F}'(b)(1)$ of $U_h^{\hat \otimes l}$ is generated by homogeneous elements of $U_h^{\hat \otimes l}$. 
Thus, the case  $j=1$  follows from 
\begin{align*}
&\sum \ad (\uqz D'_{1,\pm}\cdots D'_{n,\pm}(D'_{\pm}D''_{\pm})^m\otimes x)\otimes \uqz D_{1,\pm}''\otimes \cdots \otimes \uqz D_{n,\pm}''
\\
\subset &x\otimes (\uqz)^{\otimes n}
\\
\subset &\sum (\varepsilon \otimes \id_{U_h})(\uqz D'_{1,\pm}\cdots D'_{n,\pm}(D'_{\pm}D''_{\pm})^m\otimes  x)\otimes \uqz D_{1,\pm}''\otimes \cdots \otimes \uqz D_{n,\pm}'',
\end{align*}
for $m,n\geq 0$ and $x\in U_h$ homogeneous,  where we set  $D^{\pm 1}=\sum D'_{i,\pm}\otimes D''_{i,\pm}$ for $1\leq i\leq n$.
Here, we use  from \cite[Lemma 5.2]{sakie} the identities 
\begin{align*}
&\sum \ad(D'_{\pm}\otimes x)\otimes D_{\pm}''=x\otimes K^{\pm |x|},
\\
& \sum \ad(D'_{\pm}D_{\pm}''\otimes  x)=q^{|x|^2}x,
\end{align*}
for $x\in U_h$ homogeneous.

The cases $j=3,4$ follow from
\begin{align*}
(\varepsilon \otimes \id_{U_h})\circ \Big((\uqz)^{\otimes 2} D^{\pm 1} \Big)=\uqz =(\id_{U_h}\otimes \varepsilon )\circ \Big((\uqz)^{\otimes 2}D^{\pm 1}\Big).
\end{align*} 

The other cases $j=5,6,7,8$ are clear.
Hence we have the assertion.
\end{proof}

\subsubsection{Binary relation $\stackrel{10}{\Rightarrow}$} 
Let $\stackrel{i}{\rightsquigarrow}$  for  $i=9,\ldots,16$ be the local moves as depicted in Figure \ref{fig:move212}, where in each relation,  the outsides of the two rectangles are the same.
Here, the bottom lines in $\stackrel{12}{\rightsquigarrow}$ is the bottom lines of the morphisms.
For $B,B'\in \Gamma _g',$ we write $B\stackrel{10}{\Rightarrow} B'$ if there exist $B''\in \Hom(\mathcal{A})$ such that 
$B\stackrel{9}{\rightsquigarrow}B''$ and there is a sequence from $B''$ to $B'$ in $\Hom(\mathcal{A})$ of moves $\stackrel{i}{\rightsquigarrow}$ for $i=3,\ldots,8, 10,\ldots,16$.
\begin{figure}
\centering
\includegraphics[width=11cm,clip]{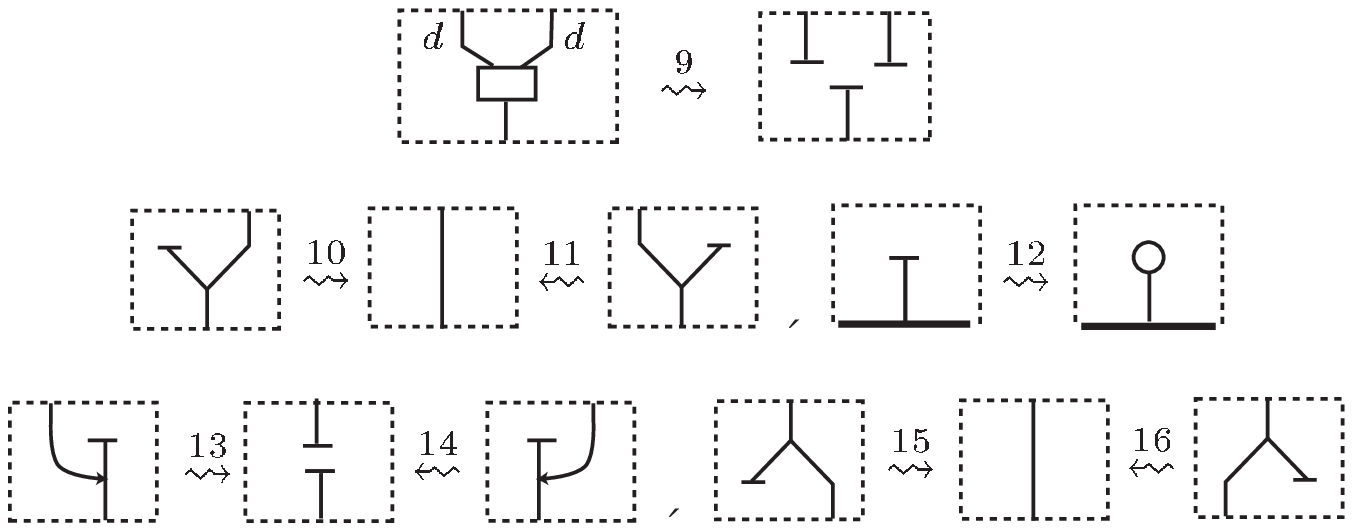}
\caption{Local moves $\stackrel{i}{\rightsquigarrow}$ for $i=9,\ldots,16$}\label{fig:move212}
\end{figure}

\begin{lemma}\label{ata2}
For $B\in \Gamma _g'$ with $N_{Y}(B)>0$ and $N_{\ad}(B)=0$, there exists $B'\in \Gamma _g'$ such that  $N_{Y}(B)>N_{Y}(B')$, $N_{\ad}(B')=0$,  and $B\stackrel{10}{\Rightarrow} B'$.
\end{lemma}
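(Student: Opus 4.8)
The plan is to imitate the proof of Lemma~\ref{ata1}, with the ``bad'' $\adx$ or $\sadx$ there replaced by a $\Yx$ both of whose input edges are colored by $d$. The algebraic engine is Lemma~\ref{y3}: on the Cartan part one has $\dot Y(x\otimes y)=\varepsilon(x)\varepsilon(y)$, so $\dot Y$ restricted to $U_h^0\hat\otimes U_h^0$ factors through $\eta\circ(\varepsilon\otimes\varepsilon)$. In a diagram of $B\in\Gamma_g'$ every $d$-colored edge is joined, through $\mux$'s of the $d$-part alone, to output edges of $\Dx$'s, and since $\mathcal{F}'(\Dx)=(\uqz)^{\otimes2}D+(\uqz)^{\otimes2}D^{-1}$ with $D^{\pm1}\in U_h^0\hat\otimes U_h^0$ and $K^{\pm1}\in U_h^0$, a $d$-colored edge always carries an element of $U_h^0$. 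Hence a $\Yx$ with two $d$-colored inputs is inert, and this is what will license the trigger move $\stackrel{9}{\rightsquigarrow}$.

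Using $N_Y(B)>0$, I would choose such a $\Yx$ and apply $\stackrel{9}{\rightsquigarrow}$ to it: this deletes the $\Yx$, puts an $\etax$ in place of its output edge, and caps each of its two former input edges with an $\varepsilonx$ (legitimate by Lemma~\ref{y3} and the observation above, the resulting scalars in $\Z$ being absorbed into the module structure of $\mathcal{M}$). I would then propagate the two $\varepsilonx$'s upward through the $d$-part exactly as in Lemma~\ref{ata1}: split an $\varepsilonx$ across a $d$-part $\mux$ by $\stackrel{7}{\rightsquigarrow}$ (this is $\varepsilon\circ\mu=\varepsilon\otimes\varepsilon$); turn a $\Dx$ with one $\varepsilonx$-capped output and one surviving output into a $\uqzx$ by $\stackrel{3}{\rightsquigarrow}$ or $\stackrel{4}{\rightsquigarrow}$ (this is $(\varepsilon\otimes\id_{U_h})D^{\pm1}=1=(\id_{U_h}\otimes\varepsilon)D^{\pm1}$); delete a $\Dx$ both of whose outputs are $\varepsilonx$-capped by $\stackrel{8}{\rightsquigarrow}$; and absorb each newly created $\uqzx$ into an adjacent $\mux$ by $\stackrel{5}{\rightsquigarrow}$ or $\stackrel{6}{\rightsquigarrow}$, or else move its edge into the $k$-part. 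The remaining moves $\stackrel{i}{\rightsquigarrow}$ for $i=10,\ldots,16$ perform the extra bookkeeping specific to $\Gamma_g'$ that is needed to restore the normal form $(b_1,\sigma\circ(d\otimes w\otimes k),b_3,b_4)$ after these replacements (recoloring a surviving $\Dx$-output from $d$ to $k$, placing the new $\uqzx$'s and the $\etax$ into the $b_1$- and $k$-parts, reindexing $\sigma$), in the same spirit as $\stackrel{i}{\rightsquigarrow}$ for $i=3,\ldots,8$. The procedure terminates since each step strictly decreases the number of $d$-colored edges lying below the former $\Yx$; let $B'$ be the result, so that $B\stackrel{10}{\Rightarrow}B'$ by the definition of $\stackrel{10}{\Rightarrow}$.

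It then remains to check three points. First, $B'\in\Gamma_g'$: the decomposition $(b_1,\sigma\circ(d\otimes w\otimes k),b_3,b_4)$ is preserved, and Condition~A survives because the $\Thetax{\cdot}$'s and the remaining $\Yx$'s are untouched while the descending path from a $w$-colored edge required by Condition~A never enters a $d$-colored edge (the $d$-coloring propagates only through the $\mux$'s of the $d$-part), so in particular such a path never terminated at the removed $\Yx$, both of whose input edges are $d$-colored. Second, $N_Y(B)>N_Y(B')$: the chosen $\Yx$ is gone and the cleanup creates no $\Yx$ with two $d$-colored inputs. Third, $N_{\ad}(B')=0$: the cleanup introduces no new $\Dx$ and no new $d$-colored edge, so the set of $d$-colored edges can only shrink and no $\adx$ (resp.\ $\sadx$) can acquire a $d$-colored left (resp.\ right) input, whence $N_{\ad}(B')\le N_{\ad}(B)=0$. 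The main obstacle, as already in Lemma~\ref{ata1}, is exactly to verify that this cleanup is well-defined, terminating, and remains inside $\Gamma_g'$ — in particular that recoloring a surviving $\Dx$-output from $d$ to $k$ never pushes a $d$-colored edge into the left slot of an $\adx$ or the right slot of an $\sadx$; here the hypothesis $N_{\ad}(B)=0$, together with the fact that recoloring can only remove $d$-labels, is precisely what is used.
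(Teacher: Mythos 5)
Your overall strategy matches the paper's: apply the trigger move $\stackrel{9}{\rightsquigarrow}$ (justified by Lemma~\ref{y3} together with the observation that $d$-colored edges carry elements of $U_h^0$), then clean up, and check that $\Gamma'_g$-membership, Condition~A, $N_Y$, and $N_{\ad}$ behave as claimed. Your analysis of Condition~A and of $N_{\ad}(B')=0$ is sound. However, there is a genuine gap in your treatment of the \emph{unit} $\etax$ that $\stackrel{9}{\rightsquigarrow}$ leaves at the former output edge of the deleted $\Yx$. You characterize the moves $\stackrel{i}{\rightsquigarrow}$, $i=10,\ldots,16$, as cosmetic bookkeeping (``recoloring a $\Dx$-output from $d$ to $k$, placing the new $\uqzx$'s and the $\etax$ into the $b_1$- and $k$-parts, reindexing $\sigma$''), and you suggest the $\etax$ simply gets slotted into $b_1$. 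That is not what happens and not what these moves do. The $\etax$ sits \emph{below} $b_3$, on an edge feeding into $b_4$, and it must be propagated downward through $b_4$ by genuine algebraic simplifications: $\stackrel{10}{\rightsquigarrow}$, $\stackrel{11}{\rightsquigarrow}$ absorb it into a $\mux$ (since $\mu\circ(\eta\otimes\id)=\id$); $\stackrel{12}{\rightsquigarrow}$ replaces it by a $\uqzex$ when it reaches the bottom; and $\stackrel{13}{\rightsquigarrow}$, $\stackrel{14}{\rightsquigarrow}$ push it through an $\adx$ or $\sadx$ using $\ad(x\otimes 1)=\varepsilon(x)\,1$, which produces \emph{both} a new $\etax$ (to keep propagating downward) and a new $\varepsilonx$.

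That new $\varepsilonx$ is the part you miss entirely: it may be $w$- or $k$-colored, not $d$-colored, so it cannot be cleaned up by the $\stackrel{3}{\rightsquigarrow}$--$\stackrel{8}{\rightsquigarrow}$ moves you invoke. The paper handles it with $\stackrel{15}{\rightsquigarrow}$, $\stackrel{16}{\rightsquigarrow}$ (the counit axioms $(\varepsilon\otimes\id)\circ\Delta=\id=(\id\otimes\varepsilon)\circ\Delta$ applied to a $\Deltax$) or $\stackrel{8}{\rightsquigarrow}$ (applied to a $\uqzx$), and crucially it uses Condition~A a second time, to rule out the possibility that this $\varepsilonx$ lands directly on an output edge of a $\Thetax{i}$ (which would have no legal cleanup move). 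Your termination argument (``each step strictly decreases the number of $d$-colored edges lying below the former $\Yx$'') also does not cover this $\etax$/$\varepsilonx$ cascade through $b_4$, since those steps need not touch $d$-colored edges at all. These omissions are exactly the new content of Lemma~\ref{ata2} relative to Lemma~\ref{ata1}, so the proposal as written does not constitute a proof.
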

\begin{proof}
We can transform $B$ into  $B'\in \Gamma _g'$  as in Lemma \ref{ata2}  as follows.
Since $N_{Y}(B)>0$, there exists  $B''$ obtained from $B$ by applying  $\stackrel{9}{\rightsquigarrow}$.
For the two $\varepsilonx$ in $B''$, we apply  the local moves $\stackrel{i}{\rightsquigarrow}$ for $i=3,\ldots,8$ as in the proof of Lemma \ref{ata1}.
For the  $\etax$ in $B''$, we continue the transformation as follows.
\begin{itemize}
\item[(1)] If the $\etax $ is connected to the left (resp. right) input edge of  a $\mux$, then we apply $\stackrel{10}{\rightsquigarrow}$
 (resp. $\stackrel{11}{\rightsquigarrow}$).
\item[(2)] If the $\etax $ is connected to the bottom of the diagram, then we  replace the $\etax $ with $\uqzex$ by using  $\stackrel{12}{\rightsquigarrow}$.
\item[(3)] If the $\etax $ is connected to the right (resp. left) input edge of an $\adx$ (resp. $\sadx$),  then we apply $\stackrel{13}{\rightsquigarrow}$ 
(resp. $\stackrel{14}{\rightsquigarrow}$).
Then, there appears an $\etax $ and  an $\varepsilonx$. For the $\etax $, we continue the transformation starting from (1).  
If the $\varepsilonx $  is  colored by $d$,  then we apply $\stackrel{i}{\rightsquigarrow}$ for $i=3,\ldots, 8$ as in the proof of Lemma \ref{ata1}.
Let us assume that the $\varepsilonx $ is colored by  $w$ or $k$. 
By Condition A in the definition of $\tilde \Gamma _g$, the $\varepsilonx $ cannot be connected  directly to any output edge of the $\Thetax{i}$'s.
Hence the $\varepsilonx $ is connected to either  an output edge  of  a $\Deltax$ or the output edge of a $\uqzx$. 
If the $\varepsilonx $ is connected to  the  left (resp. right)  output edge a $\Deltax$, then we apply $\stackrel{15}{\rightsquigarrow}$ 
 (resp. $\stackrel{16}{\rightsquigarrow}$).
If the $\varepsilonx $ is connected to a $\uqzx$, we apply $\stackrel{8}{\rightsquigarrow}$.
\end{itemize}
For example, see Figure \ref{fig:lmove2}.
It is easy to check that the procedure terminates, and the result $B'$ is contained in $\Gamma _g'$.
One can check that $N_{Y}(B')=N_{Y}(B)-1$ and $N_{\ad}(B')=N_{\ad}(B)=0$.
\end{proof}

\begin{figure}
\centering
\includegraphics[width=11cm,clip]{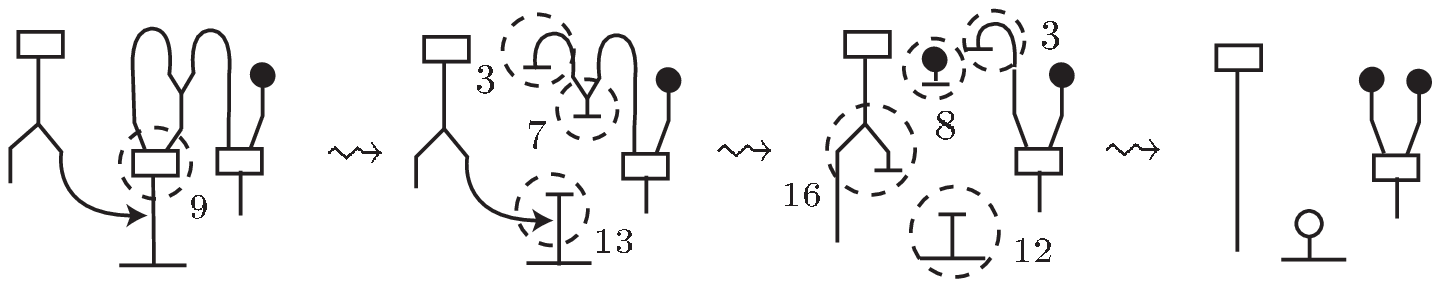}
\caption{Binary relation $\stackrel{10}{\Rightarrow}$}\label{fig:lmove2}
\end{figure}

\begin{lemma}\label{om2}
For $B\stackrel{10}{\Rightarrow} B'$ in $\Gamma _g'$, we have  $\mathcal{F}'\big(B\big)(1)\subset \mathcal{F}'\big(B'\big)(1)$.
\end{lemma}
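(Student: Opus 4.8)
The plan is to reduce the inclusion to a check of each elementary move. By definition of $\stackrel{10}{\Rightarrow}$, any instance $B\stackrel{10}{\Rightarrow}B'$ arises from a chain $B=C_0\stackrel{9}{\rightsquigarrow}C_1\stackrel{j_2}{\rightsquigarrow}\cdots\stackrel{j_N}{\rightsquigarrow}C_N=B'$ in $\Hom(\mathcal{A})$ with $j_2,\ldots,j_N\in\{3,\ldots,8,10,\ldots,16\}$, so it suffices to prove $\mathcal{F}'(C)(1)\subset\mathcal{F}'(C')(1)$ for each single move $C\stackrel{j}{\rightsquigarrow}C'$. Such a move replaces a local sub-diagram $P$ by $P'$; since composition and tensor product in $\mathcal{M}$ are monotone under inclusion, it is enough to verify either that $\mathcal{F}'(P)\subset\mathcal{F}'(P')$ as $\Z$-submodules of maps, or—when that fails—that the corresponding inclusion holds after evaluating $\mathcal{F}'(P)$ and $\mathcal{F}'(P')$ on the submodule of the relevant tensor power $U_h^{\hat\otimes m}$ that actually occurs above the altered piece. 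Throughout, as in the proof of Lemma \ref{om1}, we use that every module $\mathcal{F}'(b)(1)$ is spanned by homogeneous elements of $U_h$. The cases $j=3,\ldots,8$ have already been treated in the proof of Lemma \ref{om1}, so it remains to handle $j=9,\ldots,16$.

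The move $\stackrel{9}{\rightsquigarrow}$ is the essential one: it replaces a $\Yx$ both of whose input edges are coloured $d$ by $\etax\circ(\varepsilonx\otimes\varepsilonx)$. By the colouring rule of Section \ref{gr3}, a $d$-coloured edge is always either an output edge of a $\Dx$ or an output edge of a $\mux$ whose two inputs are themselves $d$-coloured; hence the value carried by such an edge is a product of tensorands of $D^{\pm1}$ and of $\uqz$-elements, and therefore lies in the Cartan part $U_h^0$, a subalgebra of $U_h$. Consequently the two arguments of the relevant $\Yx$ always lie in $U_h^0$, and Lemma \ref{y3} gives $\dot Y(u\otimes v)=\varepsilon(u)\varepsilon(v)=\eta\big(\varepsilon(u)\varepsilon(v)\big)$ for $u,v\in U_h^0$, i.e. $\dot Y$ and $\eta\circ(\varepsilon\otimes\varepsilon)$ coincide on $U_h^0\hat\otimes U_h^0$. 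Composing with the rest of the diagram then yields $\mathcal{F}'(C)(1)\subset\mathcal{F}'(C')(1)$, in fact with equality on the submodule in question.

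The remaining moves $\stackrel{10}{\rightsquigarrow},\ldots,\stackrel{16}{\rightsquigarrow}$ follow from the Hopf-algebra axioms together with the definitions of $\mathcal{F}'$ and $\uqze$. For $\stackrel{10}{\rightsquigarrow}$ and $\stackrel{11}{\rightsquigarrow}$ one uses $\mu\circ(\eta\otimes\id_{U_h})=\id_{U_h}=\mu\circ(\id_{U_h}\otimes\eta)$. For $\stackrel{12}{\rightsquigarrow}$, which replaces an $\etax$ at the bottom of the diagram by a $\uqzex$, one uses $1=\eta(1)\in\uqze$, so $\mathcal{F}'(\etax)(1)=\Z\cdot1\subset\uqze=\mathcal{F}'(\uqzex)(1)$ and the image can only grow. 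For $\stackrel{13}{\rightsquigarrow}$ and $\stackrel{14}{\rightsquigarrow}$ one uses $\ad(x\otimes1)=\varepsilon(x)1=\sad(1\otimes x)$. For $\stackrel{15}{\rightsquigarrow}$ and $\stackrel{16}{\rightsquigarrow}$, which remove a factor $\varepsilonx$ attached to an output edge of a $\Deltax$, one uses that for every $j\in\mathbb{Z}$ one has $(\varepsilon\otimes\id_{U_h})\circ\Delta_j=p_j$ and $(\id_{U_h}\otimes\varepsilon)\circ\Delta_j=\delta_{j,0}\,\id_{U_h}$; since $p_j(z)\in\Z z$ for homogeneous $z\in U_h$ and the modules involved are spanned by homogeneous elements, replacing $(\varepsilonx\otimes\id_A)\circ\Deltax$ or $(\id_A\otimes\varepsilonx)\circ\Deltax$ by $\id_A$ only enlarges the image.

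The one point requiring care is the $\stackrel{9}{\rightsquigarrow}$ step: one must make sure the $d/w/k$-colouring of Section \ref{gr3} is well defined on $B\in\Gamma_g'$ and that, by construction, a $d$-coloured edge is never produced by an $\adx$, $\sadx$, $\Yx$ or $\Deltax$, so that the values of $d$-coloured edges genuinely remain in $U_h^0$ and Lemma \ref{y3} is applicable. Here it is used that a single instance of $\stackrel{10}{\Rightarrow}$ begins with one application of $\stackrel{9}{\rightsquigarrow}$ to an honest element of $\Gamma_g'$, not to an intermediate diagram. All the remaining verifications are routine Hopf-algebra bookkeeping, entirely parallel to the proof of Lemma \ref{om1}.
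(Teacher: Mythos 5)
Your proof is correct and follows the same overall approach as the paper: reduce to checking each elementary move $\stackrel{j}{\rightsquigarrow}$, use Lemma \ref{y3} together with the $d/w/k$-colouring to dispose of $j=9$, and observe that $j=10,\ldots,14$ are formal Hopf-algebra identities. The one mild difference is in cases $j=15,16$: the paper verifies the inclusion by computing $\big((\varepsilon\otimes\id)\circ\Delta_k\big)$ and $\big((\id\otimes\varepsilon)\circ\Delta_k\big)$ directly on the specific elements $\f{l}\uqz\e{m}$ that actually appear, whereas you invoke the general identities $(\varepsilon\otimes\id_{U_h})\circ\Delta_j=p_j$ and $(\id_{U_h}\otimes\varepsilon)\circ\Delta_j=\delta_{j,0}\,\id_{U_h}$ together with the homogeneity of the modules $\mathcal{F}'(b)(1)$. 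Your version is a little more general and arguably cleaner, but both rest on the same mechanism — that $\mathcal{F}'$ produces homogeneous submodules, so the graded components $\Delta_j$ interact trivially with $\varepsilon$ — and the two arguments are interchangeable here.
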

\begin{proof}
It is enough to prove that, for $C\stackrel{j}{\rightsquigarrow}C'$  with  $j\in\{9,\ldots,16\} $ in the sequence of the local moves  from $B$ to $B'$, we have $\mathcal{F}'(C)(1)\subset \mathcal{F}'(C')(1)$.

The case $j=9$ follows from  Lemma \ref{y3}.

The case $j=15,16$ follows from 
\begin{align*}
\Big((\varepsilon \otimes \id_{U_h})\circ \Delta _k\Big)(\f{l}\uqz \e{m})=&
\begin{cases}
\f{l}\uqz \e{m} \quad \text{if }k=m-l,
\\
0 \quad \text{otherwise},
\end{cases}
\\
\Big((\id_{U_h}\otimes \varepsilon )\circ \Delta _k\Big)(\f{l}\uqz \e{m})=&
\begin{cases}
\f{l}\uqz \e{m}\quad \text{if }k=0,
\\
0 \quad \text{otherwise},
\end{cases}
\end{align*}
respectively, for $k,l,m\geq 0$.

The other cases $j=10,\ldots,14$ are clear. Hence we have the assertion.
\end{proof}
\subsubsection{Binary relation $\stackrel{i}{\Rightarrow}$ for $i=11,12,13$} 
Let $\stackrel{i}{\Rightarrow}$ for $i=11,12,13$ be the binary relations on $\Gamma' _g$ defined by the local moves on diagrams  as in Figure \ref{fig:move33},
where   in each relation,  the outsides of the two rectangles are the same.
It is easy to check that $\Gamma' _g$ is closed under $\stackrel{i}{\Rightarrow}$  for $i=11,12,13$. 

\begin{figure}
\centering
\includegraphics[width=13cm,clip]{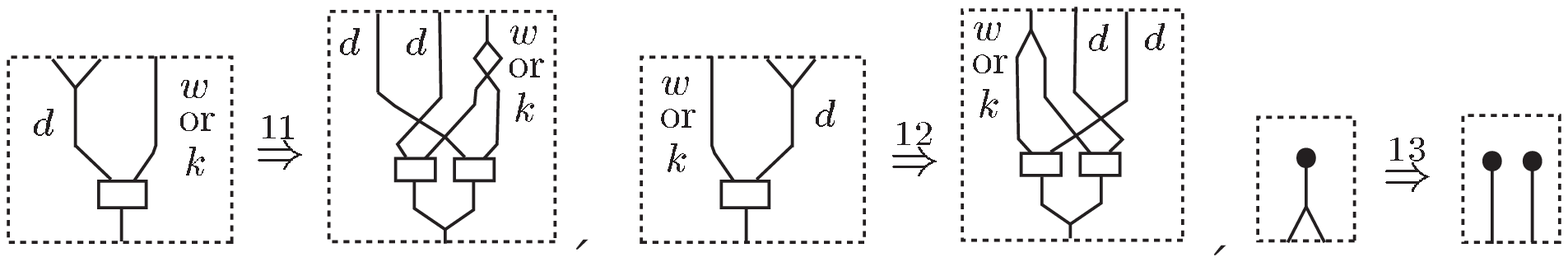}
\caption{Local moves $\stackrel{i}{\Rightarrow}$ for $i=11, 12,13$}\label{fig:move33}
\end{figure}

\begin{lemma}\label{ata3}
For $B\in \Gamma _g'$ with $N_{dk}(B)>0, N_{\ad}(B)=N_{Y}(B)=0$, there exists $B'\in \Gamma _g'$ such that $N_{dk}(B)>N_{dk}(B')$, $N_{\ad}(B')=N_{Y}(B')=0$, and   $B\stackrel{i}{\Rightarrow} B'$ with $i\in \{11,12,13\}$.
\end{lemma}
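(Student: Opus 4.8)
The plan is to follow the same strategy as in the proofs of Lemmas \ref{ata1} and \ref{ata2}, though in a lighter form, since here a single application of a move from Figure \ref{fig:move33} already lowers the complexity rather than triggering a cascade of local moves. Given $B=(\lambda\circ\kappa)(b)\in\Gamma'_g$ with $b=(b_1,d,w,k,\sigma,b_3,b_4)\in\dot\Gamma'_g$, $N_{dk}(B)>0$ and $N_{\ad}(B)=N_Y(B)=0$, I would single out one generator responsible for the positivity of $N_{dk}$ and remove it by one of $\stackrel{11}{\Rightarrow}$, $\stackrel{12}{\Rightarrow}$, $\stackrel{13}{\Rightarrow}$, choosing the instance so that the resulting $B'\in\Gamma'_g$ still satisfies $N_{\ad}(B')=N_Y(B')=0$ and $N_{dk}(B')=N_{dk}(B)-1$. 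Iterating this then reaches $\Gamma''_g$, since $N_{dk}$ is a non-negative integer. Since $N_{dk}(B)>0$ means the diagram of $B$ carries a $\mux$ colored by $d$ or a $\Deltax$ colored by $k$, there are two cases.

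First I would handle a $\Deltax$ colored by $k$. All of these occur inside $k=\Deltax^{[r_1,\ldots,r_{l_3}]}$, so after using coassociativity one of them may be assumed to sit directly below one of the $\uqzx$'s of $b_1$; I would then apply the move of Figure \ref{fig:move33} that replaces $\Deltax\circ\uqzx$ by $\uqzx\otimes\uqzx$. Combinatorially this merely enlarges the $\uqzx$-part of $b_1$ and deletes one internal $\Deltax$ of $k$ (recording that the new $k$ is again a morphism of the form $\Deltax^{[\ldots,1,\ldots]}$), while $b_1$ elsewhere, the morphisms $d$, $b_3$, $b_4$, and the entire system of $d$-colored edges are left untouched; hence Condition A survives, $B'\in\Gamma'_g$, and $N_{\ad}(B')=N_Y(B')=0$, $N_{dk}(B')=N_{dk}(B)-1$.

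Next I would handle a $\mux$ colored by $d$. I would apply the move among $\stackrel{11}{\Rightarrow}$, $\stackrel{12}{\Rightarrow}$ matched to the local configuration around such a $\mux$ — if necessary choosing one at the top of the multiplication part $d$, so that it is fed directly by outputs of $\Dx$'s, which exists whenever $d\neq\id$, and using the symmetry $(\ref{d1})$ of $D^{\pm1}$ to bring its two input edges into the required position. This deletes one $d$-colored $\mux$, introduces no generator counted by $N_{dk}$, and, being confined to the $\Dx$-and-$d$ part of the diagram, meets no $\adx$, $\sadx$ or $\Yx$ and alters none of their $d$-colored incidences, so $N_{\ad}$ and $N_Y$ remain $0$. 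After routing any crossing produced into $\sigma$, the data is again of the shape $(\lambda\circ\kappa)(b')$ with $b'\in\dot\Gamma'_g$, whence $B'\in\Gamma'_g$ with the required counts.

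The step I expect to be the main obstacle is this second case: one has to verify that around any $\mux$ colored by $d$ the diagram can be presented, after the allowed rearrangements, in the exact local shape matched by $\stackrel{11}{\Rightarrow}$ or $\stackrel{12}{\Rightarrow}$, and that the auxiliary crossings can be absorbed into $\sigma$ without disturbing Condition A or the prescribed forms of $b_1$, $b_3$, $b_4$ — the same ``normal form for the $\mux$-part'' bookkeeping that underlies Claim 1 of Section \ref{SB}. As in the proofs of Lemmas \ref{ata1} and \ref{ata2}, the effect of the chosen move on the three counts $N_{dk}$, $N_{\ad}$, $N_Y$ is then read off by comparing the colorings before and after, and a figure in the spirit of Figures \ref{fig:lmove} and \ref{fig:lmove2}, with dotted regions marking where the moves are applied, will make the whole procedure transparent. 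Once both cases are settled the lemma follows.
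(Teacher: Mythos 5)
Your strategy matches the paper's: split on whether the offending generator is a $\Deltax$ colored by $k$ (handled by $\stackrel{13}{\Rightarrow}$) or a $\mux$ colored by $d$ (handled by $\stackrel{11}{\Rightarrow}$ or $\stackrel{12}{\Rightarrow}$), and observe that the hypothesis $N_{\ad}(B)=N_Y(B)=0$ is what makes a matching left-hand side of one of these moves available. The paper's own proof is essentially the same sketch — it simply asserts that ``there is a part in $B$ as in the left hand side of $\stackrel{i}{\Rightarrow}$'' and then prioritizes $\stackrel{13}{\Rightarrow}$ when a $\Deltax\circ\uqzx$ is present, falling back on $\stackrel{11}{\Rightarrow}$ or $\stackrel{12}{\Rightarrow}$ otherwise.

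Two substantive discrepancies to flag. First, the paper's case ``no $\Deltax\circ\uqzx$'' concludes by ``applying $\stackrel{11}{\Rightarrow}$ or $\stackrel{12}{\Rightarrow}$, and then applying $\stackrel{13}{\Rightarrow}$ if necessary'': a single application of $\stackrel{11}{\Rightarrow}$ or $\stackrel{12}{\Rightarrow}$ may migrate a $\Deltax$ onto the $k$-part, so without the optional cleanup $\stackrel{13}{\Rightarrow}$ you cannot in general guarantee either $N_{dk}(B')<N_{dk}(B)$ or, in particular, your stronger claim $N_{dk}(B')=N_{dk}(B)-1$. Second, your plan selects a $\mux$ ``at the top of the multiplication part $d$, so that it is fed directly by outputs of $\Dx$'s''; but judging from the proof of Lemma \ref{om3}, the moves $\stackrel{11}{\Rightarrow}$ and $\stackrel{12}{\Rightarrow}$ engage a $\mux$ at the \emph{bottom} of $d$ together with the $\Yx$ or $\adx/\sadx$ it feeds and the $\Deltax$ above it, so the relevant local configuration is around the bottommost, not topmost, $\mux$ in $d$. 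The verification you name as ``the main obstacle'' — that with $N_{\ad}=N_Y=0$ the neighbourhood of such a $\mux$ really does match the left-hand side of Figure \ref{fig:move33} — is exactly the content the paper asserts without further comment, so your honesty in isolating it is appropriate; but as written your proposal does not actually discharge it and misidentifies where the move latches on.
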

\begin{proof}
Since $N_{dk}(B)>0 $ and $N_{Y}(B)=0$, there is  a part in $B$ as in the left hand side of $\stackrel{i}{\Rightarrow}$ with $i\in \{11,12,13\}$.
We can obtain $B'$  as in Lemma \ref{ata3} from $B$ as follows.
If there is a $\Deltax\circ \uqzx$, then we obtain $B'$ from $B$ by applying $\stackrel{13}{\Rightarrow}$.
If there is no $\Deltax\circ \uqzx$, then we obtain $B'$ from $B$ by applying  $\stackrel{11}{\Rightarrow}$ or $\stackrel{12}{\Rightarrow}$,
and then  applying  $\stackrel{13}{\Rightarrow}$ if necessary.
\end{proof}
\begin{lemma}\label{om3}
For $B\stackrel{i}{\Rightarrow} B'$ in $\Gamma _g'$ with $i\in \{11,12,13\}$, we have  $\mathcal{F}'\big(B\big)(1)\subset \mathcal{F}'\big(B'\big)(1)$.
\end{lemma}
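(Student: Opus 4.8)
The plan is to follow verbatim the pattern of the proofs of Lemmas \ref{om1} and \ref{om2}. Since $\mathcal{F}'$ is a symmetric monoidal functor and each of $\stackrel{11}{\Rightarrow}$, $\stackrel{12}{\Rightarrow}$, $\stackrel{13}{\Rightarrow}$ is defined by a local move on diagrams (Figure \ref{fig:move33}) whose ``outside'' is left unchanged, it suffices to verify, for each $i\in\{11,12,13\}$ and each application $C\stackrel{i}{\Rightarrow}C'$, the local inclusion $\mathcal{F}'(C)(1)\subset\mathcal{F}'(C')(1)$ coming from the piece inside the rectangle; monoidality of $\mathcal{F}'$ then propagates this through the rest of the diagram. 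As in Sections \ref{f7}--\ref{SB2} I would record at the outset that, since the image under $\mathcal{F}'$ of every generating morphism of $\mathcal{A}$ is homogeneous, the module $\mathcal{F}'(b)(1)$ attached to any $b\in\Hom_{\mathcal{A}}(I,A^{\otimes l})$ is spanned by homogeneous elements of $U_h^{\hat\otimes l}$; hence it is enough to check the local inclusions on homogeneous inputs, which is exactly what makes the relations $D(1\otimes x)=(K^{|x|}\otimes x)D$ and $D(x\otimes 1)=(x\otimes K^{|x|})D$ of (\ref{exD}) usable here.

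Then I would treat the three moves one by one. The move $\stackrel{13}{\Rightarrow}$ replaces a $\Deltax$ sitting on the output of a $\uqzx$ (a $k$-coloured $\Deltax$) by $\uqzx\otimes\uqzx$; here the required inclusion is immediate from $|K^{\pm m}|=0$, which gives $\Delta_0(K^{\pm m})=K^{\pm m}\otimes K^{\pm m}$ and $\Delta_j(K^{\pm m})=0$ for $j\neq0$, so that $\mathcal{F}'(\Deltax\circ\uqzx)(1)=(\Z\Delta_0)(\uqz)\subset\uqz\otimes\uqz=\mathcal{F}'(\uqzx\otimes\uqzx)(1)$. The moves $\stackrel{11}{\Rightarrow}$ and $\stackrel{12}{\Rightarrow}$ eliminate a $d$-coloured $\mux$ (possibly producing a $\Deltax\circ\uqzx$ that is afterwards removed by $\stackrel{13}{\Rightarrow}$), and their local inclusions follow from the elementary identities for the element $D$ collected in (\ref{d1})--(\ref{exD}), together with the obvious inclusions $\mu\big((\uqz)^{\otimes2}\big)\subset\uqz$ and $\Delta(\uqz)\subset(\uqz)^{\otimes2}$: because $D^{\pm1}\in U_h^0\hat\otimes U_h^0$ and, on homogeneous elements, $D^{\pm1}$ merely conjugates the neighbouring factors by powers of $K$, a $d$-coloured $\mux$ acting on two $D$-derived edges can be absorbed into the adjacent $\Dx$'s and $\uqzx$'s at the cost of $\uqz$-factors only, which is precisely the effect of the local move on the level of modules.

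I expect the main difficulty to be organisational rather than conceptual: one must check that after each local replacement the resulting diagram is still in $\Gamma_g'$ --- in particular that the decomposition of $b_2$ into its $d$-, $w$- and $k$-parts and Condition A both survive --- and that $N_{dk}$ decreases exactly as claimed, but this is the content of Lemma \ref{ata3} and is separate from the present statement. The genuinely delicate point in Lemma \ref{om3} itself is the homogeneity bookkeeping: one has to be sure that pushing a $D$-factor through the diagram does not destroy the homogeneity that lets us argue generator-by-generator and input-by-input, and that the $K$-powers produced by (\ref{exD}) land in $\uqz$ rather than in a nontrivial $\uqz$-coset; both are handled by the colour-chasing argument already used throughout Section \ref{SB2} and by the homogeneity of $\mathcal{F}'$ established in Section \ref{f7}.
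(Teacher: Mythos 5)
Your argument for $\stackrel{13}{\Rightarrow}$ is correct and essentially matches the paper's, which just invokes $\Delta(\uqz)\subset(\uqz)^{\otimes 2}$ (your observation that $\Delta_j$ kills $\uqz$ for $j\neq 0$ is the $\mathcal{F}'$-compatible restatement). The cases $i=11,12$, however, are where your proposal has a genuine gap. In $\Gamma_g'$, under the side conditions $N_{\ad}(B)=N_Y(B)=0$ that are in force when these moves are applied in Lemma \ref{ata3}, the output of a $d$-coloured $\mux$-tree cannot feed an $\adx$, a $\sadx$, or an isolated strand of $b_3$; it must enter one input of a $\Yx$ whose other input is not $d$-coloured. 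Hence $\stackrel{11}{\Rightarrow}$ and $\stackrel{12}{\Rightarrow}$ are local moves acting on a $\Yx$ whose left (respectively right) input is the output of a $d$-coloured $\mux$ --- they are not a matter of ``absorbing a $\mux$ acting on two $D$-derived edges into the adjacent $\Dx$'s,'' as you suggest.

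The paper proves $\mathcal{F}'$-monotonicity for $\stackrel{11}{\Rightarrow}$ (and $\stackrel{12}{\Rightarrow}$) in two steps: first apply the earlier move $\stackrel{5}{\Rightarrow}$ (respectively $\stackrel{6}{\Rightarrow}$), whose $\mathcal{F}'$-monotonicity comes from the commutator rewriting (\ref{y21}) (respectively (\ref{y22})) of Lemma \ref{y2} together with (\ref{homo2}); this pushes the $\mux$ past the $\Yx$ and introduces a $\Deltax$ on the $d$-coloured edge whose second copy feeds an $\adx$. The second step disposes of that $\Deltax$-$\adx$ pair by the displayed $\ad$--$D$ inclusion, of the shape $\sum\ad(a_{(1)}\otimes x)\otimes a_{(2)}\otimes\cdots\subset x\otimes a\otimes\cdots$ for $a\in\uqz D'_{1,\pm}\cdots D'_{n,\pm}(D'_{\pm}D''_{\pm})^m$; only there do the relations (\ref{exD}) and the $K$-power conjugation you emphasise actually enter. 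Because your proposal never invokes Lemma \ref{y2} and mischaracterises the move as not involving a $\Yx$, it cannot yield a complete proof of the $i=11,12$ cases; the homogeneity bookkeeping you flag, while genuinely needed (it is what (\ref{homo2}) packages), is secondary to this missing $\dot Y$-rewriting step.
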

\begin{proof}
Consider the case $i=11$. The case $i=12$ is similar.
We can prove  the assertion by  two steps as in Figure \ref{fig:move11}, i.e., we have $F'(C)(1)\subset F'(C')(1)$ for  $C\stackrel{5}{\Rightarrow }C'$ in $\Gamma _g'$ by
(\ref{y21}) and (\ref{homo2}), and we have
\begin{figure}
\centering
\includegraphics[width=8cm,clip]{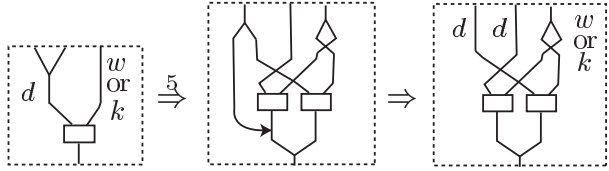}
\caption{Graphical proof of the case $i=11$}\label{fig:move11}
\end{figure}
\begin{align*}
\sum \ad \Big((\uqz D'_{1,\pm}&\cdots D'_{n,\pm}(D'_{\pm}D''_{\pm})^m)_{(1)}\otimes x\Big)
\\ &\otimes (\uqz D'_{1,\pm}\cdots D'_{n,\pm}(D'_{\pm}D''_{\pm})^m)_{(2)} \otimes \uqz D_{1,\pm}''\otimes \cdots \otimes \uqz D_{n,\pm}''
\\
\subset x\otimes \uqz D'_{1,\pm}\cdots &D'_{n,\pm}(D'_{\pm}D''_{\pm})^m \otimes \uqz D_{1,\pm}''\otimes \cdots \otimes \uqz D_{n,\pm}'',
\end{align*}
for $m,n\geq 0$.

The case $\stackrel{13}{\Rightarrow }$ follows from $\Delta(\uqz)\subset (\uqz)^{\otimes 2}$. Hence we have the assertion.
\end{proof}
\subsection{Modification of the elements in $\Gamma _g''$}\label{ep}
In the next section, we prove  $\mathcal{F}'\big(\Gamma'' _g\big)(1)\subset (\uqe)^{\otimes g}$,
which completes the proof of the sequence (\ref{naga}).
Before that, we modify the elements in $\Gamma _g''$.

First of all, we define notations.
For $m\geq 0,  n\geq 1$, set
\begin{align*}
\mathcal{I}(m,n)&=\{ (i_1,\ldots,i_n) \ | \ i_1,\ldots,i_n\geq 0, i_1+\cdots +i_n=m \}.
\end{align*}
For $\mathbf{i}=(i_1,\ldots,i_n)\in \mathcal{I}(m,n)$,  set
\begin{align*}
&\tilde E^{\mathbf{i}}=(\uqz)^{\otimes n}(\e{i_1}\otimes \cdots\otimes  \e{i_n})\subset (\uqzq)^{\otimes n},
\\
&\tilde F^{\mathbf{i}}=(\uqz)^{\otimes n}(\f{i_1}\otimes \cdots\otimes  \f{i_n})\subset (\uqzq)^{\otimes n},
\\
&\Ex{\mathbf{i}}=\ex{i_1}\otimes \cdots\otimes  \ex{i_n}\in \Hom_{\mathcal{A}}(I,A^{\otimes n}),
\\
&\Fx{\mathbf{i}}=\fx{i_1}\otimes \cdots\otimes  \fx{i_n} \in \Hom_{\mathcal{A}}(I,A^{\otimes n}).
\end{align*}
Clearly, we have 
\begin{align*}
\tilde E^{\mathbf{i}}=\mathcal{F}'(\Ex{\mathbf{i}})(1),\quad \tilde F^{\mathbf{i}}=\mathcal{F}'(\Fx{\mathbf{i}})(1).
\end{align*}

We use the following lemma.
\begin{lemma}\label{df1}
For $m\geq 0$, $n\geq 1$, we have
\begin{align*}
\mathcal{F}'\big(\Deltax ^{[n]}\circ \ex{m}\big)(1)\subset
\sum_{\mathbf{i}\in \mathcal{I}(m,n)} \mathcal{F}'\big(\Ex{\mathbf{i}}\big)(1),
\\
\mathcal{F}'\big(\Deltax ^{[n]}\circ \fx{m}\big)(1)\subset 
\sum_{\mathbf{i}\in \mathcal{I}(m,n)} \mathcal{F}'\big(\Fx{\mathbf{i}}\big)(1).
\end{align*}
\end{lemma}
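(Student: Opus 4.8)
The plan is to unravel both sides of the inclusion in terms of the graded comultiplications $\Delta_j$ of Section \ref{f7} and then to apply the iterated coproduct formula (\ref{De1}). I would prove the inclusion for $\ex{m}$; the one for $\fx{m}$ is obtained verbatim, with (\ref{De2}) and the commutation $\f{i}K^c=q^{ci}K^c\f{i}$ replacing (\ref{De1}) and $\e{i}K^c=q^{-ci}K^c\e{i}$.

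First I would identify both sides explicitly. Since $\mathcal{F}'$ agrees with $\mathcal{F}$ on every generator of $\mathcal{A}$ except $\Deltax$, we have $\mathcal{F}'(\ex{m})(1)=\mathcal{F}(\ex{m})(1)=\uqz\e{m}=\Span_{\Z}\{K^c\e{m}\mid c\in\mathbb{Z}\}$, a $\Z$-span of homogeneous elements, each of degree $m$. By functoriality and the recursion $\Deltax^{[n]}=(\Deltax\otimes\id_A^{\otimes n-2})\circ\Deltax^{[n-1]}$, the module $\mathcal{F}'(\Deltax^{[n]})$ is the $\Z$-span of the iterated maps
\begin{align*}
\phi_{\mathbf{j}}=(\Delta_{j_2}\otimes\id_{U_h}^{\otimes n-2})\circ(\Delta_{j_3}\otimes\id_{U_h}^{\otimes n-3})\circ\cdots\circ(\Delta_{j_{n-1}}\otimes\id_{U_h})\circ\Delta_{j_n},
\end{align*}
over $j_2,\ldots,j_n\in\mathbb{Z}$ (with $\phi_{\mathbf{j}}=\id_{U_h}$ when $n=1$). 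Using coassociativity and the definition of $\Delta_j$ one checks that $\phi_{\mathbf{j}}(x)=\sum x_{(1)}\otimes p_{j_2}(x_{(2)})\otimes\cdots\otimes p_{j_n}(x_{(n)})$, so that on a homogeneous $x$ of degree $d$ the element $\phi_{\mathbf{j}}(x)$ is precisely the homogeneous component of $\Delta^{[n]}(x)$ of $\mathbb{Z}^n$-degree $(d-j_2-\cdots-j_n,j_2,\ldots,j_n)$; in particular it vanishes unless $j_2,\ldots,j_n\geq 0$ and $j_2+\cdots+j_n\leq d$. Hence $\mathcal{F}'(\Deltax^{[n]}\circ\ex{m})(1)$ is the $\Z$-span of the elements $\phi_{\mathbf{j}}(K^c\e{m})$.

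Second I would carry out the computation. From $\Delta^{[n]}(K^c\e{m})=(K^c)^{\otimes n}\Delta^{[n]}(\e{m})$ and iterating (\ref{De1}),
\begin{align*}
\Delta^{[n]}(\e{m})=\sum_{\mathbf{i}\in\mathcal{I}(m,n)}\e{i_1}K^{i_2+\cdots+i_n}\otimes\e{i_2}K^{i_3+\cdots+i_n}\otimes\cdots\otimes\e{i_{n-1}}K^{i_n}\otimes\e{i_n},
\end{align*}
and since $K$ has degree $0$, the $\mathbf{i}$-th summand is homogeneous of $\mathbb{Z}^n$-degree $\mathbf{i}$; distinct $\mathbf{i}$ therefore contribute distinct degrees. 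Consequently $\phi_{\mathbf{j}}(K^c\e{m})$ is $(K^c)^{\otimes n}$ times a single one of these summands, indexed by some $\mathbf{i}\in\mathcal{I}(m,n)$ (or is $0$). Commuting the $K$'s to the left via $\e{i}K^c=q^{-ci}K^c\e{i}$ puts each tensorand into $\uqz\e{i_k}$, whence $\phi_{\mathbf{j}}(K^c\e{m})\in(\uqz)^{\otimes n}(\e{i_1}\otimes\cdots\otimes\e{i_n})=\tilde E^{\mathbf{i}}=\mathcal{F}'(\Ex{\mathbf{i}})(1)$. Taking $\Z$-spans over $\mathbf{j}$ and $c$ gives $\mathcal{F}'(\Deltax^{[n]}\circ\ex{m})(1)\subset\sum_{\mathbf{i}\in\mathcal{I}(m,n)}\mathcal{F}'(\Ex{\mathbf{i}})(1)$.

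I expect the only genuinely delicate point to be a matter of finiteness, and this is exactly why the graded functor $\mathcal{F}'$ rather than $\mathcal{F}$ is used here: $\Delta(\e{m})$ is a sum of terms of several distinct degrees, so $\mathcal{F}(\Deltax^{[n]}\circ\ex{m})(1)$ would be forced into an infinite direct sum of the modules $\tilde E^{\mathbf{i}}$, whereas the right-hand side is the finite sum over $\mathbf{i}\in\mathcal{I}(m,n)$. Replacing $\Delta$ by the homogeneous pieces $\Delta_j$ is precisely what makes each generator $\phi_{\mathbf{j}}$ of $\mathcal{F}'(\Deltax^{[n]})$ single out one index $\mathbf{i}$; everything else is the routine iteration of (\ref{De1}) and the commutation of $K$ past $\e{i}$.
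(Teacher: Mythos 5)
Your main argument is correct and is essentially the paper's proof made explicit: the paper's one-line proof ``by induction on $n$ using (\ref{De1}) and (\ref{De2})'' unrolls to exactly your computation, since iterating (\ref{De1}) gives your closed formula for $\Delta^{[n]}(\e{m})$, and the identification of $\mathcal{F}'(\Deltax^{[n]})$ with the span of the $\phi_{\mathbf{j}}$ is the only further bookkeeping needed.

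One remark in your final paragraph is, however, mistaken. You claim that with $\mathcal{F}$ in place of $\mathcal{F}'$, $\mathcal{F}(\Deltax^{[n]}\circ\ex{m})(1)$ ``would be forced into an infinite direct sum of the modules $\tilde E^{\mathbf{i}}$.'' This is not so: for fixed $m,n$ the index set $\mathcal{I}(m,n)$ is finite, and $\Delta^{[n]}(K^c\e{m})$ is a finite sum of terms lying in the various $\tilde E^{\mathbf{i}}$ with $\mathbf{i}\in\mathcal{I}(m,n)$, so $\mathcal{F}(\Deltax^{[n]}\circ\ex{m})(1)=\Span_{\Z}\{\Delta^{[n]}(K^c\e{m})\}$ already lands inside $\sum_{\mathbf{i}\in\mathcal{I}(m,n)}\tilde E^{\mathbf{i}}$. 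Hence the lemma holds verbatim for $\mathcal{F}$ as well, and the passage to $\mathcal{F}'$ is not what makes this lemma true. The functor $\mathcal{F}'$ is introduced in the paper because later degree-by-degree arguments (notably in the proof of $\mathcal{F}'(\Gamma'_g)(1)\subset\mathcal{F}'(\Gamma''_g)(1)$ and in the proof that each generator of $\mathcal{F}'(Z)(1)$ lies in a single $\tilde E^{\mathbf{i}}\otimes\tilde F^{\mathbf{i}'}$, etc.) require the generators of each $\mathcal{F}'(b)(1)$ to be homogeneous, which fails for $\mathcal{F}$ because $\Delta$ mixes degrees. This lemma is simply stated for $\mathcal{F}'$ to match the setting in which it is applied. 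The misconception does not affect the correctness of your proof; I flag it only so the role of $\mathcal{F}'$ is not misread.
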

\begin{proof}
The assertion follows from (\ref{De1}) and (\ref{De2}), by using  induction on $n\geq 1$. 
\end{proof}

Let $B=b_4\circ b_3\circ b_2\circ b_1$ with $b=(b_1,b_2,b_3,b_4)\in \tilde \Gamma _g''$.
By the condition ($C_{dk}$) in the definition of $\Gamma _g''$, we can
write $b_2\circ b_1=\sigma \circ \tilde b_1$ with $\sigma \in \Hom(\mathcal{A}_{\mathfrak{S}})$ and 
\begin{align} 
\begin{split}\label{bdel3}
\tilde b_1&=\Dx ^{\otimes l_1}\otimes \Big(\bigotimes_{p=1}^{l_2} \big(\Deltax^{[m_p, n_p]}\big)\circ \Thetax{s_p}\Big)\otimes \uqzx ^{\otimes l_3},
\end{split}
\end{align}
for   $l_1,l_2,l_3\geq 0$,  $ s_1,\ldots, s_{l_2}\geq 0$, $m_1,\ldots,m_{l_2},n_1,\ldots, n_{l_2}\geq 1$.
Note that 
\begin{align*}
 \Big(\bigotimes_{p=1}^{l_2}\big(\Deltax^{[m_p, n_p]}\big)\circ \Thetax{s_p}\Big)=\bigotimes_{p=1}^{l_2}\Big( \qintx{s_p}\otimes \big(\Deltax^{[m_p]}\circ \f{s_p}\big)\otimes\big( \Deltax^{[n_p]}\circ \e{s_p}\big)\Big).
\end{align*}

Let us start the modification of $B$.
For $\mathbf{i}_p\in \mathcal{I}(s_p,m_p)$ and $\bar {\mathbf{i}}_p\in \mathcal{I}(s_p,n_p) $, $p=1,\ldots, l_2,$ set
\begin{align}
\begin{split}\label{bdel}
\tilde{b}_1{(\mathbf{i}_1, \bar{ \mathbf{i}}_1, \ldots , \mathbf{i}_{l_2}, \bar{ \mathbf{i}}_{l_2})}&= \Dx ^{\otimes l_1}\otimes \Bigg( \bigotimes_{p=1}^{l_2} \Big(\qintx{s_p}\otimes 
\Fx{\mathbf{i}_p}\otimes \Ex{\bar{ \mathbf{i}}_p}\Big)\Bigg)\otimes \uqzx ^{\otimes l_3}.
\end{split}
\end{align}
In other words,  $\tilde{b}_1{(\mathbf{i}_1, \bar{ \mathbf{i}}_1, \ldots , \mathbf{i}_{l_2}, \bar{ \mathbf{i}}_{l_2})}$ is obtained from $\tilde b_1$
 by replacing the $\Deltax^{[m_p]}\circ \f{s_p}$ with a $\Fx{\mathbf{i}_p}$, and the $\Deltax^{[n_p]}\circ \e{s_p}$  with a $\Ex{\bar {\mathbf{i}}_p}$, 
 for  $p=1,\ldots, l_2$, see Figure \ref{fig:dwk5}.
\begin{figure}
\centering
\includegraphics[width=12.5cm,clip]{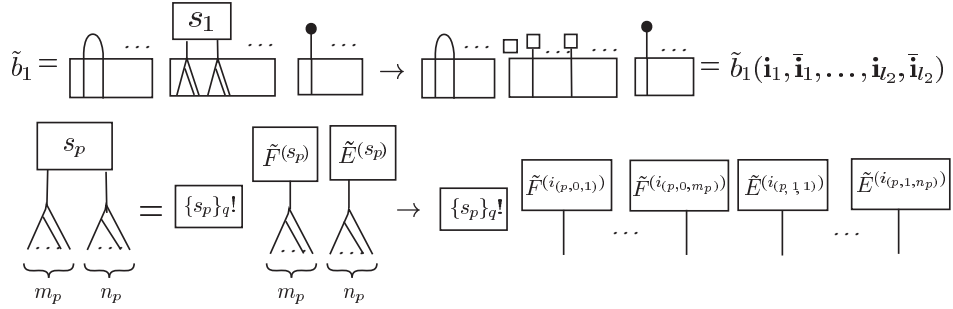}
\caption{How to modify $\tilde b_1$ with respect to $\mathbf{i}_p=\big(i_{(p,0,1)},\ldots, i_{(p,0,m_p)}\big)\in \mathcal{I}(s_p,m_p), \bar {\mathbf{i}}_p=\big(i_{(p,1,1)},\ldots, i_{(p,1,n_p)}\big)\in \mathcal{I}(s_p,n_p)$  for  $p=1,\ldots, l_2$
}\label{fig:dwk5}
\end{figure}

Thus, we obtain the modification $b_4\circ b_3\circ b_2\circ \tilde{b}_1{(\mathbf{i}_1, \bar{ \mathbf{i}}_1, \ldots , \mathbf{i}_{l_2}, \bar{ \mathbf{i}}_{l_2})}$
of $B$ with respect to $b\in \tilde \Gamma _g''$ and $\mathbf{i}_p\in \mathcal{I}(s_p,m_p),$ $\bar {\mathbf{i}}_p\in \mathcal{I}(s_p,n_p),$ for  $p=1,\ldots,l_2.$

By Lemma \ref{df1}, we have
\begin{align}
\begin{split}\label{u3}
\mathcal{F}'(B)(1)\subset \sum_{\substack{\mathbf{i}_p\in \mathcal{I}(s_p,m_p)
\\
\bar {\mathbf{i}}_p\in \mathcal{I}(s_p,n_p)
\\
p=1,\ldots,l_2}}\mathcal{F}'\big(b_4\circ b_3\circ \sigma \circ  \tilde{b}_1{(\mathbf{i}_1, \bar{ \mathbf{i}}_1, \ldots , \mathbf{i}_{l_2}, \bar{ \mathbf{i}}_{l_2})}\big)(1).
\end{split}
\end{align}
\subsection{Proof of $\mathcal{F}'\big(\Gamma'' _g\big)(1)\subset (\uqe)^{\otimes g}$}\label{prBB}
We prove the  inclusion $\mathcal{F}'\big(\Gamma'' _g\big)(1)\subset (\uqe)^{\otimes g}$.
Let $B=b_4\circ b_3\circ b_2\circ b_1$ with $(b_1,b_2,b_3,b_4)\in \Gamma _g''$ such that $b_2\circ b_1=\sigma \circ \tilde b_1$ with $\sigma \in \Hom(\mathcal{A}_{\mathfrak{S}})$ and $\tilde b_1$ as in (\ref{bdel3}).
By (\ref{u3}), it is enough to prove 
\begin{align}\begin{split}\label{B2}
\mathcal{F}'\big(b_4\circ b_3\circ \sigma \circ \tilde{b}_1{(\mathbf{i}_1, \bar{ \mathbf{i}}_1, \ldots , \mathbf{i}_{l_2}, \bar{ \mathbf{i}}_{l_2})}\big)(1)\subset (\uqe)^{\otimes g},
\end{split}
\end{align}
for $\mathbf{i}_p\in \mathcal{I}(s_p,m_p),$ $\bar {\mathbf{i}}_p\in \mathcal{I}(s_p,n_p),$  $p=1,\ldots,l_2.$

We prove (\ref{B2}). 
First, we study $\mathcal{F}'\big(b_3\circ \sigma \circ \tilde{b}_1{(\mathbf{i}_1, \bar{ \mathbf{i}}_1, \ldots , \mathbf{i}_{l_2}, \bar{ \mathbf{i}}_{l_2})}\big)(1).$
Fix
\begin{align*}
\mathbf{i}_p=\big(i_{(p,0,1)},\ldots, i_{(p,0,m_p)}\big)\in \mathcal{I}(s_p,m_p),\quad \bar {\mathbf{i}}_p=\big(i_{(p,1,1)},\ldots, i_{(p,1,n_p)}\big)\in \mathcal{I}(s_p,n_p),
\end{align*}
for $p=1,\ldots, l_2.$
On a diagram of  $\tilde{b}_1{(\mathbf{i}_1, \bar{ \mathbf{i}}_1, \ldots , \mathbf{i}_{l_2}, \bar{ \mathbf{i}}_{l_2})}$, we color  the output edge of the  $\fx{i_{(p,0,t)}}$ 
(resp. $\ex{i_{(p,1,u)}}$) with a label $(p,0,t)$ (resp.  $(p,1,u)$) for $t=1,\ldots,m_p$ (resp. $ u=1,\ldots,n_p$),  $ p=1,\ldots,l_2$,
see Figure \ref{fig:EF} (a).
We also color the output edges of the $\uqzx$'s in $\tilde b_1(\mathbf{i}_1, \bar{ \mathbf{i}}_1, \ldots , \mathbf{i}_{l}, \bar{ \mathbf{i}}_{l})$ with symbols $k_1,\ldots,k_{l_3}$ from the left to right,
see Figure \ref{fig:EF} (b).
Let $\mathcal{P}$ be the set of the all labels, i.e., set
\begin{figure}
\centering
\includegraphics[width=11cm,clip]{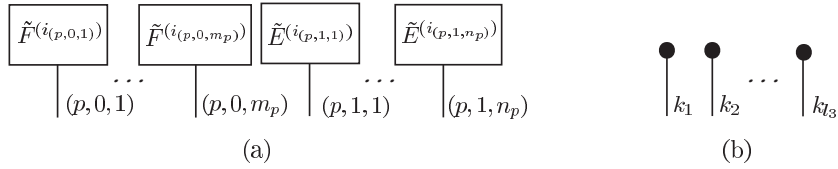}
\caption{How to color the output edges of  $\tilde b_1(\mathbf{i}_1, \bar{ \mathbf{i}}_1, \ldots , \mathbf{i}_{l_2}, \bar{ \mathbf{i}}_{l_2})$}\label{fig:EF}
\end{figure}
\begin{align*}
\mathcal{P}=&\{ (p,0,t) \ | \  1\leq t\leq m_p, 1\leq p\leq l_2\}
\sqcup \{(p,1,u) \ | \  1\leq u\leq n_p, 1\leq p\leq l_2 \}
\\
&\sqcup \{k_1,\ldots,k_{l_3}\}.
\end{align*}
In what follows, since $\mathcal{F}'(\uqzx)(1)=\mathcal{F}'(\ex{0})(1)=\uqz$, we identify $\uqzx$ with $\ex{0}$, and set $i_{k_j}=0$ for $j=1,\ldots,l_3$, see  figure \ref{fig:EF2}. We call the diagram of $\xx{i}{}$ for $i\geq 0$ with $X\in \{E,F\}$ an \textit{X-box}.
\begin{figure}
\centering
\includegraphics[width=4cm,clip]{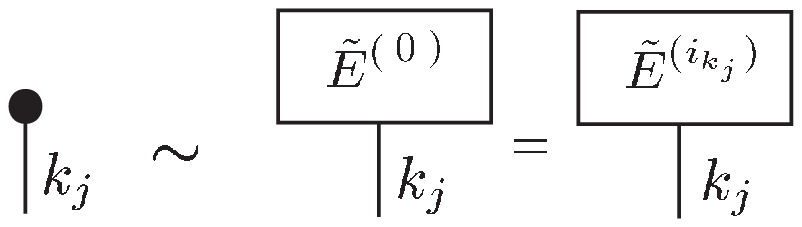}
\caption{How to treat the $j$th $\uqzx$ for $j=1,\ldots,l_3$}\label{fig:EF2}
\end{figure}
\begin{figure}
\centering
\includegraphics[width=12cm,clip]{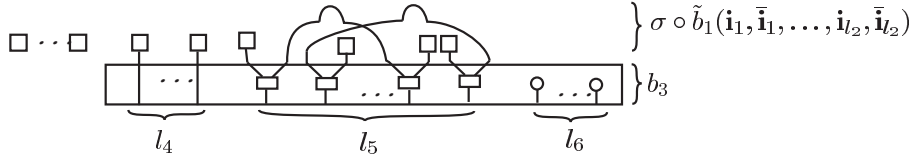}
\caption{How to arrange the diagram of $b_3\circ \sigma \circ \tilde b_1(\mathbf{i}_1, \bar{ \mathbf{i}}_1, \ldots , \mathbf{i}_{l}, \bar{ \mathbf{i}}_{l})$
}\label{fig:dwk212}
\end{figure}

 After we color the edges, we arrange the diagram of $b_3\circ \sigma \circ \tilde{b}_1{(\mathbf{i}_1, \bar{ \mathbf{i}}_1, \ldots , \mathbf{i}_{l_2}, \bar{ \mathbf{i}}_{l_2})}$
keeping $b_3$ so that each X-box is connected to $b_3$ directly without any crossings
 as in Figure  \ref{fig:dwk212}, where we set $b_3=\id_A^{\otimes l_4}\otimes \Yx^{\otimes l_5}\otimes \uqzex^{\otimes l_6}$, and  the floating boxes is the diagrams of $\qintx{s_p}$ for $p=1,\ldots,l_2$.
Here,  by the condition ($C_{\ad}$) in the definition of $\Gamma _g''$, the first $l_4$ input edges of $b_3$ are connected to X-boxes,
and  by the condition ($C_{Y}$), at least one of the input edges of each $\Yx$ in $b_3$ is connected to  an X-box.
Note that there are five cases as depicted in Figure \ref{fig:couple} (c1)--(c5), how a $\Yx$ is connected to the X-boxes and the $\Dx$'s.
 
Thus, we have
\begin{align}\label{b3}
b_3\circ \sigma \circ \tilde{b}_1{(\mathbf{i}_1, \bar{ \mathbf{i}}_1, \ldots , \mathbf{i}_{l_2}, \bar{ \mathbf{i}}_{l_2})}=
\bigotimes_{p=1}^{l_2}\qintx{s_p}\otimes  \xx{i_{a(1)}}{1}\otimes \cdots\otimes \xx{i_{a(l_4)}}{l_4}\otimes Z\otimes \uqzex^{\otimes l_6},
\end{align}
for  $a(1),\ldots, a(l_4)\in  \mathcal{P}$, $X_1,\ldots,X_{l_4}\in \{E,F\}$, and  $Z\in \Yx^{\otimes l_5}\circ \Hom_{\mathcal{A}}(I,A^{\otimes 2l_5})$.

For $j=1,\ldots,l_4$, we call the label $a(j)$  \textit{isolated}.
We say the labels $a$ and $b$ as in  Figure \ref{fig:couple} (c1)--(c5) are \textit{adjacent} to each other.
\begin{figure}
\centering
\includegraphics[width=10cm,clip]{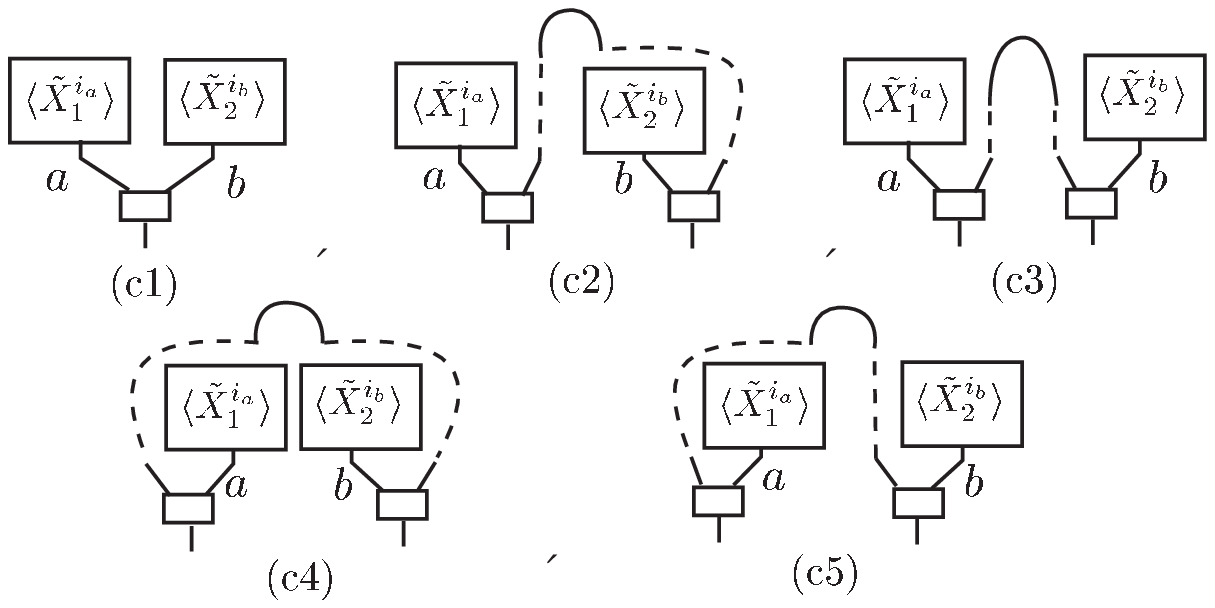}
\caption{The  $\Yx$'s in $b_3$, where $X_1,X_2\in \{E,F\}$
}\label{fig:couple}
\end{figure}

Note that the identity (\ref{b3}) implies 
\begin{align}\label{b31}
\mathcal{F}'\big(b_3\circ \sigma \circ \tilde{b}_1{(\mathbf{i}_1, \bar{ \mathbf{i}}_1, \ldots , \mathbf{i}_{l_2}, \bar{ \mathbf{i}}_{l_2})}\big)(1)\subset 
\Big(\prod_{p=1}^{l_2}\{s_p\}_q!\Big)\cdot \Big(\uqzq^{\otimes l_4}\otimes \mathcal{F}'(Z)(1)\otimes (\uqze)^{\otimes l_6}\Big).
\end{align}

Let us consider $\sum z_1\otimes\cdots \otimes  z_{l_5}\in \mathcal{F}'(Z)(1)$.  
If  the $m$th $\Yx$ (from the left in $b_3$) is as in (c1), then we can assume $z_m\in \dot Y(\uqz \tilde X_1^{(i_a)} \otimes  \uqz \tilde X_2^{(i_b)})$.
By Proposition \ref{qq}, we have
\begin{align}
&\dot Y(\uqz \tilde X_1^{(i_a)} \otimes  \uqz \tilde X_2^{(i_b)}) \subset (\{\min(i_a,i_b)\}_q!)^{-1}\cdot \uqe,\label{K1}
\end{align}
where  $(\{\min(i,j)\}_q!)^{-1}\cdot \uqe\subset \uqe\otimes _{\Z}\mathbb{Q}(q)$.
For example, we have
\begin{align*}
\dot Y(\uqz \tilde E^{(2)} \otimes  \uqz \tilde F^{(3)})&=  (\{2\}_q!)^{-1}\dot Y(\uqz e^2 \otimes  \uqz \tilde F^{(3)})
\\
&\subset (\{2\}_q!)^{-1} \uqe.
\end{align*}

If  the $m$th and $n$th $\Yx$'s  are as in (c2), then we can assume 
\begin{align*}
\sum z_{m}\otimes z_{n}\in\sum  \dot Y(\uqz\tilde X_1^{(i_a)} \otimes  \uqz D_{\pm}')\otimes \dot Y(\uqz \tilde X_2^{(i_b)} \otimes \uqz D_{\pm}'').
\end{align*}
By Lemma \ref{qd}, we have
\begin{align}
&\sum \dot Y(\uqz\tilde X_1^{(i_a)} \otimes  \uqz D_{\pm}')\otimes \dot Y(\uqz \tilde X_2^{(i_b)} \otimes \uqz D_{\pm}'')\subset  (\{\min(i_a,i_b)\}_q!)^{-1}\cdot (\uqe)^{\otimes 2}.
\end{align}
Similarly, for  (c3), (c4),  (c5), we have 
\begin{align}
&\sum \dot Y(\uqz \tilde X_1^{(i_a)} \otimes\uqz D_{\pm}')\otimes \dot Y(  \uqz D_{\pm}''\otimes  \uqz \tilde X_2^{(i_b)}) \subset (\{\min(i_a,i_b)\}_q!)^{-1}\cdot (\uqe)^{\otimes 2},
\\
&\sum \dot Y(\uqz D_{\pm}'\otimes \uqz \tilde X_1^{(i_a)})\otimes \dot Y(\uqz \tilde X_2^{(i_b)} \otimes \uqz  D_{\pm}'')\subset   (\{\min(i_a,i_b)\}_q!)^{-1}\cdot (\uqe)^{\otimes 2},\label{K2}
\\
&\sum \dot Y(\uqz D_{\pm}'\otimes \uqz \tilde X_1^{(i_a)} )\otimes \dot Y(\uqz  D_{\pm}''\otimes \uqz \tilde X_2^{(i_b)})\subset  (\{\min(i_a,i_b)\}_q!)^{-1}\cdot (\uqe)^{\otimes 2},
\end{align}
respectively. 

Let $\mathcal{P}_{\mathrm{A}}^2$ denote the set of unordered pairs $\{a,b\}$ of mutually adjacent elements $a,b\in  \mathcal{P}$.
By the above inclusions (\ref{K1})--(\ref{K2}), we have
\begin{align*}
\sum z_1\otimes \cdots\otimes z_{l_5 }\in \Big(\prod_{\{a,b\}\in \mathcal{P}_{\mathrm{A}}^2}\big(\big\{\min (i_{a},i_{b})\big\}_q!\big)^{-1} \Big)\cdot (\uqe)^{\otimes l_5}.
\end{align*}
Thus, by (\ref{b31}), we have
\begin{align}
\begin{split}\label{ab}
\mathcal{F}'\big(b_3\circ \sigma \circ \tilde b_1(\mathbf{i}_1, \bar{ \mathbf{i}}_1, \ldots , \mathbf{i}_{l}, \bar{ \mathbf{i}}_{l})\big)(1)&\subset I\cdot\Big( \uqzq^{\otimes l_4}\otimes (\uqe)^{\otimes l_5}\otimes (\uqze)^{\otimes l_6}\Big)
\\
&\subset I\cdot\Big( \uqzq^{\otimes l_4}\otimes (\uqe)^{\otimes l_5+l_6}\Big),
\end{split}
\end{align}
where we set 
\begin{align*}
I=\Big(\prod_{p=1}^{l_2}\{s_p\}_q! \Big)\cdot \Big(\prod_{\{a,b\}\in \mathcal{P}_{\mathrm{A}}^2}\big(\big\{\min (i_{a},i_{b})\big\}_q!\big)^{-1} \Big)\in \mathbb{Q}(q).
\end{align*}

Let us study $\mathcal{F}'(b_4)\Big( \uqzq^{\otimes l_4}\otimes (\uqe)^{\otimes l_5+l_6}\Big)$.
Since the first $l_4$ input edges of $b_4$ are connected to the left (resp. right) input edges of the $\adx$'s (resp. $\sadx$'s),
and the next  $l_5+l_6$ input edges of $b_4$ go down to the edges of the $\mux$'s and to the right (resp. left) input edges  of the $\adx$'s (resp. $\sadx$'s),
by  Proposition \ref{Habi}, (resp. Corollary \ref{Habii}) we have
\begin{align}\label{bb4}
\mathcal{F}'(b_4)\Big( \uqzq^{\otimes l_4}\otimes (\uqe)^{\otimes l_5+l_6}\Big)\subset  (\uqe)^{\otimes g}.
\end{align}
By (\ref{ab}) and  (\ref{bb4}),  we have
\begin{align*}
&\mathcal{F}'\big(b_4\circ b_3\circ \sigma \circ \tilde{b}_1{(\mathbf{i}_1, \bar{ \mathbf{i}}_1, \ldots , \mathbf{i}_{l_2}, \bar{ \mathbf{i}}_{l_2})}\big)(1) 
\subset
I\cdot (\uqe)^{\otimes g}.
\end{align*}
Thus, for the proof of (\ref{B2}), it is enough to  prove 
\begin{align}\label{u}
I\in \mathbb{Z}[q,q^{-1}].
\end{align}

For $k\geq 1$,  let $\Phi _k(q)$ be the $k$th cyclotomic polynomial in $q$.
For $f(q)\in \mathbb{Z}[q,q^{-1}]$, $f(q)\neq 0$, let $d_k\big(f(q)\big)$ be the largest integer $i$ such that $f(q)\in \Phi _k^i(q)\mathbb{Z}[q,q^{-1}]$.
Since both $\prod_{p=1}^{l_2} \{s_p\}_q!$ and $\prod_{\{a,b\}\in \mathcal{P}_{\mathrm{A}}^2 }\big\{\min (i_a,i_b)\big\}_q!$ are
products of the cyclotomic polynomials, in order to prove (\ref{u}), it is enough to prove
\begin{align}\label{uu}
d_k\Big(\prod_{p=1}^{l_2} \{s_p\}_q!\Big)\geq d_k\Big(\prod_{\{a,b\}\in \mathcal{P}_{\mathrm{A}}^2 } \big\{\min (i_a,i_b)\big\}_q!\Big),
\end{align}
for $k\geq 1$.

We prove (\ref{uu}). Fix $k\geq 1$.
Note that for $i\in \mathbb{Z}$, we have 
\begin{align*}
d_k\big( \{i\}_q\big)=d_k(q^i-1)=
\begin{cases}1\quad \text{if }k|i,
\\
0\quad \text{otherwise}.
\end{cases}
\end{align*}
This identity and  $s_p=\sum_{t=1}^{m_p}i_{(p,0,t)}=\sum_{u=1}^{n_p} i_{(p,1,u)}$ imply
\begin{align*}
d_k\big( \{s_p\}_q!\big)&\geq \sum_{t=1}^{m_p} d_k\big(\{i_{(p,0,t)}\}_q!\big),
\\
d_k\big( \{s_p\}_q!\big)&\geq \sum_{u=1}^{n_p}d_k\big(\{i_{(p,1,u)}\}_q!\big).
\end{align*}
Thus, we have
\begin{align*}
d_k\Big(\prod_{p=1}^{l_2} \{s_p\}_q!\Big)&\geq 
\sum_{p=1}^{l_2}\Big(\sum_{t=1}^{m_p} d_k\big(\{i_{(p,0,t)}\}_q!\big)+\sum_{u=1}^{n_p}d_k\big(\{i_{(p,1,u)}\}_q!\big)\Big)/2
\\
&=\sum_{a\in  \mathcal{P}} d_k\big( \{i_a\}_q!\big)/2
\\
&=\Big(\sum_{\{a,b\}\in  \mathcal{P}_{\mathrm{A}}^2}  d_k\big(\{i_a\}_q!\{i_b\}_q!\big)+\sum_{c\in  \mathcal{P}_{\mathrm{iso}}}d_k\big(  \{i_c\}_q!\big)\Big)/2
\\
&\geq \sum_{\{a,b\}\in  \mathcal{P}_{\mathrm{A}}^2}  d_k\big(\{i_a\}_q!\{i_b\}_q!\big)/2
\\
&= d_k\Big(\prod_{\{a,b\}\in  \mathcal{P}_{\mathrm{A}}^2}  \{i_a\}_q!\{i_b\}_q!\Big)/2
\\
&\geq d_k\Big(\prod_{\{a,b\}\in  \mathcal{P}_{\mathrm{A}}^2} \big\{\min (i_a,i_b)\big\}_q!\Big).
\end{align*}
Here $\mathcal{P}_{\mathrm{iso}}\subset \mathcal{P}$ denotes the subset consisting of isolated labels.
Hence we have (\ref{uu}), which completes the proof of $\mathcal{F}'\big(\Gamma'' _g\big)(1)\subset (\uqe)^{\otimes g}$.
\section{Completions}\label{completion}
In this section, we define the completion $\uqenh{n}$ of $(\uqe)^{\otimes n}$, and prove Theorem \ref{1}.
\subsection{Filtrations  of $\uqe$ with respect to $\ad$ and $\sad$}\label{Comp0}
For a subset $X\subset \bar  U_q^{\ev}$, let $\langle X \rangle_{\mathrm{ideal}} $ denote the two-sided ideal of $\uqe$ generated by $X$.
Set
\begin{align*}
&A_p=\langle U_{\mathbb{Z},q}\triangleright e^p  \rangle_{\mathrm{ideal}},
\\
&C_p=\langle 
\sum_{p'\geq p}(U_{\mathbb{Z},q}\tilde E^{(p')}\triangleright \bar U_q^{\ev}\big)
 \rangle_{\mathrm{ideal}},
\quad C'_p=\langle 
\sum_{p'\geq p}(U_{\mathbb{Z},q}\tilde F^{(p')}\triangleright \bar U_q^{\ev}\big)
 \rangle_{\mathrm{ideal}},
 \\
&\tilde C_p=\langle 
\sum_{p'\geq p}K(U_{\mathbb{Z},q}\tilde E^{(p')}\triangleright K\bar U_q^{\ev}\big)
 \rangle_{\mathrm{ideal}},
\quad \tilde C'_p=\langle 
\sum_{p'\geq p}K(U_{\mathbb{Z},q}\tilde F^{(p')}\triangleright K\bar U_q^{\ev}\big)
 \rangle_{\mathrm{ideal}},
\end{align*}
for $p\geq 0$. For $X=A,C,C',\tilde C,\tilde C'$,   the $X_p,p\geq 0$ form a decreasing filtration of $\uqe$, i.e., we have $X_p\supset X_{p+1}$ for $p\geq 0$.
\begin{lemma}[{\cite[Proposition 5.5]{sakie}}]\label{EQQ}
\begin{itemize}
\item[\rm{(i)}]
For $p\geq 0$, we have $
C_p=C'_p.$
\item[\rm{(ii)}]
For $p\geq 0$, we have $C_{2p}\subset A_p$.
\item[\rm{(iii)}]
If $p\geq 0$ is even, then we have $C_{2p}=A_p$.
\end{itemize}
\end{lemma}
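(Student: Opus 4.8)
The plan is to reduce all three statements to explicit computations with the left adjoint action of $U_{\mathbb{Z},q}$ on $\uqe$. The tools are Proposition~\ref{Habi}, the coproduct and antipode formulas (\ref{De1})--(\ref{De2}) for divided powers, the normalizations $\tilde E^{(m)}=(\{m\}_q!)^{-1}e^{m}$ and $\tilde F^{(m)}=q^{\frac{1}{2}m(m-1)}(\{m\}_q!)^{-1}f^{m}$ (which in particular give $e,f\in U_{\mathbb{Z},q}$, hence $\uqe\subseteq U_{\mathbb{Z},q}$, so that $A_p$, $C_p$, $C'_p$ are genuine two-sided ideals of $\uqe$), the relations $K^{2}eK^{-2}=q^{2}e$, $K^{2}fK^{-2}=q^{-2}f$, $ef-q^{-1}fe=(q-1)(K^{2}-1)$, and the $q$-binomial theorem. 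I would first record two reductions. (a) Each of $A_p,C_p,C'_p$ is stable under $z\mapsto x\triangleright z$ for $x\in U_{\mathbb{Z},q}$ (from $\Delta(U_{\mathbb{Z},q})\subseteq U_{\mathbb{Z},q}^{\hat\otimes 2}$, the identity $x\triangleright(ab)=\sum(x_{(1)}\triangleright a)(x_{(2)}\triangleright b)$, and Proposition~\ref{Habi}), so $C_p$ (resp.\ $C'_p$) equals the ideal of $\uqe$ generated just by $\{\tilde E^{(p')}\triangleright y:p'\geq p,\ y\in\uqe\}$ (resp.\ by the $\tilde F^{(p')}$), and for (ii) it is enough to control $\tilde E^{(p')}\triangleright y$ for one $y$. (b) Since $\tilde E^{(p')}$ is homogeneous of degree $p'$ and $S$ preserves the $\mathbb{Z}$-grading of $U_h$, $\ad$ is a graded map, so all three ideals are graded and one may take $y$ homogeneous, running through a PBW basis $\{e^{a}K^{2c}f^{b}\}$ of $\uqe$.

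For part (ii), fix $p'\geq 2p$ and homogeneous $y$. From (\ref{De2}) and the antipode formula,
\[
\tilde E^{(p')}\triangleright y=\sum_{j=0}^{p'}(-1)^{j}q^{\frac{1}{2}j(j-1)+j|y|}\;\tilde E^{(p'-j)}\,y\,\tilde E^{(j)},
\]
and since $p'\geq 2p$ every summand has $p'-j\geq p$ or $j\geq p$. Writing $y=e^{a}K^{2c}f^{b}$ and expanding via the iterated coproduct as a sum of products $(\tilde E^{(j_1)}\triangleright e^{a})(\tilde E^{(j_2)}\triangleright K^{2c})(\tilde E^{(j_3)}\triangleright f^{b})$ with $j_1+j_2+j_3=p'$, one evaluates $\tilde E^{(n)}\triangleright e^{a}=(-1)^{n}\binom{a+n-1}{n}_{q}e^{a+n}$ and $\tilde E^{(n)}\triangleright K^{2c}$ (again a monomial $K^{2c}e^{n}$ with coefficient in $\mathbb{Z}[q,q^{-1}]$), then pushes the resulting $e$'s to the left past the Cartan factors; the hypothesis $p'\geq 2p$ forces an $e^{p}$ to appear, with integral coefficient, in every term, so $\tilde E^{(p')}\triangleright y\in\uqe\,e^{p}\,\uqe\subseteq A_p$. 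The delicate point is that the individual $\tilde E^{(p'-j)}\,y\,\tilde E^{(j)}$ carry the non-unit denominators $\{p'-j\}_q!\{j\}_q!$; repackaging divided powers through $\tilde E^{(a)}\tilde E^{(b)}=\binom{a+b}{a}_{q}\tilde E^{(a+b)}$ keeps everything in $U_{\mathbb{Z},q}$, so the \emph{full} sum lands in the integral ideal $A_p$, not merely in $A_p\otimes\mathbb{Q}(q)$. Then $(x\tilde E^{(p')})\triangleright y=x\triangleright(\tilde E^{(p')}\triangleright y)\in A_p$ by reduction (a), whence $C_{2p}\subseteq A_p$.

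For part (iii), in view of (ii) it suffices, for $p$ even, to prove $A_p\subseteq C_{2p}$, and since $C_{2p}$ is an ideal of $\uqe$ it is enough to produce $e^{p}\in C_{2p}$. I would evaluate $\tilde E^{(2p)}\triangleright(f^{p}K^{2c})$ for a suitable $c$: commuting the divided powers of $E$ past the $p$ copies of $f$ by means of $ef-q^{-1}fe=(q-1)(K^{2}-1)$, one obtains — and here ``$p$ even'' is exactly what guarantees that every Cartan factor produced lies in $\uqze$ rather than in $K\uqze$, so the output stays in $\uqe$ — an element whose term of lowest $f$-degree is $e^{p}$ times an invertible element of $\uqze$, while the higher-$f$-degree terms lie in $C_{2p}$ by a downward induction along the $f$-degree filtration (they are hit by $\tilde E^{(p'')}\triangleright(\text{lower powers of }f)$ with $p''\geq 2p$). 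Inverting the leading coefficient gives $e^{p}\in C_{2p}$, so $C_{2p}=A_p$.

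Part (i), $C_p=C'_p$, is the $e\leftrightarrow f$-symmetric statement and I would obtain it alongside the above: running the analysis of (ii) in both directions yields explicit $\mathbb{Z}[q,q^{-1}]$-module descriptions of $C_p$ and $C'_p$ as spans of PBW monomials $e^{a}K^{2c}f^{b}$ subject to exponent conditions together with gcd-type conditions on the coefficients (coming from the $q$-integers $\binom{\cdot}{\cdot}_q$, $[n]_q$ that occur), and these descriptions are visibly interchanged under $e\leftrightarrow f$. (The Hopf algebra automorphism $\omega$ of $U_h$ with $\omega(E)=F$, $\omega(F)=E$, $\omega(K)=K^{-1}$ is the natural candidate for this symmetry, but it does \emph{not} preserve $\uqe$ — it shifts $K$-exponents by odd amounts — so the symmetry has to be read off the explicit descriptions.) The main obstacle, common to all three parts, is precisely this bookkeeping of non-unit $q$-integer scalars: one must show that the denominators $\{m\}_q!$ introduced by the divided powers cancel so that the conclusions land in the \emph{integral} ideals $A_p$, $C_{2p}$, $C'_p$, and in (iii) one must choose the auxiliary element so that the leading $\uqze$-coefficient is exactly a unit — which is where, and why, the parity hypothesis ``$p$ even'' is forced; organizing everything through explicit PBW descriptions of the three ideals is what makes this tractable.
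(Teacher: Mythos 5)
The paper does not give a proof of Lemma \ref{EQQ}; it quotes it verbatim from Proposition 5.5 of \cite{sakie}, so there is no in-paper argument to compare against, and you are proposing a from-scratch proof. That proof has a genuine gap in part (ii), which then infects (iii) and (i), both of which you build on (ii).

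For (ii) you assert that in the Leibniz expansion of $\tilde E^{(p')}\triangleright(e^{a}K^{2c}f^{b})$ the hypothesis $p'\geq 2p$ ``forces an $e^{p}$ to appear, with integral coefficient, in every term.'' That is false. You evaluate the $\tilde E^{(j_1)}\triangleright e^{a}$ and $\tilde E^{(j_2)}\triangleright K^{2c}$ factors, which do produce $e$'s, but you never evaluate $\tilde E^{(j_3)}\triangleright f^{b}$, and this factor can absorb all of $p'$ without producing a single $e$: already $\tilde E^{(1)}\triangleright f=K^{2}-1$, a pure Cartan element, and more generally the contribution with $j_1=j_2=0$, $j_3=p'$ applied to $f^{b}$ with $b\geq p'$ is a combination of PBW monomials $e^{a'}K^{2c'}f^{b'}$ with $a'-b'=p'-b\leq 0$, which need carry no $e^{p}$ at all. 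So the asserted term-by-term containment in $\uqe\,e^{p}\,\uqe$ simply fails; the actual argument for $C_{2p}\subset A_p$ must control the Cartan pieces that arise --- for instance via $K^{2}-1=-q^{-1}(\tilde F^{(1)}\triangleright e)\in A_{1}$ and some iteration --- and you have not supplied that idea. Relatedly, ``repackaging divided powers keeps everything in $U_{\mathbb{Z},q}$, so the full sum lands in $A_p$'' is a non sequitur: membership in $U_{\mathbb{Z},q}$, or even in $\uqe$ (which is just Proposition \ref{Habi}), does not yield membership in the ideal $A_p$ of $\uqe$ unless one first proves $A_p$ is saturated, which you have not done and is not obvious. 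Finally, for (iii) your explanation of the parity hypothesis --- that $p$ even ``guarantees that every Cartan factor lies in $\uqze$ rather than in $K\uqze$, so the output stays in $\uqe$'' --- cannot be the mechanism, since $\tilde E^{(p')}\triangleright\uqe\subset\uqe$ for \emph{all} $p'$ by Proposition \ref{Habi}; the parity instead governs whether the leading coefficient is a unit and is precisely what separates the filtration $C_p$ from the $K$-twisted $\tilde C_p$ of Lemma \ref{EQQ2}, neither of which your sketch addresses.
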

\begin{lemma}\label{EQQ2}
\begin{itemize}
\item[\rm{(i)}]
For $p\geq 0$, we have $
\tilde C_p=\tilde C'_p.$
\item[\rm{(ii)}]
For $p\geq 0$, we have $\tilde C_{2p}\subset A_p$.
\item[\rm{(iii)}]
If $p\geq 0$ is odd, then we have $\tilde C_{2p}=A_p$.
\end{itemize}
\end{lemma}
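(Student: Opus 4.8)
The plan is to establish Lemma~\ref{EQQ2} in close parallel with Lemma~\ref{EQQ} (that is, \cite[Proposition~5.5]{sakie}), viewing $\tilde C_p$ and $\tilde C'_p$ as the ``$K$-twisted'' counterparts of $C_p$ and $C'_p$ and carrying the extra factors of $K$ through the arguments there. Two elementary identities handle most of the bookkeeping. First, from $e=(q^{1/2}-q^{-1/2})E$ one gets $q^{-1/2}E=e/\{1\}_q$, hence $\e{n}=e^n/\{n\}_q!$, and similarly $\f{n}$ equals an invertible scalar times $f^n/\{n\}_q!$. Second, using (\ref{De1}) and the formula for $S(\e{m})$ one computes
\begin{align*}
\e{m}\triangleright K=(-1)^m q^{-m}\,Ke^m\ \in\ K\uqe ,
\end{align*}
so that, by the module-algebra rule $x\triangleright(ab)=\sum(x_{(1)}\triangleright a)(x_{(2)}\triangleright b)$ together with (\ref{De1}),
\begin{align*}
\e{p'}\triangleright(Kw)=K\sum_{j\geq 0}(-1)^{p'-j}q^{-(p'-j)}\,e^{p'-j}\bigl(\e{j}\triangleright w\bigr)
\end{align*}
for $w\in U_h$; by Proposition~\ref{Habi} the right-hand side lies in $K\uqe$, and it becomes an element of $\uqe$ after left multiplication by $K$, which is exactly the shape occurring in the definition of $\tilde C_p$. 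The same remarks apply with $\e{}$, $e$ replaced by $\f{}$, $f$, covering the generators $\f{n}$ of $\uqzq$.

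For part~(i) I would follow the proof of $C_p=C'_p$ in \cite[Proposition~5.5]{sakie}, which exploits a symmetry interchanging the $E$-side and the $F$-side of $U_h$, and carry the left factor of $K$ along. The only compatibility needed is that this symmetry sends the coset $K\uqe=K^{-1}\uqe$ to itself, which is immediate since $K^{\pm2}$ is a unit in $\uqe$; hence it maps the generating family of $\tilde C_p$ to that of $\tilde C'_p$, and $\tilde C_p=\tilde C'_p$ follows. (If the symmetry is only available over $\mathbb{Z}[q^{1/2},q^{-1/2}]$, one concludes by faithfully flat descent, $\mathbb{Z}[q^{1/2},q^{-1/2}]$ being free over $\mathbb{Z}[q,q^{-1}]$.)

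For part~(ii) the argument is the one used for $C_{2p}\subset A_p$. By the displayed formula, $K\bigl(\e{p'}\triangleright(Kw)\bigr)=K^2\sum_j(\pm q^{-(p'-j)})\,e^{p'-j}\bigl(\e{j}\triangleright w\bigr)$, where $\e{j}\triangleright w\in\uqe$ by Proposition~\ref{Habi}. For $p'\geq 2p$ the exponents obey $(p'-j)+j=p'\geq 2p$, so applying the pigeonhole principle to the total $E$-content on either side of $w$ and reusing the integrality arguments of \cite[Proposition~5.5]{sakie}, one finds that every summand lies in the two-sided ideal of $\uqe$ generated by $e^p$, i.e.\ in $A_p$. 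Since $A_p$ is an ideal of $\uqe$ and $K^2\in\uqe$, we have $K^2A_p\subset A_p$; combined with the $\ad(\uqzq)$-stability $\uqzq\triangleright A_p\subset A_p$ — which follows from $A_p=\langle \uqzq\triangleright e^p\rangle_{\mathrm{ideal}}$, the module-algebra rule, $\Delta(\uqzq)\subset\uqzq^{\otimes 2}$ and Proposition~\ref{Habi} — this yields $\tilde C_{2p}\subset A_p$.

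Part~(iii), the reverse inclusion $A_p\subset\tilde C_{2p}$ for \emph{odd} $p$, is where the parity hypothesis is forced, and I expect it to be the main obstacle. Here I would exhibit the generator $e^p$ of $A_p$ inside $\tilde C_{2p}$ by evaluating $K\bigl(\e{2p}\triangleright(Kw)\bigr)$ (and, if needed, first acting by a suitable element of $\uqzq$) on a carefully chosen $w\in\uqe$ of degree $-p$ built from the $\f{}$'s — the natural candidate being a scalar multiple of $f^p$ or $\f{p}$. Applying $\e{2p}$ and using $EF-FE=(K-K^{-1})/(q^{1/2}-q^{-1/2})$ repeatedly, the ``diagonal'' part of the adjoint action produces $e^p$ times a unit times a contribution coming from $(K-K^{-1})^{p}$, all of whose $K$-powers have the parity of $p$. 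When $p$ is odd these are odd powers of $K$, so $\e{2p}\triangleright(Kw)$ lies in $K\uqe$ but not in $\uqe$, and it is precisely the extra left factor of $K$ present in $\tilde C_p$ (and absent from $C_p$) that makes $K\bigl(\e{2p}\triangleright(Kw)\bigr)$ land back in $\uqe$ — mirroring the fact that $C_{2p}=A_p$ holds exactly for even $p$ in Lemma~\ref{EQQ}(iii). Passing to the generated ideal and using that $\tilde C_{2p}$ is $\ad(\uqzq)$-stable then gives $A_p\subset\tilde C_{2p}$, and with (ii) this completes the proof. The delicate points will be pinning down the exact scalar and power of $K$ in this reconstruction so that the non-$e^p$ terms cancel, and verifying that the chosen $w$ actually lies in $\uqe$ — precisely the constraint that the $K$-twist in the definition is designed to meet.
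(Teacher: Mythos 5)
The paper's own proof is just ``The proof is almost the same as that of Lemma \ref{EQQ},'' so your plan of carrying the extra $K$-factors through the argument of \cite[Proposition 5.5]{sakie} is exactly the intended route, and your preparatory computations are correct (in particular, $\e{m}\triangleright K=(-1)^m q^{-m}Ke^m$ checks out via the $q$-binomial theorem, and the module-algebra expansion of $\e{p'}\triangleright(Kw)$ is right). Parts~(i) and~(ii) of your sketch look sound.

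Part~(iii), however, is not a proof but a plan with an acknowledged gap, and one of the justifications you offer for it is off. You write that for odd $p$ ``these are odd powers of $K$, so $\e{2p}\triangleright(Kw)$ lies in $K\uqe$ but not in $\uqe$, and it is precisely the extra left factor of $K$ $\dots$ that makes $K\bigl(\e{2p}\triangleright(Kw)\bigr)$ land back in $\uqe$.'' But by Proposition~\ref{Habi} one has $\uqzq\triangleright K\uqe\subset K\uqe$ for \emph{all} $p$, so $K\bigl(\e{2p}\triangleright(Kw)\bigr)\in K^2\uqe=\uqe$ is automatic and carries no parity information; this is not where the hypothesis ``$p$ odd'' bites. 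The genuine content of~(iii) is the reverse inclusion $A_p\subset\tilde C_{2p}$, i.e.\ exhibiting $e^p$ (up to a unit of $\Z$) as an element of $\tilde C_{2p}$. There the parity enters through the $(K-K^{-1})$-contributions when $E$'s and $F$'s annihilate: the resulting $K$-powers have the parity of $p$, so when $p$ is odd the leading $e^p$-term appears with an odd power of $K$ that the extra $K$'s in the definition of $\tilde C_{2p}$ are needed to absorb. You correctly anticipate this, but you do not pin down the $w$, the unit, or the cancellation of the non-leading terms, so the crucial step of~(iii) is still missing; one really needs to redo the computation of \cite[Proposition 5.5(iii)]{sakie} with the $K$-twist in place rather than argue by analogy.
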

\begin{proof}
The proof is almost the same as that of Lemma \ref{EQQ}.
\end{proof}For $p\geq 0$, set
 \begin{align*}G_p=C_p+\tilde C_p=C'_p+\tilde C'_p.
 \end{align*}
\begin{corollary}\label{c1}
For $p\geq 0$, we have 
$G_{2p}= A_p.$
\end{corollary}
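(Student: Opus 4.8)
\textbf{Proof proposal for Corollary \ref{c1}.}

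The plan is to derive $G_{2p} = A_p$ directly from Lemmas \ref{EQQ} and \ref{EQQ2}, splitting into the cases where $p$ is even and where $p$ is odd. First I would observe that by definition $G_{2p} = C_{2p} + \tilde C_{2p}$, so by Lemma \ref{EQQ}(ii) and Lemma \ref{EQQ2}(ii) we immediately get the inclusion $G_{2p} = C_{2p} + \tilde C_{2p} \subset A_p + A_p = A_p$, which holds for all $p \geq 0$ with no parity assumption. So the only content is the reverse inclusion $A_p \subset G_{2p}$.

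For the reverse inclusion I would argue by parity. If $p$ is even, then Lemma \ref{EQQ}(iii) gives $C_{2p} = A_p$, hence $A_p = C_{2p} \subset C_{2p} + \tilde C_{2p} = G_{2p}$. If $p$ is odd, then Lemma \ref{EQQ2}(iii) gives $\tilde C_{2p} = A_p$, hence $A_p = \tilde C_{2p} \subset C_{2p} + \tilde C_{2p} = G_{2p}$. In either case $A_p \subset G_{2p}$, so combined with the inclusion from the previous paragraph we conclude $G_{2p} = A_p$ for all $p \geq 0$.

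I do not expect any real obstacle here: the corollary is a formal bookkeeping consequence of the two lemmas, and the only subtlety is remembering to invoke the even case of one lemma and the odd case of the other so that the two parity ranges together cover all $p$. One should also note in passing that $G_p$ is indeed a two-sided ideal of $\uqe$, being a sum of two such ideals, and that the equality $C_p + \tilde C_p = C'_p + \tilde C'_p$ used in the definition of $G_p$ is exactly Lemma \ref{EQQ}(i) together with Lemma \ref{EQQ2}(i), so the notation $G_p$ is well-defined.
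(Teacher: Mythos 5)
Your proof is correct and follows essentially the same route as the paper: the forward inclusion $G_{2p}\subset A_p$ from Lemma \ref{EQQ}(ii) and Lemma \ref{EQQ2}(ii), and the reverse inclusion by parity, using Lemma \ref{EQQ}(iii) for $p$ even and Lemma \ref{EQQ2}(iii) for $p$ odd. The added remark on the well-definedness of $G_p$ is a fine aside but not needed for the corollary.
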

\begin{proof}For $p\geq 0$,  by Lemma \ref{EQQ} (ii) and \ref{EQQ2} (ii), we have
$G_{2p}=C_{2p}+\tilde C_{2p}\subset A_p.$ 

If $p\geq 0$ is even, then by Lemma \ref{EQQ} (iii), we have $G_{2p}\supset C_{2p}= A_p.$

If $p\geq 0$ is odd, then by Lemma \ref{EQQ2} (iii), we have $G_{2p}\supset \tilde C_{2p}= A_p.$

Thus, we have the assertion.
\end{proof}
\begin{corollary}\label{c2}
The filtrations $\{A_p\}_{p\geq 0},\{C_p\}_{p\geq 0},\{\tilde C_p\}_{p\geq 0},\{G_p\}_{p\geq 0}$ are all cofinal with each other.
\end{corollary}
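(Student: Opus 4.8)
The plan is to use that cofinality of decreasing filtrations of $\uqe$ is an equivalence relation: it is visibly reflexive and symmetric, and it is transitive, since if $\{Y_p\}$ is cofinal with $\{Z_p\}$ and $\{X_p\}$ is cofinal with $\{Y_p\}$, then given $p$ we may first pick $q$ with $Y_q\subset Z_p$ and then $p'$ with $X_{p'}\subset Y_q\subset Z_p$ (and symmetrically). Hence it suffices to show that each of $\{C_p\}_{p\geq 0}$, $\{\tilde C_p\}_{p\geq 0}$, and $\{G_p\}_{p\geq 0}$ is cofinal with the single filtration $\{A_p\}_{p\geq 0}$. I will use throughout that all the filtrations involved are decreasing; for $\{G_p\}$ this follows at once from $G_p=C_p+\tilde C_p$.

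First I would compare $\{G_p\}$ with $\{A_p\}$. Corollary \ref{c1} gives $G_{2p}=A_p$ for every $p\geq 0$. Thus for each $p$ the index $2p$ satisfies $G_{2p}=A_p\subset A_p$, and for each $p$ the index $p$ satisfies $A_p=G_{2p}\subset G_p$, the last inclusion because $\{G_p\}$ is decreasing and $2p\geq p$. So $\{G_p\}$ and $\{A_p\}$ are cofinal.

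Next I would compare $\{C_p\}$ with $\{A_p\}$. Lemma \ref{EQQ}(ii) gives $C_{2p}\subset A_p$ for all $p$, which is one of the two inclusion families needed. For the other, fix $p\geq 0$ and choose an even integer $p'$ with $2p'\geq p$; Lemma \ref{EQQ}(iii) then gives $C_{2p'}=A_{p'}$, while $C_{2p'}\subset C_p$ because $\{C_p\}$ is decreasing, so $A_{p'}\subset C_p$. Hence $\{C_p\}$ is cofinal with $\{A_p\}$. The comparison of $\{\tilde C_p\}$ with $\{A_p\}$ is identical with Lemma \ref{EQQ} replaced by Lemma \ref{EQQ2}, choosing now $p'$ odd (and $\geq 1$) with $2p'\geq p$ so that $\tilde C_{2p'}=A_{p'}$. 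Combining the three comparisons by transitivity yields that all four filtrations $\{A_p\},\{C_p\},\{\tilde C_p\},\{G_p\}$ are cofinal.

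The argument is entirely formal given Corollary \ref{c1} and Lemmas \ref{EQQ} and \ref{EQQ2}, and I do not expect a genuine obstacle. The only point deserving a little care — and the reason one cannot simply take the auxiliary index equal to $p$ — is that the exact identities $C_{2p}=A_p$ and $\tilde C_{2p}=A_p$ are available only for $p$ even, respectively $p$ odd, so one must insert a monotonicity step and pick the auxiliary index with the correct parity.
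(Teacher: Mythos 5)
Your proof is correct and fills in, in the natural way, the argument that the paper leaves implicit (the paper states Corollary \ref{c2} without proof, taking it to follow directly from Corollary \ref{c1} together with Lemmas \ref{EQQ} and \ref{EQQ2}). Your observation about the parity restriction in part (iii) of each lemma is exactly the small point of care involved, and the use of monotonicity to pass to an auxiliary index of the right parity handles it properly.
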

\subsection{Filtrations of $\uqe$ and $(\uqe)^{\otimes 2}$ with respect to $\dot Y$}
For $p\geq 0$, let $\mathcal{Y}_p$  be the two-sided ideal in $\uqe$ generated by the elements in 
\begin{align*}
&\sum_{p'\geq p}\dot Y(\uqz \e{p'} \otimes \uq)
, \ \sum_{p'\geq p}\dot Y(\uq \otimes \uqz \e{p'} ) 
, \ \sum_{p'\geq p}\dot Y(\uqz \f{p'} \otimes \uq)
, \ \sum_{p'\geq p}\dot Y(\uq \otimes \uqz \f{p'} ).
\end{align*}
\begin{lemma}\label{zf1}
For $p\geq 0$, we have $\mathcal{Y}_p \subset G_p.$
\end{lemma}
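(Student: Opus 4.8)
The plan is to show that each of the four families generating the two‑sided ideal $\mathcal{Y}_p$ already lies inside $G_p$; since $G_p$ is a two‑sided ideal of $\uqe$, this will give $\mathcal{Y}_p\subset G_p$. By Lemma~\ref{EQQ}(i) and Lemma~\ref{EQQ2}(i) we have $G_p=C_p+\tilde C_p=C'_p+\tilde C'_p$, so the families built from $\f{p'}$ are handled exactly like those built from $\e{p'}$, with $C_p,\tilde C_p$ replaced by $C'_p,\tilde C'_p$; hence it is enough to prove, for $p'\geq p$, that $\dot Y(\uqz\e{p'}\otimes\uq)\subset G_p$ and $\dot Y(\uq\otimes\uqz\e{p'})\subset G_p$. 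Throughout I will use that $C_p$ and $\tilde C_p$ are $\mathbb{Z}$‑graded two‑sided ideals of $\uqe$ (their generators may be taken homogeneous), that $\uqe$ is invariant under conjugation by $K$, that $\uqz\e{p'}\subset\uqzq\tilde E^{(p')}$ and $S^{-1}(\uqz\e{p'})\subset\uqz\tilde E^{(p')}$ (by the antipode formula for $\tilde E^{(p')}$), and the key parity statement (\ref{bibi}): for $y\in\uq$, the element $(1\otimes S^{\pm1})\Delta(y)$ lies in $\bigoplus_{i=0,1}\bigl(K^i\uqe\otimes K^i\uqe\bigr)$.

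For the first family, fix $\kappa\in\uqz$, $p'\geq p$ and $y\in\uq$. By (\ref{ad1}), $\dot Y(\kappa\tilde E^{(p')}\otimes y)=\sum\bigl(\kappa\tilde E^{(p')}\triangleright S^{-1}(y_{(2)})\bigr)y_{(1)}$; using (\ref{bibi}) I write $\sum y_{(1)}\otimes S^{-1}(y_{(2)})=\sum_{i=0,1}\sum_\alpha p^{(i)}_\alpha\otimes s^{(i)}_\alpha$ with $p^{(i)}_\alpha,s^{(i)}_\alpha\in K^i\uqe$, so that $\dot Y(\kappa\tilde E^{(p')}\otimes y)=\sum_{i,\alpha}\bigl(\kappa\tilde E^{(p')}\triangleright s^{(i)}_\alpha\bigr)p^{(i)}_\alpha$. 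If $i=0$ then $\kappa\tilde E^{(p')}\triangleright s^{(0)}_\alpha\in\uqzq\tilde E^{(p')}\triangleright\uqe$, which by Proposition~\ref{Habi} lies in $\uqe$ and is a generator of $C_p$; right multiplication by $p^{(0)}_\alpha\in\uqe$ keeps it in $C_p$. If $i=1$, write $s^{(1)}_\alpha=K\hat s_\alpha$, $p^{(1)}_\alpha=K\hat p_\alpha$ with $\hat s_\alpha,\hat p_\alpha\in\uqe$, and set $v=\kappa\tilde E^{(p')}\triangleright K\hat s_\alpha\in K\uqe$ (Proposition~\ref{Habi}). Then $Kv$ is a generator of $\tilde C_p$, so $Kv\in\tilde C_p$ and hence $\hat v:=K^{-2}(Kv)=K^{-1}v\in\tilde C_p$ (left multiplication by $K^{-2}\in\uqe$). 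Writing $v=K\hat v$ and decomposing $\hat v=\sum_d\hat v_d$ into homogeneous components (each in $\tilde C_p$, by gradedness), we get $\bigl(\kappa\tilde E^{(p')}\triangleright s^{(1)}_\alpha\bigr)p^{(1)}_\alpha=vK\hat p_\alpha=K\hat v K\hat p_\alpha=\sum_d q^{d}\,\hat v_d\,(K^{2}\hat p_\alpha)\in\tilde C_p$. Summing over $i,\alpha$ gives $\dot Y(\kappa\tilde E^{(p')}\otimes y)\in C_p+\tilde C_p=G_p$.

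For the second family I argue symmetrically from (\ref{ad2}): for $x\in\uq$ and $z:=S^{-1}(\kappa\tilde E^{(p')})\in\uqz\tilde E^{(p')}$ one has $\dot Y(x\otimes\kappa\tilde E^{(p')})=\sum x_{(1)}\bigl(z\triangleright S(x_{(2)})\bigr)$. Decomposing $\sum x_{(1)}\otimes S(x_{(2)})=\sum_{i,\alpha}p^{(i)}_\alpha\otimes s^{(i)}_\alpha$ by $K$‑parity via (\ref{bibi}), the $i=0$ terms $p^{(0)}_\alpha(z\triangleright s^{(0)}_\alpha)$ lie in $C_p$ since $z\triangleright s^{(0)}_\alpha\in\uqzq\tilde E^{(p')}\triangleright\uqe$, and for $i=1$, with $v:=z\triangleright K\hat s_\alpha\in K\uqe$ and $\hat v:=K^{-2}(Kv)\in\tilde C_p$, one has $p^{(1)}_\alpha(z\triangleright s^{(1)}_\alpha)=K\hat p_\alpha\,v=(K\hat p_\alpha K)\hat v\in\tilde C_p$, because $K\hat p_\alpha K=K^{2}(K^{-1}\hat p_\alpha K)\in\uqe$; here not even a grading argument is needed, since the surplus $K$'s sit to the left of $\hat v$. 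Thus $\dot Y(\uq\otimes\uqz\e{p'})\subset G_p$. Running both arguments with $\f{p'}$ in place of $\e{p'}$ produces elements of $C'_p+\tilde C'_p=G_p$, so all four families lie in $G_p$ and $\mathcal{Y}_p\subset G_p$ follows. I expect the only delicate point to be precisely the bookkeeping of the extra factor of $K$ in the $K$‑parity‑one contributions — matching it against the leading $K$ in the generators of $\tilde C_p$ — which is where the gradedness of $\tilde C_p$ and the $K$‑conjugation invariance of $\uqe$ enter.
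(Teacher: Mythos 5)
Your proof is correct and follows essentially the same route as the paper: reduce to showing the four generating families lie in $G_p$, use (\ref{ad1}), (\ref{ad2}), (\ref{bibi}) together with Proposition~\ref{Habi} to land each generator in $C_p$, $\tilde C_p$, $C'_p$, or $\tilde C'_p$. The paper states the inclusions $\dot Y(\uqz\e{p'}\otimes\uq)\subset\sum_{i=0,1}(\uqz\e{p'}\triangleright K^i\uqe)K^i\uqe\subset C_p+\tilde C_p$ without further comment, leaving the $K$-power bookkeeping implicit, whereas you explicitly verify that the stray factors of $K$ in the parity-one terms can be absorbed (using the $K$-conjugation invariance of $\uqe$ and the $\mathbb{Z}$-gradedness of $\tilde C_p$) so that the result is genuinely inside the two-sided ideal $\tilde C_p$; this is a worthwhile elaboration but not a different method.
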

\begin{proof}
It is enough to prove that all the generators of $\mathcal{Y}_p$ are contained in $G_p$.

By (\ref{ad1}), (\ref{ad2}),  and (\ref{bibi}), we have
\begin{align*}
\dot Y(\uqz \e{p'}\otimes \uq)&\subset \sum_{i=0,1} \Big( \uqz\e{p'} \triangleright K^i\uqe \Big) K^i\uqe \subset C_p+ \tilde C_p \subset G_p,
\\
\dot Y(\uq\otimes \uqz \e{p'})&=\sum_{i=0,1} K^i\uqe\Big(S^{-1}(\uqz \e{p'}) \triangleright K^i\uqe \Big)
\\
&\subset \sum_{i=0,1} K^i\uqe\Big( \uqz \e{p'} \triangleright K^i\uqe \Big)\subset C_p+ \tilde C_p\subset  G_p,
\end{align*} 
for $p'\geq p$.
Similarly, we have
\begin{align*}
&\dot Y(\uqz \f{p'} \otimes \uq)\subset C_p'+ \tilde C_p'\subset G_p,\\
&\dot Y(\uq \otimes \uqz\f{p'} )\subset C_p'+ \tilde C_p'\subset G_p,
\end{align*}
for $p'\geq p$.
Hence we have the assertion.
\end{proof}
Let $(\mathcal{Y}^D)_p$ be the two-sided ideal in $(\uqe)^{\otimes 2} $ generated by the elements in
\begin{align*}
&\sum_{p'\geq p} \dot Y(\uqz \e{p'}\otimes \uqz D_{\pm}')\otimes \dot Y(\uq\otimes  \uqz D_{\pm}''), \  
\sum_{p'\geq p}  \dot Y(\uqz \f{p'}\otimes  \uqz D_{\pm}')\otimes \dot Y(\uq\otimes  \uqz D_{\pm}''),
\\
&\sum_{p'\geq p}  \dot Y(\uqz \e{p'} \otimes  \uqz D_{\pm}')\otimes \dot Y(\uqz D_{\pm}''\otimes\uq ), \ 
\sum _{p'\geq p}\dot Y(\uqz \f{p'}\otimes \uqz  D_{\pm}')\otimes \dot Y(\uqz D_{\pm}''\otimes\uq ),
\\
&\sum_{p'\geq p}  \dot Y(\uqz D_{\pm}'\otimes \uqz \e{p'})\otimes \dot Y(\uq \otimes \uqz D_{\pm}''), \ 
\sum_{p'\geq p}  \dot Y(\uqz D_{\pm}'\otimes \uqz \f{p'})\otimes \dot Y(\uq \otimes \uqz  D_{\pm}''),
\\
&\sum_{p'\geq p}  \dot Y( \uqz D_{\pm}'\otimes \uqz \e{p'})\otimes \dot Y( \uqz D_{\pm}''\otimes \uq), \ 
\sum_{p'\geq p}  \dot Y( \uqz D_{\pm}'\otimes \uqz \f{p'})\otimes \dot Y( \uqz D_{\pm}''\otimes \uq).
\end{align*}
Note that these sets are all contained in $(\uqe)^{\otimes 2}$ by Lemma \ref{qd}.
\subsection{Filtrations of $(\uqe)^{\otimes n}$}
For $n\geq 1$ and a filtration  $\{X_p\}_{p\geq 0}$ of $\uqe$, we define a filtration $\{X_p^{(n)}\}_{p\geq 0}$ of $(\bar U_q^{\ev})^{ \otimes n}$ by
\begin{align*}
X_p^{(n)}= \sum_{j=1}^n (\uqe)^{\otimes j-1} \otimes X_p \otimes (\uqe)^{\otimes n-j}.
\end{align*}
For $n\geq 1$, we define the filtration $\{(\mathcal{Y}^D)_p^{(n)}\}_{p\geq 0}$ of $(\bar U_q^{\ev})^{ \otimes n}$ by
\begin{align*}
(\mathcal{Y}^D)_p^{(n)}= \{ &\sum_{1\leq i<j\leq n} (\uqe)^{\otimes i-1} \otimes
 y' \otimes (\uqe)^{\otimes j-i-1}\otimes y'' \otimes(\uqe)^{\otimes n-j}\ | \ \sum y'\otimes y''\in (\mathcal{Y}^D)_p  \}
 \\
 +\{&\sum_{1\leq i<j\leq n} (\uqe)^{\otimes i-1} \otimes
 y'' \otimes (\uqe)^{\otimes j-i-1}\otimes y' \otimes(\uqe)^{\otimes n-j}\ | \ \sum y'\otimes y''\in (\mathcal{Y}^D)_p  \}
 \\
 +\{ &\sum_{k=1}^n (\uqe)^{\otimes k-1} \otimes 
 y'y''\otimes  (\uqe)^{\otimes n-k} \ | \ \sum y'\otimes y''\in (\mathcal{Y}^D)_p  \}
\\
+\{&\sum_{k=1}^n (\uqe)^{\otimes k-1} \otimes 
 y''y' \otimes (\uqe)^{\otimes n-k} \ | \ \sum y'\otimes y''\in(\mathcal{Y}^D)_p  \}.
\end{align*}
\begin{lemma}\label{zf2}
For $n\geq 1,p\geq 0$, we have $(\mathcal{Y}^D)_p^{(n)}\subset G_{\lfloor \frac{p}{2}\rfloor}^{(n)}.$
\end{lemma}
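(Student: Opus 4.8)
The plan is to reduce the statement to its ``one pair of tensorands'' core, namely an inclusion at the level of $(\uqe)^{\otimes 2}$ of the form $(\mathcal{Y}^D)_p \subset G_{\lfloor p/2\rfloor}\otimes \uqe + \uqe \otimes G_{\lfloor p/2\rfloor}$, and then propagate this along the four families of generators that define $(\mathcal{Y}^D)_p^{(n)}$. The definition of $(\mathcal{Y}^D)_p^{(n)}$ already builds it out of ideals generated from elements $\sum y'\otimes y''\in (\mathcal{Y}^D)_p$ placed into two (possibly distant) tensor slots, or multiplied together ($y'y''$ or $y''y'$) into a single slot. Since the $G_q^{(n)}$ are ideals of $(\uqe)^{\otimes n}$ (being sums of ideals $G_q$ in single slots tensored with $\uqe$'s), it suffices to check each \emph{generator} of $(\mathcal{Y}^D)_p$ lands in the right place. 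So the whole lemma collapses to: for each of the eight generating families of $(\mathcal{Y}^D)_p$, a term $\sum y'\otimes y''$ satisfies $y'\in G_{\lfloor p/2\rfloor}$ or $y''\in G_{\lfloor p/2\rfloor}$ (for the two-slot placements), and $y'y'', y''y' \in G_{\lfloor p/2\rfloor}$ (for the one-slot placements). Because $G_q$ is already a two-sided ideal of $\uqe$, the products $y'y''$ and $y''y'$ lie in $G_{\lfloor p/2\rfloor}$ as soon as \emph{one} of the factors does, so in fact all four cases reduce to the single claim that in each generating family at least one tensorand lies in $G_{\lfloor p/2\rfloor}$.

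First I would recall the shape of the generators of $(\mathcal{Y}^D)_p$: each is of the form $\dot Y(\uqz \tilde X^{(p')}\otimes \uqz D_{\pm}')\otimes \dot Y(\cdots D_{\pm}''\cdots)$ with $X\in\{E,F\}$ and $p'\geq p$, i.e.\ a $\dot Y$ with an $\tilde E^{(p')}$ or $\tilde F^{(p')}$ inserted, tensored against a $\dot Y$ carrying the matching half of $D^{\pm 1}$. The key computational input is the four displayed identities at the end of the proof of Lemma \ref{qd}, which evaluate sums of the form $\sum \dot Y(a\otimes D_\pm')\otimes \dot Y(b\otimes D_\pm'')$ (and the three variants) in closed form as $\sum (a\triangleright K^{\pm|b_{(2)}|})K^{\mp|b_{(2)}|}\otimes (\text{stuff in }\uqe)$ or the mirror versions. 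Specializing $a$ (or $b$) to $\uqz\tilde E^{(p')}$ or $\uqz\tilde F^{(p')}$, the slot carrying the $\tilde X^{(p')}$ becomes, after these identities, an expression of the form $K^i(\uqzq\tilde X^{(p')}\triangleright K^j\uqe)$ (up to $S^{\pm1}$, which preserves the relevant submodules). Now I invoke the defining expressions for $C_p,C_p',\tilde C_p,\tilde C_p'$ in Section \ref{Comp0}: such a term, being of the form $K(\uqzq\tilde E^{(p')}\triangleright K\uqe)$ or $\uqzq\tilde E^{(p')}\triangleright\uqe$ (with $p'\geq p$), lies in $C_p$ or $\tilde C_p$, hence in $G_p=C_p+\tilde C_p$. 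Finally the passage from $G_p$ to $G_{\lfloor p/2\rfloor}$ is free: $G_p\subset G_{\lfloor p/2\rfloor}$ since $\{G_q\}$ is a decreasing filtration and $\lfloor p/2\rfloor\leq p$. (The reason the $\lfloor p/2\rfloor$, rather than $p$, is stated is presumably to match how this lemma is used downstream — the weaker target only helps us.)

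The routine-but-careful part — and the main obstacle — is bookkeeping the degree shifts $K^{\pm|b_{(2)}|}$ produced by the $D^{\pm1}$ factors and confirming they stay inside $\uqe$ resp.\ $K\uqe$ on the appropriate side, exactly as in the $a\in\uqzq$, $b\in\uq$ homogeneous reduction already carried out in Lemma \ref{qd}; one must check that inserting $\tilde X^{(p')}$ doesn't break the homogeneity hypotheses (it doesn't: $\tilde E^{(p')}$ and $\tilde F^{(p')}$ are homogeneous of degrees $p'$ and $-p'$, and $\uqz$ is degree $0$). There is also the minor point that one of the eight families uses $D'_-,D''_-$ specifically (cf.\ the $\dot Y(a\otimes D_-')\otimes\dot Y(D_-''\otimes b)$ identity), so I would treat the $+$ and $-$ cases in parallel using the corresponding displayed identity, noting $S^{\pm1}$-invariance of $C_p$ and $\tilde C_p$ where needed. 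Modulo this bookkeeping the proof is: reduce to generators, apply the Lemma \ref{qd} identities, read off membership in $C_p$ or $\tilde C_p$, and use $G_p\subset G_{\lfloor p/2\rfloor}$.
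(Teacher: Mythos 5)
You correctly reduce the claim to checking each of the eight generating families of $(\mathcal{Y}^D)_p$ placed into two tensor slots, and the observation that the one-slot placements $y'y''$, $y''y'$ follow automatically because the $G_q$ are two-sided ideals is fine. However, there is a genuine gap in the step where you ``specialize $a$ to $\uqz\tilde E^{(p')}$ and read off membership in $C_p$ or $\tilde C_p$.'' The explicit identities at the end of the proof of Lemma \ref{qd}, for instance
\begin{align*}
\sum\dot Y(a\otimes D'_\pm)\otimes\dot Y(b\otimes D''_\pm)=\sum(a\triangleright K^{\pm|b_{(2)}|})K^{\mp|b_{(2)}|}\otimes b_{(1)}S(b_{(2)}),
\end{align*}
require the $D$-slots to contain \emph{exactly} $D'_\pm$ or $D''_\pm$. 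The generators of $(\mathcal{Y}^D)_p$ instead carry $\uqz D'_\pm$ and $\uqz D''_\pm$. The version your plan actually proves is the auxiliary inclusion (\ref{yg}), $\sum\dot Y(\uqz\tilde E^{(p)}\otimes D')\otimes\dot Y(D''\otimes\uq)\subset G_p^{(2)}$, which does give the sharper index $p$; but this is not yet the generator.

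To pass from (\ref{yg}) to the actual generator one has to strip the extra $\uqz$ from $\uqz D'_\pm$ via Lemma \ref{y2}, exactly as in (\ref{Dd}) inside the proof of Lemma \ref{qd}. That step applies $\Delta$ to the slot carrying $\tilde E^{(p')}$, and by (\ref{De1}) the index splits: $\tilde E^{(p')}\rightsquigarrow\tilde E^{(p'_1)}\otimes\tilde E^{(p'_2)}$ with $p'_1+p'_2=p'$. One factor meets the bare $D$ and lands in $G_{p'_2}^{(2)}$ by (\ref{yg}); the other yields a pure $\dot Y$-term in $\mathcal{Y}_{p'_1}^{(2)}\subset G_{p'_1}^{(2)}$ by Lemma \ref{zf1}. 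The product of these two ideal-filtration pieces lies only in $G_{\max(p'_1,p'_2)}^{(2)}$, and the best uniform bound over all decompositions $p'_1+p'_2=p'\geq p$ is $\max(p'_1,p'_2)\geq\lfloor p/2\rfloor$. So the $\lfloor p/2\rfloor$ in the statement is not, as you speculate, a downstream convenience --- it is the bound genuinely forced by this $\Delta$-split, and $G_p^{(2)}$ is simply not available for the full generators. Your plan proves (\ref{yg}) but stops short of the lemma itself; the essential missing idea is the multiplicativity of the decreasing filtration across the $\Delta$-decomposition, which is where the halving of the index comes from.
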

\begin{proof}
It is enough to prove that all the  generators of $(\mathcal{Y}^D)_p$ are contained in  $G_{\lfloor \frac{p}{2}\rfloor}^{(2)}$.

We prove 
\begin{align*}
\sum_{p'\geq p} \dot Y(\uqz \e{p'}\otimes \uqz D')\otimes \dot Y(\uqz D'' \otimes \uq )\subset G_{\lfloor \frac{p}{2}\rfloor}^{(2)}.
\end{align*}
Similarly for the other generators of $(\mathcal{Y}^D)_p$.
For $p\geq 0$, let us assume 
\begin{align}\label{yg}
\sum \dot Y(\uqz \e{p}\otimes D')\otimes \dot Y(D''\otimes \uq )\subset  G_{p}^{(2)}.
\end{align}
Then, similarly to (\ref{Dd}), for $p'\geq p,$ we have
\begin{align*}
&\sum \dot Y(\uqz \e{p'}\otimes \uqz D')\otimes \dot Y(\uqz D''\otimes  \uq)
\\
=&\sum \dot Y(\uqz \e{p'}\otimes D'\uqz )\otimes \dot  Y(\uqz D''\otimes  \uq)
\\
\subset &\sum\dot Y\big(\uqz (\e{p'})_{(1)}\otimes \uqz\big)\big(\dot Y\big(\uqz (\e{p'})_{(2)}\otimes D'\big)\triangleleft \uqz\big)\otimes 
\big(\uqz\triangleright \dot Y( D''\otimes \uq)\big)\dot Y( \uqz\otimes \uq)
\\
\subset &\sum\dot Y\big(\uqz (\e{p'})_{(1)}\otimes \uqz\big)\dot Y\big(\uqz (\e{p'})_{(2)}\otimes D'\big)\otimes 
 \dot Y( D''\otimes \uq)\dot Y( \uqz\otimes \uq)
\\
=&\sum _{p'_1+p'_2=p'}\dot Y(\uqz \e{p'_1}\otimes \uqz)\dot Y(\uqz \e{p'_2}\otimes  D')\otimes 
\dot Y( D''\otimes \uq)\dot Y( \uqz\otimes \uq)
\\
\subset &\sum_{p'_1+p'_2=p'}\Big(\mathcal{Y}_{p'_1}\cdot \dot Y(\uqz \e{p'_2}\otimes  D')\Big)\otimes 
\Big(\dot Y( D''\otimes \uq)\cdot \uqe\Big)
\\
\subset &\sum_{p'_1+p'_2=p'}\mathcal{Y}_{p'_1}^{(2)}\cdot \Big(\dot Y(\uqz \e{p'_2}\otimes  D')\otimes 
\dot Y( D''\otimes \uq)\cdot \uqe\Big)
\\
\subset &\sum_{p'_1+p'_2=p'}\mathcal{Y}_{p'_1}^{(2)}\cdot G_{p'_2}^{(2)}
\subset \sum_{p'_1+p'_2=p'}G_{p'_1}^{(2)}\cdot G_{p'_2}^{(2)}\subset  \sum_{p'_1+p'_2=p'} G_{\max( p_1',p'_2)}^{(2)}\subset G_{\lfloor \frac{p'}{2}\rfloor}^{(2)}\subset G_{\lfloor \frac{p}{2}\rfloor}^{(2)}.
\end{align*}

Now, we prove (\ref{yg}).
Similar to (\ref{ad3}), for $b\in \uq$ homogeneous, we have
\begin{align*}
\sum \dot Y(\uqz \e{ p}\otimes D')\otimes \dot Y(D'' \otimes b)= \sum (\uqz \e{ p}\triangleright K^{-|b_{(2)}|})K^{|b_{(2)}|}\otimes   S^{-1}(b_{(2)})b_{(1)},
\end{align*}
 with  $b_{(1)},b_{(2)}\in \uq$ homogeneous such that  $S^{-1}(b_{(1)})b_{(2)}\in \uqe$.  

If $|b_{(2)}|\in 2\mathbb{Z},$ then we have
\begin{align*}
(\uqz \e{ p}\triangleright K^{-|b_{(2)}|})K^{|b_{(2)}|}&
\subset C_{p}\subset G_{p}.
\end{align*}
If $|b_{(2)}|\in \mathbb{Z}\setminus 2\mathbb{Z},$ then we have
\begin{align*}
(\uqz \e{ p}\triangleright K^{-|b_{(2)}|})K^{|b_{(2)}|}\subset  \tilde C_{p}\subset G_{p}.
\end{align*}
Thus, we have
\begin{align*}
\sum (\uqz \e{ p}\triangleright K^{-|b_{(2)}|})K^{|b_{(2)}|}\otimes   S^{-1}(b_{(2)})b_{(1)}\subset  G_{p}  \otimes \uqe \subset G_{p}^{(2)}.
\end{align*}
Hence we have the assertion.
\end{proof}
\subsection{Completions}\label{Comp}
Let $(\bar U_q^{\ev})\,\hat  {}$  denote the completion of $\uqe$ in $U_h$ with respect to the filtration $\{G_p\}_{p\geq 0}$, i.e.,  
$(\bar U_q^{\ev})\,\hat  {}$ is the image of the map
\begin{align*}
\varprojlim _{p}  \big( \bar U_q^{\ev}/G_p\big) \rightarrow U_h
\end{align*}
induced by  the inclusion $\bar U_q^{\ev} \subset U_h$.
Since $G_{2p}=A_p\subset h^pU_h$, this map is well-defined.

Let $\uqenh{n}$ denote the completion of $ (\bar U_q^{\ev})^{ \otimes n}$  in $U_h^{\hat \otimes n}$ with respect to the filtration $\{G_p^{(n)}\}_{p\geq 0}$.
For $n=0$, it is natural to set 
\begin{align*}
G_p^{(0)}=\begin{cases}
\mathbb{Z}[q,q^{-1}] \quad \text{if} \ \ p=0,
\\
0 \quad  \quad \quad \quad  \text{if} \ \ p>0.
\end{cases}
\end{align*}
Thus, we have 
\begin{align*}
\uqenh{0}=\mathbb{Z}[q,q^{-1}].
\end{align*}
\subsection{Proof of Theorem \ref{1}}
Let $T\in BT_n$ be a boundary bottom tangle and $(\tilde T;g,g_1,\ldots,g_n)$ a  boundary data for $T$.
Let $C(\tilde T)=\{c_1,\ldots,c_{l}\}$ be the set  of crossings of the diagram of $\tilde T$ which we fix in the definition of $J_{\tilde T}$.
We fix these notations in this section.

By Proposition \ref{2}, we have 
\begin{align*}
\mu ^{[g_1,\ldots ,g_n]}\bar  Y^{\otimes g}(J_{\tilde T,s}) \in (\bar U_q^{\ev})^{\otimes n},
\end{align*}
for $s\in \mathcal{S}(\tilde T)$.
In this section, we prove Theorem \ref{1}, i.e., we prove
\begin{align*}
J_T=\sum_{s\in \mathcal{S}(\tilde T)}\mu ^{[g_1,\ldots ,g_n]}\bar  Y^{\otimes g}(J_{\tilde T,s}) \in \uqenh{n}.
\end{align*}
Since $\mu ^{[g_1,\ldots ,g_n]}\big(G_p^{(g)}\big)\subset G_p^{(n)}$ for $p\geq 0$, it is enough  to prove the following lemma.
\begin{lemma}\label{cm}
For each $p\geq 0$, there are only finitely many states $s\in \mathcal{S}(\tilde T)$ such that 
$\bar  Y^{\otimes g}(J_{\tilde T,s})\notin G_p^{(n)}$.
\end{lemma}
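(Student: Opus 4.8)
The plan is to establish a quantitative strengthening of Proposition \ref{2}: there is an integer $M=M(\tilde T)\geq 1$, depending only on the fixed diagram of $\tilde T$, such that for every state $s\in\mathcal{S}(\tilde T)$,
\[
\bar Y^{\otimes g}(J_{\tilde T,s})\in G^{(g)}_{\lfloor N_s/(2M)\rfloor},\qquad N_s:=\max_{c\in C(\tilde T)}s(c).
\]
Granting this, Lemma \ref{cm} follows at once: if $\bar Y^{\otimes g}(J_{\tilde T,s})\notin G^{(g)}_p$ then $\lfloor N_s/(2M)\rfloor<p$ (otherwise $G^{(g)}_{\lfloor N_s/(2M)\rfloor}\subset G^{(g)}_p$), hence $N_s<2Mp$, so $s(c)<2Mp$ for each of the finitely many $c\in C(\tilde T)$, and there are at most $(2Mp)^{|C(\tilde T)|}$ such states. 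For $p=0$ the set is empty, since $G^{(g)}_0=(\uqe)^{\otimes g}$ contains $\bar Y^{\otimes g}(J_{\tilde T,s})$ by Proposition \ref{2}. Finally, since $\mu^{[g_1,\ldots,g_n]}\big(G^{(g)}_p\big)\subset G^{(n)}_p$, this makes $J_T=\sum_s\mu^{[g_1,\ldots,g_n]}\bar Y^{\otimes g}(J_{\tilde T,s})$ converge in the $\{G^{(n)}_p\}$-adic topology, i.e. $J_T\in\uqenh{n}$.

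To prove the strengthening I would retrace the proof of Proposition \ref{2}, keeping track of the filtration degree rather than mere membership in $(\uqe)^{\otimes g}$. By Lemma \ref{A} and the reductions of Sections \ref{SB} and \ref{SB2}, $\bar Y^{\otimes g}(J_{\tilde T,s})$ lies in $\mathcal{F}'(B_s)(1)$ for some $B_s\in\Gamma''_g$ whose combinatorial ``shape''---the integers $l_1,\ldots,l_6$, the arities $m_p,n_p$ of the $\Deltax^{[m_p,n_p]}$'s, and the wiring of the $\Dx$'s, $\Thetax{s_p}$'s and $\uqzx$'s by $\mux$'s, $\Deltax$'s, $\Yx$'s, $\adx$'s and $\sadx$'s---does not depend on $s$; the only dependence is through the labels $s_p=s(c_{\pi(p)})$ of the $\Thetax{s_p}$'s, for a fixed bijection $\pi$ onto $C(\tilde T)$. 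This rests on the observation that none of the moves $\stackrel{i}{\Rightarrow}$ or $\stackrel{i}{\rightsquigarrow}$ of Sections \ref{SB}--\ref{SB2}, nor the modification of Section \ref{ep}, creates new $\Thetax{\cdot}$-boxes or enlarges their comultiplication arities beyond a bound fixed by $\tilde T$; in particular one may set $M:=\max_p\max(m_p,n_p)$ once and for all.

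Now fix $s$ and choose the crossing $c_i=\pi(p)$ realizing $s(c_i)=N_s$. In the modification $\tilde b_1(\mathbf{i}_1,\bar{\mathbf{i}}_1,\ldots)$ of Section \ref{ep}, the $F$-box indices coming from $\Thetax{s_p}$ sum to $s_p=N_s$ and there are $m_p\leq M$ of them, so some $F$-box has index $j\geq N_s/M$. By the defining conditions $C_{\ad}$ and $C_Y$ of $\Gamma''_g$ this box is not $d$-coloured, so---tracing the layout of Figures \ref{fig:dwk212} and \ref{fig:couple}---it either feeds a $\Yx$ in $b_3$, paired with another box (case (c1)) or with a leg of a $\Dx$ (cases (c2)--(c5)), or is an isolated box entering the left input of an $\adx$ (or the right input of a $\sadx$) in $b_4$. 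In the $\Yx$ cases the corresponding one- or two-tensor-slot factor lies in $\mathcal{Y}_{j}$ up to a scalar $(\{\cdot\}_q!)^{-1}$, or in $(\mathcal{Y}^D)_{j}$, directly from the definitions of $\mathcal{Y}_j$ and $(\mathcal{Y}^D)_j$ together with (\ref{qq1}), (\ref{qq2}) and Lemma \ref{qd}; in the isolated case it lies in $C_j$ ($=C'_j$), because $\uqz\e{j}\subset U_{\mathbb{Z},q}\e{j}$ and by Proposition \ref{Habi}. The slotwise sums $\mathcal{Y}_j^{(g)}$, $(\mathcal{Y}^D)_j^{(g)}$, $C_j^{(g)}$ are $\mathbb{Z}[q,q^{-1}]$-submodules of $(\uqe)^{\otimes g}$ that are ideals stable under the operations $\mu,\ad,\sad$ occurring downstream in $b_3$ and $b_4$---here one uses filtered analogues such as $U_{\mathbb{Z},q}\triangleright C_j\subset C_j$, proved exactly as Propositions \ref{Habi} and \ref{Habii}. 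Assembling the large-box factor with the remaining factors and with the $\{s_p\}_q!$'s from the $\qintx{\cdot}$'s exactly as in Section \ref{prBB}, one finds $\bar Y^{\otimes g}(J_{\tilde T,s})\in I\cdot\mathcal{Y}_j^{(g)}$ (or $I\cdot(\mathcal{Y}^D)_j^{(g)}$, or $I\cdot C_j^{(g)}$) with $I\in\mathbb{Z}[q,q^{-1}]$ the scalar of Section \ref{prBB}; since this is contained in $G^{(g)}_{\lfloor j/2\rfloor}\subset G^{(g)}_{\lfloor N_s/(2M)\rfloor}$ by Lemmas \ref{zf1}, \ref{zf2} and $C_j\subset G_j$, the strengthening follows.

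The main obstacle I expect is exactly this last step: converting the scalar bookkeeping of Section \ref{prBB} into ideal bookkeeping. It forces a case analysis over the five configurations (c1)--(c5) of Figure \ref{fig:couple} together with the isolated boxes feeding $b_4$, and requires checking that $\mathcal{Y}_p,(\mathcal{Y}^D)_p,C_p,\tilde C_p$ are preserved by every Hopf operation appearing downstream in $b_3$ and $b_4$---a point where the mismatch between $\uqe$ and $\uq$ (so that $\Delta(\uqe)\not\subset(\uqe)^{\otimes 2}$) obliges one to invoke the finer stability statements of the type of Propositions \ref{Habi} and \ref{Habii} rather than naive Hopf-subalgebra arguments. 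A secondary, more tedious point is the uniformity of $M$ asserted in the second paragraph: one must verify that the reduction procedure of Sections \ref{SB}--\ref{ep} never increases the number of $\Thetax{\cdot}$-boxes or their comultiplication arities past a bound depending only on $\tilde T$.
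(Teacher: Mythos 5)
Your proposal is correct and follows essentially the same route as the paper's own proof. The paper splits the argument into Lemma \ref{uun} (the shape of $B^s\in\Gamma''_g$ depends on $s$ only through the $\Thetax{s(c_p)}$'s), Lemma \ref{infs} (finitely many $s$ with $N^s\leq r$, proved via the same bound $w^s\geq N_s/M$ you use), and Lemma \ref{sa} (the quantitative refinement of Lemma \ref{B''} replacing $(\uqe)^{\otimes g}$ by $G^{(g)}_{\lfloor M/2\rfloor}$ with $M=\max_a i_a$); your single quantitative statement $\bar Y^{\otimes g}(J_{\tilde T,s})\in G^{(g)}_{\lfloor N_s/(2M)\rfloor}$ packages Lemmas \ref{infs} and \ref{sa} together, and your diagnosis of the two obstacles (ideal-theoretic bookkeeping via $\mathcal{Y}_p$, $(\mathcal{Y}^D)_p$, $C_p$ and Lemmas \ref{zf1}--\ref{zf2}, and the $s$-uniformity of the shape) matches exactly what the paper establishes. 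Two tiny points: the lemma's $G_p^{(n)}$ is a misprint for $G_p^{(g)}$, which you silently and correctly fix; and in the isolated case, membership in $C_j$ comes directly from the definition of $C_j$ as the ideal generated by $\sum_{p'\geq j}U_{\mathbb{Z},q}\tilde E^{(p')}\triangleright\uqe$, not from Proposition \ref{Habi}, though this is a citation slip and not a gap.
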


We use the setting in  in Section \ref{proof}, where we can consider a state $s\in \mathcal{S}(\tilde T)$ as a parameter.
\begin{lemma}\label{uun}
There is  a map  $B\co \mathcal{S}(\tilde T)\rightarrow \Gamma _g''$, $s\mapsto B^s$, satisfying the following  conditions:
For all $s\in \mathcal{S}(\tilde T)$,  we have  $\bar  Y^{\otimes g}(J_{\tilde T,s})\in \mathcal{F}'\big(B^s\big)(1)$, and  we have  $B^s =b_4\circ b_3\circ b_2\circ  b_1^s$ with $(b_1^s, b_2,b_3,b_4)\in \tilde \Gamma _g''$ such that  $b_2\circ b_1^s=\sigma \circ \tilde b_1^s$ with $\sigma \in \Hom(\mathcal{A}_{\mathfrak{S}})$ and $\tilde b_1^s$ as in (\ref{bdel3}) replacing $s_p$ with $s(c_p)$ for $p=1,\ldots,l_2$, where
any part of $B^s$ except $\Thetax{s(c_1)}\otimes \ldots\otimes \Thetax{s(c_{l_2})}$ in  $\tilde b_1^s$ does not depend on $s\in \mathcal{S}(\tilde T)$.
\end{lemma}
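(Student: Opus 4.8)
The plan is to make explicit the dependence on the state $s \in \mathcal{S}(\tilde T)$ in the construction already carried out in the proof of Lemma \ref{A} and then refined through Claims 1 and 2 of Section \ref{SB} and the reductions of Section \ref{SB2}. The key observation is that, in the proof of Lemma \ref{A}, the element $u = \mux^{[N_1,\ldots,N_{2g}]} \circ \sigma \circ \big(\Rx{s(c_1)} \otimes \cdots \otimes \Rx{s(c_l)} \otimes \uqzx^{\otimes k}\big)$ has all of its combinatorial data — the multi-indices $N_i$, the permutation $\sigma$, the number $k$ of $\uqzx$'s, and indeed the whole diagram shape — determined by the fixed diagram of $\tilde T$; only the indices $s(c_1), \ldots, s(c_l)$ attached to the copies $\Rx{s(c_i)}$ vary with $s$. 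Consequently $B = \bYx^{\otimes g} \circ u \in \Gamma_g$ varies with $s$ only through the labels on the $\Thetax{s(c_i)}$'s appearing inside the $\Rx{s(c_i)}$'s, and $\bar Y^{\otimes g}(J_{\tilde T,s}) \in \mathcal{F}(B)(1) \subset \mathcal{F}'(B)(1)$ by \eqref{F3} and the inclusion $\mathcal{F} \subset \mathcal{F}'$ established in Section \ref{f7}.

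First I would track this $s$-dependence through the preorder reductions. Each local move $\stackrel{i}{\Rightarrow}$ used in proving Lemmas \ref{c'}, \ref{C} (via Claims 1, 2 and Lemmas \ref{ata1}, \ref{ata2}, \ref{ata3}) is a purely topological manipulation of the diagram that does not create, destroy, or alter any $\Thetax{j}$-box: the moves redistribute $\mux$'s, $\Deltax$'s, $\adx$'s, $\sadx$'s, $\Yx$'s, $\etax$'s and $\varepsilonx$'s, but the boxes $\qintx{j}$, $\fx{j}$, $\ex{j}$ carrying the index $j = s(c_p)$ only get moved around and possibly comultiplied. Since the sequence of moves chosen in those proofs depends only on the diagram shape (which $\mux$ has an output edge colored by $|b_2|$, which $\varepsilonx$ is connected to which generator, etc.), and that shape is the same for every $s$, I can apply the \emph{same} sequence of moves for every $s$. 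This produces a map $s \mapsto B^s \in \Gamma_g''$ with $\bar Y^{\otimes g}(J_{\tilde T,s}) \in \mathcal{F}'(B^s)(1)$ for all $s$, where $B^s = b_4 \circ b_3 \circ b_2 \circ b_1^s$ and the tuple $(b_1^s, b_2, b_3, b_4) \in \tilde\Gamma_g''$ has $b_2, b_3, b_4$ independent of $s$. Finally, using the condition $(C_{dk})$ in the definition of $\Gamma_g''$ exactly as in Section \ref{ep}, I write $b_2 \circ b_1^s = \sigma \circ \tilde b_1^s$ with $\tilde b_1^s$ of the form \eqref{bdel3}; by construction the only part of $\tilde b_1^s$ that depends on $s$ is the tensor factor $\Thetax{s(c_1)} \otimes \cdots \otimes \Thetax{s(c_{l_2})}$, i.e.\ the parameters $s_p$ become $s(c_p)$, while $l_1, l_3$, the $m_p, n_p$, and the permutation $\sigma$ are fixed.

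The main obstacle is bookkeeping rather than any new idea: I must verify that none of the reduction moves $\stackrel{i}{\Rightarrow}$ (for $i = 1,\ldots,13$) ever splits, merges, or relabels a $\Thetax{j}$-box, and that Condition A together with conditions $(C_{dk})$, $(C_{\ad})$, $(C_Y)$ continue to hold uniformly in $s$. Inspecting the moves: moves $\stackrel{1}{\Rightarrow}$--$\stackrel{4}{\Rightarrow}$ and $\stackrel{7}{\Rightarrow}$--$\stackrel{13}{\Rightarrow}$ act on $\adx, \sadx, \mux, \Deltax, \etax, \varepsilonx, \Dx$ and never touch a $\Thetax{j}$ directly (Condition A guarantees no $\varepsilonx$ is connected straight to a $\Thetax{j}$ output edge, which is precisely what makes case (3) of Lemma \ref{ata2} go through); moves $\stackrel{5}{\Rightarrow}, \stackrel{6}{\Rightarrow}$ comultiply along a $\Yx$ but preserve the $\Thetax{j}$'s. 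Hence the index set $\{s(c_1), \ldots, s(c_{l_2})\}$ and the box-structure of $\tilde b_1^s$ are invariant along the whole reduction, and since the chosen reduction path is the same for all $s$, the map $s \mapsto B^s$ has the asserted uniformity. This gives the lemma.
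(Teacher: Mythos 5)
Your proposal is correct and takes essentially the same approach as the paper: both hinge on the observation that the diagram shape of $u$, and hence the entire sequence of reduction moves used in Lemmas \ref{c'} and \ref{C}, is determined by the fixed diagram of $\tilde T$ alone and is independent of the state $s$, so that the $\Thetax{s(c_p)}$'s can be carried along (or, as the paper phrases it, one reduces $B_0^x$ for a fixed reference state $x$ and then substitutes $\Thetax{s(c_p)}$ for $\Thetax{x(c_p)}$). One small imprecision: you write $\mathcal{F}(B)(1) \subset \mathcal{F}'(B)(1)$ for $B \in \Gamma_g$, but the inclusion of Section \ref{f7} is only proved for elements of $\Gamma'_g$; this does not affect your argument since the actual chain goes through $\Gamma'_g$ via the reductions, exactly as in (\ref{naga}).
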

\begin{proof}
We can define $B$ as in Lemma \ref{uun} by constructing  $B^s$   as follows. First, we choose a  state $x \in \mathcal{S}(\tilde T)$,  and  construct $B^{x}_0\in \Gamma _g$ so that $\bar  Y^{\otimes g}(J_{\tilde T,x})\in \mathcal{F}\big(B^{x}_0\big)(1)$ as in the proof of Lemma \ref{A}.
By the definition, we have $\bar  Y^{\otimes g}(J_{\tilde T,s})\in \mathcal{F}\big(B^s_0\big)(1)$ for all state $s\in \mathcal{S}(\tilde T)$, where $B^s_0\in \Gamma _g$  is obtained  from $B^s_0$  by replacing $\Thetax{x(c_p)}$ with $\Thetax{s(c_p)}$ for $p=1,\ldots,l_2$.
Second, we transform $B^x_0$ into some $B^x\in \Gamma _g''$ by using the preorder $\preceq $ and $\preceq '$.
We have $\bar  Y^{\otimes g}(J_{\tilde T,x})\in \mathcal{F}'\big(B^x\big)(1)$ by Lemma \ref{co''} and \ref{co'}.
Here, since  $\preceq$ and $\preceq '$ each does not depend on any $\Thetax{x(c_p)}$, 
we can obtain the desired $B^s\in \Gamma _g''$ from $B^x$ by replacing $\Thetax{x(c_p)}$ with $\Thetax{s(c_p)}$ for $p=1,\ldots,l_2$.
\end{proof}

We take $B\co \mathcal{S}(\tilde T)\rightarrow \Gamma _g''$, $s\mapsto B^s$, as in Lemma \ref{uun}.
For $s\in \mathcal{S}(\tilde T)$, recall from (\ref{bdel}) the modification $\tilde{b}^s_1{(\mathbf{i}_1, \bar{ \mathbf{i}}_1, \ldots , \mathbf{i}_{l_2}, \bar{ \mathbf{i}}_{l_2})}$
of $\tilde{b}^s_1$  with respect to $\mathbf{i}_p\in \mathcal{I}(s(c_p),m_p)$ and $\bar {\mathbf{i}}_p\in \mathcal{I}(s(c_p),n_p)$ for $p=1,\ldots,l_2$. 

For $s\in \mathcal{S}(\tilde T)$, and 
\begin{align*}
\mathbf{i}_p=\big(i_{(p,0,1)},\ldots, i_{(p,0,m_p)}\big)\in \mathcal{I}(s(c_p),m_p), \quad \bar {\mathbf{i}}_p=\big(i_{(p,1,1)},\ldots, i_{(p,1,n_p)}\big)\in \mathcal{I}(s(c_p),n_p),
\end{align*}
for $p=1,\ldots,l_2$, set
\begin{align*}
N^s&(\mathbf{i}_1, \bar{ \mathbf{i}}_1, \ldots , \mathbf{i}_{l}, \bar{ \mathbf{i}}_{l})=\max\{ i_{(p,0,i)}, i_{(p,1,j)} \  | \  1\leq i\leq m_p, \  1\leq j\leq n_p,\  1\leq p\leq l_2 \},
\\
N^s&=\min\{ N^s(\mathbf{i}_1, \bar{ \mathbf{i}}_1, \ldots , \mathbf{i}_{l}, \bar{ \mathbf{i}}_{l}) \  | \  \mathbf{i}_p\in \mathcal{I}(s(c_p),m_p), \  \bar {\mathbf{i}}_p\in \mathcal{I}(s(c_p),n_p), \  1\leq p\leq l_2 \}.
\end{align*}
We use the following lemma.
\begin{lemma}\label{infs}
For $r\geq 0$, there are only finitely many states  $s\in \mathcal{S}(\tilde T)$
such that
$N^s\leq r.$
\end{lemma}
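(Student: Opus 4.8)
The plan is to describe $N^s$ explicitly enough to see that the condition $N^s\leq r$ confines $s$ to a finite set of states, after which the count is immediate.

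First I would pin down exactly which crossings the quantity $N^s$ depends on. Recall from the proof of Lemma \ref{A} and from Lemma \ref{uun} that $B^s$ is built from the morphism $u$, a composite of a permutation in $\mathcal{A}_{\mathfrak S}$ and multiplications applied to $\Rx{s(c_1)}\otimes\cdots\otimes\Rx{s(c_l)}\otimes\uqzx^{\otimes k}$ with $C(\tilde T)=\{c_1,\dots,c_l\}$, and that each $\Rx{i}$ contains exactly one $\Thetax{i}$. None of the local moves generating the preorders $\preceq$ and $\preceq'$ creates, deletes, or duplicates a $\Thetax$ — a $\varepsilonx$ is never attached directly to an output edge of a $\Thetax$ by Condition A, and the remaining moves only rearrange $\mux$'s, $\Dx$'s, $\Deltax$'s, $\adx$'s, $\sadx$'s and $\Yx$'s. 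Hence in the decomposition of $B^s$ we have $l_2=l$ and, after relabeling, $\{c_1,\dots,c_{l_2}\}=C(\tilde T)$; moreover the integers $m_p,n_p\geq 1$ appearing in $\tilde b_1^s$ through $\Deltax^{[m_p,n_p]}$ are constants not depending on $s$, again by Lemma \ref{uun}. This identification is the step I would write out most carefully, since the whole argument collapses if some crossing of $\tilde T$ fails to contribute a surviving $\Thetax$: then $N^s$ would be insensitive to the value of $s$ at that crossing and the lemma would fail.

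Next I would record the elementary fact that for $s_p\geq 0$ and $m\geq 1$ the minimum of $\max_j i_j$ over $(i_1,\dots,i_m)\in\mathcal{I}(s_p,m)$ equals $\lceil s_p/m\rceil$, achieved by a balanced composition. Since for distinct $p$ the choices of $\mathbf{i}_p\in\mathcal{I}(s(c_p),m_p)$ and $\bar{\mathbf{i}}_p\in\mathcal{I}(s(c_p),n_p)$ are mutually independent, and $N^s(\mathbf{i}_1,\bar{\mathbf{i}}_1,\dots)$ is the maximum over all their entries, interchanging $\min$ with the outer $\max$ gives
\[
N^s=\max_{1\leq p\leq l_2}\max\!\left(\left\lceil\frac{s(c_p)}{m_p}\right\rceil,\ \left\lceil\frac{s(c_p)}{n_p}\right\rceil\right).
\]
In fact only the easy inequality is needed: if $N^s\leq r$ then for each $p$ there is a composition of $s(c_p)$ into $m_p$ nonnegative parts each at most $r$, so $s(c_p)\leq r\,m_p$.

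Finally I would set $M=r\cdot\max_{1\leq p\leq l_2}m_p$ (with $M=0$ in the degenerate case $l_2=0$, where $\mathcal{S}(\tilde T)$ is a single point). Then $N^s\leq r$ forces $s(c_p)\leq M$ for every crossing $c_p\in C(\tilde T)$, so $s$ is a map from the finite set $C(\tilde T)$ to $\{0,1,\dots,M\}$, of which there are at most $(M+1)^{\,l}$; this yields the lemma. As noted, the hard part is not this counting but the structural bookkeeping that ties $l_2$ to $l=|C(\tilde T)|$ and keeps the $m_p,n_p$ independent of $s$.
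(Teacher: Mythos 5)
Your proof is correct and follows essentially the same route as the paper: the paper also observes $s(c_p)/m_p \leq N^s$ (its quantity $w^s$ is precisely the maximum of these ratios) and then notes, without elaboration, that $w^s \leq r$ holds for only finitely many states. Your version simply spells out the step the paper leaves as ``not difficult,'' in particular that $l_2 = l$ and that the $m_p,n_p$ are constants so that $N^s \leq r$ forces $s(c_p) \leq r\,m_p$ for every crossing.
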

\begin{proof}
Note that for $\mathbf{i}=(i_1,\ldots,i_l)\in \mathcal{I}(k,l)$, $k\geq 0,l\geq 1$ we have 
\begin{align*}
\frac{k}{l}\leq \max(i_1,\ldots,i_l).
\end{align*}
Thus we have
\begin{align*}
w^s:=\max\Big\{\frac{s(c_p)}{m_p}, \frac{s(c_p)}{n_p}\ \Big| \  1\leq p\leq l_2 \Big\}\leq N^s(\mathbf{i}_1, \bar{ \mathbf{i}}_1, \ldots , \mathbf{i}_{l}, \bar{ \mathbf{i}}_{l}),
\end{align*}
for all $\mathbf{i}_p\in \mathcal{I}(s(c_p),m_p),$ $\bar {\mathbf{i}}_p\in \mathcal{I}(s(c_p),n_p),$ $p=1,\ldots, l_2.$
Hence we have
\begin{align}\label{wss}
w^s\leq N^s.
\end{align}
It is not difficult to prove that, for $r\geq 0$, there are only finitely many states  $s\in \mathcal{S}(\tilde T)$ such that
$w^s\leq r.$ This and (\ref{wss}) imply the assertion.
\end{proof}
Lemma \ref{cm} follows from Lemma \ref{infs} and  the following lemma.
\begin{lemma}\label{sa}
For $s\in \mathcal{S}(\tilde T)$ and  $r\geq 0$ such that
$N^s\geq  2r,$ we have
\begin{align*}
\mathcal{F}'\big(b_4\circ b_3\circ \sigma \circ \tilde b^s_1{(\mathbf{i}_1, \bar{ \mathbf{i}}_1, \ldots , \mathbf{i}_{l_2}, \bar{ \mathbf{i}}_{l_2})}\big)(1)\subset G_{r}^{(g)},
\end{align*}
for  $\mathbf{i}_p\in \mathcal{I}(s(c_p),m_p),$ $\bar {\mathbf{i}}_p\in \mathcal{I}(s(c_p),n_p),$ $p=1,\ldots, l_2.$
\end{lemma}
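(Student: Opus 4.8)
The plan is to reduce the claim to the estimates already obtained in Section \ref{prBB} together with the filtration comparisons proved in Sections \ref{Comp0}--\ref{Comp}. Recall that in the proof of $\mathcal{F}'\big(\Gamma'' _g\big)(1)\subset (\uqe)^{\otimes g}$ we analyzed $b_3\circ \sigma \circ \tilde b^s_1{(\mathbf{i}_1, \bar{ \mathbf{i}}_1, \ldots , \mathbf{i}_{l_2}, \bar{ \mathbf{i}}_{l_2})}$ by arranging the diagram (as in Figure \ref{fig:dwk212}) so that each $X$-box is attached directly to $b_3$, and so that the behaviour of each $\Yx$ in $b_3$ falls into one of the five cases (c1)--(c5) of Figure \ref{fig:couple}. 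The key point is that the hypothesis $N^s\geq 2r$ forces, for \emph{every} choice of $\mathbf{i}_p,\bar{\mathbf i}_p$, at least one label $i_a$ of some $X$-box to be $\geq 2r$; and by the conditions $(C_{\ad})$ and $(C_Y)$ in the definition of $\Gamma''_g$, every $X$-box with an isolated label feeds into $b_4$ through an $\adx$ or $\sadx$, while every $X$-box with a paired label feeds into a $\Yx$ whose output lands in one of the ideals estimated in Lemma \ref{qd} or Proposition \ref{qq}.

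First I would make the choice of $\mathbf{i}_p,\bar{\mathbf i}_p$ explicit: pick the tuples realizing the minimum in the definition of $N^s$, so that $N^s(\mathbf{i}_1, \bar{ \mathbf{i}}_1, \ldots , \mathbf{i}_{l_2}, \bar{ \mathbf{i}}_{l_2})=N^s\geq 2r$; then some box, say with label $a_0$, carries $i_{a_0}\geq 2r$. Second, I would trace this distinguished box through $b_3$. If $a_0$ is isolated, then the corresponding tensorand of the input to $b_4$ lies in $\uqz\tilde X^{(i_{a_0})}\uqzq\subset \sum_{p'\geq 2r}\uqzq\e{p'}\uqzq$ (or the $\f{}$-version), and applying $b_4$, whose first $l_4$ legs enter through $\adx$'s and $\sadx$'s, lands this tensorand inside $A_{r}$ by the definition of $A_p=\langle U_{\mathbb{Z},q}\triangleright e^p\rangle_{\mathrm{ideal}}$ together with Proposition \ref{Habi} and Corollary \ref{Habii}; since $A_r=G_{2r}\subset G_r$ by Corollary \ref{c1}, this already places the whole tensor in $G_r^{(g)}$. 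If instead $a_0$ is paired, say $\{a_0,b_0\}\in\mathcal P^2_{\mathrm A}$, then the corresponding $\Yx$ in $b_3$ produces, by the appropriate one of the inclusions \eqref{K1}--\eqref{K2}, an element of $\dot Y(\uqz\e{i_{a_0}}\otimes \uq)$-type, i.e.\ of $\mathcal{Y}_{i_{a_0}}\subset\mathcal{Y}_{2r}$, or of the $(\mathcal{Y}^D)$-type when the $\Yx$ is attached to a $\Dx$ as in (c2)--(c5); by Lemmas \ref{zf1} and \ref{zf2} these lie in $G_{2r}^{(g)}$ resp.\ $G_{r}^{(g)}$. Pushing through the remaining morphisms $b_4$ (and recalling that $\mathcal{F}'(b_4)$ maps $\uqzq$-tensorands into $\uqe$ and preserves the ideal structure by Proposition \ref{Habi} and Corollary \ref{Habii}) keeps us inside $G_r^{(g)}$.

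Finally I would assemble these observations: since the inclusion \eqref{u3} (or directly Lemma \ref{df1}) writes $\mathcal{F}'(B^s)(1)$ as a finite sum over all $\mathbf{i}_p,\bar{\mathbf i}_p$ of the modified pieces, and since for \emph{each} such choice the argument of the previous paragraph produces a distinguished high-degree box and hence membership in $G_r^{(g)}$, every summand lies in $G_r^{(g)}$, and so does the sum. Thus $\mathcal{F}'\big(b_4\circ b_3\circ \sigma \circ \tilde b^s_1{(\mathbf{i}_1, \bar{ \mathbf{i}}_1, \ldots , \mathbf{i}_{l_2}, \bar{ \mathbf{i}}_{l_2})}\big)(1)\subset G_r^{(g)}$ as claimed. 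I expect the main obstacle to be bookkeeping rather than conceptual: one must check that the rational prefactor $I$ appearing in \eqref{ab} does not interfere — it is already known to lie in $\mathbb{Z}[q,q^{-1}]$ by \eqref{u}, so multiplying a generator of $G_r$ by $I$ stays in $G_r$ — and one must verify carefully that in each of the five cases (c1)--(c5) the relevant ideal-membership is the \emph{degree}-$i_{a_0}$ one and not a weaker bound, which is exactly where the conventions in the definitions of $\mathcal{Y}_p$ and $(\mathcal{Y}^D)_p$ (sums over $p'\geq p$) are designed to match the outputs of Proposition \ref{qq} and Lemma \ref{qd}.
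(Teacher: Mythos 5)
Your argument is essentially the paper's: for any choice of $\mathbf{i}_p,\bar{\mathbf{i}}_p$ there is a box with $i_a\geq 2r$ (because $\max\{i_a\}=N^s(\mathbf{i}_1,\ldots)\geq N^s\geq 2r$ by the definition of $N^s$ as a minimum), and tracing this box through $b_3$ and $b_4$ via the $C_p/\tilde C_p$, $\mathcal{Y}_p$, and $(\mathcal{Y}^D)_p$ filtrations together with Lemmas \ref{zf1}, \ref{zf2} and Corollary \ref{c1} yields membership in $G_r^{(g)}$, exactly as in the paper (which packages the same estimate as two claims bounded by $G_{M_{\mathrm{iso}}}^{(g)}$ and $G_{\lfloor M_Y/2\rfloor}^{(g)}$). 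Two small points to tidy: do not open by \emph{picking the minimizer} — the lemma is quantified over all tuples, and your own final paragraph shows the correct move is that \emph{any} tuple already has a high-degree box; and the inclusions you need in the paired case (outputs lying in $\mathcal{Y}_{\max(i_a,i_b)}$ resp.\ $(\mathcal{Y}^D)_{\max(i_a,i_b)}$) are strictly stronger than (\ref{K1})--(\ref{K2}) and must be proved separately (the paper records them as (\ref{yd1})--(\ref{yd2})), while in the isolated case the image lands in $C_{i_{a_0}}$ or $\tilde C_{i_{a_0}}\subset G_{i_{a_0}}\subset G_{2r}\subset G_r$ directly rather than via $A_r$.
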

\begin{proof}
The proof is similar to that of Lemma \ref{B''}.
By replacing $s_p$ with $s(c_p)$ for $p=1,\ldots,l_2$, we use the notations and results in the proof of Lemma \ref{B''}.

Fix $\mathbf{i}_p\in \mathcal{I}(s(c_p),m_p)$ and $\bar {\mathbf{i}}_p\in \mathcal{I}(s(c_p),n_p),$ for  $p=1,\ldots, l_2.$
Recall that we color the output edges of $\tilde b^s_1(\mathbf{i}_1, \bar{ \mathbf{i}}_1, \ldots , \mathbf{i}_{l_2}, \bar{ \mathbf{i}}_{l_2})$ with the labels in $\mathcal{P}$ as in Figure \ref{fig:EF}.  Note that 
\begin{align*}
M:=N^s(\mathbf{i}_1, \bar{ \mathbf{i}}_1, \ldots , \mathbf{i}_{l}, \bar{ \mathbf{i}}_{l}) =\max\{ i_a \  | \  a\in \mathcal{P} \}\geq  N^s\geq 2r.
\end{align*}
Since the filtration $\{G_p\}_{p\geq 0}$ is  decreasing, it is enough to prove 
\begin{align}\begin{split}\label{maxa}
\mathcal{F}'\big(b_4\circ b_3\circ\sigma \circ  \tilde{b}^s_1{(\mathbf{i}_1, \bar{ \mathbf{i}}_1, \ldots , \mathbf{i}_{l_2}, \bar{ \mathbf{i}}_{l_2})}\big)(1)\subset G_{\lfloor M/2 \rfloor}^{(g)}.
\end{split}
\end{align}

We prove (\ref{maxa}).
Recall that $\mathcal{P}_{\mathrm{iso}}\subset \mathcal{P}$ denotes the set of isolated labels,
and $\mathcal{P}_{\mathrm{A}}^2$ denotes the set of  unordered pairs $\{a,b\}$ of mutually adjacent labels $a,b\in \mathcal{P}$.
Set 
\begin{align*}
\mathcal{P}_Y=\mathcal{P}\setminus \mathcal{P}_{\mathrm{iso}}=\bigcup \mathcal{P}_{\mathrm{A}}^2.
\end{align*}
Set $M_{\mathrm{iso}}=\max \{  i_a  | \ a\in \mathcal{P}_{\mathrm{iso}}\}$  and  $M_Y=\max\{ i_a | \ a\in \mathcal{P}_Y\}$.
It is enough to prove 
\begin{itemize}
\item[(i)]$\mathcal{F}'\big(b_4\circ b_3\circ \sigma \circ\tilde 
 b_1^s(\mathbf{i}_1, \bar{ \mathbf{i}}_1, \ldots , \mathbf{i}_{l_2}, \bar{ \mathbf{i}}_{l_2})\big)(1)\subset G_{M_{\mathrm{iso}}}^{(g)}$ ($\subset G_{\lfloor M_{\mathrm{iso}}/2 \rfloor}^{(g)}$), and 
\item[(ii)]$ \mathcal{F}'\big(b_4\circ b_3\circ \sigma \circ\tilde  b_1^s(\mathbf{i}_1, \bar{ \mathbf{i}}_1, \ldots , \mathbf{i}_{l_2}, \bar{ \mathbf{i}}_{l_2})\big)(1)\subset G_{\lfloor M_Y/2 \rfloor}^{(g)}$.
\end{itemize}

Let us prove (i).
Recall from (\ref{ab}) that
\begin{align*}
\mathcal{F}'\big( b_3\circ \sigma \circ \tilde b^s_1{(\mathbf{i}_1, \bar{ \mathbf{i}}_1, \ldots , \mathbf{i}_{l_2}, \bar{ \mathbf{i}}_{l_2})}\big)(1)\subset \uqzq^{\otimes l_4}\otimes (\uqe)^{\otimes l_5+\otimes l_6}.
\end{align*}
Thus, it is enough to  prove
\begin{align}
 \mathcal{F}'(b_4)\Big( \uqzq^{\otimes l_4}\otimes (\uqe)^{\otimes l_5+\otimes l_6}\Big) 
&\subset G_{M_{\mathrm{iso}}}^{(g)}.\label{aa5}
\end{align}

Recall that the first $l_4$ input edges of $b_4$ are connected to the left (resp. right) input edges of the $\adx$'s (resp. $\sadx$'s),
and the next  $l_5+l_6$ input edges of $b_4$  go down to the edges of the $\mux$'s and to the right (resp. left) input edges  of the $\adx$'s (resp. $\sadx$'s).
By the definition of $C_p$ and $C'_p$, we have
\begin{align}
\ad (U_{\mathbb{Z},q}\tilde E^{(p)}\otimes \bar U_q^{\ev}) \subset C_p\subset G_p,\label{aa3}
\\
\ad (U_{\mathbb{Z},q}\tilde F^{(p)}\otimes \bar U_q^{\ev}) \subset C_p'\subset G_p,
\end{align}
for $p\geq 0$. We also  have
\begin{align}\begin{split}
  \sad (\bar U_q^{\ev}\otimes U_{\mathbb{Z},q}\tilde E^{(p)})&\subset \sad (\bar U_q^{\ev}\otimes \tilde E^{(p)})
  \\
&\subset \ad \big(S^{-1}(\tilde E^{(p)})\otimes \bar U_q^{\ev}\big)\\
&\subset \ad (\uqz\tilde E^{(p)}\otimes \bar U_q^{\ev})\subset C_p\subset G_p,
\end{split}
\end{align}
for $p\geq 0$. Similarly, we have
\begin{align}
\sad (\bar U_q^{\ev}\otimes U_{\mathbb{Z},q}\tilde F^{(p)})&\subset C'_p\subset G_p,\label{aa4}
\end{align}
for $p\geq 0$. 
Thus,  (\ref{aa5}) follows from (\ref{aa3})--(\ref{aa4}) and  the inclusions
\begin{align}
\mu(\uqzq \otimes G_{p})&=\mu (G_{p}\otimes \uqzq )\subset G_{p},\label{aa2}
\\
\ad(\uqzq \otimes G_{p})&\subset G_{p},
\quad
\sad( G_{p}\otimes \uqzq )\subset G_{p}.\label{aa6}
\end{align}
for $p\geq 0$. We have finished the proof of  (i).

Let us prove (ii).
Recall from (\ref{b31}) that
\begin{align}\label{anl}
\mathcal{F}'\big(b_3\circ \sigma \circ\tilde  b^s_1{(\mathbf{i}_1, \bar{ \mathbf{i}}_1, \ldots , \mathbf{i}_{l_2}, \bar{ \mathbf{i}}_{l_2})}\big)(1)\in 
\Big(\prod_{p=1}^{l_2}\{s_p\}_q!\Big)\cdot \Big(\uqzq^{\otimes l_4}\otimes \mathcal{F}'(Z)(1)\otimes (\uqze)^{\otimes l_6}\Big),
\end{align}
where $Z\in \Yx^{\otimes l_5}\circ \Hom_{\mathcal{A}}(I,A^{\otimes 2l_5})$.
We study $\mathcal{F}'(Z)(1)$ by  using the following inclusions instead of (\ref{K1})--(\ref{K2}).

For $X_1,X_2\in \{E,F\}$ and $i_a,i_b\geq 0$ for $\{a,b\}\in \mathcal{P}_{\mathrm{A}}^2$, we have
\begin{align}
&\dot Y(\uqz \tilde X_1^{(i_a)} \otimes  \uqz \tilde X_2^{(i_b)})\subset  (\{\min(i,j)\}_q!)^{-1}\cdot \mathcal{Y}_{\max(i_a,i_b)}.\label{yd1}
\end{align}
For example, we have
\begin{align*}
\dot Y(\uqz \tilde E^{(2)} \otimes  \uqz \tilde F^{(3)})&=  (\{2\}_q!)^{-1}\dot Y(\uqz e^2 \otimes  \uqz \tilde F^{(3)})
\\
&\subset (\{2\}_q!)^{-1} \mathcal{Y}_{3}.
\end{align*}
We also have
\begin{align}
&\sum \dot Y(\uqz \tilde X_1^{(i_a)} \otimes \uqz D_{\pm}')\otimes \dot Y( \uqz \tilde X_2^{(i_b)} \otimes \uqz D_{\pm}'') \subset  (\{\min(i_a,i_b)\}_q!)^{-1}\cdot (\mathcal{Y}^D)_{\max(i_a,i_b)},
\\
&\sum \dot Y(\uqz \tilde X_1^{(i_a)}\otimes \uqz D_{\pm}' )\otimes \dot Y( \uqz D_{\pm}''\otimes \uqz \tilde X_2^{(i_b)})\subset  (\{\min(i_a,i_b)\}_q!)^{-1}\cdot 
(\mathcal{Y}^D)_{\max(i_a,i_b)}.
\\
&\sum \dot Y(\uqz D_{\pm}'\otimes \uqz \tilde X_1^{(i_a)} )\otimes \dot Y(\uqz \tilde X_2^{(i_b)} \otimes \uqz D_{\pm}'')\subset  (\{\min(i_a,i_b)\}_q!)^{-1}\cdot(\mathcal{Y}^D)_{\max(i_a,i_b)},
\\
&\sum \dot Y(\uqz D_{\pm}'\otimes \uqz \tilde X_1^{(i_a)})\otimes \dot Y(\uqz D_{\pm}''\otimes \uqz \tilde X_2^{(i_b)} )\subset    (\{\min(i_a,i_b)\}_q!)^{-1}\cdot
 (\mathcal{Y}^D)_{\max(i_a,i_b)}.\label{yd2}
\end{align}

By the above inclusions (\ref{yd1})--(\ref{yd2}), and by Lemmas \ref{zf1} and \ref{zf2}, we have
\begin{align}\label{cal}
\mathcal{F}'(Z)(1)  \in \prod_{\{a,b\}\in \mathcal{P}_{\mathrm{A}}^2}\big(\big\{\min (i_{a},i_{b})\big\}_q!\big)^{-1} \cdot G^{(l_5)}_{\lfloor M_Y/2\rfloor }.
\end{align}

Thus, by (\ref{anl}), (\ref{cal}) and   (\ref{u}),   we have
\begin{align*}
\mathcal{F}'\big(b_3\circ \sigma \circ \tilde b^s_1{(\mathbf{i}_1, \bar{ \mathbf{i}}_1, \ldots , \mathbf{i}_{l_2}, \bar{ \mathbf{i}}_{l_2})}\big)(1)& \subset \uqzq^{\otimes l_4}\otimes  G^{(l_5)}_{\lfloor M_Y/2\rfloor }\otimes (\uqze) ^{\otimes l_6}
\\
& \subset \uqzq^{\otimes l_4}\otimes  G^{(l_5+l_6)}_{\lfloor M_Y/2\rfloor }.
\end{align*}

For the proof of the claim, it is enough to prove the inclusion
\begin{align}
\mathcal{F}'(b_4)\Big( \uqzq^{\otimes l_4}\otimes  G^{(l_5+l_6)}_{\lfloor M_Y/2\rfloor }\Big)\subset  G^{(g)}_{\lfloor M_Y/2\rfloor },
\end{align}
which follows from (\ref{aa2}) and (\ref{aa6}).
This completes the proof.
\end{proof}

\begin{acknowledgments}
This work was partially suppoted by JSPS Research Fellowships for Young Scientists.
The author is deeply grateful to Professor Kazuo Habiro and Professor Tomotada Ohtsuki
for helpful advice and encouragement.
\end{acknowledgments}

\end{document}